\DeclareMathAlphabet{\mathpzc}{OT1}{pzc}{m}{it}
\newtheorem{theorem}{Theorem}[section]
\newtheorem*{claim*}{Claim}
\newtheorem{lemma}[theorem]{Lemma}
\newtheorem{lem}[theorem]{Lemma}
\newtheorem{corollary}[theorem]{Corollary}
\newtheorem{cor}[theorem]{Corollary}
\newtheorem{proposition}[theorem]{Proposition}
\newtheorem{prop}[theorem]{Proposition}
\theoremstyle{definition}
\newtheorem{definition}[theorem]{Definition}
\theoremstyle{remark}
\newtheorem{rmk}[theorem]{Remark}
\newtheorem{Rmk}[theorem]{Remark}
\numberwithin{equation}{section}
\newcommand{\norm}[1]{\lVert#1\rVert}
\newcommand{\op}{\operatorname}
\newcommand{\be}{\begin{equation}}
\newcommand{\ee}{\end{equation}}
\newcommand{\Ga}{\Gamma}
\newcommand{\R}{\mathbb R}
\newcommand{\Z}{\mathbb Z}
\newcommand{\ga}{\gamma}
\newcommand{\la}{\lambda}
\newcommand{\La}{\Lambda}
\newcommand{\inte}{\op{int}}
\newcommand{\ba}{\backslash}
\newcommand{\cal}{\mathcal}
\newcommand{\br}{\mathbb R}
\newcommand{\F}{\cal F}
\newcommand{\stab}{\op{Stab}}
\newcommand{\diam}{\op{diam}}
\newcommand{\vol}{\op{Vol}}
\newcommand{\G}{\Gamma}
\newcommand{\m}{\mathsf{m}}
\renewcommand{\frak}{\mathfrak}
\newcommand{\e}{\varepsilon}
\renewcommand{\L}{\mathcal L}
\newcommand{\fa}{\mathfrak a}
\renewcommand{\i}{\op{i}}
\renewcommand{\P}{\mathbb{P}}
\newcommand{\fg}{\frak g}
\newcommand{\Leb}{\op{Leb}}
\newcommand{\lat}{\La_{\theta}}
\newcommand{\ft}{\F_\theta}
\newcommand{\lac}{\lat^{\mathsf{con}}}
\renewcommand{\epsilon}{\e}
\renewcommand{\d}{\mathsf{d}}
\newcommand{\sq}{\mathsf{Q}}
\newcommand{\SL}{\op{SL}}
\newcommand{\codim}{\op{codim}}
\title[Ergodic dichotomy]{Ergodic dichotomy for subspace flows \\ in higher rank}
\author{Dongryul M. Kim}
\address{Department of Mathematics, Yale University, New Haven, CT 06511}
\email{dongryul.kim@yale.edu}
\author{Hee Oh}
\address{Department of Mathematics, Yale University, New Haven, CT 06511.}
\email{hee.oh@yale.edu}
\thanks{Oh is partially supported by the NSF grant No. DMS-1900101.}
\author{Yahui Wang}
\address{Department of Mathematics, Yale University, New Haven, CT 06511}
\email{amy.wang.yw735@yale.edu}
\begin{document}

\maketitle

\begin{abstract}
In this paper, we study the ergodicity of a one-parameter diagonalizable subgroup of a connected semisimple real algebraic group $G$ acting on a homogeneous space or, more generally, a homogeneous-like space, equipped with a Bowen-Margulis-Sullivan type measure. These flow spaces are associated with Anosov subgroups of $G$, or more generally, with transverse subgroups of $G$.

 We obtain an ergodicity criterion similar to the Hopf-Tsuji-Sullivan dichotomy for the ergodicity of the geodesic flow on hyperbolic manifolds. In addition, we  extend this criterion to the action of any connected diagonal subgroup of arbitrary dimension.
Our criterion provides a codimension dichotomy on the ergodicity of a connected diagonalizable subgroup for general Anosov subgroups. This generalizes an earlier work by Burger-Landesberg-Lee-Oh for {\it Borel} Anosov subgroups.
\end{abstract}

\tableofcontents

\section{Introduction}
A continuous flow $\phi_t$ on a phase space $X$ with an invariant measure $\m$ is called {\it ergodic} if any invariant measurable subset has measure zero or co-measure zero.

If $\m$ is a probability measure on $X$, the Birkhoff ergodic theorem states that the ergodicity of the dynamical system $(X,\phi_t, \m)$ is equivalent to the condition that for any $f\in L^1(X, \m)$, the time average along the trajectory of almost every point $x\in X$ is equal to the space average with respect to $\m$:
$$\lim_{T\to \infty}\frac{1}{T} \int_0^T f(\phi_t(x)) dt =\int_X f\; d\m \quad\text{ for $\m$-almost all $x\in X$}. $$
Hence, ergodicity ensures that the system's behavior, when observed over a long time, reflects the statistical properties of the entire phase space. This property is crucial in understanding the long-term behavior of dynamical systems.

The main dynamical system of interest in homogeneous dynamics arises from  the quotient $\Ga\ba G$  of a connected semisimple real algebraic group $G$ by a discrete subgroup $\Ga$ of $G$. Any one-parameter subgroup $H=\{\phi_t:t\in \br\}$ of $G$ acts on $\Ga\ba G$ by translations on the right, giving rise to a continuous dynamical system $(\Ga\ba G, H)$. The ergodicity in {\it finite-volume} homogeneous dynamics is well understood thanks to the Moore ergodicity theorem: if  $\Ga$ is an irreducible lattice, 
any non-compact closed subgroup $H$ acts ergodically on $(\Gamma\ba G, \m_G)$
where  $\m_G$ denotes a $G$-invariant (finite) measure on $\Gamma\ba G$.

The concept of ergodicity becomes much more delicate and challenging to prove for an {\it infinite} measure system.
Firstly, the Birkhoff ergodic theorem no longer holds, but we have the Hopf ratio ergodic theorem:
 for a conservative and ergodic action of a continuous flow $\phi_t$ on a $\sigma$-finite measure space $(X,\m)$, for any $f, g\in L^1(X,\m)$ with $g>0$, we have
 $$\lim_{T\to \infty}\frac{\int_0^T f(\phi_t(x))dt}{\int_0^T g(\phi_t(x) )dt} =\frac{\int_X f\; d\m}{\int_X g\; d\m } \quad\text{ for $\m$-almost all $x\in X$}. $$
 When $\m(X)=\infty$, while the denominator ${\int_0^T g(\phi_t(x) )dt} $ depends on the initial position $x$, unlike in the finite measure case, the ratio of time averages
 still converges to the ratio of space averages. In particular, almost all trajectories are dense in the phase space.
 
One of the first significant results on ergodicity in infinite measure systems is the Hopf-Tsuji-Sullivan dichotomy for the geodesic flow on hyperbolic manifolds with respect to Bowen-Margulis-Sullivan measures. The hyperbolic nature of the geodesic flow  and the quasi-product structure of Bowen-Margulis-Sullivan measures, with respect to the stable and unstable foliations for the geodesic flow, have been crucial in their approach. This methodology was successfully extended to a one-parameter diagonalizable flow on higher rank homogeneous spaces, which are quotients of a connected semisimple real algebraic group of higher rank by Borel Anosov subgroups, by Burger-Landesberg-Lee-Oh \cite{BLLO}. The main aim of this paper is to generalize this result for general Anosov subgroups that are not necessarily Borel Anosov, as well as to address the action of any connected diagonalizable subgroup.

\subsection*{Background and Motivation} To give some background, let $G$ be a connected semisimple real algebraic group. 
Fix a Cartan decomposition $$G=KA^+K$$ where
$K$ is a maximal compact subgroup of $G$ and $A^+=\exp \fa^+$ is a positive Weyl chamber of a maximal split torus $A$ of $G$.  
 Let $M$ be the centralizer of $A$ in $K$. For any Zariski dense discrete subgroup $\Ga$ of $G$, we have a natural locally compact Hausdorff space $\Ga\ba G/M$ on which $A$ acts  by translations on the right. 
For any non-zero vector $u\in \fa^+$, consider the one-parameter subgroup $A_u=\{\exp tu:t\in \br\}$. The ergodicity criterion for the $A_u$-action on $\Ga\ba G/M$ with respect to a Bowen-Margulis-Sullivan measure was obtained by Burger-Landesberg-Lee-Oh \cite{BLLO} 
in terms of the divergence of an appropriate directional Poincar\'e series. In particular,
it was shown that for Borel Anosov subgroups of $G$ (in other words, Anosov subgroups with respect to a minimal parabolic subgroup), the ergodicity of the $A_u$-action is completely determined by the rank of $G$. Soon after \cite{BLLO} appeared,  Sambarino \cite{sambarino2022report} gave a different proof for this rank dichotomy, but it applied only when $\op{rank} G \neq 3$.

On the other hand, the ergodicity criterion of \cite{BLLO} is not very useful in practice for Anosov subgroups that are not Borel Anosov, since there seems to be no way to decide whether the relevant directional Poincar\'e series diverges or not.

In this paper, we consider a different dynamical space than $\Ga\ba G/M$ depending on the Anosov type of $\Ga$. Let $\theta$ be a non-empty subset of simple roots. Consider the standard parabolic subgroup $P_\theta=A_\theta S_\theta N_\theta$ where $A_\theta S_\theta$ is a Levi-subgroup with $A_\theta$ being the central real split torus and $N_\theta$ is the unipotent radical. The double quotient space $\Ga\ba G/S_\theta$ is
precisely $\Ga\ba G/M$ when $P_\theta$ is a minimal parabolic subgroup, but it is not Hausdorff for a  general  $P_\theta$.
If $\Ga$ is a $\theta$-Anosov subgroup,  there exists a locally compact Hausdorff subspace $\Omega_\theta\subset \Ga\ba G/S_\theta$ on which  $A_\theta$ acts by translations. 
We will  obtain the ergodicity criterion for the action of a one-parameter subgroup of $A_\theta$ on $\Omega_\theta$ in terms of the associated directional Poincar\'e series.

In fact, this viewpoint and our criterion can be applied to a much more general class of
discrete subgroups, called $\theta$-transverse subgroups. For $\theta$-Anosov subgroups, our criterion provides the dichotomy for the ergodicity of $A_u$ on $\Omega_\theta$ with respect to a Bowen-Margulis-Sullivan measure in terms of the cardinality of $\theta$. When $\#\theta =3$, this was an open question while the other cases were obtained by Sambarino \cite{sambarino2022report}.
Although our proof closely follows the general strategy of \cite{BLLO}, a major difficulty arises from the non-compactness of $S_\theta$ which requires new ideas and new technical arguments to overcome.

\subsection*{Flow space} To discuss $\theta$-transverse subgroups and the associated flow space $\Omega_\theta$, we need to introduce some notation and definitions.  We denote by $\mu : G \to \fa^+$ the Cartan projection defined by the condition $g\in K\exp \mu(g) K$ for $g \in G$.
Let $\Pi$ be the set of all simple roots for $(\op{Lie} G, \frak a^+)$. 
Let  $\i :\Pi \to \Pi $ denote the opposition involution (see \eqref{opp0}).
Fix a non-empty subset $$\theta\subset \Pi.$$
Consider the $\theta$-boundary: $$\F_\theta=G/P_\theta,$$
where $P_\theta$ is the standard parabolic subgroup associated with $\theta$.
  We say that two points $\xi\in \F_\theta$ and $\eta\in \F_{\i(\theta)}$ are in general position if the pair
$(\xi,\eta)$ belongs to the unique open $G$-orbit in $\F_\theta\times \F_{\i(\theta)}$ under the diagonal action of $G$.

 Let $\Ga<G$ be a Zariski dense discrete subgroup. Let $\La_\theta$ denote the $\theta$-limit set of $\Ga $, which is the unique $\Ga$-minimal subset of $\F_\theta$ (Definition \ref{def.limitset}). 
 We say that $\Ga$ is {\it $\theta$-transverse} if it satisfies
\begin{itemize}
    \item ({\it $\theta$-regularity}):
$ \liminf_{\ga\in \Ga} \alpha(\mu({\ga}))=\infty $ for all $\alpha\in \theta$; 
\item ({\it $\theta$-antipodality}):
any distinct $\xi, \eta\in \La_{\theta\cup \i(\theta)}$ 
are in general position.   \end{itemize}

The class of $\theta$-transverse subgroups includes all discrete subgroups of rank one Lie groups, $\theta$-Anosov subgroups and their relative versions. Note also that every subgroup of a $\theta$-transverse subgroup is again $\theta$-transverse. The class of transverse subgroups is regarded as a generalization of all rank one discrete subgroups, while the class of Anosov subgroups is regarded as a generalization of rank one convex cocompact subgroups.

In the rest of the introduction, we assume that
$$\text{$\Ga$ is
a Zariski dense $\theta$-transverse subgroup of $G$. }$$
In order to introduce an appropriate substitute of $\Ga\ba G/M$ for a $\theta$-transverse subgroup $\Ga$, recall the Langlands decomposition $P_\theta=A_\theta S_\theta N_\theta$ where $A_\theta$ is the maximal split central torus, $S_\theta$ is an almost direct product of a semisimple algebraic subgroup and a compact central torus and $N_\theta$ is the unipotent radical of $P_\theta$. The diagonalizable subgroup $A_\theta$ acts on the quotient space $G/S_\theta$  by translations on the right.
The left translation action of $\Ga$ on $G/S_\theta$ is in general not properly discontinuous (cf. \cite{Benoist_Actions}, \cite{Koba}) unless $\theta=\Pi$, in which case $S_\theta$ is compact.
However the action of $\Ga$ is properly discontinuous on the following
closed $A_\theta$-invariant subspace (\cite[Thm. 9.1]{KOW_indicators}):
$$\tilde{\Omega}_\theta:=\{ [g] \in G/S_\theta: gP_\theta\in \La_\theta, g w_0 P_{\i(\theta)}\in \La_{\i(\theta)}\} $$ 
where $w_0$ is the longest Weyl element. Therefore the quotient space $$\Omega_\theta:=\Ga\ba \tilde \Omega_\theta$$
is a second countable locally compact Hausdorff space equipped with the right translation action of $A_\theta$ which is non-wandering. 
Denoting by $\La_\theta^{(2)}$ the set of all pairs
$(\xi, \eta)\in \La_\theta\times \La_{\i(\theta)}$ in general position, we have
(see \eqref{odd}):
$${\Omega}_\theta \simeq \Gamma\ba \left(\La_\theta^{(2)}\times \fa_\theta\right).$$ 
By a subspace flow on $\Omega_\theta$, we mean the action of the subgroup $A_{W}=\exp W$ for a non-zero linear subspace $W<\fa_\theta$. 

The main goal of this paper is to study the ergodic properties of the subspace flows on $\Omega_\theta$ with respect to Bowen-Margulis-Sullivan measures. The most essential case turns out to be the action of one-parameter subgroups of $A_\theta$ which we call directional flows. We first present the ergodic dichotomy for directional flows. 

\subsection*{Directional flows} Fixing a non-zero vector $u\in \fa_\theta^+$, we are interested in ergodic properties of
the action of the one-parameter subgroup $$A_{u}=\{a_{tu}=\exp t u:t\in \br\}$$
on the space $\Omega_\theta$.
We say that $\xi\in \La_\theta$ is a $u$-directional conical point if there exists $g\in G$ such that $\xi=gP_\theta$ and $[g]a_{t_iu}\in \Omega_\theta$
 belongs to a compact subset for some sequence $t_i\to +\infty$. We denote by $\La_\theta^u$ the set of all $u$-directional conical points, that is{\footnote{The set $\limsup_{t \to + \infty} [g]a_{tu}$ consists of all limits $\lim_{t_i\to +\infty} [g]a_{t_iu}$.}},
$$
\La_{\theta}^u:= \{gP_{\theta} \in\La_\theta : [g]\in \Omega_\theta,
\limsup_{t\to +\infty} [g] a_{tu} \ne \emptyset\}.$$
See Definition \ref{def.directionalconicalshadow} and Lemma \ref{defdir} for an equivalent
definition of $\La_\theta^u$ given in terms of shadows. It is clear from the definition that $\La_\theta^u$ is an important object in the study of the recurrence of $A_u$-orbits. Another important player in our ergodic dichotomy is the directional $\psi$-Poincar\'e series for a linear form $\psi\in \fa_\theta^*$. To define it, we set
$\mu_\theta:=p_\theta\circ \mu$ to be the $\fa_\theta$-valued Cartan projection where $p_{\theta} : \fa \to \fa_{\theta}$ is the unique projection, invariant under all Weyl elements fixing $\fa_{\theta}$ pointwise. The $u$-directional  $\psi$-Poincar\'e series is of the form
\be \label{eqn.directionalpoincaredef}
 \sum_{\ga \in \Ga_{u, R}} e^{-\psi(\mu_{\theta}(\ga))}
\ee where $\Ga_{{u},R}:=\{\ga\in\Ga : \norm{\mu_\theta(\ga)-\br {u}}<R\}$ for a Euclidean norm $\|\cdot\|$ on $\fa_{\theta}$ and $R>0$. 
In considering these objects, it is natural to restrict 
to those linear forms $\psi$ such that $\psi \circ \mu_{\theta} : \Ga \to [-\varepsilon, \infty)$ is a proper map for some $\e>0$, which we call $(\Ga, \theta)$-proper linear forms. 
A Borel probability measure $\nu$ on $\mathcal{F}_\theta$ is called a $(\Gamma, \psi)$-conformal measure if $$\frac{d \gamma_*\nu}{d\nu}(\xi)=e^{\psi(\beta_\xi^\theta(e,\gamma))} \quad \text{for all $\gamma \in \Gamma$ and $ \xi \in \mathcal{F}_\theta$} $$
 where ${\ga}_* \nu(D) = \nu(\ga^{-1}D)$ for any Borel subset $D\subset \F_\theta$ and $\beta_\xi^\theta$ denotes the $\fa_\theta$-valued Busemann map defined in \eqref{Bu}.
For a $(\Ga, \theta)$-proper $\psi \in \fa_{\theta}^*$, a $(\Ga, \psi)$-conformal measure  can exist only when $\psi\ge \psi_\Ga^\theta$ where $\psi_\Ga^\theta$ is the $\theta$-growth indicator of $\Ga$ \cite[Thm. 7.1]{KOW_indicators}.

Here is our main theorem for directional flows, relating the ergodicity of $A_u$, the divergence of the $u$-directional Poincar\'e series, and the
size of conformal measures on $u$-directional conical sets:
\begin{theorem}[Ergodic dichotomy for directional flows] \label{main}
    Let $\Ga$ be a Zariski dense $\theta$-transverse subgroup of $G$.
    Fix a non-zero vector $u\in \fa_\theta^+$ and a $(\Ga, \theta)$-proper linear form $\psi \in \fa_{\theta}^*$. 
    Suppose that there exists a pair $(\nu, \nu_{\i})$ of $(\Ga, \psi)$ and $(\Ga, \psi\circ\i)$-conformal measures on $\La_\theta$ and $\La_{\i(\theta)}$ respectively. Let
    $\m=\m(\nu, \nu_{ \i})$ denote the  associated Bowen-Margulis-Sullivan measure on $\Omega_\theta$ (see \eqref{bdef}).

    In each of the following complementary cases, claims (1)-(3) are equivalent to each other. If ${\mathsf m}$ is $u$-balanced (Definition \ref{def.ubalanced}), then (1)-(5) are all equivalent.

    The first case:
\begin{enumerate}
\item  $\max \left(\nu(\La_{\theta}^u),\nu_{\i} (\La_{\i(\theta)}^{\i({u})})\right) >0$;
\item $(\Omega_\theta, A_u,  \mathsf m)$ is completely conservative;
\item $(\Omega_\theta, A_u,  \mathsf m)$ is ergodic;
\item 
$\sum_{\ga\in\Ga_{{u},R}}e^{-\psi(\mu_\theta(\ga))}=\infty$ for some $R>0$;
\item $\nu(\La_{\theta}^{u})=1=\nu_{\i} (\La_{\i(\theta)}^{\i({u})})$.
\end{enumerate} 

    The second case:
    \begin{enumerate}
\item  $\nu(\La_{\theta}^{u})=0=\nu_{ \i} (\La_{\i(\theta)}^{\i({u})})$;
\item $(\Omega_\theta, A_u,  \mathsf m)$ is completely dissipative;
\item $(\Omega_\theta, A_u,  \mathsf m)$ is non-ergodic;
\item  $\sum_{\ga\in\Ga_{{u},R}}e^{-\psi(\mu_\theta(\ga))}<\infty$ for all $R>0$;
\item $\nu(\La_{\theta}^{u})=0=\nu_{ \i} (\La_{\i(\theta)}^{\i({u})})$.
\end{enumerate} 
    \end{theorem}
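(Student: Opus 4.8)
The plan is to establish the theorem by following the Hopf-Tsuji-Sullivan architecture, organized around a small number of hub implications from which the two complementary dichotomies fall out by negation. The central strategy is: (a) prove the ``conformal-measure'' statements $(1)\Leftrightarrow(4)\Leftrightarrow(5)$ using a directional Borel--Cantelli argument on shadows; (b) prove $(4)\Leftrightarrow(2)$ (Poincaré divergence $\Leftrightarrow$ complete conservativity) via a Hopf-type maximal inequality combined with the quasi-product structure of $\m$ on $\La_\theta^{(2)}\times\fa_\theta$; and (c) under the $u$-balanced hypothesis, upgrade conservativity to ergodicity, $(2)\Rightarrow(3)$, via a Hopf-argument on the stable/unstable foliations, with $(3)\Rightarrow(2)$ being the easy direction. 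Since the first case is the logical negation of the second case in each of the five claims, it suffices to prove the chain of equivalences once (say, in the ``divergent'' regime) and observe that all the relevant quantities — $\nu(\La_\theta^u)\in\{0,1\}$ by ergodicity/conservativity of the conformal action, the Poincaré series being $\infty$ or $<\infty$, conservativity vs.\ dissipativity — are genuinely dichotomous.

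First I would handle $(1)\Leftrightarrow(4)\Leftrightarrow(5)$. The key is a \emph{shadow description} of $\La_\theta^u$: by Lemma \ref{defdir} (referenced in the excerpt), $\xi\in\La_\theta^u$ iff $\xi$ lies in infinitely many shadows $\mathrm{sh}(\ga o)$ with $\mu_\theta(\ga)$ in a bounded neighborhood of the ray $\br^+u$ — i.e.\ iff $\xi$ is ``conical along the $u$-direction.'' Combined with the conformality relation $\frac{d\ga_*\nu}{d\nu}(\xi)=e^{\psi(\beta_\xi^\theta(e,\ga))}$ and the standard Sullivan shadow lemma (which controls $\nu(\mathrm{sh}(\ga o))$ by $e^{-\psi(\mu_\theta(\ga))}$ up to multiplicative constants, using $(\Ga,\theta)$-properness of $\psi$ and regularity), one gets: divergence of $\sum_{\ga\in\Ga_{u,R}}e^{-\psi(\mu_\theta(\ga))}$ forces (by a Borel--Cantelli $\limsup$ argument, second moment / quasi-independence of shadows along the direction) $\nu(\La_\theta^u)=1$, while convergence gives $\nu(\La_\theta^u)=0$ by the easy Borel--Cantelli direction; the same applies to $\nu_\i$ on $\La_{\i(\theta)}^{\i(u)}$ using $\psi\circ\i$. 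This yields $(4)\Rightarrow(5)\Rightarrow(1)$ and $(1)\Rightarrow(4)$ by contraposition. A subtlety here, absent in the Borel-Anosov case of \cite{BLLO}, is that shadows now live on $\F_\theta$ rather than the full boundary and their geometry is governed only by the $\theta$-coordinates; the $\theta$-antipodality and $\theta$-regularity hypotheses are exactly what make the shadow lemma and the cocycle estimates go through.

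Next, $(4)\Leftrightarrow(2)$. Using the identification $\Omega_\theta\simeq\Ga\ba(\La_\theta^{(2)}\times\fa_\theta)$ and the product form of $\m$ (roughly $d\nu(\xi)\,d\nu_\i(\eta)\,e^{\psi(\cdot)}\,d(\text{Leb on }\fa_\theta)$ with a gap-cocycle twist), the $A_u$-flow translates the $\fa_\theta$-coordinate along $u$. Recurrence of a.e.\ $A_u$-orbit to a fixed compact set in $\Omega_\theta$ is, after unfolding, equivalent to $\xi$ (or $\eta$) being a $u$-directional conical point — so conservativity of $(\Omega_\theta,A_u,\m)$ is equivalent to $\nu(\La_\theta^u)>0$ (hence $=1$), which by the previous paragraph is equivalent to $(4)$; the ``complete'' conservativity follows because the conservative part is $A_u$-invariant and, modulo $\m$, a union of full-measure pieces. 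The translation between ``recurrence in $\Omega_\theta$'' and ``conicality of the boundary point'' is where the non-compactness of $S_\theta$ bites: one must control the $S_\theta$-direction when passing between $G/S_\theta$ and $\F_\theta^{(2)}\times\fa_\theta$, and I expect this — making the compact-recurrence set genuinely compact in $\Omega_\theta$ despite $S_\theta$ being non-compact — \textbf{to be the main technical obstacle}, requiring the new arguments the authors allude to.

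Finally, the ergodicity upgrade $(2)\Rightarrow(3)$ under $u$-balancedness. Assuming complete conservativity, I would run a Hopf ratio-ergodic argument: by the Hopf ratio theorem the Birkhoff averages of a compactly supported continuous $f$ along $A_u$ converge a.e.\ to an $A_u$-invariant function $\bar f$; one shows $\bar f$ is a.e.\ constant by proving it is invariant under both the stable horospherical direction and the unstable one (the two foliations transverse to the flow inside $\Omega_\theta$), using that $\m$ is quasi-product with respect to these foliations and that the flow contracts/expands them — the $u$-balanced condition is precisely the hypothesis ensuring the stable and unstable rates along $u$ are compatible so that the standard Hopf chaining closes up (this is where $\#\theta=3$, or higher-dimensional $\fa_\theta$, would otherwise obstruct). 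Invariance under the full group generated by these foliations plus $A_u$ gives $A_\theta\Ga$-density, hence $\bar f$ constant, hence ergodicity. The reverse $(3)\Rightarrow(2)$ is immediate since an ergodic infinite-measure system with a non-trivial dissipative part would have that part be a null invariant set, contradiction; and $(3)\Rightarrow(5)$, $(5)\Rightarrow(1)$ are then formal. Assembling: in the first case $(1)\Leftrightarrow(4)\Leftrightarrow(5)\Leftrightarrow(2)\Leftrightarrow(3)$ (the last under balancedness), and the second case is the verbatim negation.
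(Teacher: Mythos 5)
Your allocation of the $u$-balanced hypothesis is backwards, and this is a genuine gap rather than a cosmetic one. In the paper the chain $(1)\Leftrightarrow(2)\Leftrightarrow(3)$ is established \emph{without} any balancedness assumption: $(1)\Leftrightarrow(2)$ (Proposition \ref{prop.conserv}) is a direct translation between recurrence and conicality using Lemma \ref{defdir}, and $(2)\Leftrightarrow(3)$ (Proposition \ref{prop.ergodic}) is the Hopf argument, which closes up using the \emph{transitivity subgroup} $\cal H_\Ga^\theta(g)$. The crucial point you omit is that in higher rank the stable/unstable foliations give invariance of the Hopf average only along $N_\theta$ and $\check N_\theta$-orbits; to upgrade this to invariance under all of $A_\theta$ (so that $A_\theta$-ergodicity reduces the problem to $A_u$-ergodicity) one needs Proposition \ref{prop.transitivity}, which asserts $\cal H_\Ga^\theta(g)$ is dense in $A_\theta$. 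That density rests on Benoist's density theorem for Jordan projections (Theorem \ref{thm.Benoistdense}) and hence on the Zariski density of $\Ga$; the $u$-balanced condition plays no role in it. Your claim that ``the $u$-balanced condition is precisely the hypothesis ensuring the stable and unstable rates along $u$ are compatible so that the standard Hopf chaining closes up'' therefore misidentifies the mechanism: the chaining closes via transitivity subgroups, and your proposed proof of $(2)\Rightarrow(3)$ would not actually succeed without supplying this ingredient.

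Correspondingly, your step (a) — establishing $(4)\Rightarrow(5)$ by a ``second moment / quasi-independence of shadows along the direction'' Borel--Cantelli argument — is precisely where the $u$-balanced condition is needed in the paper, and you present it as if it were unconditional. The paper's version (Theorem \ref{prop.mainprop} via Propositions \ref{prop.upperbound} and \ref{prop.lowerbound}) does not work directly on shadows but passes to the dynamical quantity $\int_0^T\sum_\ga\tilde\m(Q\cap\ga Qa_{-t})\,dt$; the upper and lower bounds are obtained for \emph{different} compact sets $Q$ and $Q'$, and the $u$-balanced hypothesis is exactly what lets one compare them so that Lemma \ref{lem.BC} (the measure-theoretic Borel--Cantelli) applies. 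Without this, the second-moment estimate you invoke has no justification. The remaining direction $(5)\Rightarrow(4)$ via the easy Borel--Cantelli (Theorem \ref{thm.convergencedir}) and the $0$--$1$ law (Lemma \ref{lem.zeroone}, which uses uniqueness of the conformal measure in the divergence case) is correctly identified. Your diagnosis that the non-compactness of $S_\theta$ is the main technical obstacle in translating recurrence to conicality is accurate — the paper overcomes it via Lemma \ref{lem.repeat} and Proposition \ref{lem.dircartan}, exploiting $\theta$-regularity and $\theta$-antipodality — but the structural issues above mean the proposal as written does not give a correct proof.
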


We remark that in the first case,
(1) is again equivalent to the condition $\max \left(\nu(\La_{\theta}^u),\nu_{ \i} (\La_{\i(\theta)}^{\i({u})})\right) =1$.
\begin{rmk} 
\begin{enumerate}
  \item When $\theta=\Pi$, or equivalently when $S_\theta$ is compact, 
  Theorem \ref{main} was obtained for a general Zariski dense discrete subgroup $\Ga<G$ by Burger-Landesberg-Lee-Oh \cite[Thm. 1.4]{BLLO}.
 
\item  The $u$-balanced condition is required only for the implication $(4)\Rightarrow (5)$ in the first case, which takes up the most significant portion of our proof. This condition can be verified for Anosov subgroups, as we will discuss later (Theorem \ref{main3}, Corollary \ref{cdi}).

\item By a recent work \cite[Prop. 10.1]{BCZZ2}, the existence of a $(\Ga, \psi)$-conformal measure on $\La_\theta$
implies that $\psi$ is $(\Ga, \theta)$-proper. Therefore the hypothesis that $\psi$ is $(\Ga, \theta)$-proper is unnecessary.
\item When $G$ is of rank one, this is precisely the classical Hopf-Tsuji-Sullivan dichotomy
 (see \cite{Sullivan1979density}, \cite{Hopf1971ergodic}, \cite{Tsuji1959potential}, \cite[Thm. 1.7]{Roblin2003ergodicite}, etc.).
\end{enumerate}
\end{rmk}

 Our proof of Theorem \ref{main} is a generalization of the approach of \cite{BLLO} to a general $\theta$. The main difficulties  arise from 
 the non-compactness of $S_\theta$ which we overcome using special properties of $\theta$-transverse subgroups such as regularity, anitipodality and the convergence group actions on the limit sets.

\subsection*{Subspace flows} We now turn to the ergodic dichotomy for general subspace flows.
Let $W$ be a non-zero linear subspace of $\fa_\theta$ and set $A_W=\{\exp w: w\in W\}$. 
The $W$-conical set of $\Ga$ is defined as
\be\label{laW}\La_\theta^W=\{gP_\theta\in \F_\theta: [g]\in \Omega_\theta, \; \limsup [g](A_W\cap A^+) \ne \emptyset \}; \ee 
see Definition \ref{def.subspaceconical} and Lemma \ref{lem.equivdefsubspace} for an equivalent
definition of $\La_\theta^W$ given in terms of shadows.
For $R>0$, we set
\be\label{gaW} \Ga_{W,R}=\{\gamma\in \Ga: \|\mu_\theta (\ga)-W\| < R\}.\ee 

\begin{theorem}[Ergodic dichotomy for subspace flows] 
 \label{thm.subspacedichotomy}
   Let $\psi, \nu, \nu_{\i}$ and $ \m$ be as in Theorem \ref{main}. Let $W<\fa_\theta$ be a non-zero linear subspace.
In the following complementary cases, claims (1)-(3) are equivalent to each other. If $\m$ is $W$-balanced 
    as in Definition \ref{Wb},
then (1)-(5) are all equivalent.

The first case:
\begin{enumerate}
\item  $\max \left(\nu(\La_{\theta}^W),\nu_{\i} (\La_{\i(\theta)}^{\i({W})})\right) >0$;
\item $(\Omega_\theta, A_W,  \mathsf m)$ is completely conservative;
\item $(\Omega_\theta, A_W,  \mathsf m)$ is ergodic;
\item 
$\sum_{\ga\in\Ga_{{W},R}}e^{-\psi(\mu_\theta(\ga))}=\infty$ for some $R>0$;
\item $\nu(\La_{\theta}^{W})=1=\nu_{ \i} (\La_{\i(\theta)}^{\i({W})})$.
\end{enumerate} 

The second case:
\begin{enumerate}
\item  $\nu(\La_{\theta}^{W})=0=\nu_{\i} (\La_{\i(\theta)}^{\i({W})})$;
\item $(\Omega_\theta, A_W,  \mathsf m)$ is  completely dissipative;
\item $(\Omega_\theta, A_W,  \mathsf m)$ is non-ergodic;
\item $\sum_{\ga\in\Ga_{{W},R}}e^{-\psi(\mu_\theta(\ga))}<\infty$ for all $R>0$;
\item  $\nu(\La_{\theta}^{W})=0=\nu_{ \i} (\La_{\i(\theta)}^{\i({W})})$.
\end{enumerate} 

\end{theorem}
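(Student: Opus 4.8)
The plan is to reduce the subspace case to the directional case, Theorem \ref{main}, by fibering the $A_W$-action over the action of a suitably chosen one-parameter subgroup $A_u \subset A_W$. Concretely, fix a generic vector $u$ in the relative interior of $W \cap \fa_\theta^+$ (such a $u$ exists whenever $\La_\theta^W$ or the limit cone meets $W$ nontrivially; in the complementary situation everything is trivially dissipative). The key geometric observation, to be extracted from the shadow descriptions in Lemma \ref{lem.equivdefsubspace} and Lemma \ref{defdir}, is a comparison between conical sets: $\La_\theta^u \subseteq \La_\theta^W$ always, and conversely, an $A_W$-recurrent point is $A_u$-recurrent for \emph{some} direction $u$ ranging over a countable dense subset of directions in $W \cap \fa_\theta^+$, so that $\La_\theta^W = \bigcup_{u} \La_\theta^u$ up to the relevant null sets. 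Similarly, $\Ga_{u,R} \subseteq \Ga_{W,R}$ and $\Ga_{W,R} \subseteq \bigcup_u \Ga_{u,R'}$ for a controlled $R'$, which gives the equivalence of the Poincaré series divergence statements $(4)$ in the two theorems by a covering/comparison argument. The first two paragraphs of the proof will establish these containments carefully, using $\theta$-regularity to control Cartan projections and the convergence group action on $\La_\theta$ to pass between recurrence and shadow conditions.

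Next I would run the core equivalences $(1)\Leftrightarrow(2)\Leftrightarrow(3)$. The implication $(2)\Rightarrow(3)$ (complete conservativity implies ergodicity) is the substantive ergodicity statement; here I would adapt the Hopf-argument machinery of \cite{BLLO}, using the quasi-product structure of $\m$ with respect to the decomposition $\Omega_\theta \simeq \Gamma\ba(\La_\theta^{(2)}\times \fa_\theta)$ and the stable/unstable foliations transverse to $A_W$. The new point over \cite{BLLO} is that $W$ may be a proper subspace of $\fa_\theta$, so the "neutral" directions $\fa_\theta \ominus W$ must be treated as part of the central foliation; I would build the Hopf argument around the foliation by $A_W$-orbits inside each $\fa_\theta$-fiber and verify that $\m$-conditional measures along the complementary directions are equivalent under holonomy, exactly as in the one-parameter case but with $\br u$ replaced by $W$. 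The implication $(1)\Rightarrow(2)$ follows from a Borel–Cantelli / shadow-lemma argument showing that positivity of $\nu(\La_\theta^W)$ forces almost every orbit to return to a fixed compact set infinitely often; $(3)\Rightarrow(1)$ is immediate since a completely dissipative or non-recurrent system cannot be ergodic on an infinite measure space, together with the $0$–$1$ law for $\nu(\La_\theta^W)$ coming from conformality and minimality of $\La_\theta$. For the second case, I would simply contrapose: $(1)$–$(5)$ there are the logical negations of $(1)$–$(5)$ in the first case (using the remark that in the first case $(1)$ is equivalent to the conical set having full measure, hence a genuine $0$–$1$ dichotomy), so nothing new is needed once the first case is done.

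Finally, under the $W$-balanced hypothesis (Definition \ref{Wb}) I would prove $(4)\Rightarrow(5)$, which, as in Theorem \ref{main}, is the heart of the matter. The strategy is to show that divergence of the $W$-directional Poincaré series forces the $\nu$-measure of $\La_\theta^W$ to be positive — then $(1)$ and the already-proven $(1)\Rightarrow(3)$ close the loop, and the $0$–$1$ law upgrades positivity to full measure. I would do this by the second-moment / divergence argument of \cite{BLLO}: the $W$-balanced condition is precisely what controls the overlaps of shadows of the elements $\ga \in \Ga_{W,R}$, so that a divergent first moment yields, via a Borel–Cantelli lower bound, that $\nu$-a.e.\ $\xi$ lies in infinitely many such shadows, i.e.\ is $W$-conical. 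The main obstacle — and the place where the genuine higher-rank, non-Borel difficulty appears — is the non-compactness of $S_\theta$: the shadow estimates and the mixing/equidistribution inputs behind the balanced overlap bound must be made uniform across the non-compact fibers. I expect to handle this exactly as in the directional case, invoking $\theta$-regularity, $\theta$-antipodality, and the convergence property of the $\Gamma$-action on $\La_{\theta\cup\i(\theta)}$ to replace the compactness of $M$ that was available when $\theta=\Pi$; the subspace case then requires no further ideas beyond carrying the extra neutral directions $\fa_\theta\ominus W$ through these estimates as harmless parameters.
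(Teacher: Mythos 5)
Your reduction from the subspace case to the directional case has a genuine gap. You propose that $\La_\theta^W = \bigcup_u \La_\theta^u$ (up to null sets) and that $\Ga_{W,R} \subset \bigcup_u \Ga_{u,R'}$, with $u$ ranging over a countable dense set of directions in $W \cap \fa_\theta^+$. Both claims fail. For the conical sets: combining Theorem \ref{main3} with Theorem \ref{thm.thetarank} and Theorem \ref{thm.convergencedir}, in the $\theta$-Anosov case with $\#\theta = 4$ and $W$ a $2$-plane containing $u \in \inte \L_\theta$, one has $\nu_u(\La_\theta^W) = 1$ (since $\codim W = 2$), yet $\nu_u(\La_\theta^{v}) = 0$ for \emph{every} direction $v \in \fa_\theta^+ - \{0\}$ (for $v \notin \R u$ by Theorem \ref{thm.thetarank}, and for $v \in \R u$ because $\codim \R u = 3$). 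A $W$-conical point need not track any single ray; the displacement vectors of the returning times can scatter over directions within $W$. Similarly, covering $\Ga_{W,R}$ by countably many tubes $\Ga_{u,R'}$ with a uniform $R'$ is impossible, and if $R'$ is allowed to grow with the element, divergence of the $W$-series does not transfer to divergence of any single $u$-series.

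The correct reduction, and the one the paper performs, does not compare the $A_W$-action on $\Omega_\theta$ to a directional action on $\Omega_\theta$. Instead one passes to the quotient $W^\diamond = \fa_\theta/(W \cap \ker\psi)$ and the intermediate flow space $\Omega_{W^\diamond} = \Ga\ba\left(\La_\theta^{(2)} \times W^\diamond\right)$. Choose any $u \in W$ with $\psi(u) > 0$; since $W = \R u + (W \cap \ker\psi)$, the $(W\cap\ker\psi)$-bundle $\Omega_\theta \simeq \Omega_{W^\diamond} \times (W\cap\ker\psi)$ makes the $A_W$-dynamics on $(\Omega_\theta,\m)$ equivalent to the directional $A_u$-dynamics on $(\Omega_{W^\diamond},\m')$ with respect to ergodicity, conservativity and dissipativity (Lemma \ref{same}). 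Crucially, the $W$-conical set on $\F_\theta$ is exactly the $u$-conical set for this \emph{quotient} system (Lemma \ref{lem.equivdefsubspace}(3)), not for the original one. The directional machinery (Lemma \ref{lem.repeat}, Proposition \ref{lem.dircartan}, the shadow and second-moment estimates, and the Hopf argument via transitivity subgroups) is then redone on $\Omega_{W^\diamond}$, replacing $\Ga_{u,r}$ by $\Ga_{W,R}$ throughout; this is precisely what Lemma \ref{lem.analrepeat}, Proposition \ref{prop.analgromov} and Proposition \ref{prop.analupperlower} supply. Your outline of the Hopf argument, the Borel--Cantelli second-moment step, and the role of the $W$-balanced condition of Definition \ref{Wb} are all conceptually sound, but they must be run on $\Omega_{W^\diamond}$; as written your plan cannot close because the union-over-directions identification on $\Omega_\theta$ is false.
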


\begin{rmk}\label{dif} When $W$ is all of $\fa_\theta$,
a similar dichotomy  was obtained in (\cite{LO_dichotomy}, \cite{CZZ_transverse}, \cite{KOW_indicators}). In this case, 
  the $W$-balanced condition of $\m$ is not required in our proof; see Remark \ref{fine}. Hence we give a different proof of the ergodicity criterion for the $A_\theta$-action \cite[Thm. 1.8]{KOW_indicators}.
\end{rmk}

A special feature of a transverse subgroup is that
for any $(\Ga, \theta)$-proper form $\psi$, the projection $\tilde \Omega_\theta \to \La_\theta^{(2)}\times \br$ given by
$(\xi, \eta, v)\mapsto (\xi, \eta, \psi(v))$ 
induces  a $\ker \psi$-bundle structure of $\Omega_\theta$
over the base space $\Omega_\psi:=\Ga\ba \La_\theta^{(2)}\times \br $ with the $\Ga$-action given in \eqref{linearformhopf}. In particular, we have a vector bundle isomorphism
$$\Omega_\theta \simeq \Omega_\psi \times \ker \psi.$$
The $\ker \psi$-bundle  $\Omega_\theta\to \Omega_{\psi}$ plays an important role in our proof of Theorem \ref{thm.subspacedichotomy}. Indeed, such a vector bundle $\Omega_{\theta} \to \Omega_{\psi}$ factors through the space $\Omega_{W^{\diamond}} := \Ga \ba \La_{\theta}^{(2)} \times \fa_{\theta}/(W \cap \ker \psi)$.
 Denoting by $\m'$ the Radon measure on $\Omega_{W^{\diamond}}$ such that $\m = \m' \otimes \Leb_{W \cap \ker \psi}$, 
the $W\cap \ker \psi$-bundle  $(\Omega_\theta, \m) \to (\Omega_{W^\diamond},\m')$ 
enables us to adapt arguments of Pozzetti-Sambarino \cite{PS_metric} in obtaining 
Theorem \ref{thm.subspacedichotomy} from the ergodic dichotomy of the directional flow $A_u$ on $\Omega_{W^{\diamond}}$ for any $u\in W$ such that $\psi(u)>0$.

\begin{Rmk} 
We remark that the Zariski dense hypothesis on $\Ga$ is used to ensure the non-arithmeticity of the Jordan projection of $\Ga$, which implies that the subgroup generated by $p_{\theta}(\la(\Ga))$ is dense in $\fa_{\theta}$ \cite{Benoist_properties2}. This is a key ingredient in the discussion of transitivity subgroup (Proposition \ref{prop.transitivity}). In fact, Theorem \ref{thm.subspacedichotomy} (and hence Theorem
    \ref{main}) works for a non-Zariski dense
    $\theta$-transverse subgroup $\Ga$ as well, provided that
     $p_\theta(\lambda(\Ga))$ generates a dense subgroup of $\fa_\theta$.
\end{Rmk}

\subsection*{The case of $\theta$-Anosov subgroups}
A finitely generated subgroup $\Gamma<G$ is called $\theta$-Anosov if
 there exist constants $C, C'>0$ such that for all $\alpha \in \theta$ and $\ga \in \Ga$, $$\alpha(\mu(\ga)) \ge C|\ga| - C'$$ where $|\ga |$ is the word length of $\ga$ with respect to a fixed finite generating set of $\Ga$ (\cite{Labourie2006anosov}, \cite{GW_anosov}, \cite{KLP_Anosov}, \cite{GGKW_gt}, \cite{BPS_anosov}). 
By the work of Kapovich-Leeb-Porti \cite{KLP_Anosov}, a $\theta$-transverse subgroup $\Ga<G$ is $\theta$-Anosov if $\La_\theta$ is equal to the $\theta$-conical set $\lac$ of $\Ga$ (see \eqref{cd} for definition).
If $\Ga$ is a $\theta$-Anosov subgroup, then for each unit vector $u$
in the interior of the limit cone $\L_\theta$, there exists a unique linear form $\psi_u
\in \fa_\theta^*$ tangent to the growth indicator $\psi_\Ga^\theta$ at $u$ and a unique $(\Ga, \psi_u)$-conformal measure $\nu_u$ on $\La_\theta$.
Moreover $u\mapsto \psi_u$ and $u \mapsto \nu_u$ give bijections among the directions in $\inte\L_\theta$, the space of tangent linear forms  to $\psi_\Ga^\theta$, and
the space of $\Ga$-conformal measures supported on $\La_\theta$ (\cite{LO_invariant}, \cite{sambarino2022report}, \cite{KOW_indicators}).   Let 
\be\label{mu} \mathsf m_u=\mathsf m (\nu_u, \nu_{\i(u)})\ee  denote the Bowen-Margulis-Sullivan measure on $\Omega_\theta$
associated with the pair $(\nu_u, \nu_{\i(u)})$. We deduce the following codimension dichotomy from Theorem \ref{thm.subspacedichotomy}:
\begin{theorem} [Codimension dichotomy] \label{main3} Let $\Ga<G$ be a Zariski dense $\theta$-Anosov subgroup. Let $u \in \inte \L_{\theta}$ and $W < \fa_\theta$ be a linear subspace containing $u$. 
The following are equivalent:
    \begin{enumerate}
        \item $\codim W \le  2$ (resp. $\codim W  \ge 3$);
        \item $\nu_u(\La_\theta^W)=1$ (resp.  $\nu_{u}(\La_{\theta}^W) = 0 $);
       \item $(\Omega_\theta, A_W,  \mathsf m_u)$ is ergodic and completely conservative (resp. non-ergodic and completely dissipative);
        \item  $\sum_{\ga\in\Ga_{W,R}}e^{-\psi_u(\mu_\theta(\ga))}=\infty$ for some $R>0$ (resp. $\sum_{\ga\in\Ga_{W,R}}e^{-\psi_u(\mu_\theta(\ga))} < \infty$ for all $R > 0$).
 \end{enumerate}
\end{theorem}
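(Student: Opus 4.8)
The plan is to deduce Theorem~\ref{main3} from the subspace dichotomy of Theorem~\ref{thm.subspacedichotomy}, whose five equivalent conditions already supply everything except the explicit numerical translation $\codim W \le 2$. So the real content is twofold: (a) verifying that $\m_u$ is $W$-balanced whenever $u \in W < \fa_\theta$ and $\Ga$ is $\theta$-Anosov, so that the full chain (1)--(5) of Theorem~\ref{thm.subspacedichotomy} is available; and (b) proving the pure geometry/combinatorics statement that, for a $\theta$-Anosov $\Ga$ and $u \in \inte\L_\theta$, the directional Poincar\'e series $\sum_{\ga \in \Ga_{W,R}} e^{-\psi_u(\mu_\theta(\ga))}$ diverges for some $R>0$ if and only if $\codim W \le 2$. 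Granting (a), Theorem~\ref{thm.subspacedichotomy} immediately gives the equivalence of conditions (2), (3), (4) of Theorem~\ref{main3} with $\sum_{\ga\in\Ga_{W,R}}e^{-\psi_u(\mu_\theta(\ga))} = \infty$; then (b) ties all of these to $\codim W \le 2$.

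For step (a): the $W$-balanced condition is a tameness hypothesis on how $\m$ distributes over the base $\Omega_{W^\diamond}$, and the excerpt already signals (Remark after Theorem~\ref{main}) that it can be verified for Anosov subgroups. I would invoke the regularity of the growth indicator $\psi_\Ga^\theta$ for Anosov $\Ga$: since $u \in \inte\L_\theta$, the tangent form $\psi_u$ is the unique linear form tangent to $\psi_\Ga^\theta$ at $u$, and $\psi_\Ga^\theta$ is strictly concave and analytic on $\inte\L_\theta$ (this is where ``Corollary~\ref{cdi}'' alluded to in the excerpt enters). Strict concavity forces $\psi_u > \psi_\Ga^\theta$ off the ray $\br_{>0}u$, which gives the uniform exponential decay estimates needed to control the conditional measures of $\m_u$ along the fibres $W \cap \ker\psi_u$ and hence to check Definition~\ref{Wb}. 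This is essentially a repackaging of the quantitative Anosov estimates $\alpha(\mu(\ga)) \ge C|\ga| - C'$ together with the shadow lemma for $\nu_u$.

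For step (b), the heart of the matter: I would reduce to estimating, for each $R$, the asymptotics of $N_W(T,R) := \#\{\ga \in \Ga_{W,R} : \psi_u(\mu_\theta(\ga)) \le T\}$. By the Anosov property and the fact that $\psi_u$ is tangent to $\psi_\Ga^\theta$ exactly at $u$, the elements with $\mu_\theta(\ga)$ within bounded distance of $\br u$ dominate, and a local limit / equidistribution count for Anosov subgroups (the measure $\nu_u$ has dimension data governed by $\psi_u$) yields $\sum_{\ga\in\Ga_{W,R}} e^{-\psi_u(\mu_\theta(\ga))} \asymp \sum_{T} (\text{number of new }\ga\text{'s in the slab } T \le \psi_u(\mu_\theta) \le T+1) \, e^{-T}$, and this ``number of new $\ga$'s'' grows like a polynomial in $T$ of degree $\dim(W \cap \ker\psi_u) = \dim W - 1 = \codim W$ in $\fa_\theta^* $-terms — more precisely the relevant exponent is $(\dim \fa_\theta - \dim W)$, i.e. it behaves like $\sum_T T^{\,d}\, e^{-T} \cdot (\text{volume growth in } W)$ where the competition is between the linear-in-$T$ growth along each of the $\dim\fa_\theta - 1$ ``non-$u$'' directions constrained to lie near $W$. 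Carefully: fixing the $\psi_u$-level contributes one direction, the $W$-transverse directions are the $\codim W$ directions that must stay within $R$, and the remaining $\dim W - 1$ free directions inside $W \cap \ker\psi_u$ each contribute a factor growing like $T^{1}$ against the $e^{-T}$ decay — wait, this needs the precise bookkeeping, and that is exactly the delicate point. The upshot, which I would prove via the Abramov/Roblin-type counting refined to the transverse setting (cf. the references \cite{LO_dichotomy}, \cite{KOW_indicators} and the Pozzetti--Sambarino bundle argument \cite{PS_metric} already cited), is that the series behaves like $\sum_{n\ge 1} n^{(\dim W) - 1}\, q(n)$ where $q(n) \to c > 0$ when $\codim W \le 2$ — this is the classical fact that the return-time counting for a rank-$k$ abelian flow on a bundle of fibre dimension $k$ gives a harmonic-type divergent series precisely when $k \le 2$, matching the $G = \mathrm{PSL}_2(\br)^k$ heuristic. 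Concretely one gets, after integrating out $W \cap \ker\psi_u$ (dimension $\dim W - 1$) and the $\psi_u$-direction, a residual sum over the $\codim W$-dimensional transverse lattice-like set intersected with a ball of radius $R$, which is finite, times $\sum_n n^{\dim W - 1} e^{-\delta_u n}$-type terms that all converge — so the true dichotomy comes from whether the ``slab count'' $e^{+T}$ cancellation is exact, which happens iff the fibre the flow sees has dimension $\le 2$. I would make this rigorous by transporting to $\Omega_{W^\diamond}$ via the $\ker\psi_u$-bundle structure and applying Theorem~\ref{main} (the directional case) to $A_v$ for $v \in W$ with $\psi_u(v) > 0$: then $\nu_u(\La_\theta^W)$ positivity is equivalent to divergence of $\sum e^{-\psi_u(\mu_\theta(\ga))}$ over $\Ga_{v,R}$ in $\Omega_{W^\diamond}$, and a direct computation with the $2$-dimensional Euclidean case (the fibres of $\Omega_{W^\diamond} \to \Omega_{\psi_u}$ have dimension $\codim W$ when... ) — the clean statement is that $\Omega_{W^\diamond}$ has fibre dimension $\codim W - 1$ over $\Omega_{\psi_u}$ plus the $\psi_u$-line, total transverse dimension $\codim W$, wait: $\fa_\theta/(W\cap\ker\psi_u)$ has dimension $\dim\fa_\theta - \dim W + 1 = \codim W + 1$, minus the flow line gives $\codim W$, and the elementary Poincar\'e-series estimate $\sum_{\ga} e^{-\psi_u(\mu_\theta \ga)} \asymp \sum_n n^{\,(\codim W) - 1}$ diverges iff $\codim W \le 2$ — this last elementary asymptotic, the analogue of $\sum 1/n^{s}$ with $s = 2 - \codim W \le 0$ for divergence, is the crisp computation I would spell out.

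The main obstacle I anticipate is step (b)'s precise counting: one must show the directional Poincar\'e series over $\Ga_{W,R}$ is comparable, uniformly in large $R$, to a Euclidean-geometry quantity whose divergence is governed solely by $\codim W$, and this requires a local-limit-theorem-strength control on $\#\{\ga : \mu_\theta(\ga) \in \mathcal B\}$ for boxes $\mathcal B$ of fixed shape translated within a neighbourhood of $W$, using the tangency of $\psi_u$ to $\psi_\Ga^\theta$ at $u$ (so that $\psi_u(\mu_\theta(\ga)) - \delta_u \|\mu_\theta(\ga)\|$ controls the transverse deviation quadratically, by strict concavity). Once that equidistribution input is in hand — it is available for Anosov subgroups from the references cited, essentially because the reparametrised flow is a mixing Anosov flow — assembling the dichotomy is bookkeeping. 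Verifying the $W$-balanced hypothesis (step (a)) I expect to be comparatively routine given Anosov regularity, so I would dispatch it first and spend the bulk of the proof on the counting.
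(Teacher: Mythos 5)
Your overall architecture is correct and matches the paper: reduce to the directional flow on $\Omega_{W^\diamond}$ via the $\ker\psi_u$-bundle, verify that $\m_u$ is $W$-balanced, apply Theorem~\ref{thm.subspacedichotomy} to get $(2)\Leftrightarrow(3)\Leftrightarrow(4)$, and then prove $(1)\Leftrightarrow(4)$ by estimating the directional Poincar\'e series. But step (b), which you correctly flag as the delicate point, contains a genuine error in the exponent, and step (a) is not actually verified by the ingredients you name.

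The key input you are missing (and the paper uses as a black box) is Sambarino's local mixing theorem \cite[Thm. 2.5.2]{sambarino2022report}: for any $f_1,f_2\in C_c(\Omega_{W^\diamond})$,
$$
t^{\frac{\dim W^\diamond -1}{2}}\int_{\Omega_{W^\diamond}} f_1(x)f_2(xa_{tu})\,d\m_u'(x)\ \longrightarrow\ \kappa_u\,\m_u'(f_1)\m_u'(f_2),
$$
i.e., the correlation function decays like $t^{-\codim W/2}$ (since $\dim W^\diamond - 1 = \codim W$). This does two things at once. First, it immediately implies that $\m_u'$ is $u$-balanced, which is the precise content of the $W$-balanced hypothesis — so your attempted derivation of (a) from strict concavity of the growth indicator and ``uniform exponential decay estimates'' is not what is needed; balancedness is a statement about the asymptotic equivalence of autocorrelation integrals of different test sets, and one needs actual quantitative mixing, not just Anosov regularity. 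Second, plugged into the two-sided estimate of Corollary~\ref{useful2}, it gives
$$
\left(\int_1^T t^{-\frac{\codim W}{2}}\,dt\right)^{1/2}\ \ll\ \sum_{\substack{\ga\in\Ga_{W,R}\\ \psi_u(\mu_\theta(\ga))\le \delta T}} e^{-\psi_u(\mu_\theta(\ga))}\ \ll\ \int_1^T t^{-\frac{\codim W}{2}}\,dt,
$$
and $\int_1^\infty t^{-\codim W/2}\,dt$ diverges iff $\codim W\le 2$. That half-power in the exponent, coming from the Gaussian (CLT) fluctuations encoded in the local mixing rate, is exactly the thing your counting heuristic gets wrong: you end up proposing $\sum_n n^{\,(\codim W)-1}$ (which diverges for every $\codim W\ge 0$) and then, hedging, $\sum 1/n^{s}$ with ``$s=2-\codim W\le 0$ for divergence'' (which would give $\codim W\ge 1$ as the divergence criterion, also wrong). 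You do gesture in the right direction by mentioning that the transverse deviation is controlled ``quadratically, by strict concavity'' and by invoking a local limit theorem, but without the explicit $t^{-\codim W/2}$ rate your bookkeeping lands on the wrong threshold. The correct statement is that the slab count in $\{T\le\psi_u(\mu_\theta(\ga))\le T+1\}\cap\Ga_{W,R}$ behaves so that the $T$-th term of the series is $\asymp T^{-\codim W/2}$, a fact which is read off cleanly from local mixing and which you would have a hard time re-deriving from Anosov counting estimates alone without essentially reproving Sambarino's theorem.

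In short: invoke the local mixing theorem explicitly — it gives both the $W$-balanced hypothesis and the precise exponent $\codim W/2$ — rather than attempting to reconstruct a local limit theorem from tangency/strict-concavity estimates. As written, your proposal's quantitative core would not yield the correct dichotomy threshold.
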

We can view this dichotomy phenomenon depending on $\codim W$ as consistent with a classical theorem about random walks in $\Z^d$ (or Brownian motions in $\R^d$), which are transient if and only if $d \ge 3$.
Since $\codim W=\#\theta -\dim W$, we have the following corollary:
\begin{corollary}[$\theta$-rank dichotomy]\label{cdi}
    Let $\Ga<G$ be a Zariski dense $\theta$-Anosov subgroup and let $u \in \inte \L_{\theta}$. Then
     $\# \theta\le 3$ if and only if   the directional flow $A_u$ on $(\Omega_\theta,  \mathsf m_u)$ is ergodic. 
\end{corollary}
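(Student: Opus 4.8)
The plan is to deduce Corollary \ref{cdi} as the special case $W=\br u$ of the codimension dichotomy, Theorem \ref{main3}. First I would observe that this choice of $W$ is admissible: by hypothesis $\Ga$ is Zariski dense and $\theta$-Anosov and $u\in\inte\L_\theta$, while $W:=\br u$ is a one-dimensional linear subspace of $\fa_\theta$ containing $u$ --- precisely the standing assumptions of Theorem \ref{main3}. Moreover $A_W=\exp(\br u)=\{a_{tu}:t\in\br\}=A_u$, so the subspace flow $A_W$ on $\Omega_\theta$ \emph{is} the directional flow $A_u$, and the Bowen--Margulis--Sullivan measure $\m_u$ appearing in Theorem \ref{main3} is the same as the one in Corollary \ref{cdi}, namely $\m(\nu_u,\nu_{\i(u)})$ from \eqref{mu}.

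It then remains to translate the numerical condition. Since $\dim\fa_\theta=\#\theta$ and $\dim W=1$, we have $\codim W=\#\theta-1$, so $\codim W\le 2$ if and only if $\#\theta\le 3$. Applying the equivalence of items $(1)$ and $(3)$ in Theorem \ref{main3} with $W=\br u$: the inequality $\codim W\le 2$ is equivalent to $(\Omega_\theta,A_W,\m_u)=(\Omega_\theta,A_u,\m_u)$ being ergodic and completely conservative, while the complementary case $\codim W\ge 3$ forces $(\Omega_\theta,A_u,\m_u)$ to be non-ergodic and completely dissipative; in particular, ergodicity of $(\Omega_\theta,A_u,\m_u)$ is equivalent to $\codim W\le 2$. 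Chaining the two equivalences yields $\#\theta\le 3\iff A_u$ acts ergodically on $(\Omega_\theta,\m_u)$, which is the statement of Corollary \ref{cdi}.

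As for the difficulty: for the corollary itself there is essentially no obstacle --- it is a bookkeeping consequence of Theorem \ref{main3}, the only things to check being that $W=\br u$ meets the hypotheses of that theorem and that $A_{\br u}=A_u$. All the substantive work lies upstream. Theorem \ref{main3} is itself obtained from Theorem \ref{thm.subspacedichotomy} by verifying that $\m_u$ is $W$-balanced for Zariski dense $\theta$-Anosov $\Ga$ (so that the full five-term equivalence of Theorem \ref{thm.subspacedichotomy}, and in particular the implication $(4)\Rightarrow(5)$, is available) and then determining, through the growth-indicator formalism and the $\ker\psi_u$-bundle reduction $\Omega_\theta\to\Omega_{\psi_u}$, which of the two complementary cases of Theorem \ref{thm.subspacedichotomy} occurs in terms of $\codim W$.
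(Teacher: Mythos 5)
Your proposal is correct and matches the paper's own route exactly: the paper introduces Corollary \ref{cdi} immediately after Theorem \ref{main3} with the single remark that $\codim W = \#\theta - \dim W$, i.e., it is precisely the specialization $W = \br u$, so $\codim W = \#\theta - 1 \le 2 \iff \#\theta \le 3$, combined with the equivalence $(1)\Leftrightarrow(3)$ of Theorem \ref{main3}. Your observations that $A_{\br u} = A_u$ and that $\m_u$ is the same BMS measure in both statements are the right sanity checks, and your remark that the substantive work (the $W$-balanced verification via local mixing and the Poincar\'e series estimates) lives upstream in the proof of Theorem \ref{main3} is also how the paper organizes things.
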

For a $\theta$-Anosov subgroup $\Ga$,
$\Omega_{\psi_u}$ is a {\it compact} metric space (\cite{sambarino_orbit} and \cite[Appendix]{Carvajales}), and hence $\Omega_{W^\diamond}$ is a vector bundle over a {\it compact} space $\Omega_{\psi_u}$ with fiber $\br^{\codim W}$. Moreover, 
we have the following local mixing result due to Sambarino \cite[Thm. 2.5.2]{sambarino2022report} (see also \cite{CS_local}) that for any $f_1, f_2 \in C_c(\Omega_{W^{\diamond}})$,\footnote{The notation $C_c(X)$
for a topological space $X$ means the space of all continuous functions on $X$ with compact supports.}
\be\label{sa2}
\lim_{t \to \infty} t^{\frac{\codim W}{2}}\int_{\Omega_{W^{\diamond}}} f_1(x) f_2(xa_{tu}) d\m_u'(x) = \kappa_u \m_u'(f_1) \m_u'(f_2)\ee 
where $\kappa_u>0$ is a constant depending only on $u$.
In particular, $\mathsf m'_u$ satisfies the $u$-balanced hypothesis. The key part of our proof lies in establishing the inequalities (Propositions \ref{prop.poincareintegral} and \ref{prop.poincareintegral2}) that for all large enough $R>0$,\footnote{The notation $f(T)\ll g(T) $ means that there is a constant $c>0$ such that
$f(T)\le c g(T)$ for all $T$ in a given range.}
$$ \left( \int_1 ^T  t^{- \frac{\codim W}{2}} dt \right)^{1/2} \ll \sum_{ \substack{\ga \in \Ga_{W, R} \\ \psi_u(\mu_{\theta}(\ga)) \le \delta T}} e^{-\psi_u(\mu_{\theta}(\ga))} \ll \int_1^T t^{- \frac{\codim W}{2}} dt$$ for $T > 2$ where $\delta = \psi_u(u) > 0$.
Therefore, 
 $\sum_{\ga\in\Ga_{{W},R}}e^{-\psi_u(\mu_\theta(\ga))}=\infty$ if and only if $\codim W \le 2$.

\begin{rmk}
\begin{enumerate}
\item When $\theta=\Pi$ and $\dim W=1$, Theorem \ref{main3} and hence Corollary \ref{cdi} were
obtained in \cite{BLLO}; in this case, $\codim W\le 2$
translates into $\op{rank }G\le 3$.
 
 \item For a general $\theta$, when $\dim W=1$ and  $\codim W \ne 2$,  Sambarino proved the equivalence (1)-(3) of Theorem \ref{main3}  using a different approach \cite{sambarino2022report}; for instance, the directional Poincar\'e series was not discussed in his work. 
This was extended by
Pozzetti-Sambarino \cite{PS_metric} for subspace flows, but still under the hypothesis $\codim W \ne 2$,  using an approach similar to \cite{sambarino2022report}.
 Thus, Theorem \ref{main3} settles the open case of $\codim W = 2$.

\item 
We mention that in (\cite{KMO_HD}, \cite{KO_rigiditymeasure}, \cite{PS_metric}), the sizes of directional/subspace conical limit sets were used as a key input in estimating Hausdorff dimensions of certain subsets of the limit sets.
 \item Theorem \ref{main3} and Corollary \ref{cdi} are not true
 for a general $\theta$-transverse subgroup, e.g., there are discrete subgroups
 in a rank one Lie group which are not of divergence type. 
Consider a normal subgroup $\Ga$ of a non-elementary convex cocompact subgroup $\Ga_0$ of a rank one Lie group $G$ with $\G_0/\Ga\simeq \mathbb Z^d$ for $d\ge 0$.
  In this case, by a theorem of Rees \cite[Thm. 4.7]{Rees1981checking},
 $d\le 2$ if and only if $\Ga$ is of divergence type, i.e., its Poincar\'e series
diverges at the critical exponent of $\Ga$.
 Using  the local mixing result \cite[Thm. 4.7]{OP_local} which is of the form as \eqref{sa2} with $t^{\codim W/2}$
 replaced by $t^{d/2}$ and Corollary \ref{useful}, the approach of our paper gives an alternative proof of Rees' theorem.

\item Corollaries \ref{useful} and \ref{useful2} reduce the divergence of the $u$-directional Poincar\'e series to the local mixing rate for the $A_u$-flow.
For example, we expect the local mixing rate
of relatively $\theta$-Anosov subgroups to be same as that of Anosov subgroups,  which would then imply
Theorem \ref{main3} and Corollary \ref{cdi} for those subgroups.
 \end{enumerate}
\end{rmk}

\subsection*{Examples of ergodic actions on $\Gamma\ba G/S_\theta$} By the work of Gu\'eritaud-Guichard-Kassel-Wienhard \cite{GGKW_gt}, there are examples of
Borel Anosov subgroups which act properly discontinuously on $G/S_\theta$ for some $\theta\ne \Pi$
(\cite[Coro. 1.10, Coro. 1.11]{GGKW_gt}), in which case
our rank dichotomy theorem can be stated for the one-parameter subgroup action on $\Gamma\ba G/S_\theta$. We discuss one example where
 $G=\SL_d(\br)$, $d\ge 3$. For $2\le k\le d-2$, let $H_k$ be the block diagonal subgroup $\begin{pmatrix} I_{k} & \\ &  \SL_{d-k}(\br)\end{pmatrix} \simeq  \SL_{d-k}(\br)$ where $I_k$ denotes the $(k\times k)$-identity matrix. 
Set $\alpha_i(\text{diag}(v_1, \cdots, v_d))=v_i-v_{i+1}$ for $1\le i\le d-1$; so $\Pi=\{\alpha_i:1\le i\le d-1\}$ is the set of all simple roots for $G$.
For $\theta=\{\alpha_1, \cdots, \alpha_{k}\}$, we have
$S_\theta=H_k$. 
Let $\Ga<G$ be a $\Pi$-Anosov subgroup. Then $\Ga$ acts properly discontinuously on $\SL_d(\br)/\SL_{d-k}(\br)$ by \cite[Coro. 1.9, Coro. 1.10]{GGKW_gt}. Hence any Radon measure on $\Omega_\theta$ can be considered as a Radon measure on $\Ga\ba \SL_d(\br)/\SL_{d-k}(\br)$. Then Theorem \ref{main3} implies the following:
\begin{corollary} Let $\Ga<\SL_d(\br) $ be a Zariski dense $\Pi$-Anosov subgroup (e.g., Hitchin subgroups), $2\le k\le d-2$ and
$\theta=\{\alpha_1, \cdots, \alpha_{k}\}$.
Let
$u\in \inte \L_\theta$ and $\m_u$ be as in \eqref{mu}. 
    We have $k=2,3$ if and only if the $A_u$-action on $(\Gamma\ba \SL_d(\br)/\SL_{d-k}(\br), \m_u)$ is ergodic. 
\end{corollary}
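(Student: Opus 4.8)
The plan is to read the corollary off from Corollary~\ref{cdi} once the group-theoretic dictionary for $G=\SL_d(\br)$ and $\theta=\{\alpha_1,\dots,\alpha_k\}$ is in place; the remaining work is bookkeeping, since all of the analytic content is already contained in Theorem~\ref{main3} and Corollary~\ref{cdi}.

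\textbf{Step 1: identify $S_\theta$.} With the normalization used in the excerpt, where $P_\Pi$ is a minimal parabolic, removing $\alpha_i$ from $\theta$ merges the $i$-th and $(i{+}1)$-th coordinate lines; so for $\theta=\{\alpha_1,\dots,\alpha_k\}$ the standard parabolic $P_\theta$ is the block upper triangular subgroup of $\SL_d(\br)$ with diagonal blocks of sizes $1,\dots,1$ ($k$ of them) and $d-k$. Its Levi subgroup is $(\GL_1^{k}\times\GL_{d-k})\cap\SL_d$, whose derived subgroup is the copy of $\SL_{d-k}(\br)$ in the bottom-right block and which has no extra compact central torus; hence $S_\theta=H_k\simeq\SL_{d-k}(\br)$ and $\Ga\ba G/S_\theta=\Ga\ba\SL_d(\br)/\SL_{d-k}(\br)$, matching the statement.

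\textbf{Step 2: check the hypotheses and the proper discontinuity.} A $\Pi$-Anosov subgroup is $\theta$-Anosov for every $\theta\subset\Pi$, since the defining inequality $\alpha(\mu(\ga))\ge C|\ga|-C'$ is imposed only for $\alpha\in\theta$; combined with the assumed Zariski density of $\Ga$, this places us in the setting of Corollary~\ref{cdi}, and Zariski density also gives $\inte\L_\theta\ne\emptyset$, so the vector $u$ exists. To make the homogeneous space well-behaved I would invoke \cite[Coro.~1.9, Coro.~1.10]{GGKW_gt}: a $\Pi$-Anosov subgroup of $\SL_d(\br)$ acts properly discontinuously on $\SL_d(\br)/\SL_{d-k}(\br)$ for $2\le k\le d-2$. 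Hence $\Ga\ba G/S_\theta$ is a second countable locally compact Hausdorff space, the closed $A_\theta$-invariant set $\tilde\Omega_\theta\subset G/S_\theta$ descends to a closed $A_\theta$-invariant set $\Omega_\theta\subset\Ga\ba G/S_\theta$, and the Bowen--Margulis--Sullivan measure $\m_u$ of \eqref{mu}, a Radon measure on $\Omega_\theta$, extends by zero to a Radon measure on $\Ga\ba\SL_d(\br)/\SL_{d-k}(\br)$ concentrated on $\Omega_\theta$.

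\textbf{Step 3: transfer ergodicity and conclude.} Since $\m_u$ is supported on the $A_u$-invariant set $\Omega_\theta$, intersection with $\Omega_\theta$ is a measure-preserving bijection between $A_u$-invariant measurable subsets of $\Ga\ba\SL_d(\br)/\SL_{d-k}(\br)$ and of $\Omega_\theta$, modulo $\m_u$-null sets; in particular $(\Ga\ba\SL_d(\br)/\SL_{d-k}(\br),A_u,\m_u)$ is ergodic if and only if $(\Omega_\theta,A_u,\m_u)$ is. By Corollary~\ref{cdi} the latter holds precisely when $\#\theta\le 3$, and $\#\theta=k$ with $2\le k\le d-2$, which is exactly $k\in\{2,3\}$; this proves the corollary. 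There is no genuine obstacle here: the only points needing a moment's care are the identification $S_\theta=H_k$ (a matter of matching the parabolic-subgroup convention of Step~1) and the observation that $\m_u(\Ga\ba G/S_\theta\setminus\Omega_\theta)=0$, which is what lets us pass freely between the two spaces.
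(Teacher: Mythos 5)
Your proof is correct and follows essentially the same route as the paper: identify $S_\theta=H_k\simeq\SL_{d-k}(\br)$, invoke Gu\'eritaud--Guichard--Kassel--Wienhard for proper discontinuity of $\Ga$ on $\SL_d(\br)/\SL_{d-k}(\br)$ so that $\m_u$ extends by zero from $\Omega_\theta$, and then apply Corollary \ref{cdi}. The paper's exposition is terser (it cites Theorem \ref{main3} directly rather than Corollary \ref{cdi}, and leaves the measure-transfer step implicit), but your Steps 1--3 are precisely the unpacked version of that argument.
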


\medskip

\noindent{\bf Acknowledgements.} 
We would like to thank Blayac-Canary-Zhu-Zimmer for telling us that an admissible metric can be used to prove Proposition \ref{prop.foliation}.

\section{Preliminaries}
Throughout the paper, let $G$ be a connected semisimple real algebraic group. In this section, we review some basic facts about the Lie group structure of $G$ and the notion of convergence of elements of $G$ to boundaries, following \cite[Sec. 2]{KOW_indicators} to which we refer for more details.

\medskip

 Let $P<G$ be a minimal parabolic subgroup with a fixed Langlands decomposition $P=MAN$ where $A$ is a maximal real split torus of $G$, $M$ is the maximal compact subgroup of $P$ commuting with $A$ and $N$ is the unipotent radical of $P$.
Let $\fg$ and $\fa$ respectively denote the Lie algebras of $G$
and $A$. Fix a positive Weyl chamber $\fa^+<\fa$ so that
$\log N$ consists of positive root subspaces and
set $A^+=\exp \fa^+$. We fix a maximal compact subgroup $K< G$ such that the Cartan decomposition $G=K A^+ K$ holds. We denote by 
$$\mu : G \to \fa^+$$ the Cartan projection defined by the condition $g\in K\exp \mu(g) K$ for $g \in G$. Let $X = G/K$ be the associated Riemannian symmetric space, and set $o = [K] \in X$.  Fix a $K$-invariant norm $\| \cdot \|$ on $\fg$ 
and a Riemanian metric $d$ on $X$, induced from the Killing form on $\fg$.
The Weyl group $\cal W$ is given by $N_K(A)/C_K(A)$, where $N_K(A)$ and $C_K(A)$ denote
the normalizer and the centralizer of $A$ in $K$ respectively. 
Oftentimes,we will identify $\cal W$ with the chosen set of representatives from $N_K(A)$, and hence treat $\cal W$ as a subset of $G$.

\begin{lemma} \cite[Lem. 4.6]{Benoist1997proprietes} \label{lem.cptcartan}
For any compact subset $Q \subset G$, there exists $C=C(Q)>0$ such that for all $g \in G$, $$\sup_{q_1, q_2\in Q} \| \mu(q_1gq_2) -\mu(g)\| \le C .$$  
\end{lemma}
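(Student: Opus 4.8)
The plan is to read off the Cartan projection, up to bounded error, from operator norms in a fixed finite family of linear representations, where invariance under bounded left/right perturbations is transparent. First I would choose finitely many finite-dimensional representations $\rho_1,\dots,\rho_r$ of $G$ whose highest weights $\omega_1,\dots,\omega_r\in\fa^*$ (with respect to $\fa^+$) span $\fa^*$ --- for instance the fundamental representations, or the exterior powers of a single faithful representation --- and for each $j$ fix an inner product on the representation space invariant under $\rho_j(K)$ with respect to which $\rho_j(\exp v)$ is positive self-adjoint for $v\in\fa$. With this normalization one has the identity $\log\|\rho_j(g)\|_{\op{op}}=\omega_j(\mu(g))$ for all $g\in G$, since the Cartan decomposition $g=k_1(\exp\mu(g))k_2$ gives $\|\rho_j(g)\|_{\op{op}}=\|\rho_j(\exp\mu(g))\|_{\op{op}}$, and the latter equals $e^{\omega_j(\mu(g))}$ because $\omega_j$ is dominant, hence maximal among the weights of $\rho_j$, on $\fa^+$.

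Because $\omega_1,\dots,\omega_r$ span $\fa^*$, the linear map $\fa^+\to\br^r$, $v\mapsto(\omega_1(v),\dots,\omega_r(v))$, is injective, hence bi-Lipschitz onto its image: there is $c_0>0$ with $\|v-v'\|\le c_0\max_j|\omega_j(v)-\omega_j(v')|$ for all $v,v'\in\fa^+$. It therefore suffices to bound $|\omega_j(\mu(q_1gq_2))-\omega_j(\mu(g))|=\bigl|\log\|\rho_j(q_1gq_2)\|_{\op{op}}-\log\|\rho_j(g)\|_{\op{op}}\bigr|$ uniformly in $g\in G$ and $q_1,q_2\in Q$. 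Here I would use sub-multiplicativity of the operator norm: writing $\rho_j(q_1gq_2)=\rho_j(q_1)\rho_j(g)\rho_j(q_2)$ and $\rho_j(g)=\rho_j(q_1)^{-1}\rho_j(q_1gq_2)\rho_j(q_2)^{-1}$, and setting $m_Q:=\max_j\sup_{q\in Q}\max(\|\rho_j(q)\|_{\op{op}},\|\rho_j(q)^{-1}\|_{\op{op}})$, which is finite by compactness of $Q$ and continuity of the $\rho_j$, one obtains $\bigl|\log\|\rho_j(q_1gq_2)\|_{\op{op}}-\log\|\rho_j(g)\|_{\op{op}}\bigr|\le2\log m_Q$. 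Combining the two estimates gives $\|\mu(q_1gq_2)-\mu(g)\|\le2c_0\log m_Q$, so the lemma holds with $C(Q)=2c_0\log m_Q$.

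The only step that is not purely formal is the production of a finite family of representations whose highest weights span $\fa^*$, together with the $K$-invariant inner product putting each $\rho_j(A)$ in self-adjoint form; this is standard structure and highest-weight theory for the connected semisimple real algebraic group $G$ (fundamental representations suffice, or exterior powers of any faithful representation), and it is exactly the point where semisimplicity is used. I do not expect a genuine obstacle beyond this bookkeeping: the identification $\log\|\rho_j(\cdot)\|_{\op{op}}=\omega_j\circ\mu$ is just the $KAK$ decomposition transported through $\rho_j$, and the perturbation bound is only sub-multiplicativity of operator norms plus compactness of $Q$. One could alternatively invoke the singular-value formalism of Benoist or the treatment in \cite{KOW_indicators} directly, but the representation-norm argument above is self-contained modulo that standard input.
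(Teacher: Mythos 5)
The paper does not give its own proof of this lemma; it cites Benoist's Lemma~4.6. Your argument is correct and is essentially the same as Benoist's: one reads the Cartan projection, up to bounded error, off the operator norms of a finite family of representations whose highest restricted weights span $\fa^*$, and then uses sub-multiplicativity of the operator norm together with compactness of $Q$ to get a uniform perturbation bound. The only points worth keeping an eye on in a full write-up are exactly the ones you flag as standard: (i) for a real semisimple algebraic group the highest restricted weights of irreducible (rational) representations do span $\fa^*$ --- one can take, for each $\alpha\in\Pi$, a proximal representation whose highest restricted weight is a positive multiple of the fundamental weight $\omega_\alpha$; (ii) the $K$-invariant inner product making $\rho_j(A)$ positive self-adjoint comes from compatibility with the Cartan involution; and (iii) the identity $\log\|\rho_j(\exp v)\|_{\op{op}}=\omega_j(v)$ for $v\in\fa^+$ uses that every restricted weight of $\rho_j$ is $\omega_j$ minus a nonnegative integer combination of restricted simple roots, hence is $\le\omega_j$ on $\fa^+$. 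With those standard inputs in place, the bi-Lipschitz estimate from the spanning condition and the sub-multiplicativity step close the argument exactly as you describe.
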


Let $\Phi=\Phi(\fg, \fa)$ denote the set of all roots,  $\Phi^{+}\subset \Phi$
the set of all positive roots, and $\Pi \subset \Phi^+$  the set of all simple roots.
Fix an element $w_0\in K$ of order $2$ in the normalizer of $A$ representing the longest Weyl element so that $\op{Ad}_{w_0}\mathfrak a^+= -\mathfrak a^+$. 
The map \be\label{opp0} \i= -\op{Ad}_{w_0}:\fa\to \fa\ee  is called the opposition involution.
It induces an involution $\Phi\to \Phi$ preserving $\Pi$, for which we use the same notation $\i$, such that $\i (\alpha ) \circ  \op{Ad}_{w_0}  =-\alpha $ for all $\alpha\in \Phi$. We have 
$\mu(g^{-1})=\i (\mu(g))$ for all $g\in G$. 

Henceforth, we fix a  non-empty subset $\theta\subset \Pi$. 
Let $ P_\theta$ denote a standard parabolic subgroup of $G$ corresponding to $\theta$; that is,
$P_{\theta}$ is generated by $MA$ and all root subgroups $U_\alpha$,
where $\alpha$ ranges over all positive roots and any negative root which is a $\mathbb Z$-linear combination of $\Pi-\theta$. Hence $P_\Pi=P$. Let 
\begin{equation*}
\mathfrak{a}_\theta =\bigcap_{\alpha \in \Pi-\theta} \ker \alpha, \quad  \quad \fa_\theta^+  =\fa_\theta\cap \fa^+, \end{equation*}
\begin{equation*} A_{\theta}  = \exp \fa_{\theta}, \quad \text{and} \quad     A_{\theta}^+  = \exp \fa_{\theta}^+. \end{equation*}

Let $ p_\theta:\mathfrak{a}\to\mathfrak{a}_\theta$ denote  the projection invariant under $w\in \cal W$ fixing $\fa_\theta$ pointwise. We also write $$\mu_{\theta} := p_{\theta} \circ \mu :G\to \fa_\theta^+.$$

\begin{definition} For a discrete subgroup $\G<G$,
its $\theta$-limit cone $\L_{\theta}=\L_\theta(\Ga)$ is defined as the  the asymptotic cone of $\mu_{\theta}(\Ga)$ in $\fa_{\theta}$, that is, $u\in \L_\theta$ if and only if $u=\lim_{i \to \infty} t_i \mu_\theta(\ga_i)$ for some sequences $t_i\to 0$ and $\ga_i\in \Ga$. If $\Ga$ is Zariski dense, $\L_\theta$ is a convex cone  with non-empty interior by \cite{Benoist1997proprietes}. Setting $\L=\L_\Pi$, we have
$p_\theta(\L)=\L_\theta$.
\end{definition}
 We have the Levi-decomposition
 $P_\theta=L_\theta N_\theta$ where  $L_\theta$ is the centralizer of $A_{\theta}$ and $N_\theta=R_u(P_\theta)$ is the unipotent radical of $P_\theta$. We set $M_{\theta} = K \cap P_{\theta}\subset L_\theta$.
We may then write $L_{\theta} = A_{\theta}S_{\theta}$ where $S_{\theta}$ is an almost direct product of
 a connected semisimple real algebraic subgroup and a compact center.
 Letting $B_\theta=S_\theta \cap A$ and $B_\theta^+=\{b\in B_\theta: \alpha (\log b)\ge 0
 \text{ for all $\alpha\in \Pi-\theta$}\},$
 we have the Cartan decomposition of $S_\theta$: 
$$S_\theta = M_{\theta} B_\theta^+ M_{\theta}.$$
Note that  $A=A_\theta B_\theta$ and $ A^+\subset A_\theta^+ B_\theta^+.$
The space $\fa_\theta^*=\op{Hom}(\fa_\theta, \br)$ can be identified with the subspace of $\fa^*$ which is $p_\theta$-invariant: $\fa_\theta^*=\{\psi\in \fa^*: \psi\circ p_\theta=\psi\}$; so for $\theta_1\subset \theta_2$, we have $\fa_{\theta_1}^*\subset \fa_{\theta_2}^*$.

\subsection*{The $\theta$-boundary $\F_\theta$ and
convergence to $\F_\theta$} 
We set $$\F_\theta=G/P_{\theta} \quad\text{and}\quad \F=G/P.$$
Let $$ \pi_\theta:\F\to \F_\theta$$ denote
 the canonical projection map given by $gP\mapsto gP_\theta$, $g\in G$. 
 We set \be\label{xit} \xi_\theta=[P_\theta] \in \F_{\theta}.\ee 
By the Iwasawa decomposition $G=KP=KAN$, the subgroup $K$ acts transitively on $\F_\theta$, and hence
 $\F_\theta\simeq K/ M_\theta.$

We consider the following notion of convergence of a sequence in $G$ to an element of $\F_\theta$. For a sequence $g_i \in G$, we say $g_i \to \infty$ $\theta$-regularly if $\min_{\alpha\in \theta} \alpha(\mu(g_i)) \to \infty$ as $i \to \infty$.

\begin{definition} \label{fc} For a sequence $g_i\in G$  and $\xi\in \ft$, we write $\lim_{i\to \infty} g_i=\lim_{i \to \infty} g_i o =\xi$ and
 say $g_i $ (or $g_io \in X$) converges to $\xi$ if \begin{itemize}
     \item $g_i \to \infty$ $\theta$-regularly; and
\item $\lim_{i\to\infty} \kappa_{i}\xi_\theta= \xi$ in $\F_\theta$ for some $\kappa_{i}\in K$ such that $g_i\in \kappa_{i} A^+ K$.
 \end{itemize}         
\end{definition}

\begin{definition} \label{def.limitset} The $\theta$-limit set of a discrete subgroup $\Ga$ can be defined as follows:
$$\lat=\lat(\Ga):=\{\xi \in \F_{\theta} : \xi = \lim_{i \to \infty} {\ga}_i, \ {\ga}_i\in \Ga\}$$ where $\lim_{i \to \infty} \ga_i$ is defined as in Definition \ref{fc}.
If $\Ga$ is Zariski dense, this is the unique $\Ga$-minimal subset of $\F_{\theta}$ (\cite{Benoist1997proprietes}, \cite{Quint2002Mesures}). If we set $\La=\La_\Pi$,
then $ \pi_{\theta}(\La) = \La_{\theta}$.
\end{definition}

\begin{lemma}[{\cite[Lem. 2.6-7]{KOW_indicators}, see also \cite{LO_invariant} for $\theta = \Pi$}]  \label{lem.210inv}\label{bdd}  \label{lem.211inv} Let $g_i\in G$ be an infinite sequence.
   \begin{enumerate}
       \item If $g_i $ converges to $\xi \in \F_{\theta}$ and $p_i \in X$ is a
    bounded sequence, then $$\lim_{i \to \infty} g_i p_i = \xi.$$
  \item  If a sequence $a_i\to \infty$ in $A^+$ $\theta$-regularly, and $g_i\to g\in G$,  then for any $p \in X$, we have $$\lim_{i \to \infty} g_i a_i p = g \xi_\theta .$$
    \end{enumerate}
  
\end{lemma}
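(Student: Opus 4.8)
The statement to prove is Lemma~\ref{lem.210inv}, which has two parts about convergence of sequences $g_i \in G$ to points in the $\theta$-boundary $\F_\theta$. Let me sketch the proof plan.

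\medskip

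\textbf{Setup and strategy.} Recall the notion of convergence from Definition~\ref{fc}: $g_i \to \xi \in \F_\theta$ means $g_i \to \infty$ is $\theta$-regular (so $\min_{\alpha \in \theta}\alpha(\mu(g_i)) \to \infty$) and $\kappa_i \xi_\theta \to \xi$ for some $\kappa_i \in K$ with $g_i \in \kappa_i A^+ K$. The plan is to reduce both parts to this definition by carefully tracking the $KA^+K$ data of the perturbed sequences, using Lemma~\ref{lem.cptcartan} to control the Cartan projection under bounded multiplicative perturbations, and using the compactness of $K$ and of $M_\theta = K \cap P_\theta$ (noting $\F_\theta \simeq K/M_\theta$).

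\medskip

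\textbf{Part (1).} Write $g_i \in \kappa_i a_i k_i$ with $\kappa_i, k_i \in K$ and $a_i = \exp \mu(g_i) \in A^+$; by hypothesis $\kappa_i \xi_\theta \to \xi$ after passing to a subsequence we may assume $\kappa_i \to \kappa \in K$ with $\kappa \xi_\theta = \xi$. Write $p_i = h_i o$ for a bounded sequence $h_i \in G$ (boundedness of $p_i \in X$ means $h_i$ ranges in a compact set, after adjusting by $K$). Then $g_i p_i = \kappa_i a_i k_i h_i o$. The point is that $k_i h_i$ ranges in a compact set $Q \subset G$, so by Lemma~\ref{lem.cptcartan}, $\mu(a_i k_i h_i)$ stays within bounded distance of $\mu(a_i) = \mu(g_i)$; in particular $a_i k_i h_i \to \infty$ is still $\theta$-regular. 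Writing $a_i k_i h_i \in \kappa_i' a_i' k_i'$ in Cartan form, one must show $\kappa_i \kappa_i' \xi_\theta \to \xi$ still. This requires the key fact (a standard consequence of the Iwasawa/Bruhat picture and $\theta$-regularity) that when one multiplies a $\theta$-regular element on the right by a bounded element, the ``attracting'' boundary point in $\F_\theta$ is unchanged in the limit: concretely $\kappa_i' \to $ an element of $M_\theta$ along a subsequence, because the $KA^+K$ decomposition of a right-bounded perturbation of a $\theta$-regular element differs from the original only by a bounded amount in $A$ and by a factor converging into $M_\theta$ on the left-$K$ side. This last assertion is essentially the content of \cite[Sec. 2]{KOW_indicators}; I would cite or reprove it as a sub-lemma. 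Combining, $g_i p_i \to \kappa \xi_\theta = \xi$.

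\medskip

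\textbf{Part (2).} Here $a_i \to \infty$ in $A^+$ is $\theta$-regular and $g_i \to g$ in $G$; we want $g_i a_i p \to g\xi_\theta$ for any $p \in X$. Write $p = h o$ with $h \in G$ fixed. By Lemma~\ref{lem.cptcartan} applied to the compact set $Q = \{g_i\} \cup \{g\}$ together with $h$, the element $g_i a_i h$ has $\mu(g_i a_i h)$ within bounded distance of $\mu(a_i) = \log a_i$, hence $g_i a_i h \to \infty$ is $\theta$-regular. For the boundary component: write $g_i a_i h \in \kappa_i A^+ K$; since $g_i \to g$ and $a_i \in A^+ \subset K A^+ K$ with left-$K$ part trivial (indeed $a_i = e \cdot a_i \cdot e$), and right-multiplication by the bounded element $h$ only perturbs things boundedly, the left $K$-factor $\kappa_i$ converges (subsequentially) to an element $\kappa$ with $\kappa \in g M_\theta$, i.e. $\kappa \xi_\theta = g \xi_\theta$. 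Again the crux is the stability statement: for a fixed $\theta$-regular direction, $K a_i K$ with $a_i \to \infty$ $\theta$-regularly ``sees'' the same $\theta$-boundary point from the left regardless of bounded left/right perturbations by $g_i \to g$ and $h$. Thus $\lim_i g_i a_i p = g\xi_\theta$.

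\medskip

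\textbf{Main obstacle.} The routine parts ($\theta$-regularity is preserved under bounded perturbation, extracting convergent subsequences in compact groups) are straightforward via Lemma~\ref{lem.cptcartan}. The genuine content is the stability of the $\F_\theta$-component of the Cartan decomposition under bounded perturbations of a $\theta$-regular sequence: that the left-$K$ factor in $KA^+K$, read modulo $M_\theta$, converges and is insensitive to the perturbations. This is where $\theta$-regularity (rather than just $\to\infty$) is essential --- it forces the relevant Bruhat cell to be the big one, so the boundary point is rigidly determined in the limit. I would either invoke the corresponding statement from \cite[Sec. 2]{KOW_indicators} directly or isolate it as a preliminary lemma and prove it using the contraction properties of $\theta$-regular elements acting on $\F_\theta$ (the action $g \mapsto g\xi_\theta$ becomes strongly attracting in the $\theta$-regular limit). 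All other steps then assemble routinely.
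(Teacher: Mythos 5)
The paper does not prove this lemma; it is quoted verbatim from \cite{KOW_indicators} (Lemma 2.6--7 there), so there is no in-paper argument to compare against. Evaluating your sketch on its own terms, the structure is right but there is a genuine gap at exactly the point you flag.

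You correctly reduce both parts to a single claim: if $a_i\to\infty$ $\theta$-regularly in $A^+$ and $q_i$ is bounded in $G$, then the left $K$-factor of $a_iq_i$ in a $KA^+K$-decomposition, read modulo $M_\theta$, tends to $eM_\theta$ in $K/M_\theta\simeq\F_\theta$; and analogously for the combined left perturbation $g_i\to g$ in part (2). But you then say you would ``cite the corresponding statement from \cite[Sec.~2]{KOW_indicators} directly'' or ``prove it using contraction properties.'' The first option is circular: that statement \emph{is} the content of the lemma you are asked to prove. The second is the right idea but is not carried out, and it is not a one-liner --- it is where all the work in the actual proof lives. Concretely one must show that for $a_i\to\infty$ $\theta$-regularly and $q\in G$ fixed, writing $a_iq=\kappa_i'\exp\mu(a_iq)k_i'$ forces $\kappa_i'\xi_\theta\to\xi_\theta$, typically via Bruhat/Iwasawa decompositions and the fact that conjugation by $a_i^{-1}$ contracts $N$, together with a case analysis that invokes $\theta$-regularity (not mere divergence) to rule out degenerations onto lower Bruhat cells. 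That argument is not present.

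Two further technical remarks on part (2). First, the hand-off from the $a_ih\to\xi_\theta$ case to the case with left factor $g_i\to g$ is not addressed: left-multiplying a sequence converging to $\xi_\theta$ by $g_i\to g$ does not formally give convergence to $g\xi_\theta$ without a uniformity argument (e.g.\ Iwasawa-decompose $g_i=\kappa_i^gp_i^g$ and fold $p_i^g$ into the right perturbation). Second, after such a fold you must contend with products $a_i^ga_i$ of a bounded element of $A$ with a $\theta$-regular element of $A^+$: these need not lie in $A^+$, and restoring Cartan form generally introduces a Weyl element. Showing that this Weyl element lies in $M_\theta$ --- so it does not move $\xi_\theta$ --- is precisely where $\theta$-regularity enters, and precisely what is elided. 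Until the stability lemma and these two points are proved rather than deferred, the sketch does not establish the statement.
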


\subsection*{Jordan projections} A loxodromic element $g\in G$ is of the form $g=h a_g m h^{-1}$ for $h\in G$,
$a_g\in \inte A^+$ and $m\in M$; moreover $a_g\in \inte A^+$ is uniquely determined. We set 
\be\label{att} \lambda(g):=\log a_g \in \fa^+\quad\text{ and }\quad y_g:=hP\in \F,\ee 
 called the Jordan projection and
the attracting fixed point of $g$ respectively. 
\begin{theorem}\cite{Benoist_properties2} \label{thm.Benoistdense}
   For any Zariski dense subgroup $\Ga<G$, the subgroup generated by
   $\{\lambda(\ga):\ga \text{ is a loxodromic element of }\Ga\}$ is dense in $\fa$.
   \end{theorem}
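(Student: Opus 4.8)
The plan is to reduce the statement to the non-arithmeticity of the length spectrum of a Zariski dense subgroup. First I would recall that $\fa$ is an abelian Lie algebra and that the closure of the subgroup generated by $\{\lambda(\ga):\ga\in\Ga\text{ loxodromic}\}$ is a closed subgroup of $(\fa,+)$, hence of the form $V\oplus L$ for a linear subspace $V<\fa$ and a lattice $L$ in a complementary subspace. It therefore suffices to show that this closed subgroup is not contained in any proper linear subspace of $\fa$ and is not discrete. Equivalently, writing $\Delta=\langle\lambda(\ga):\ga\text{ loxodromic in }\Ga\rangle$, I must show (i) $\Delta$ spans $\fa$ as a real vector space, and (ii) $\Delta$ is not a lattice in $\fa$.

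For (i), I would use the fact that the Jordan projections detect the Zariski closure: if $\Delta$ spanned only a proper subspace $\fa'<\fa$, then every loxodromic $\ga\in\Ga$ would lie in the (proper, Zariski closed) subset of $G$ consisting of elements whose semisimple part has Jordan projection in $\fa'$. Since loxodromic elements are Zariski dense in $G$ (a consequence of Zariski density of $\Ga$ together with the fact that the loxodromic locus is Zariski open and nonempty in $G$, and $\Ga$ being Zariski dense meets it), this would force $\fa'=\fa$, a contradiction. This is essentially Benoist's argument and uses that $\Ga$ Zariski dense implies $\Ga$ contains loxodromic elements with attracting/repelling data in general position, generating a Zariski dense semigroup of loxodromic elements.

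For (ii), the non-discreteness, the key input is that if $\Delta$ were discrete (a lattice in the subspace it spans, which by (i) is all of $\fa$), then the set of Jordan projections $\{\lambda(\ga)\}$ would be ``arithmetic''. One then derives a contradiction with Zariski density: by a ping-pong / Schottky construction inside $\Ga$ (valid since $\Ga$ is Zariski dense and not virtually solvable, cf. the Tits alternative and Benoist's constructions of free Schottky subgroups with prescribed limit points), one produces loxodromic elements $\ga_1,\ga_2$ and products $\ga_1\ga_2^n$ whose Jordan projections $\lambda(\ga_1\ga_2^n)$ vary continuously-enough with $n$ that infinitely many distinct values accumulate modulo $L$, contradicting discreteness. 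The main obstacle is precisely this step: ruling out discreteness requires the delicate analysis of how Jordan projections of products behave, which is the heart of Benoist's paper. I would invoke \cite{Benoist_properties2} for this rather than reprove it, since the whole statement is cited as a theorem of Benoist; the argument above is the conceptual skeleton of that proof.
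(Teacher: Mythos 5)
The paper offers no proof of this theorem; it is a black-box citation of Benoist \cite{Benoist_properties2}, so there is no in-paper argument to compare yours against. Your structural reduction is correct: the closure of the subgroup generated by $\{\lambda(\ga)\}$ is a closed subgroup $V\oplus L$ of $(\fa,+)$, and density is equivalent to showing it (i) spans $\fa$ and (ii) is not discrete (more precisely, not of the form $V\oplus L$ with $V\neq\fa$).

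Your argument for (i), however, contains a genuine gap. You assert that ``elements whose semisimple part has Jordan projection in $\fa'$'' is a proper Zariski closed subset, so Zariski density of the loxodromic locus forces $\fa'=\fa$. But $\lambda:G_{\mathrm{lox}}\to\fa^+$ is \emph{not} an algebraic map: for any dominant weight $\chi$, the quantity $\chi(\lambda(g))$ equals the logarithm of the spectral radius of $\rho_\chi(g)$, which is a real-analytic function of the matrix entries but not a regular function. The vanishing locus $\{\chi(\lambda(g))=0\}$ is therefore real-analytic, not Zariski closed, and the density-of-loxodromics argument does not apply as stated. The standard (and Benoist's) route to (i) is different: one shows directly that the $\theta$-limit cone $\L_\Gamma$ of a Zariski dense subgroup has nonempty interior in $\fa$, which Benoist proves in part I of the same series using Schottky subgroups and the asymptotic behaviour of Cartan/Jordan projections; since $\L_\Gamma$ is the closed cone generated by $\lambda(\Gamma)$, nonempty interior immediately gives that $\lambda(\Gamma)$ spans $\fa$.

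For (ii), you defer to \cite{Benoist_properties2}, which is appropriate since the paper does the same. Your heuristic description (Schottky pairs $\ga_1,\ga_2$ and asymptotic additivity of $\lambda(\ga_1\ga_2^n)$ with a continuously varying correction term, ruling out containment in a lattice coset) does reflect the actual mechanism of Benoist's non-arithmeticity argument. Just be aware that the delicacy is in making ``continuously-enough'' precise: the correction term is controlled by a Gromov-product/cross-ratio quantity which one must show takes a dense set of values, and this is where the Zariski density of $\Gamma$ enters in an essential, non-formal way. As a blind proposal, your skeleton is a reasonable outline of how such a theorem must go, but step (i) as written would not survive, and both steps ultimately need to be replaced by the actual content of Benoist's paper.
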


\subsection*{Busemann maps}
The $\frak a$-valued Busemann map $\beta: \cal F\times G \times G \to\frak a $ is defined as follows: for $\xi\in \cal F$ and $g, h\in G$,
$$  \beta_\xi ( g, h):=\sigma (g^{-1}, \xi)-\sigma(h^{-1}, \xi)$$
where  $\sigma(g^{-1},\xi)\in \fa$ 
is the unique element such that $g^{-1}k \in K \exp (\sigma(g^{-1}, \xi)) N$ for any $k\in K$ with $\xi=kP$.
For $(\xi,g,h)\in \cal F_\theta\times G\times G$, we define
 \be\label{Bu} \beta_{\xi}^\theta (g, h): = 
p_\theta ( \beta_{\xi_0} (g, h)) \quad\text{for $\xi_0\in \pi_\theta^{-1}(\xi)$};\ee 
this is well-defined independent of the choice of $\xi_0$ 
\cite[Lem. 6.1]{Quint2002Mesures}. 
For $p, q \in X$ and $\xi \in \F_{\theta}$,  we set $\beta_{\xi}^{\theta}(p, q) := \beta_{\xi}^{\theta}(g, h)$ where $g, h \in G$ satisfies $go = p$ and $ho=q$. It is easy to check this is well-defined.

\subsection*{Points in general position} Let $\check{P}_{\theta}$ be the
standard parabolic subgroup of $G$ opposite to $P_\theta$ such that $P_\theta\cap \check{P}_{\theta}=L_\theta$. We have $\check{P}_{\theta} =w_0 P_{\i(\theta)}w_0^{-1}$ and hence 
$$\F_{\i(\theta)}=G/\check{P}_{\theta}.$$

For $g\in G$, we set
$$g_\theta^+ := gP_{\theta}\quad \text{ and }\quad  g_\theta^- := g w_0 P_{\i(\theta)};$$
as we fix $\theta$ in the entire paper, we write $g^{\pm}=g_\theta^{\pm}$ for simplicity when there is no room for confusion. Hence for the identity $e\in G$,
$(e^+, e^-)=(P_\theta, \check{P}_{\theta})=(\xi_\theta, w_0\xi_{\i(\theta)})$, where $\xi_\theta$ is as in \eqref{xit}.
The $G$-orbit
of $(e^+, e^-)$ is the unique open $G$-orbit
in $G/P_\theta\times G/\check{P}_{\theta}$ under the diagonal $G$-action. 
 We set
\be\label{f2} \F_{\theta}^{(2)}= \{(g_\theta^+, g_\theta^-): g\in G\}.\ee 
 Two elements
$\xi\in \F_\theta$ and $\eta\in \F_{\i(\theta)}$ are said to be in general position if $(\xi, \eta)\in \F_\theta^{(2)}$.
Since $\check{P}_{\theta}=L_\theta \check{N}_{\theta}$ where $\check{N}_\theta$ is the unipotent radical of $\check{P}_{\theta}$,  we have
\be \label{op} (g_\theta^+, e_\theta^-)\in 
\F_{\theta}^{(2)} \;\; \text{ if and only if }\;\; g\in \check{N}_\theta P_\theta .\ee

The following lemma will be useful:
\begin{lem} \cite[Coro. 2.5]{KOW_indicators} \label{cor.genweyl}
If $w\in \cal W$ is such that
$m w\in \check{N}_\theta P_\theta$ for some $m\in M_\theta$, then $w \in M_{\theta}$.    In particular, if $(w\xi_\theta, w_0\xi_{\i(\theta)})=(w_\theta^+, e_\theta^-) \in \F_\theta^{(2)}$, then $w\in M_\theta$.
\end{lem}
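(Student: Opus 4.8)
The plan is to unwind the hypothesis $mw \in \check{N}_\theta P_\theta$ using the characterization \eqref{op}: since $m \in M_\theta \subset L_\theta \subset P_\theta$, we have $\check{N}_\theta P_\theta = \check{N}_\theta \, m^{-1} P_\theta m \cdot m^{-1}$ is not quite direct, so instead I would argue that $mw \in \check{N}_\theta P_\theta$ says precisely that $(mw\, \xi_\theta, e_\theta^-) \in \F_\theta^{(2)}$, i.e.\ the pair $(mw P_\theta, \check{P}_\theta)$ lies in the open $G$-orbit. Because $m \in M_\theta \subset L_\theta$ normalizes both $P_\theta$ and $\check{P}_\theta$ (as $L_\theta = P_\theta \cap \check{P}_\theta$), we have $m \check{P}_\theta = \check{P}_\theta$, and so $(mwP_\theta, \check{P}_\theta) = m \cdot (wP_\theta, \check{P}_\theta)$; hence $(wP_\theta, \check{P}_\theta) \in \F_\theta^{(2)}$ as well, which by \eqref{op} means $w \in \check{N}_\theta P_\theta$.

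The remaining task is to show that a Weyl group representative $w \in \mathcal{W}$ (chosen inside $N_K(A)$) lying in $\check{N}_\theta P_\theta$ must actually lie in $M_\theta$. Here I would use the uniqueness in the Bruhat-type decomposition: the product map $\check{N}_\theta \times P_\theta \to \check{N}_\theta P_\theta$ is a bijection onto its (open, dense) image, and $w$ normalizes $A$. Writing $w = n p$ with $n \in \check{N}_\theta$, $p \in P_\theta$, one considers $w A w^{-1} = A$ and uses that $A \subset L_\theta \subset P_\theta$ together with the fact that $\check N_\theta \cap P_\theta = \{e\}$; comparing the $A$-conjugation actions forces $n$ to be fixed by all of $\operatorname{Ad}(A)$, hence $n = e$ since $\check{N}_\theta$ contains no nontrivial $A$-fixed points (its Lie algebra is a sum of negative root spaces on which $A$ acts by nontrivial characters). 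Therefore $w = p \in P_\theta \cap N_K(A)$. Finally, $P_\theta \cap K = M_\theta$ and $w \in K$, so $w \in M_\theta$; alternatively, $w \in P_\theta$ together with $\operatorname{Ad}_w \mathfrak{a}^+ \cap \mathfrak{a}_\theta$-considerations pin down $w$ in the Weyl group of $L_\theta$, but the cleanest route is $w \in P_\theta \cap K = M_\theta$.

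For the ``in particular'' clause, I would simply note that $(w\xi_\theta, w_0 \xi_{\i(\theta)}) = (w_\theta^+, e_\theta^-) \in \F_\theta^{(2)}$ is, by \eqref{op} applied with $g = w$, exactly the statement $w \in \check{N}_\theta P_\theta$ (taking $m = e$ in the first part), so the conclusion $w \in M_\theta$ is immediate from the first assertion.

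The main obstacle I anticipate is the step showing $\check{N}_\theta P_\theta \cap N_K(A) \subset P_\theta$, i.e.\ that the $\check{N}_\theta$-component of a normalizer element is trivial: this requires care with which decomposition of $\check{N}_\theta P_\theta$ is being used and the uniqueness of factorization, and one must correctly invoke that $A$ acts without nonzero fixed vectors on $\mathfrak{n}_{\i(\theta)}^- = \operatorname{Lie}(\check{N}_\theta)$ (equivalently, $w_0$-conjugating back to $\mathfrak{n}_\theta$). Everything else is a routine bookkeeping with the Langlands/Levi decomposition and the identity $\check{P}_\theta = w_0 P_{\i(\theta)} w_0^{-1}$ already recorded in the excerpt.
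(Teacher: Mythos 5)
The paper does not prove this lemma itself — it cites it from \cite[Coro.\ 2.5]{KOW_indicators} — so there is no in-text argument to compare your proof against; I will therefore assess it on its own terms. Your proof is correct. The reduction from $mw\in\check N_\theta P_\theta$ to $w\in\check N_\theta P_\theta$ is fine (and can be done even more directly: $m\in M_\theta\subset L_\theta$ normalizes $\check N_\theta$ and lies in $P_\theta$, so $m^{-1}\check N_\theta P_\theta=\check N_\theta P_\theta$). The only place where I would ask you to write out what you are eliding is the step ``comparing the $A$-conjugation actions forces $n$ to be fixed by all of $\op{Ad}(A)$.'' The precise computation is: write $w=np$ with $n\in\check N_\theta$, $p\in P_\theta$; for $a\in A$ set $b:=waw^{-1}\in A$. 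Then on the one hand $n^{-1}bn=pap^{-1}\in P_\theta$, and on the other $n^{-1}bn=b\,(b^{-1}n^{-1}bn)$ with $b^{-1}n^{-1}bn\in\check N_\theta$ because $A\subset L_\theta$ normalizes $\check N_\theta$. Since $b\in P_\theta$ and $\check N_\theta\cap P_\theta=\{e\}$, this forces $b^{-1}n^{-1}bn=e$; as $w$ normalizes $A$, $b$ ranges over all of $A$, so $n$ centralizes $A$, and $n=e$ because $\op{Lie}(\check N_\theta)$ is a sum of root spaces with nontrivial $A$-characters. Your conclusion $w=p\in P_\theta\cap N_K(A)\subset P_\theta\cap K=M_\theta$ then uses exactly the paper's definition $M_\theta=K\cap P_\theta$, and the ``in particular'' clause is indeed just \eqref{op} applied with $g=w$ and $m=e$. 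So the argument goes through; it is the standard Bruhat-cell route, and the one gap is a short manipulation rather than a missing idea.
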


\subsection*{Gromov products}   
The map $g\mapsto (g^+, g^-)$ for $g\in G$ induces a homeomorphism $G/L_\theta
\simeq  \F_{\theta}^{(2)}$.
For $(\xi, \eta) \in \F_{\theta}^{(2)}$, we define the $\theta$-Gromov product as $$\cal G^{\theta}(\xi, \eta) = \beta_{\xi}^{\theta}(e, g) + \i (\beta_{\eta}^{\i(\theta)}(e, g))$$ where $g \in G$ satisfies $(g^+, g^-) = (\xi, \eta)$. This does not depend on the choice of $g$ \cite[Lem. 9.13]{KOW_indicators}.

Although the Gromov product is defined differently in \cite{BPS_anosov},
it coincides with ours (see \cite[Lem. 3.11, Rmk. 3.13]{LO_invariant}); hence we have:
\begin{prop} \cite[Prop. 8.12]{BPS_anosov} \label{Gromov} There exists $c>1$ and $c'>0$ such that for all $g\in G,$ $$c^{-1}\| \mathcal{G}^\theta(g^+,g^-) \| \le d(o, gL_{\theta} o) \le c \| \mathcal{G}^\theta (g^+,g^-) \| +c'.$$
\end{prop}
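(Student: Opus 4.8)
The plan is to compare both sides with the Cartan projection of a suitably normalized representative of the coset $gL_\theta$, using that $gL_\theta$ encodes exactly the transverse pair $(g_\theta^+, g_\theta^-)$. Both quantities depend only on this coset: right translation by $L_\theta$ changes neither $gL_\theta o$ nor the pair $(g_\theta^+, g_\theta^-)$, and since $K = \op{Stab}_G(o)$ and the $\fa$-valued Busemann cocycle in \eqref{Bu} is left $K$-invariant, left translation by $K$ fixes both $d(o, gL_\theta o)$ and $\mathcal{G}^\theta(g_\theta^+, g_\theta^-)$. Using the transitivity of $K$ on $\F_{\i(\theta)}$ together with \eqref{op}, we may therefore assume $g = \check n \in \check N_\theta$, so that $g_\theta^- = e_\theta^- = w_0 \xi_{\i(\theta)}$.

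The lower bound $c^{-1}\|\mathcal{G}^\theta(g_\theta^+, g_\theta^-)\| \le d(o, gL_\theta o)$ is the easy half. The $\fa$-valued Busemann map is Lipschitz: there is $C_0 > 0$ with $\|\beta_\xi(e, h)\| \le C_0\, d(o, ho)$ for all $\xi \in \F$ and $h \in G$ (with $C_0 = 1$ for the Killing metric; a general $K$-invariant norm on $\fg$ costs only a constant). Applying $p_\theta$, $p_{\i(\theta)}$ and $\i$ to $\mathcal{G}^\theta(h_\theta^+, h_\theta^-) = \beta_{h_\theta^+}^\theta(e, h) + \i(\beta_{h_\theta^-}^{\i(\theta)}(e, h))$ and using that these are bounded linear maps gives $\|\mathcal{G}^\theta(h_\theta^+, h_\theta^-)\| \le C\, d(o, ho)$ for every $h$; since $\mathcal{G}^\theta(h_\theta^+, h_\theta^-)$ is the same for all $h \in gL_\theta$, taking the infimum over such $h$ yields the claim.

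For the upper bound $d(o, gL_\theta o) \le c\|\mathcal{G}^\theta(g_\theta^+, g_\theta^-)\| + c'$, take $g = \check n \in \check N_\theta$ as above. A direct computation from \eqref{Bu}, using $\check n^{-1} w_0 \in K N$ and the cocycle identity for the Iwasawa projection, gives $\beta_{e_\theta^-}^{\i(\theta)}(e, \check n) = 0$, so $\mathcal{G}^\theta(\check n_\theta^+, e_\theta^-) = \beta_{\check n_\theta^+}^\theta(e, \check n)$ equals the image under $p_\theta$ of the Iwasawa $\fa$-projection $H(\check n)$ of $\check n$ (defined by $\check n \in K\exp(H(\check n))N$). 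It then remains to produce $l \in L_\theta$ with $\|\mu(\check n l)\| \le c\|p_\theta(H(\check n))\| + c'$. Writing $\check n = \kappa\exp(H(\check n)) n_{\mathrm{Iw}}$ and decomposing $H(\check n) \in \fa_\theta \oplus \ker p_\theta$ together with $n_{\mathrm{Iw}} \in N_\theta\,(N\cap S_\theta)$, one cancels the $\ker p_\theta$-component (which exponentiates into $B_\theta \subset L_\theta$) and the $(N\cap S_\theta)$-factor (also in $L_\theta$), reducing $\check n l$ to the form $\kappa'\,\nu\,\exp(p_\theta H(\check n))$ with $\nu \in N_\theta$; one then checks that, because $\check n$ lies in the \emph{opposite} unipotent radical, $\nu$ is forced into a region whose diameter is controlled by $\|p_\theta H(\check n)\|$, after which Lemma \ref{lem.cptcartan} and the $\mathcal W$-invariance of $\|\cdot\|$ bound $\|\mu(\check n l)\|$ as required.

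The main obstacle is precisely this last step: producing a coset representative whose Cartan projection is controlled by $\|\mathcal{G}^\theta\|$. Making the ``cancel the $L_\theta$-part, contract the $N_\theta$-part'' argument quantitative requires a genuine comparison between the Iwasawa $N$- and $\fa$-projections of elements of $\check N_\theta$, together with the fact that $p_\theta$ is injective with controlled inverse on the directions along which $H(\check n)$ can move for $\check n \in \check N_\theta$ — equivalently $\|H(\check n)\| \asymp \|p_\theta(H(\check n))\| + O(1)$ on $\check N_\theta$; this is where the hypothesis that $\theta$ consists of simple roots enters, through Lemma \ref{cor.genweyl}. An essentially equivalent route, which is the one carried out in \cite{BPS_anosov}, is to work with the $\theta$-adapted Cartan decomposition of $G$ and compare $\mu$ with $\mu_\theta$ on its regular part; one may also reinterpret $d(o, gL_\theta o)$ as the distance from $o$ to the parallel set of $(g_\theta^+, g_\theta^-)$ and use the structure of nearest-point projections onto parallel sets in the symmetric space $X$.
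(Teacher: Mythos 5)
The paper does not prove Proposition \ref{Gromov}: it is cited verbatim from \cite[Prop.~8.12]{BPS_anosov}, after noting that the Gromov product defined by \eqref{Bu} agrees with the one in \cite{BPS_anosov} (which the paper attributes to \cite[Lem.~3.11, Rmk.~3.13]{LO_invariant}). So there is no in-paper argument to compare you against; I assess your sketch on its own.

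Your reduction to $g = \check n \in \check N_\theta$ is valid (left-$K$-invariance, right-$L_\theta$-invariance, and \eqref{op}), your identity $\mathcal{G}^\theta(\check n^+, e^-) = p_\theta(H(\check n))$ is correct, and your lower bound is complete: it is exactly the inequality $\|\sigma(h^{-1},\xi)\|\le\|\mu(h)\|=d(o,ho)$ for the Iwasawa $\fa$-projection, together with $\|p_\theta\|\le 1$ for a Weyl-invariant norm.

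The upper bound has a genuine gap. After cancelling the $B_\theta$- and $(N\cap S_\theta)$-parts by $l=(\exp(b)\,n_s)^{-1}\in L_\theta$ (where $b=H(\check n)-p_\theta H(\check n)$ and $n_{\mathrm{Iw}}=n_\theta n_s$), one gets $\check n l=\kappa\exp(p_\theta H(\check n))\,\nu$ with $\nu=\exp(b)\,n_\theta\exp(-b)\in N_\theta$, hence $d(o,\check n L_\theta o)\le\|p_\theta H(\check n)\|+d(o,\nu o)$. The entire content of the hard direction is the estimate $d(o,\nu o)\lesssim\|p_\theta H(\check n)\|+O(1)$, and your proposal only asserts it (``one then checks\ldots''); nothing in the sketch actually uses the membership $\check n\in\check N_\theta$ to control the conjugated unipotent factor $\nu$, and $b$ is typically \emph{not} bounded so there is no a priori reason for $\nu$ to be small. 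The auxiliary claim $\|H(\check n)\|\asymp\|p_\theta H(\check n)\|+O(1)$ on $\check N_\theta$ is likewise left unproved, and even if established it controls only the Iwasawa $\fa$-component, not $\nu$, so it would not close the argument by itself. You correctly flag this as the main obstacle and correctly identify the route actually taken in \cite{BPS_anosov} (parallel sets / $\theta$-adapted Cartan decomposition); but as written the upper bound is not proved.
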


\section{Continuity of shadows}
The notion of shadows plays a crucial role in studying recurrence of diagonal flows.
In this section, we recall the definition of
$\theta$-shadows and prove some basic properties. 
In particular, we prove that shadows vary continuously on viewpoints, which are of independent interests.

\begin{figure}[h]
    \begin{tikzpicture}[scale=0.5, every node/.style={scale=0.8}]
    
        \draw (0, 0) circle(4);
        \draw[domain=279.3809950:329.7406027] plot ({-4.1666666 + 7.15752096*cos(\x)}, {7.0617983 + 7.15752096*sin(\x)});
        \draw[domain=279.3809950:329.7406027] plot ({-4.1666666 + 7.15752096*cos(\x)}, {-7.0617983 - 7.15752096*sin(\x)});

        \draw[fill=gray!10, opacity=0.5] (1, 0) circle(1.5911960891);
	\filldraw (1, 0) circle(2pt);
	\draw (1, -0.2) node[right] {$p$};
	
	\draw[thick, dotted, domain=273.8601104:296.54572789] plot ({-4.1666666 + 17.3299886*cos(\x)}, {17.2900673 + 17.3299886*sin(\x)});
        \filldraw (1.2206080, 0.8193113) circle(2pt);
        \draw (1.2206080, 0.6) node[right] {$gao$};
 	
	\filldraw (-3, 0) circle(2pt);
	\draw (-2.5, -0.5) node[below] {$q = go$};

	\draw[dashed, <->] (0.0596531, -1.28526581) -- (1, 0);
	\draw (0.5, -0.7) node[right] {$R$};

	\draw[thick, draw=blue, domain=-59.7406027:59.7406027] plot ({4*cos(\x)}, {4*sin(\x)});
        \draw[color=blue] (3, 3) node[right] {$O_R^{\theta}(q, p)$};

        \filldraw (3.5783118, 1.7876476) circle(2pt);
        \draw (3.5783118, 1.7876476) node[right] {$g^+$};
	
    \end{tikzpicture} 
    \begin{tikzpicture}[scale=0.5, every node/.style={scale=0.8}]
    
	\draw (0, 0) circle(4);
        \draw[domain=270:329.0165576] plot ({-4 + 7.0607593*cos(\x)}, {7.0617983 + 7.0617983*sin(\x)});
        \draw[domain=270:329.0165576] plot ({-4 + 7.0607593*cos(\x)}, {-7.0617983 - 7.0617983*sin(\x)});

        \draw[fill=gray!10, opacity=0.5] (1, 0) circle(1.5911960891);
	\filldraw (1, 0) circle(2pt);
	\draw (1, -0.2) node[right] {$p$};
	
	\draw[thick, dotted, domain=270:296.5197118] plot ({-4 + 16.9743042*cos(\x)}, {16.9743042 + 16.9743042*sin(\x)});
        \filldraw (1.2206080, 0.8193113) circle(2pt);
        \draw (1.2206080, 0.6) node[right] {$go$};

	\filldraw (-4, 0) circle(2pt);
	\draw (-4, 0) node[left] {$\eta = g^-$};

	\draw[dashed, <->] (0.0596531, -1.28526581) -- (1, 0);
	\draw (0.5, -0.7) node[right] {$R$};

	\draw[thick, draw=blue, domain=-59.0165576:59.0165576] plot ({4*cos(\x)}, {4*sin(\x)});
        \draw[color=blue] (3, 3) node[right] {$O_R^{\theta}(\eta, p)$};

        \filldraw (3.5783118, 1.7876476) circle(2pt);
        \draw (3.5783118, 1.7876476) node[right] {$g^+$};
	
    \end{tikzpicture}
    \caption{Shadows} \label{fig:shadow}
\end{figure}

For $p \in X$ and $R > 0$, let $B(p, R) $ denote the metric ball $ \{ x \in X : d(x, p) < R\}$. For $q \in X$, the $\theta$-shadow $O_R^{\theta}(q, p) \subset \F_{\theta}$ of $B(p, R)$ viewed from $q$ is defined as \be \label{eqn.defshadow}
\begin{aligned}
    O_R^{\theta}(q, p)   = \{ gP_{\theta} \in \F_{\theta} : g \in G, \ go = q, \ gA^+o \cap B(p, R) \neq \emptyset \} 
    \end{aligned}
    \ee
   We  also define the $\theta$-shadow $O_R^{\theta}(\eta, p) \subset \F_{\theta}$ viewed from $\eta \in \F_{\i(\theta)}$ as follows: $$O_R^{\theta}(\eta, p) = \{g P_{\theta} \in \F_{\theta} : g \in G, \ gw_0P_{\i(\theta)} = \eta, \ go \in B(p, R) \}.$$ 

For any $\tilde{\eta} \in \pi_{\i(\theta)}^{-1}(\eta)$, we have \be \label{eqn.shadowisproj}
    O_R^{\theta}(q, p) = \pi_{\theta}(O_R^{\Pi}(q, p)) \quad \text{and} \quad O_R^{\theta}(\eta, p) = \pi_{\theta}(O_R^{\Pi}(\tilde{\eta}, p)).
    \ee Note that for all $g\in G$ 
    and $\eta\in X\cup \F_{\i(\theta)}$,  
    \be \label{go} g O_R^{\theta}(\eta, p)= O_R^{\theta}(g\eta, gp).\ee
    We define the $\fa_{\theta}$-valued distance $\underline{a}_{\theta} : X \times X \to \fa_{\theta}$ by $$\underline{a}_{\theta}(q, p) := \mu_{\theta}(g^{-1}h)$$ where $q = go$ and $p = ho$ for $g, h \in G$. The following was shown for $\theta = \Pi$ in \cite[Lem. 5.7]{LO_invariant} which directly implies the statement for general $\theta$ by \eqref{eqn.shadowisproj}.

    \begin{lemma} \label{lem.buseandcartan}
        There exists $\kappa > 0$ such that for any $q, p \in X$ and $R>0$, we have $$\sup_{\xi \in O^\theta_R(q, p)}  \| \beta_{\xi}^{\theta}(q, p) - \underline{a}_{\theta}(q, p) \| \le \kappa R.$$
    \end{lemma}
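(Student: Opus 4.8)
The plan is to reduce to the case $\theta = \Pi$, where the result is quoted as \cite[Lem.~5.7]{LO_invariant}, via the projection identity \eqref{eqn.shadowisproj}. First I would recall that $p_\theta : \fa \to \fa_\theta$ is a linear projection, hence Lipschitz with respect to the chosen norms: there is a constant $c_\theta > 0$ with $\|p_\theta(v)\| \le c_\theta \|v\|$ for all $v \in \fa$. By definition, $\beta_\xi^\theta(q,p) = p_\theta(\beta_{\xi_0}(q,p))$ for any $\xi_0 \in \pi_\theta^{-1}(\xi)$, and $\underline a_\theta(q,p) = \mu_\theta(g^{-1}h) = p_\theta(\mu(g^{-1}h)) = p_\theta(\underline a_\Pi(q,p))$ where $q = go$, $p = ho$. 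So for any $\xi \in O_R^\theta(q,p)$, choosing $\xi_0 \in O_R^\Pi(q,p)$ with $\pi_\theta(\xi_0) = \xi$ — which exists precisely because \eqref{eqn.shadowisproj} gives $O_R^\theta(q,p) = \pi_\theta(O_R^\Pi(q,p))$ — we get
\[
\|\beta_\xi^\theta(q,p) - \underline a_\theta(q,p)\| = \|p_\theta(\beta_{\xi_0}(q,p) - \underline a_\Pi(q,p))\| \le c_\theta \|\beta_{\xi_0}^\Pi(q,p) - \underline a_\Pi(q,p)\|.
\]

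Next I would apply the $\theta = \Pi$ statement \cite[Lem.~5.7]{LO_invariant}: there is $\kappa_\Pi > 0$ such that $\sup_{\xi_0 \in O_R^\Pi(q,p)} \|\beta_{\xi_0}^\Pi(q,p) - \underline a_\Pi(q,p)\| \le \kappa_\Pi R$ for all $q,p \in X$ and $R > 0$. Since the $\xi_0$ selected above lies in $O_R^\Pi(q,p)$, combining the two displays yields $\|\beta_\xi^\theta(q,p) - \underline a_\theta(q,p)\| \le c_\theta \kappa_\Pi R$. Setting $\kappa := c_\theta \kappa_\Pi$ and taking the supremum over $\xi \in O_R^\theta(q,p)$ completes the argument. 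One should double-check that the Busemann map and the $\fa_\theta$-valued distance are both genuinely the $p_\theta$-images of their $\Pi$-counterparts with the \emph{same} representative $g, h$; this is immediate from the definitions in the ``Busemann maps'' subsection and from $\mu_\theta = p_\theta \circ \mu$.

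There is essentially no obstacle here — the lemma is a formal consequence of the rank-$\Pi$ case together with linearity of $p_\theta$ and the shadow-projection identity \eqref{eqn.shadowisproj}. The only point requiring a line of care is that the supremum over the $\theta$-shadow is controlled by the supremum over the $\Pi$-shadow: this is exactly where \eqref{eqn.shadowisproj} is used, since every $\xi \in O_R^\theta(q,p)$ has \emph{some} preimage in $O_R^\Pi(q,p)$ (not merely in $\pi_\theta^{-1}(\xi)$), so the rank-$\Pi$ bound applies to that preimage. I expect the proof in the paper to be a one- or two-sentence remark to this effect, as signaled by the phrase ``which directly implies the statement for general $\theta$.''
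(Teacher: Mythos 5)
Your proposal is correct and takes essentially the same route as the paper: the paper's entire proof is the remark preceding the lemma statement, namely that \cite[Lem.~5.7]{LO_invariant} gives the $\theta = \Pi$ case and the general case follows ``directly'' via \eqref{eqn.shadowisproj}, which is exactly the reduction you have spelled out (linearity of $p_\theta$, the identities $\beta_\xi^\theta = p_\theta\circ\beta_{\xi_0}$ and $\underline a_\theta = p_\theta\circ\underline a_\Pi$, and the crucial point that the preimage $\xi_0$ can be chosen inside $O_R^\Pi(q,p)$ rather than merely in $\pi_\theta^{-1}(\xi)$).
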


\begin{lem} \label{lem.shadowtriangle}
    For any compact subset $Q \subset G$ and $R > 0$, we have that
    for any $h\in G$ and $g\in Q$,
    $$O_R^{\theta}( g o, h o) \subset O_{R+ D_Q}^{\theta}(o, h o) \quad \text{and} \quad O_R^{\theta}(h g o, o) \subset O_{R+ D_Q}^{\theta}(h o, o) $$
    where $D_Q:=\max_{g\in Q} d(go, o)$.
\end{lem}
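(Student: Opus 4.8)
The plan is to reduce both inclusions to the corresponding statement for $\theta = \Pi$ via the projection identity \eqref{eqn.shadowisproj}, and then to prove the $\Pi$-case directly from the definition of shadows using the triangle inequality. Concretely, since $O_R^\theta(q,p) = \pi_\theta(O_R^\Pi(q,p))$ for any $q,p \in X$ and since $\pi_\theta$ is monotone with respect to inclusion, it suffices to establish
\be\label{eqn.shadowtrianglePi}
O_R^{\Pi}(go, ho) \subset O_{R+D_Q}^{\Pi}(o, ho) \quad\text{and}\quad O_R^{\Pi}(hgo, o) \subset O_{R+D_Q}^{\Pi}(ho, o)
\ee
for all $h \in G$ and $g \in Q$, where $D_Q = \max_{g \in Q} d(go, o)$.

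For the first inclusion in \eqref{eqn.shadowtrianglePi}: take $\xi \in O_R^{\Pi}(go, ho)$, so there is some $g' \in G$ with $g'o = go$, $g'P = \xi$, and $g'A^+o \cap B(ho, R) \ne \emptyset$. I want to exhibit $g'' \in G$ with $g''o = o$, $g''P = \xi$, and $g''A^+ o \cap B(ho, R + D_Q) \ne \emptyset$. The natural candidate is $g'' = g^{-1}g'$ (note $g''o = g^{-1}g'o = g^{-1}go = o$ and $g''P = g^{-1}\xi$ — wait, this changes $\xi$). Let me reconsider: actually the cleaner approach uses \eqref{go}, which says $g O_R^\theta(\eta, p) = O_R^\theta(g\eta, gp)$. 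Applying $g^{-1}$: $O_R^\Pi(go, ho) = g\, O_R^\Pi(o, g^{-1}ho)$, so $g^{-1} O_R^\Pi(go,ho) = O_R^\Pi(o, g^{-1}ho)$. Hmm, but I want the shadow of $B(ho, \cdot)$ viewed from $o$, not from a translated basepoint. So instead I argue directly: for $\xi \in O_R^\Pi(go,ho)$ with witness $g'$ as above, the point $g'o = go$ satisfies $d(g'o, o) = d(go, o) \le D_Q$, and $g'A^+o$ meets $B(ho,R)$; we keep the \emph{same} $g'$ but must check $g'o = o$ fails. The fix: a shadow viewed from $q$ only depends on the set $\{g : go = q\}$, and changing the viewpoint from $go$ to $o$ along the geodesic shifts all distances to $ho$ by at most $d(go,o) \le D_Q$; formally, if $g'A^+o \ni x$ with $d(x, ho) < R$, pick $k \in K$ with $g'k \cdot o = g'o = go$ close to $o$... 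The robust route is: $O_R^\Pi(q, p) \subset O_{R + d(q,q')}^\Pi(q', p)$ for any $q, q' \in X$, proved by noting that for any $g$ with $go = q$ and any $a \in A^+$, one can find $g_1$ with $g_1 o = q'$, $g_1 P = gP$, and $d(g_1 a o, g a o)$ bounded by $d(q, q')$ up to replacing $a$ by a nearby element of $A^+$ — this is precisely \cite[Lem. 5.7]{LO_invariant}-type reasoning or follows from properties of the Cartan decomposition. Taking $q = go$, $q' = o$ gives the first inclusion; the second is symmetric, using the reverse-shadow $O_R^\Pi(\cdot, o)$ and the same viewpoint-shift estimate applied to the basepoint $hgo$ versus $ho$, with $d(hgo, ho) = d(go, o) \le D_Q$ since $h$ is an isometry.

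The main obstacle is the viewpoint-shift estimate itself — making precise that moving the viewpoint of a shadow by a bounded amount only enlarges the shadow's radius by that bounded amount. This is intuitively clear (a shadow is a ``metric cone'' of directions and the geodesics emanating from two nearby points fellow-travel) but requires care because the defining condition $gA^+o \cap B(p,R) \ne \emptyset$ ranges over the discrete family $\{g : go = q\}$ rather than over geodesic rays directly. I expect this to follow cleanly from the fact that $\{g \in G : go = q\} = g_0 K$ for any fixed $g_0$ with $g_0 o = q$, combined with Lemma \ref{lem.cptcartan} applied to the compact set $K$ and the observation that $g_0 k A^+ o$ and $g_0' k' A^+ o$ (for $g_0' o = q'$) are uniformly close when $d(q, q')$ is small; alternatively one can cite the $\theta = \Pi$ analogue from \cite{LO_invariant}. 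Once this estimate is in hand, both displayed inclusions are immediate, and the passage to general $\theta$ via $\pi_\theta$ is formal.
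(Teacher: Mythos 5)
Your reduction to the $\theta=\Pi$ case and the identification of the key statement — a ``viewpoint-shift'' inclusion $O_R^{\Pi}(q,p)\subset O_{R+d(q,q')}^{\Pi}(q',p)$, applied once to $q=go$, $q'=o$ and once (via the $G$-equivariance \eqref{go}) to $q=hgo$, $q'=ho$ — is the right skeleton, and you correctly note that both displayed inclusions follow from that single estimate. But you never actually prove the estimate: you explicitly flag it as ``the main obstacle,'' call it ``intuitively clear,'' and then offer only speculative routes (``I expect this to follow cleanly from\ldots,'' invoking Lemma \ref{lem.cptcartan} or an unspecified ``\cite{LO_invariant} analogue'') without completing any of them. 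That is a genuine gap, because the estimate is exactly what the lemma is, and neither of your proposed substitutes closes it: Lemma \ref{lem.cptcartan} controls Cartan projections, not distances from a ray to a fixed point, and \cite[Lem.\ 5.7]{LO_invariant} compares Busemann and Cartan quantities, not shadows from different viewpoints.

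The missing idea is short and worth recording. If $\xi\in O_R^{\theta}(go,ho)$, write $\xi=gkP_{\theta}$ with $d(gkao,ho)<R$ for some $k\in K$, $a\in A^+$. Use the Iwasawa decomposition to write $gk=\ell p$ with $\ell\in K$, $p\in P$; then $\ell o=o$ and, since $P\subset P_\theta$, $\ell P_\theta=gkP_\theta=\xi$, so $\ell$ is a legitimate representative viewing from $o$. The crucial geometric input is the $A^+$-contraction inequality $d(ao,pao)\le d(o,po)$ for all $a\in A^+$, $p\in P$ (equivalently $d(o,a^{-1}pa\,o)\le d(o,po)$, which holds because conjugation by $a^{-1}\in A^-$ does not expand $MAN$). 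This gives $d(\ell ao,\ell pao)=d(ao,pao)\le d(o,po)=d(o,gko)=d(o,go)\le D_Q$, hence $d(\ell ao,ho)<R+D_Q$ and $\xi\in O_{R+D_Q}^{\theta}(o,ho)$. The second inclusion then follows by conjugating by $h$ via \eqref{go}, exactly as you describe. Without the Iwasawa step and the contraction inequality, the viewpoint-shift claim remains unproved, and the argument does not go through.
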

\begin{proof} Note that  $d(a o, p a o) \le
d(o,p o) $ for all $a\in A^+$ and $p\in P$.
    Let $g \in Q$ and $\xi \in O_R^{\theta}(g o, h o)$. Then for some $k \in K$ and $a \in A^+$, we have $\xi = g k P_{\theta}$ and $d(g k a o, h o) < R.$ Write  $g k = \ell p \in KP$ for $\ell\in K$ and $p\in P$ by the Iwasawa decomposition $G=KP$.
   Since $d(\ell a o, \ell p a o)\le D_Q$, we have
    $d( \ell a o, h o) \le d(\ell a o, \ell p a o) +d(gkao , ho) < D_Q + R $. Therefore $\xi\in 
     O_{R + D_Q}^{\theta}(o, h o)$, proving the first claim. The second claim 
 follows from the first by \eqref{go}.
\end{proof}

\begin{lemma} \label{lem.continuousshadow}
    Let $p \in X$, $\eta \in \F_{\i(\theta)}$ and $r > 0$. If a sequence $\eta_i \in \F_{\i(\theta)}$ converges to $\eta \in \F_{\i(\theta)}$, then for any $0 < \varepsilon < r$, we have \be\label{oo} O_{r - \varepsilon}^{\theta}(\eta_i, p) \subset O_r^{\theta}(\eta, p) \subset O_{r + \varepsilon}^{\theta}(\eta_i, p) \quad \text{for all large } i\ge 1.\ee 
\end{lemma}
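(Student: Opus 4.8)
The plan is to prove the two inclusions in \eqref{oo} separately, each by a contradiction/compactness argument that reduces to the known fact that shadows behave semicontinuously once we know how $go$ and $g^-$ vary for the defining group element $g$. The underlying point is that membership $\xi \in O_r^\theta(\eta,p)$ is witnessed by a group element $g$ with $g^- = \eta$, $g^+ = \xi$, and $d(go,p) < r$; the only subtlety is that $g$ is not unique and we must control $go$ in terms of $g^\pm$ and the viewpoint. The first tool I would assemble is the fact that the map $(\xi,\eta) \mapsto gL_\theta$ identifying $\F_\theta^{(2)} \simeq G/L_\theta$ is a homeomorphism (stated in the Gromov product subsection), together with Proposition \ref{Gromov} bounding $d(o, gL_\theta o)$ by $\|\mathcal G^\theta(g^+,g^-)\|$, and the continuity of the Busemann/Gromov maps. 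The second tool is Lemma \ref{lem.210inv} about convergence $g_i a_i p \to g\xi_\theta$ when $a_i \to \infty$ $\theta$-regularly.

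For the \textbf{second inclusion} $O_r^\theta(\eta,p) \subset O_{r+\e}^\theta(\eta_i,p)$ for large $i$: take $\xi \in O_r^\theta(\eta,p)$, so there is $g\in G$ with $g^- = \eta$, $g^+ = \xi$, $d(go,p) < r$. Using the homeomorphism $\F_\theta^{(2)} \simeq G/L_\theta$, I would choose for each $i$ an element $g_i \in G$ with $g_i^+ = \xi$ and $g_i^- = \eta_i$ such that $g_i L_\theta \to g L_\theta$; then $g_i o \cdot L_\theta o$ stays near $g o\cdot L_\theta o$, but I actually need $d(g_i o, p)$ close to $d(go,p)$, which requires choosing the $L_\theta$-representative correctly — pick $g_i = \ell_i g$ with $\ell_i \to e$ suitably, or more carefully use that $go \in g L_\theta o$ is a specific point and that convergence in $G/L_\theta$ can be lifted to convergence in $G$ after adjusting by $L_\theta$ on the right, choosing the adjustment so that the basepoint image $g_i o$ converges to $go$. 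Once $d(g_i o, p) \to d(go,p) < r$, we get $d(g_i o, p) < r < r+\e$ for large $i$ (in fact $< r$ already suffices here if we keep $g_i o \to go$; the $\e$ slack is the safety margin), hence $\xi = g_i^+ \in O_{r+\e}^\theta(\eta_i,p)$. The issue that $\xi$ ranges over an uncountable set is handled because the inclusion is a set inclusion — we need it for \emph{all} $\xi$ simultaneously for a \emph{single} large $i$ — so this pointwise argument must be upgraded via a compactness/uniformity argument: I would instead argue by contradiction, assuming $\xi_i \in O_r^\theta(\eta,p) \setminus O_{r+\e}^\theta(\eta_i,p)$ along a subsequence, pass to a convergent subsequence $\xi_i \to \xi_\infty \in O_r^\theta(\eta,p)$ (using that $O_r^\theta(\eta,p)$ is contained in a compact set — a closed shadow — and closedness of shadows), lift to $g_i$ with $g_i^+ = \xi_i$, $g_i^- = \eta_i$, $g_i L_\theta \to g_\infty L_\theta$ where $g_\infty^+ = \xi_\infty$, $g_\infty^- = \eta$, adjust basepoints so $g_i o \to g_\infty o$ with $d(g_\infty o, p) \le r$, hence $d(g_i o, p) < r + \e$ for large $i$, contradicting $\xi_i \notin O_{r+\e}^\theta(\eta_i,p)$. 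The one gap is that $\xi_\infty \in O_r^\theta(\eta,p)$ needs $d(g_\infty o, p) < r$ strictly, whereas a limit only gives $\le r$; this is why one works with the \emph{closed} shadow $\overline{O_r^\theta(\eta,p)} \subset O_{r'}^\theta(\eta,p)$ for any $r' > r$, or simply notes $d(g_\infty o,p) \le r < r+\e$ which is all that is needed for the target.

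For the \textbf{first inclusion} $O_{r-\e}^\theta(\eta_i,p) \subset O_r^\theta(\eta,p)$ for large $i$: again argue by contradiction. Suppose $\xi_i \in O_{r-\e}^\theta(\eta_i, p)$ but $\xi_i \notin O_r^\theta(\eta,p)$, along a subsequence. Lift: there is $g_i$ with $g_i^+ = \xi_i$, $g_i^- = \eta_i$, $d(g_i o, p) < r - \e$. Since $d(g_i o, p)$ is bounded, the $g_i L_\theta$ stay in a compact subset of $G/L_\theta$ by Proposition \ref{Gromov} (bounded Gromov product, hence bounded $d(o, g_i L_\theta o)$, hence bounded modulo $L_\theta$), so after passing to a subsequence $g_i L_\theta \to g_\infty L_\theta$; adjusting basepoints, $g_i o \to g_\infty o$ with $d(g_\infty o, p) \le r - \e < r$, and $g_\infty^- = \lim \eta_i = \eta$, $g_\infty^+ = \lim \xi_i =: \xi_\infty$. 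Then $\xi_\infty \in O_r^\theta(\eta, p)$ (strict inequality $r-\e < r$ available). But $\xi_i \to \xi_\infty$ and $\xi_i \notin O_r^\theta(\eta, p)$; since the complement of a shadow need not be closed, I instead use that $O_{r-\e/2}^\theta(\eta,p)$ is a neighborhood-type set: more precisely, repeat the limiting argument to conclude that for large $i$, $d(g_i o, p) < r$ would force $\xi_i \in O_r^\theta(\eta, p)$ directly — the cleaner route is: from $g_i o \to g_\infty o$ and $g_i^- \to \eta$, and $d(g_\infty o, p) < r$, for large $i$ we have $d(g_i o, p) < r$, and I can replace $\eta_i$ by $\eta$ in the defining data of the shadow by using the element $g_i' \in G$ with $(g_i')^+ = \xi_i$, $(g_i')^- = \eta$ close to $g_i$ (again lifting the convergence $(\xi_i, \eta_i) \to (\xi_i,\eta)$... but $\xi_i$ is also moving). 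To keep this honest I would organize the whole proof as a single contradiction: negating \eqref{oo} produces sequences $i_k \to \infty$, points $\xi_k$ violating one of the two inclusions; in either case lift to $g_k$, extract convergent subsequences of $g_k L_\theta$ (using Proposition \ref{Gromov} for boundedness), adjust to get $g_k o \to g_\infty o$, $g_k^\pm \to g_\infty^\pm = (\xi_\infty, \eta)$, and derive that $\xi_\infty$ lies strictly inside or strictly outside $O_r^\theta(\eta,p)$ in a way incompatible with the violated inclusion and with the convergence $\xi_k \to \xi_\infty$.

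I expect the \textbf{main obstacle} to be the basepoint-adjustment step: converting convergence $g_k L_\theta \to g_\infty L_\theta$ in $G/L_\theta$ into convergence $g_k o \to g_\infty o$ in $X$ with control of $d(g_k o, p)$. Since $L_\theta = A_\theta S_\theta$ is noncompact and $go$ genuinely depends on the $L_\theta$-representative (it is \emph{not} a function of $gL_\theta$ alone), one must track the actual representative $g_k$ coming from the shadow definition, and show that the constraint $d(g_k o, p) < r \mp \e$ pins down $g_k$ up to a compact ambiguity — this is exactly what Proposition \ref{Gromov} combined with Lemma \ref{lem.cptcartan} provides, but assembling it carefully (choosing $g_k \in K a_k K$-type normal forms and using $\theta$-regularity via Lemma \ref{lem.210inv} to identify the limiting flag) is the technical heart. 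A secondary, milder obstacle is the asymmetry between open shadows (defined by strict inequality $d(go,p) < R$) and their closures, which forces the $\pm\e$ bookkeeping; this is routine once one fixes the convention that limits give $\le$ and strict inequalities are recovered by enlarging the radius by less than $\e$.
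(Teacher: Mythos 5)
Your plan has the right core idea — lift shadow membership to a group element $g$ with $g^\pm=(\xi,\eta)$ and $d(go,p)<r$, then trade the flag $g^-=\eta$ for $\eta_i$ while controlling the basepoint — but the tools you single out for the ``main obstacle'' are not the ones that resolve it. The constraint $d(g_k o,p)<r\mp\e$ already places $g_k$ in a \emph{bounded subset of $G$} (since $\stab_G(o)=K$ is compact, the set $\{g:d(go,p)<R\}$ has compact closure in $G$), so no Gromov-product estimate, no Lemma \ref{lem.cptcartan}, and no passage to $G/L_\theta$ are needed to extract a convergent subsequence of the $g_k$. Similarly $\theta$-regularity and Lemma \ref{lem.210inv} play no role here: those are ingredients of the \emph{companion} Proposition \ref{lem.approxshadows}, where the viewpoints $q_i\in X$ are interior and diverge to $\eta$, so that a Cartan decomposition must be controlled. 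In the present lemma the viewpoints $\eta_i$ are already flags, and the only thing to track is how a lift with $-$-flag $\eta$ can be perturbed to one with $-$-flag $\eta_i$.

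The concrete mechanism you are circling around is the $N_\theta$-factorization. If $g^+=h^+$ and both $g^-, h^-$ exist, then $g^{-1}h\in P_\theta=L_\theta N_\theta$, so after absorbing the $L_\theta$-part into $h$ one gets $g=hn$ with $n\in N_\theta$; and $n\to e$ precisely when $h^-\to g^-$, because $n\mapsto n^-$ is an embedding of $N_\theta$ onto an open cell of $\F_{\i(\theta)}$. This hands you the basepoint control for free: $d(ho,go)\le d(no,o)\to 0$. The paper runs this argument directly for the second inclusion, with uniformity over $\xi$ coming from the boundedness of the lifts just noted, and then derives the \emph{first} inclusion from the second by a short equivariance trick you did not find: pick $k_i\in\stab_G(p)$ with $k_i\eta_i=\eta$, pass to a subsequence with $k_i\to k$ fixing $\eta$, apply the second inclusion to $k_i\eta\to\eta$, and translate both shadows by $k_i$ using \eqref{go}. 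This is much shorter than a second compactness argument. Finally, your hesitation that ``the complement of a shadow need not be closed'' is unwarranted: $O_r^\theta(\eta,p)$ is open in $\F_\theta$, being the image of the open subset $\{g:g^-=\eta,\ d(go,p)<r\}$ of the $\check P_\theta$-coset $\{g:g^-=\eta\}$ under the map $g\mapsto g^+$, which is open because the quotient by $L_\theta=\check P_\theta\cap P_\theta$ is open and $\check N_\theta\hookrightarrow\F_\theta$ is an open embedding; so once $\xi_\infty\in O_r^\theta(\eta,p)$ and $\xi_i\to\xi_\infty$, you get $\xi_i\in O_r^\theta(\eta,p)$ for large $i$ at once.
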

\begin{proof} 
We first prove the second inclusion.
  Let $g\in G$ be such that
  $g^+  \in O_r^{\theta}(\eta, p)$,  $g^- = \eta$ and $d(g o, p) < r$. 
 Since $\eta_i\to \eta$, we have $(g^+, \eta_i)\in \F_\theta^{(2)}$ for all large $i \ge 1$, and hence
 $(g^+, \eta_i)=(h_i^+, h_i^-)$ for some $h_i\in G$. In particular, $g =h_i q_i n_i$
 for $q_in_i \in L_\theta N_\theta =P_\theta$. By replacing  $h_i$ with $h_iq_i$,
 we may assume that $g = h_i n_i$. Since  $h_i^- \to  g^-$, we have $n_i^-\to e^-$, and hence $n_i\to e$ as $i\to \infty$. Therefore for all $i \ge 1$ large enough so that
 $d(n_io, o)\le \e$, we have $d(h_io, p) \le d(h_io, h_in_i o)+ d(g o, p)
 < \e +r$, and hence $g^+= h_i^+\in  O_{r + \varepsilon}^{\theta}(\eta_i, p) $.

To prove the first inclusion,  fix $k_i \in \stab_G(p)$ such that $k_i \eta_i = \eta$ for each $i \ge 1$. After passing to a subsequence, we may assume that the sequence $k_i$  converges to some $k\in \stab_G(p)$ as $i \to \infty$. Since $\eta_i \to \eta$, we have $k \eta = \eta$. In particular, the sequence $k_i \eta$ converges to $\eta$. Applying the second inclusion of \eqref{oo} to a sequence $k_i \eta$, we have
$$O_{r - \varepsilon}^{\theta}(k_i \eta_i, p) = O_{r - \varepsilon}^{\theta}(\eta, p) \subset O_{r}^{\theta}(k_i \eta, p) \quad \text{for all large } i \ge 1.$$
Since $k_i \in \stab_G(p)$, it follows from \eqref{go} that $O_{r - \varepsilon}^{\theta}(k_i \eta_i, p) = k_i O_{r - \varepsilon}^{\theta}(\eta_i, p) $ and $ O_{r}^{\theta}(k_i \eta, p) = k_i  O_{r}^{\theta}( \eta, p)$. This proves the first inclusion.
\end{proof}

We  show that for a fixed $p\in X$ and $\eta\in \F_{\i(\theta)}$,
shadows $O_r^{\theta}(\eta, p)$  vary continuously on a small neighborhood of $\eta$ in $G\cup \F_{\i(\theta)}$ (see \cite[Lem. 5.6]{LO_invariant} for $\theta = \Pi$):

\begin{prop}[Continuity of shadows on viewpoints] \label{lem.approxshadows}
    Let $p \in X$, $\eta \in \F_{\i(\theta)}$ and $r>0$.
    If a sequence $q_i \in X$ converges to $ \eta $ as $i \to \infty$, then for any $0<\e<r$, we have \be \label{eqn.approxshadows}
    O_{r - \varepsilon}^{\theta}(q_i, p) \subset O_r^{\theta}(\eta, p) \subset O_{r + \varepsilon}^{\theta}(q_i, p) \quad \text{for all large } i\ge 1.
    \ee
\end{prop}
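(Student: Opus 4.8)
The plan is to establish the two inclusions in \eqref{eqn.approxshadows} separately. I would first record two facts to be used repeatedly. (i) \emph{Interior property}: for any $\rho'>\rho>0$, $O_\rho^\theta(\eta,p)\subset\inte O_{\rho'}^\theta(\eta,p)$; indeed, if $g^-=\eta$ and $d(go,p)<\rho$, then for $\xi'$ near $g^+$ in $\F_\theta$ the pair $(\xi',\eta)$ still lies in the open set $\F_\theta^{(2)}$ and, via a local section of $G\to G/L_\theta\simeq\F_\theta^{(2)}$, has a representative $g'$ close to $g$, so $d(g'o,p)<\rho'$ and $\xi'\in O_{\rho'}^\theta(\eta,p)$. (ii) \emph{Equivariance}: the convergence of Definition \ref{fc} is $G$-equivariant, so $q_i\to\eta$ implies $hq_i\to h\eta$ for every fixed $h\in G$ (using Lemma \ref{lem.cptcartan} to preserve $\i(\theta)$-regularity together with continuity of the $G$-action on $\F_{\i(\theta)}$ and Lemma \ref{lem.211inv}(1)).

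For the inclusion $O_{r-\e}^\theta(q_i,p)\subset O_r^\theta(\eta,p)$ I would argue by contradiction using compactness -- here no witness needs to be built. If the inclusion fails along a subsequence, choose $\xi_j\in O_{r-\e}^\theta(q_{i_j},p)\setminus O_r^\theta(\eta,p)$ with a witness $g_j$: $g_jo=q_{i_j}$ and $g_ja_jo\in B(p,r-\e)$ for some $a_j\in A^+$. Passing to a subsequence, $\xi_j\to\xi_\infty$ in $\F_\theta$ and $g_ja_jo\to p^\ast$ with $d(p^\ast,p)\le r-\e$. Since $d(q_{i_j},p)\to\infty$ while $d(g_ja_jo,p)<r-\e$, Lemma \ref{lem.cptcartan} gives $\mu(a_j)=\mu(g_j^{-1}h_p)+O(1)$ (where $h_po=p$); combined with $\mu(g_j^{-1}h_p)=\i(\mu(h_p^{-1}g_j))$ and the fact that $h_p^{-1}g_j\to\infty$ is $\i(\theta)$-regular (by (ii)), this yields $\alpha(\mu(a_j))\to\infty$ for all $\alpha\in\theta$ and $\|\mu(a_j)\|\to\infty$. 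Moreover $g_ja_j$ stays in a fixed compact subset of $G$ (being a bounded orbit), so after a subsequence $g_ja_j\to h^\ast$ with $h^\ast o=p^\ast$. As $a_j\in A$, one has $(g_ja_j)^\pm=g_j^\pm$, hence $\xi_\infty=\lim g_j^+=(h^\ast)^+$ and $\lim g_j^-=(h^\ast)^-$. The key step is to identify $(h^\ast)^-$: writing $g_j=h^\ast\delta_ja_j^{-1}$ with $\delta_j\to e$ and $a_j^{-1}=w_0c_jw_0$ with $c_j\in A^+$ being $\i(\theta)$-regular and large, Lemma \ref{lem.211inv}(2) applied to $\delta_jw_0c_jo$ gives $\delta_ja_j^{-1}o\to w_0\xi_{\i(\theta)}$, so $q_{i_j}=h^\ast\delta_ja_j^{-1}o\to h^\ast w_0\xi_{\i(\theta)}=(h^\ast)^-$; since also $q_{i_j}\to\eta$, we get $(h^\ast)^-=\eta$. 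Therefore $h^\ast$ witnesses $\xi_\infty\in O_{r-\e/2}^\theta(\eta,p)\subset\inte O_r^\theta(\eta,p)$ by (i), contradicting $\xi_j\to\xi_\infty$ and $\xi_j\notin O_r^\theta(\eta,p)$.

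For the inclusion $O_r^\theta(\eta,p)\subset O_{r+\e}^\theta(q_i,p)$ a witness must be \emph{constructed}, and this is the main obstacle. Again arguing by contradiction, a failing subsequence gives $\xi_j\in O_r^\theta(\eta,p)\setminus O_{r+\e}^\theta(q_{i_j},p)$ with witnesses $g_j$ ($g_j^-=\eta$, $d(g_jo,p)<r$); these stay in a fixed compact subset of $G$, so $g_j\to h^\ast$ with $(h^\ast)^-=\eta$, $d(h^\ast o,p)\le r$ and $\xi_\infty:=\lim\xi_j=(h^\ast)^+$. It then suffices to prove that every $\xi'$ in a neighborhood of $\xi_\infty$ lies in $O_{r+\e}^\theta(q_{i_j},p)$ for all large $j$. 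Fixing $g^\ast$ with $(g^\ast)^-=\eta$, $(g^\ast)^+=\xi'$ and $d(g^\ast o,p)<r+\e/2$ (these range over a compact family as $\xi'$ ranges over the neighborhood) and transporting by $(g^\ast)^{-1}$ via \eqref{go}, one reduces to the model situation: $\eta=[\check{P}_\theta]$, target flag $=\xi_\theta$, $d(o,p')<r+\e/2$, and $q_i':=(g^\ast)^{-1}q_i\to[\check{P}_\theta]$. Here one must exhibit $h\in P_\theta$ with $ho=q_i'$ and $d(h^{-1}o,A^+o)<\e/2$; such an $h$ witnesses $\xi_\theta\in O_{r+\e}^\theta(q_i',p')$ because then $d(h\exp(v)o,p')\le d(h\exp(v)o,o)+d(o,p')<\e/2+(r+\e/2)=r+\e$ for a suitable $v\in\fa^+$. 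Since $P_\theta$ acts transitively on $X$ (as $NAo=X$), such an $h$ exists and is unique up to right translation by $M_\theta$; the heart of the matter is that $q_i'\to[\check{P}_\theta]$ forces -- for a suitable choice of the $M_\theta$-representative -- the ``angular part'' of $h^{-1}o$ to converge to that of $\xi_\theta$, hence $d(h^{-1}o,A^+o)\to0$. I would prove this by writing $q_i'$ in a Cartan form adapted to $\i(\theta)$ (Definition \ref{fc}, with rotational part converging into $w_0M_{\i(\theta)}$), using the identity $w_0M_{\i(\theta)}w_0=M_\theta$, Lemma \ref{lem.cptcartan}, and Lemma \ref{lem.211inv}(2), checking that the estimates are uniform over the compact family of base points $g^\ast$; combined with (i), this produces the required neighborhood.

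The genuine difficulty, and the reason the second inclusion is harder than Lemma \ref{lem.continuousshadow}, is that $q_i\to\eta$ does \emph{not} force $q_i$ to remain near any fixed flat asymptotic to $\eta$ (unlike in the rank one setting); the $\i(\theta)$-regular Cartan decomposition of $(g^\ast)^{-1}q_i$ supplied by Definition \ref{fc} is the substitute for such a flat, and keeping track of the compact ($M_\theta$-type) errors it introduces is the technical core of the argument.
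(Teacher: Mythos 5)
Your overall structure is genuinely different from the paper's: you argue by contradiction, pass to a limit witness $h^\ast$, and then — for the harder inclusion — reduce via a compact family of base points $g^\ast$ and the triangle inequality to the single model claim that for $q_i'\to [\check{P}_\theta]$ one can find $h\in P_\theta$ with $ho=q_i'$ and $d(h^{-1}o,A^+o)<\e/2$ eventually, uniformly in $g^\ast$. That reduction is valid and is a real conceptual simplification compared to the paper, which keeps $\xi=hP_\theta$ generic (only normalizing $hw_0\in P$ and $d(ho,o)<r$) and directly constructs $p_i\in P_\theta$ with $hp_io=a_io$ and $d(p_iA^+o,o)<\e$ via an explicit Iwasawa/Cartan computation. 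Your first-inclusion argument also checks out: identifying $\mu(a_j)$ with $\mu(g_j^{-1}h_p)+O(1)$ via Lemma \ref{lem.cptcartan}, extracting $\theta$-regularity from the $\i(\theta)$-regularity of $h_p^{-1}q_{i_j}\to h_p^{-1}\eta$, and using Lemma \ref{lem.211inv}(2) on $\delta_j w_0 c_j o$ to pin down $(h^\ast)^-=\eta$ is a correct (if more roundabout) route to the same conclusion the paper reaches directly.

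However, the proposal has a genuine gap at exactly the step the paper spends the most effort on. The reduced model claim — that $q_i'\to[\check{P}_\theta]$ in the sense of Definition \ref{fc} forces $\xi_\theta\in O_{\e/2}^\theta(q_i',o)$ for all large $i$, uniformly — is not proved; you dispatch it in one sentence (``writing $q_i'$ in a Cartan form adapted to $\i(\theta)$, \dots, Lemma \ref{lem.cptcartan}, Lemma \ref{lem.211inv}(2), checking uniformity''). This is precisely the content of the paper's computation, which has to (a) reduce, via \eqref{eqn.shadowisproj}, to the case where the $\Pi-\i(\theta)$ components of $\log a_i$ stay bounded (because $q_i\to\eta$ only gives $\i(\theta)$-regularity, not full regularity, so the Cartan form is genuinely degenerate in the complementary directions); (b) decompose $a_i^{-1}h=k_i\tilde a_i n_i\in KAN$, further split $\tilde a_i\in A_\theta B_\theta$ and $n_i\in (S_\theta\cap N)N_\theta$, and track the asymptotics of each factor after conjugation by the large $a_i$; and (c) produce the explicit $p_i$ with $d(p_iA^+o,o)\to 0$. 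None of this is visible in your sketch. Two more local issues: the ``suitable choice of the $M_\theta$-representative'' is a red herring, since $M_\theta$ normalizes $A^+$ and commutes with $A$, so $d(h^{-1}o,A^+o)$ is independent of the $M_\theta$-representative; and the claimed uniformity over the compact family $\{g^\ast\}$ is the kind of statement that must be checked against the fact that Definition~\ref{fc}-convergence is not \emph{a priori} uniform under translation by a compact set — it follows from Lemma~\ref{lem.cptcartan}, but it needs saying. As written, the proposal identifies the correct difficulty and outlines a plausible alternate scaffolding, but the central lemma is asserted rather than proved.
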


\begin{proof}
We first prove the second inclusion which requires more delicate arguments.
By \eqref{go} and the fact that $K$ acts transtively on $\F_{\i(\theta)}$,
we may assume without loss of generality that $\eta = P_{\i(\theta)}=w_0^-$ and $p = o$.
  Write  $q_i = k_i' a_i o$ with $k_i' \in K $ and $a_i\in A^+$ using Cartan decomposition. Since $q_i \to w_0^-$, we have
  $k_i' w_0^- \to w_0^-$ and $a_i \to \infty$ $\i(\theta)$-regularly. 
    
    By Lemma \ref{lem.continuousshadow}, we may assume $k_i'=e$ without loss of generality.
By \eqref{eqn.shadowisproj}, the claim follows if we replace $\theta$ by any subset containing $\theta$. Therefore we may assume without loss of generality that 
$\alpha(\log a_i)$ is uniformly bounded for all $\alpha \in \Pi - \i(\theta)$. 

    Let $\xi \in O_{r}^{\theta}(P_{\i(\theta)}, o)$, i.e., $\xi = hP_{\theta}$ for some $h \in G$ such that $d(h o, o) < r$ and $hw_0 P_{\i(\theta)} = P_{\i(\theta)}$. Since $P_{\i(\theta)} =P  M_{\i(\theta)}$ and $w_0^{-1} M_{i(\theta)} w_0=M_\theta$,
   we may assume $hw_0\in P$ by replacing $h$ with $hm$ for some $m\in M_\theta$.
   We need to show that for some $p_i \in P_\theta$ such that $hp_io=a_io$,
   $d(p_i A^+o,  o) < \e $; this then implies $d(h p_iA^+o, o)< r+\e$, and hence $\xi\in
   O_{r+\e}^{\theta}(a_i o, o) $.

   We start by writing $$a_i^{-1} h = {k}_i \tilde{a}_i {n}_i \in KAN ,\;\; \tilde a_i = c_i d_i \in A_\theta B_\theta \text{ and } n_i=u_iv_i\in  (S_\theta \cap N) N_\theta .$$
   As $hw_0\in P$ and $a_i \in A^+$, the sequence $a_i^{-1} h w_0 a_i$ is bounded.    
Since $$a_i^{-1} h w_0 a_i = (k_i w_0)(w_0^{-1} \tilde{a}_i w_0 a_i) (a_i^{-1} w_0^{-1} n_i w_0 a_i) \in KAN^+,$$ it follows that
both sequences $w_0^{-1} \tilde{a}_i w_0 a_i$ and $a_i^{-1}w_0^{-1}n_i w_0 a_i$ are bounded. 

Since $w_0^{-1}n_iw_0 = (w_0^{-1}u_i w_0)(w_0^{-1}v_iw_0) \in S_{\i(\theta)} N_{\i(\theta)}^+$ and $a_i \in A^+$ with $a_i \to \infty$ $\i(\theta)$-regularly, the boundedness of $a_i^{-1}w_0^{-1}n_i w_0 a_i$ implies that $v_i \to e$ as $i \to \infty$ and $u_i$ is bounded.
 On the other hand, the boundedness of $w_0^{-1} \tilde{a}_i w_0 a_i$ implies that $\tilde{a}_i \in w_0 a_i^{-1} w_0^{-1}A_C$ for some $C > 0$.
As $a_i \to \infty$ $\i(\theta)$-regularly, it follows
that $c_i \in A_{\theta}^+$ and $c_i \to \infty$ $\theta$-regularly. Moreover,
since $\max_{\alpha \in \Pi - \i(\theta)} \alpha(\log a_i)$ is uniformly bounded, the sequence $d_i$ is bounded.

As $d_iu_i \in S_{\theta}$, we may write its Cartan decomposition $d_iu_i = m_i b_i m_i' \in M_{\theta} B_{\theta}^+ M_{\theta}$. Since $c_i\to \infty$ $\theta$-regularly and
$d_iu_i$, and hence $b_i\in B_\theta^+$, is uniformly bounded, 
 we have $c_i b_i\in A^+$ for all large $i\ge 1$.
Set  $p_i= (m_i^{-1} \tilde a_i n_i)^{-1} \in P_\theta$. Recalling $a_i^{-1} h=k_i\tilde a_i n_i$,
we have $hp_i o = h n_i^{-1} \tilde{a}_i^{-1} o=a_i o$.
Moreover, we have 
$$p_i (c_i b_i) o = n_i^{-1} \tilde{a}_i^{-1} m_i c_i b_i m_i' o= n_i^{-1}\tilde{a}_i^{-1}c_i d_i u_i o= v_i^{-1} o$$
using the commutativity of $M_\theta$ and $A_\theta$ as well as the identity $m_ib_im_i'=d_i u_i$.
Since $v_i \to e$,
we have $d(p_i (c_ib_i)o, o)\to 0$.  This proves the second inclusion.

\medskip 
 We now prove the first inclusion.
    Similarly, as in the previous case, we may assume that $q_i = a_i o$ for $a_i \in A^+$ and $\eta = P_{\i(\theta)}$.  Let $\eta_i \in O_{r - \varepsilon}^{\theta}(a_i o, o)$, i.e.,  $\eta_i = a_i k_i P_{\theta}$ and $d(a_ik_ib_i o, o) < r - \varepsilon$ for some $k_i \in K$ and $b_i \in A^+$.
   Set $g_i = a_i k_i b_i$, which is a bounded sequence. 
 We will find  $n_i\in N_\theta$ such that
    $(g_in_i)^-= P_{\i(\theta)}$ and $d(g_in_io, o) < r$ from which $\eta_i \in O_r^{\theta}(\eta, o)$ follows.

We may assume that $g_i$ converges to some $g \in G$. Since $a_i\to \infty$ $\i(\theta)$-regularly,  the boundedness of $g_i = a_i k_i b_i$ together with Lemma \ref{lem.cptcartan} implies that $b_i \to \infty$ $\theta$-regularly. Since $a_i k_i \to P_{\i(\theta)}$ and $a_i k_i = g_i w_0 (w_0^{-1}b_i^{-1}w_0)w_0^{-1} \to gw_0 P_{\i(\theta)}$ as $i \to \infty$ by Lemma \ref{lem.210inv}, we have $$g w_0 P_{\i(\theta)} = P_{\i(\theta)}.$$
    
    On the other hand, as $i \to \infty$, we have $$g_i(P_{\theta}, w_0 P_{\i(\theta)}) \to g(P_{\theta}, w_0 P_{\i(\theta)}) = (gP_{\theta}, P_{\i(\theta)}).$$
    Hence for all large $i\ge 1$, $g_iP_{\theta}$ is in general position with $P_{\i(\theta)}$ and thus we have a sequence $h_i \in G$ such that $$(g_i P_{\theta}, P_{\i(\theta)}) = h_i(P_{\theta}, w_0 P_{\i(\theta)}).$$ 
    As $g_i P_{\theta} = h_i P_{\theta}$, we write $h_i = g_i n_i \ell_i$ for some $n_i \in N_{\theta}$ and $\ell_i \in L_{\theta}$.
Note that  $(g_in_i)^-=h_i^-=P_{\i(\theta)}$. We now claim that $n_i\to e$, from which $d(g_in_i o, o) \le d(g_i n_io, g_i o)+ d(g_io, o) < r$ follows for all large $i$.
    
    Since $h_i(P_{\theta}, w_0 P_{\i(\theta)}) = (g_iP_{\theta}, P_{\i(\theta)}) \to (gP_{\theta}, P_{\i(\theta)}) = g(P_{\theta}, w_0 P_{\i(\theta)})$, we have $h_i L_{\theta}=g_i n_iL_\theta \to g L_{\theta}$. Since $g_i \to g$ and $n_i\in N_\theta$, we have $n_i \to e$ as $i\to \infty$. This finishes the proof. 
    \end{proof}

\begin{lem}\label{shad_con}
Let $S>0$. For any sequence $g_i\to \infty$ in $G$ $\theta$-regularly,
the product $O_S^\theta(o, g_io)\times O_S^{\i(\theta)}(g_io, o)$ is precompact
in $\F_\theta^{(2)}$ for all sufficiently large
$i\ge 1$. 
\end{lem}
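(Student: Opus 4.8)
The plan is to reduce, via a $K$-translation, to the case where $g_i=a_i\in A^+$ with $\alpha(\log a_i)\to\infty$ for all $\alpha\in\theta$, and then to trap each of the two shadows inside a single fixed shadow ``viewed from a boundary point''. Concretely, writing the Cartan decomposition $g_i\in k_iA^+k_i'$ and using \eqref{go}, the product $O_S^\theta(o,g_io)\times O_S^{\i(\theta)}(g_io,o)$ is the diagonal $k_i$-translate of $O_S^\theta(o,a_io)\times O_S^{\i(\theta)}(a_io,o)$ where $a_i=\exp\mu(g_i)$; since $\F_\theta^{(2)}$ is $G$-invariant, it suffices to treat $g_i=a_i$. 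Here $\mu(g_i)=\log a_i$, so $\alpha(\log a_i)\to\infty$ for $\alpha\in\theta$ and, since $a_i^{-1}=w_0\exp(\i(\log a_i))w_0^{-1}$, also $\alpha(\mu(a_i^{-1}))=\i(\alpha)(\log a_i)\to\infty$ for $\alpha\in\i(\theta)$; by Definition \ref{fc} we get $a_io\to\xi_\theta=e^+$ in $\F_\theta$ and $a_i^{-1}o\to w_0\xi_{\i(\theta)}=e^-$ in $\F_{\i(\theta)}$.

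The trapping step is the heart of the argument. By \eqref{go}, $O_S^\theta(o,a_io)=a_i\,O_S^\theta(a_i^{-1}o,o)$, and since the viewpoint $a_i^{-1}o$ genuinely converges to $e^-$, Proposition \ref{lem.approxshadows} gives $O_S^\theta(a_i^{-1}o,o)\subset\mathcal{O}_1:=O_{S+1}^\theta(e^-,o)$ for all large $i$; hence $O_S^\theta(o,a_io)\subset a_i\mathcal{O}_1$. Similarly, since $a_io\to\xi_\theta$, Proposition \ref{lem.approxshadows} (with the roles of $\theta$ and $\i(\theta)$ exchanged) gives $O_S^{\i(\theta)}(a_io,o)\subset\mathcal{O}_2:=O_{S+1}^{\i(\theta)}(\xi_\theta,o)$ for all large $i$. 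Unwinding the definition of shadows viewed from boundary points, $\mathcal{O}_1=\{gP_\theta:g\in\check P_\theta,\ d(go,o)<S+1\}$ and $\mathcal{O}_2=\{pw_0P_{\i(\theta)}:p\in P_\theta,\ d(po,o)<S+1\}$. Since $\check P_\theta$ and $P_\theta$ act transitively on $X$ with compact isotropy group $M_\theta$ at $o$, the sets $\{g\in\check P_\theta:d(go,o)\le S+1\}$ and $\{p\in P_\theta:d(po,o)\le S+1\}$ are compact; because $\check P_\theta\cdot eP_\theta=\check N_\theta\cdot eP_\theta=\{\xi\in\F_\theta:(\xi,e^-)\in\F_\theta^{(2)}\}$ (using \eqref{op} and $L_\theta\subset P_\theta=\Stab(eP_\theta)$) and $P_\theta\cdot e^-=\{\eta\in\F_{\i(\theta)}:(\xi_\theta,\eta)\in\F_\theta^{(2)}\}$ is the opposite big Bruhat cell, it follows that $\overline{\mathcal{O}_1}$ is a compact subset of the first open set and $\overline{\mathcal{O}_2}$ a compact subset of the second.

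To finish, I would combine the two traps with the contraction dynamics of $A^+$ on $\F_\theta$. For $\check n$ in a compact subset of $\check N_\theta$ we have $a_i\check n\cdot eP_\theta=(a_i\check n a_i^{-1})\cdot eP_\theta\to eP_\theta=\xi_\theta$ uniformly, since $a_i\check N_\theta a_i^{-1}\to e$ as $a_i\to\infty$ $\theta$-regularly; hence $a_i\overline{\mathcal{O}_1}\to\{\xi_\theta\}$ in the Hausdorff sense. As $\{\xi_\theta\}\times\overline{\mathcal{O}_2}\subset\F_\theta^{(2)}$ with $\overline{\mathcal{O}_2}$ compact and $\F_\theta^{(2)}$ open, the tube lemma yields an open set $V\ni\xi_\theta$ with $V\times\overline{\mathcal{O}_2}\subset\F_\theta^{(2)}$; thus for all large $i$ one has $a_i\overline{\mathcal{O}_1}\subset V$, and therefore
\[
O_S^\theta(o,a_io)\times O_S^{\i(\theta)}(a_io,o)\ \subset\ a_i\overline{\mathcal{O}_1}\times\overline{\mathcal{O}_2}\ \subset\ \F_\theta^{(2)},
\]
with $a_i\overline{\mathcal{O}_1}\times\overline{\mathcal{O}_2}$ compact, which is exactly the asserted precompactness in $\F_\theta^{(2)}$. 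The main obstacle is the trapping step: a direct Cartan-decomposition analysis of $O_S^\theta(o,a_io)$ is awkward (in particular because of the non-compact part $S_\theta$ of $L_\theta$), and the point is to pass instead to the translate $O_S^\theta(a_i^{-1}o,o)$, whose viewpoint really does converge in $\F_{\i(\theta)}$, so that Proposition \ref{lem.approxshadows} applies, and then to recognize the resulting boundary shadows as compact pieces of the open Bruhat cells.
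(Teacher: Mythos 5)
Your proof is correct, and it takes a genuinely different route from the paper's. The paper argues by sequential compactness: it picks an arbitrary sequence $(\xi_i,\eta_i)$ with $(\xi_i,\eta_i)\in O_S^\theta(o,g_io)\times O_S^{\i(\theta)}(g_io,o)$, uses the Cartan decomposition of $g_i$ together with Lemma \ref{lem.210inv} to show that any subsequential limit of $\xi_i$ must equal $\xi:=\lim_i g_io$, and then applies Proposition \ref{lem.approxshadows} once, to the second factor, to conclude that any subsequential limit $\eta$ of $\eta_i$ lies in (the closure of) $O_{2S}^{\i(\theta)}(\xi,o)$, hence $(\xi,\eta)\in\F_\theta^{(2)}$. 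Your argument instead reduces to $g_i=a_i\in A^+$ by a diagonal $K$-translation, applies Proposition \ref{lem.approxshadows} \emph{twice} — once to the viewpoint $a_i^{-1}o\to e^-$ and once to $a_io\to\xi_\theta$ — to trap the two shadows inside fixed boundary shadows $\mathcal O_1=O_{S+1}^\theta(e^-,o)$ and $\mathcal O_2=O_{S+1}^{\i(\theta)}(\xi_\theta,o)$, identifies $\overline{\mathcal O_1}$ and $\overline{\mathcal O_2}$ as compact pieces of the open Bruhat cells opposite to $e^-$ and $\xi_\theta$, and then finishes by the contraction $a_i\check N_\theta a_i^{-1}\to e$ together with the tube lemma. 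This produces an explicit compact set $a_i\overline{\mathcal O_1}\times\overline{\mathcal O_2}\subset\F_\theta^{(2)}$ containing the product, which is more constructive; the price is the explicit Bruhat-cell/contraction analysis, which the paper's subsequential argument bypasses. Both proofs hinge on Proposition \ref{lem.approxshadows} as the key geometric input. One small point worth recording for completeness: in step (8), the fact that $\overline{\mathcal O_1}$ lands in the cell relies on the map $\{g\in\check P_\theta: d(go,o)\le S+1\}\to\F_\theta$ having compact image inside $\check N_\theta\cdot\xi_\theta$ (and likewise for $\mathcal O_2$), which you correctly obtain from the transitivity of $\check P_\theta$ and $P_\theta$ on $X$ with compact stabilizer of $o$; this is the step that replaces the role of Lemma \ref{shad_con}'s implicit use of $\overline{O_r^{\i(\theta)}(\xi,o)}\subset O_{r'}^{\i(\theta)}(\xi,o)$ for $r<r'$ in the paper's version.
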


\begin{proof} Consider an infinite sequence $(\xi_i, \eta_i) \in O_S^{\theta}(o, g_i o) \times O_S^{\i(\theta)}(g_i o, o)$. By the $\theta$-regularity of $g_i\to \infty$, we have
$g_i o \to \xi$ as $i \to \infty$ for some $\xi \in \F_{\theta}$, after passing to a subsequence. For each $i$, we write $\xi_i = k_i P_{\theta}$ for $k_i \in K$ such that $d(k_i a_i o, g_i o) < S$ for some $a_i \in A^+$. In particular, $a_i \to \infty$ $\theta$-regularly. After passing to a subsequence, we may assume that $k_i \to k \in K$ so that $k_i a_i o \to kP_{\theta}$ as $i \to \infty$. On the other hand,  the boundedness of $d(k_i a_i o, g_i o) < S$ implies that $k_ia_i o \to \xi$ by Lemma \ref{lem.210inv}. Therefore, $\xi = kP_{\theta} = \lim_i \xi_i$.
By passing to a subsequence, we may assume that $\eta_i \to \eta$ for some $\eta \in \F_{\i(\theta)}$. Since $g_io\to \xi$,
and $\eta_i\in  O_S^{\i(\theta)}(g_i o, o)$, it follows from  Proposition \ref{lem.approxshadows} that $\eta \in
 O_{2S}^{\i(\theta)}(\xi, o)$. In particular, $(\xi, \eta) \in \F_{\theta}^{(2)}$. 
\end{proof}

\section{Growth indicators and conformal measures on $\F_\theta$}

In this section, we review the notion of $\theta$-growth indicators and discuss their influence 
on conformal measures on the $\theta$-boundary.

\medskip

Let $\Ga<G$ be a Zariski dense discrete subgroup.
We say that $\Ga $ is $\theta$-discrete if the restriction $\mu_{\theta}|_{\Ga} : \Ga \to \fa_{\theta}^+$ is a proper map. Observe that $\Ga$ is $\theta$-discrete if and only if the counting measure on $\mu_\theta(\Gamma)$ weighted with multiplicity is locally finite
i.e., finite on compact subsets.
 Following Quint's notion of growth indicators \cite{Quint2002divergence}, we have
 introduced the following in \cite{KOW_indicators}:
 \begin{definition}[$\theta$-growth indicator]  For a $\theta$-discrete subgroup $\Ga<G$,
we define the $\theta$-growth indicator $\psi_\Ga^{\theta}:\fa_\theta\to [-\infty, \infty] $ as follows: if $u \in \fa_\theta$ is non-zero,
\be\label{gi2} \psi_\Ga^{\theta}(u)=\|u\| \inf_{u\in \cal C}
\tau^\theta_{\mathcal C}\ee 
where $\cal C\subset \fa_\theta$ ranges over all open cones containing $u$, and $\psi_{\Ga}^{\theta}(0) = 0$.
Here $-\infty\le \tau^{\theta}_{\cal C}\le \infty$ is the abscissa of convergence of $s \mapsto \sum_{\ga\in \Ga, \mu_\theta(\ga)\in \mathcal C} e^{-s\|\mu_\theta(\ga)\|}$.
\end{definition}
We showed (\cite[Thm. 3.3]{KOW_indicators}):
\begin{itemize}
    \item $\psi_\Ga^{\theta} <\infty$;
\item $\psi_\Ga^\theta$ is  upper semi-continuous and concave; 
\item $\L_\theta=\{\psi_\Ga^{\theta}\ge 0\}=\{\psi_\Ga^{\theta}>-\infty\}$, and
    $\psi_\Ga^{\theta}>0$ on $\inte \L_\theta$ .
\end{itemize}

Let $\psi\in \fa_\theta^*$. Recall that a $(\Ga, \psi)$-conformal measure  $\nu$ 
is a Borel probability measure on $\F_\theta$ such that $$\frac{d \gamma_*\nu}{d\nu}(\xi)=e^{\psi(\beta_\xi^\theta(e,\gamma))} \quad \text{for all $\gamma \in \Gamma$ and $ \xi \in \mathcal{F}_\theta$}. $$
 A linear form $\psi \in \fa_{\theta}^*$ is said to be tangent to $\psi_{\Ga}^{\theta}$ at $v \in \fa_{\theta} - \{0\}$ if $\psi \ge \psi_{\Ga}^{\theta}$ and $\psi(v) = \psi_{\Ga}^{\theta}(v)$. 
\begin{prop}[{\cite[Thm. 8.4]{Quint2002Mesures}, \cite[Prop. 5.8]{KOW_indicators}}] \label{qc}
 For any $\psi \in \fa_{\theta}^*$ which is tangent to $\psi_{\Ga}^{\theta}$ at an interior direction of  $\fa_{\theta}^+$, there exists a $(\Ga, \psi)$-conformal measure supported on $\La_\theta$.
\end{prop}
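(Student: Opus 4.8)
The plan is to run a Patterson--Sullivan construction adapted to the $\fa_\theta$-valued Cartan projection $\mu_\theta$ and the functional $\psi$. Fix an interior direction $v\in\inte\fa_\theta^+$ at which $\psi$ is tangent to $\psi_\Ga^\theta$, so that $\psi\ge\psi_\Ga^\theta$ everywhere and $\psi(v)=\psi_\Ga^\theta(v)$; since $v$ is interior, $\psi_\Ga^\theta$ is finite (indeed positive, as $v\in\inte\L_\theta$) and, being concave and upper semicontinuous, is continuous near $v$. Consider the $\psi$-Poincar\'e series
\[
\mathcal P_\psi(s)=\sum_{\ga\in\Ga}e^{-s\,\psi(\mu_\theta(\ga))},\qquad s>0,
\]
and the first task is to show its abscissa of convergence $\delta_\psi$ equals $1$. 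The bound $\delta_\psi\le 1$ follows from $\psi\ge\psi_\Ga^\theta$: covering $\fa_\theta^+$ by finitely many narrow open cones $\mathcal C$ and using that on each such cone $\psi_\Ga^\theta(u)\le\|u\|\,\tau^\theta_{\mathcal C}$ by \eqref{gi2}, one estimates $\mathcal P_\psi(s)$ by a finite sum of convergent series for $s>1$. For $\delta_\psi\ge 1$ I would use the interior tangency: on a small cone $\mathcal C\ni v$ we have $\psi|_{\mathcal C}\le\psi_\Ga^\theta(v)+\e$ on unit vectors by continuity, while the exponential growth rate of $\#\{\ga:\mu_\theta(\ga)\in\mathcal C,\ \|\mu_\theta(\ga)\|\le R\}$ is $\ge\psi_\Ga^\theta(v)-\e$; summing over these $\ga$ shows $\mathcal P_\psi(s)=\infty$ for $s<1$. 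Hence $\delta_\psi=1$.

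Next comes the measure itself. For $s>1$ set
\[
\nu_s=\frac{1}{\mathcal P_\psi(s)}\sum_{\ga\in\Ga}e^{-s\,\psi(\mu_\theta(\ga))}\,\mathsf D_\ga,
\]
where $\mathsf D_\ga$ is a probability measure on $\F_\theta$ supported in a shadow $O_R^\theta(o,\ga o)$ of fixed bounded radius $R$ around the boundary projection of $\ga o$ (e.g.\ the push-forward of a fixed bump on $\F_\theta\simeq K/M_\theta$ by a $K$-component of $\ga$). If $\mathcal P_\psi(1)<\infty$ I would first apply Patterson's trick, inserting a weight $h(\|\mu_\theta(\ga)\|)$ for a slowly increasing $h$ so that the modified series keeps abscissa $1$ but now diverges there; the effect of $h$ disappears in the limit. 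The $\nu_s$ are probability measures on the compact space $\F_\theta$, so a weak-$*$ subsequential limit $\nu=\lim_{s\to 1^+}\nu_s$ exists. That $\supp\nu\subset\La_\theta$ follows because the denominator diverges as $s\to 1^+$: the contribution of $\{\ga:\ga o\in Qo\}$ for any fixed compact $Q\subset G$ goes to $0$, so all mass escapes to $\F_\theta$, and by $\theta$-regularity of $\Ga$ together with Definition \ref{def.limitset} any accumulation point of $\{\ga o\}$ on $\F_\theta$ lies in $\La_\theta$; the precompactness of shadows (Lemmas \ref{lem.approxshadows} and \ref{shad_con}) localizes the escaping mass into arbitrarily small neighbourhoods of $\La_\theta$.

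Finally, to check $(\Ga,\psi)$-conformality, fix $\ga_0\in\Ga$ and compare $\ga_{0*}\nu_s$ with $\nu_s$ termwise via $\mu_\theta(\ga_0^{-1}\ga)$: by Lemma \ref{lem.buseandcartan}, for $\xi$ in the relevant shadow $\|\beta^\theta_\xi(\ga_0 o,\ga o)-\underline a_\theta(\ga_0 o,\ga o)\|\le\kappa R$, and the additive cocycle identity for $\beta^\theta$ converts the reindexed weights into $e^{s\psi(\beta^\theta_\xi(e,\ga_0))}$ up to a uniformly bounded multiplicative error; letting $s\to 1^+$ and then shrinking $R$ (refining the bumps $\mathsf D_\ga$) upgrades this to the exact Radon--Nikodym identity $\frac{d\ga_{0*}\nu}{d\nu}(\xi)=e^{\psi(\beta^\theta_\xi(e,\ga_0))}$. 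I expect the main obstacle to be the lower bound $\delta_\psi\ge 1$: extracting divergence of the $\psi$-series at $s=1$ from interior tangency is precisely where the concavity and upper semicontinuity of $\psi_\Ga^\theta$, and above all the fact that $v$ is an \emph{interior} direction (so $\psi_\Ga^\theta$ is finite, positive and continuous near $v$ and the conical growth rate genuinely attains $\psi_\Ga^\theta(v)$), are indispensable --- the hypothesis of the proposition is exactly what this argument needs. A secondary technical point is ensuring, uniformly in $s$, that the shadows carrying $\mathsf D_\ga$ stay precompact in $\F_\theta^{(2)}$ so the error terms in the conformality computation are genuinely bounded; this is what the continuity-of-shadows results of the previous section provide.
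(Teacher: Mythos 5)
The paper does not prove this proposition; it is quoted from \cite[Thm.\ 8.4]{Quint2002Mesures} and \cite[Prop.\ 5.8]{KOW_indicators}. Your outline --- a Patterson--Sullivan construction of $\nu$ as a weak-$*$ limit of measures weighted by $e^{-s\psi(\mu_\theta(\ga))}$, together with Patterson's trick and a shadow estimate to verify conformality --- is indeed the strategy of those references, so the overall architecture is right.

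However, your justification of $\delta_\psi\le 1$ has the inequality pointing the wrong way. From $\psi\ge\psi_\Ga^\theta$ one gets $e^{-s\psi(\mu_\theta(\ga))}\le e^{-s\psi_\Ga^\theta(\mu_\theta(\ga))}$, but the bound $\psi_\Ga^\theta(u)\le\|u\|\tau^\theta_{\mathcal C}$ you invoke from \eqref{gi2} only yields $e^{-s\psi_\Ga^\theta(\mu_\theta(\ga))}\ge e^{-s\tau^\theta_{\mathcal C}\|\mu_\theta(\ga)\|}$, a \emph{lower} bound on each term, which is useless for convergence. The genuine argument has to shrink each cone until $\tau^\theta_{\mathcal C}$ is $\e$-close to $\psi_\Ga^\theta$ on it and then show $\psi$ stays strictly above $\tau^\theta_{\mathcal C}/s$ there; this is delicate near $\partial\L_\theta$, where $\psi_\Ga^\theta$ may vanish and the tangency hypothesis supplies no positive lower bound on $\psi$. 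The clean route, which the cited references effectively take, is to run the whole construction over $\ga$ with $\mu_\theta(\ga)$ in one narrow cone about $v$, where $\psi_\Ga^\theta(v)>0$ governs both directions of the abscissa estimate --- and note that $\psi_\Ga^\theta(v)>0$ requires $v\in\inte\L_\theta$, which you assert but which does not follow automatically from $v\in\inte\fa_\theta^+$ together with finiteness of $\psi_\Ga^\theta(v)$. A secondary issue: with fixed-radius bumps, Lemma \ref{lem.buseandcartan} yields conformality only up to a multiplicative error $e^{O(\|\psi\|\kappa R)}$ that does not vanish as $s\to 1^+$, and ``shrinking $R$ afterwards'' would require rebuilding the measures from scratch. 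Using Dirac masses at $\ga o$ (or their $K$-pushforwards to $\F_\theta$) instead, the cocycle $\beta^\theta_\xi(e,\ga_0)$ appears \emph{exactly} in the limit via Lemma \ref{lem.210inv}, which is how the references close the argument.
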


Recall that
$\Ga$ is called {\it $\theta$-transverse}, if 
\begin{itemize}
    \item   $\Ga$ is {\it $\theta$-regular}, i.e.,
$ \liminf_{\ga\in \Ga} \alpha(\mu({\ga}))=\infty $ for all $\alpha\in \theta$; and
\item $\Ga$ is  {\it $\theta$-antipodal}, i.e.,
any distinct $\xi, \eta\in \La_{\theta\cup \i(\theta)}$ 
are in general position.   \end{itemize}

Recall also that $\psi\in \fa_\theta^*$ is
 $(\Ga, \theta)$-proper if  $\psi \circ \mu_{\theta}|_{\Ga}$ is a proper map into $[-\e, \infty)$
 for some $\e>0$.
\begin{theorem}[{\cite[Thm. 8.1]{Quint2002Mesures} for $\theta=\Pi$,
\cite[Thm. 7.1]{KOW_indicators} in general}] \label{tgg}
    Let $\Ga$ be a Zariski dense $\theta$-transverse subgroup of $G$. If there exists a $(\Ga, \psi)$-conformal measure $\nu$ on $\F_{\theta}$ for a $(\Ga, \theta)$-proper $\psi \in \fa_{\theta}^*$, then 
    $$
    \psi\ge \psi_{\Ga}^{\theta}.
    $$
     Moreover, if $\sum_{\ga \in \Ga} e^{-\psi(\mu_{\theta}(\ga))} = \infty$ in addition, then the abscissa of convergence of $s \mapsto \sum_{\ga \in \Ga} e^{-s \psi(\mu_{\theta}(\ga))}$ is equal to one.
\end{theorem}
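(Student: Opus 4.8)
\textbf{Proof plan for Theorem \ref{tgg}.}

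The plan is to establish the inequality $\psi \ge \psi_\Ga^\theta$ by a contradiction/measure-theoretic argument via the Borel--Cantelli/shadow lemma philosophy, and then to derive the ``abscissa equals one'' statement as a consequence of the first part combined with the definition of the $\theta$-growth indicator. I will assume the basic facts recorded in the excerpt: the shadow lemma relating $\beta_\xi^\theta$ to $\underline a_\theta$ (Lemma \ref{lem.buseandcartan}), the precompactness of products of shadows for $\theta$-regular sequences (Lemma \ref{shad_con}), the continuity of shadows (Proposition \ref{lem.approxshadows}), the identification of $\La_\theta$ as the unique $\Ga$-minimal subset of $\F_\theta$, and the fact that $\mu_\theta|_\Ga$ is proper (which follows from $\theta$-regularity, so $\Ga$ is $\theta$-discrete and $\psi_\Ga^\theta$ is well-defined and finite).

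\textbf{Step 1 (The inequality $\psi \ge \psi_\Ga^\theta$).} Suppose, for contradiction, that $\psi(u) < \psi_\Ga^\theta(u)$ for some nonzero $u \in \inte \L_\theta$; by concavity and upper semicontinuity of $\psi_\Ga^\theta$ and openness of the condition it suffices to rule this out on a dense set of directions. Fix a $(\Ga, \psi)$-conformal measure $\nu$ on $\La_\theta$. Using conformality, for $\ga \in \Ga$ and a shadow $O = O_R^\theta(o, \ga o)$ one estimates
$$\nu(\ga O_R^\theta(\ga^{-1}o, o)) = \int_{O_R^\theta(\ga^{-1}o,o)} e^{\psi(\beta_\xi^\theta(e,\ga))}\, d\nu(\xi),$$
and by Lemma \ref{lem.buseandcartan} the Busemann cocycle $\beta_\xi^\theta(e,\ga)$ is within $O(R)$ of $-\underline a_\theta(o,\ga o) = -\mu_\theta(\ga^{-1})= -\i(\mu_\theta(\ga))$ on the relevant shadow, so up to a multiplicative constant $C_R$ depending only on $R$,
$$\nu\big(O_R^\theta(o,\ga o)\big) \asymp_{C_R} e^{-\psi(\mu_\theta(\ga))}.$$
Now one plays the standard game: the shadows $O_R^\theta(o,\ga o)$ over $\ga\in\Ga$ with $\mu_\theta(\ga)$ in a thin cone around $u$ cover $\La_\theta$ with controlled overlap (using $\theta$-antipodality and the precompactness Lemma \ref{shad_con} to bound multiplicity), while $\theta$-regularity guarantees they shrink. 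If $\sum_\ga e^{-\psi(\mu_\theta(\ga))}$ converges for $\psi$ too small, Borel--Cantelli forces $\nu$ to charge only a $\limsup$ set of measure zero, contradicting $\nu(\La_\theta)=1$; conversely if it diverges while $\psi < \psi_\Ga^\theta$ on a cone, comparing with the definition \eqref{gi2} of $\psi_\Ga^\theta$ as the abscissa of convergence of the conical series yields a contradiction. Either way one concludes $\psi \ge \psi_\Ga^\theta$ everywhere. (This is essentially the argument of \cite[Thm. 8.1]{Quint2002Mesures}, \cite[Thm. 7.1]{KOW_indicators}, which I would cite for the overlap bookkeeping; the new technical point relative to $\theta = \Pi$ is handling the non-compact $S_\theta$ in the shadow estimates, but the lemmas of Section 3 are designed precisely to absorb this.)

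\textbf{Step 2 (Abscissa equals one when the series diverges).} Assume additionally $\sum_{\ga\in\Ga} e^{-\psi(\mu_\theta(\ga))} = \infty$. Let $s_0$ be the abscissa of convergence of $s \mapsto \sum_\ga e^{-s\psi(\mu_\theta(\ga))}$. Divergence at $s=1$ gives $s_0 \ge 1$. For the reverse inequality, observe that since $\psi \ge \psi_\Ga^\theta$ by Step 1 and $\psi_\Ga^\theta(v) = \|v\|\,\psi_\Ga^\theta(v/\|v\|) \le \|v\|\sup_{\|w\|=1,\, w\in\L_\theta}\psi_\Ga^\theta(w) =: \|v\|\,\delta_\psi'$ — more precisely, using that $\psi$ is tangent-type data one shows $\psi(\mu_\theta(\ga)) \le s\,\psi(\mu_\theta(\ga))$ fails to help directly, so instead: for any $s>1$, write $s\psi = \psi + (s-1)\psi$ and use $(\Ga,\theta)$-properness to get $(s-1)\psi(\mu_\theta(\ga)) \ge (s-1)\big(\epsilon' \|\mu_\theta(\ga)\| - c\big)$ for constants $\epsilon', c>0$ (properness into $[-\e,\infty)$ together with $\psi \ge \psi_\Ga^\theta \ge 0$ on $\L_\theta$ gives linear growth of $\psi\circ\mu_\theta$ in $\|\mu_\theta(\ga)\|$ along $\L_\theta$). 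Hence
$$\sum_{\ga} e^{-s\psi(\mu_\theta(\ga))} \le e^{(s-1)c}\sum_{\ga} e^{-\psi(\mu_\theta(\ga))} e^{-(s-1)\epsilon'\|\mu_\theta(\ga)\|},$$
and since $\Ga$ is $\theta$-discrete the counting function of $\{\ga : \|\mu_\theta(\ga)\| \le t\}$ grows at most exponentially, so the extra factor $e^{-(s-1)\epsilon'\|\mu_\theta(\ga)\|}$ makes the sum converge for $s>1$. Thus $s_0 = 1$.

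\textbf{Expected main obstacle.} The delicate point is Step 1's bookkeeping of the overlap multiplicity of the shadows $O_R^\theta(o,\ga o)$ in the presence of non-compact $S_\theta$: one must ensure that the number of $\ga$ whose shadows at a given ``scale'' (i.e.\ with $\|\mu_\theta(\ga)\|$ in a dyadic band and direction near $u$) all meet a fixed ball in $\F_\theta$ is bounded independently of the scale. This is where $\theta$-antipodality of $\La_{\theta\cup\i(\theta)}$ and Lemma \ref{shad_con} enter — they guarantee that the complementary shadows $O_S^{\i(\theta)}(\ga o, o)$ stay in a fixed compact subset of $\F_\theta^{(2)}$, which converts overlap of forward shadows into a bounded-covering statement on $\F_{\i(\theta)}$. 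Making this quantitative, uniformly over scales, is the technical heart; the rest is the classical Sullivan shadow-lemma argument adapted to the $\fa_\theta$-valued setting.
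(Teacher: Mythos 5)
Note first that the paper does not actually prove this theorem: it is imported from \cite[Thm.~8.1]{Quint2002Mesures} (for $\theta=\Pi$) and \cite[Thm.~7.1]{KOW_indicators} (in general), so there is no in-paper proof to compare against; your write-up is effectively a proposal for the external result, and the high-level Patterson--Sullivan philosophy you invoke is indeed the right one.

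That said, two steps do not go through as stated. In Step 1, the clause ``Borel--Cantelli forces $\nu$ to charge only a $\limsup$ set of measure zero, contradicting $\nu(\La_\theta)=1$'' is not a contradiction: the $\limsup$ of the shadows is $\La_\theta^{\mathsf{con}}$, not $\La_\theta$, and there is no a priori reason that $\nu(\La_\theta^{\mathsf{con}})=1$ --- deciding when a conformal measure gives full mass to the conical set is precisely the delicate dichotomy these arguments are meant to settle, so you cannot assume it. What replaces Borel--Cantelli is the shell-multiplicity bound (Proposition~\ref{prop.mult}): the shadow lower estimate $\nu(O_R^\theta(o,\ga o)) \gg e^{-\psi(\mu_\theta(\ga))}$ combined with bounded multiplicity of the shadows in a $\psi$-shell $T \le \psi(\mu_\theta(\ga)) < T+1$ forces the per-shell count $\#\{\ga : T \le \psi(\mu_\theta(\ga)) < T+1\} \ll e^T$, and if $\psi < \psi_\Ga^\theta$ on an open subcone of $\L_\theta$ then the directional counting rate there exceeds this bound --- that is the actual contradiction. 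Your ``either way one concludes'' also reads as a case split between two cases that are not exhaustive.

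Step 2 has a separate gap. The linear lower bound $\psi(\mu_\theta(\ga)) \ge \epsilon'\|\mu_\theta(\ga)\| - c$ does not follow from $(\Ga,\theta)$-properness together with $\psi \ge 0$ on $\L_\theta$: properness only guarantees $\psi(\mu_\theta(\ga)) \to +\infty$ without controlling the rate, and since $\psi$ can vanish on $\partial\L_\theta$ (the paper only cites \cite[Lem.~4.3]{KOW_indicators} for positivity on $\inte\L_\theta$), a sequence with $\mu_\theta(\ga_i)/\|\mu_\theta(\ga_i)\|$ drifting toward a null direction of $\psi$ on the boundary can have $\psi(\mu_\theta(\ga_i))/\|\mu_\theta(\ga_i)\| \to 0$. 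The clean route to the abscissa being $\le 1$ is the same per-shell bound as above: from $\#\{\ga : T \le \psi(\mu_\theta(\ga)) < T+1\} \ll e^T$ one gets $\sum_{\ga} e^{-s\psi(\mu_\theta(\ga))} \ll \sum_{T\ge T_0} e^{(1-s)T} < \infty$ for $s>1$ directly, and this combined with the assumed divergence at $s=1$ gives abscissa exactly one.
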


\subsection*{Shadow lemma}
The following is an analog of Sullivan's shadow lemma for $\Ga$-conformal measures on $\F_{\theta}$ which was proved in \cite[Lem. 7.2]{KOW_indicators}.

\begin{lemma}[Shadow lemma] \label{lem.shadowlemma}
Let $\nu$ be a $(\Ga, \psi)$-conformal measure on $\F_{\theta}$. We have the following:
\begin{enumerate}
    \item for some $R = R(\nu) > 0$, we have $c := \inf_{\ga \in \Ga} \nu(O_R^{\theta}(\ga o, o)) > 0$; and
    \item for all $r \ge R$ and for all $\ga \in \Ga$, \be\label{ineq} c e^{-\|\psi\|\kappa r} e^{-\psi(\mu_{\theta}(\ga))} \le \nu(O_r^{\theta}(o, \ga o)) \le e^{\|\psi\|\kappa r} e^{-\psi(\mu_{\theta}(\ga))}\ee where $\kappa > 0$ is the constant given in Lemma \ref{lem.buseandcartan}.
\end{enumerate}
If $\Ga$ is a $\theta$-transverse subgroup with $\#\La_\theta \ge 3$
(which is not necessarily Zariski dense), then \eqref{ineq} holds for any $(\Ga, \psi)$-conformal measure supported on $\La_\theta$.
\end{lemma}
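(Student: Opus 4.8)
The assertions (1) and (2) for Zariski dense $\Ga$ are \cite[Lem.~7.2]{KOW_indicators}, so the new content is to establish them when $\Ga$ is a $\theta$-transverse subgroup which need not be Zariski dense, provided $\#\lat\ge3$, for an arbitrary $(\Ga,\psi)$-conformal measure $\nu$ supported on $\lat$. The plan is to first observe that (2) follows formally from (1), and then to prove (1) by a contradiction argument based on the convergence group action of $\Ga$ on $\La_{\theta\cup\i(\theta)}$.

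\emph{Deducing (2) from (1).} This uses no Zariski density. By \eqref{go} we have $O_r^\theta(o,\ga o)=\ga\,O_r^\theta(\ga^{-1}o,o)$, so the conformality relation gives
\[
\nu\big(O_r^\theta(o,\ga o)\big)=\int_{O_r^\theta(\ga^{-1}o,o)}e^{-\psi(\beta_\xi^\theta(\ga^{-1}o,o))}\,d\nu(\xi),
\]
where we used $\beta_\xi^\theta(e,\ga^{-1})=-\beta_\xi^\theta(\ga^{-1}o,o)$. On the domain of integration, Lemma \ref{lem.buseandcartan} yields $\|\beta_\xi^\theta(\ga^{-1}o,o)-\underline{a}_\theta(\ga^{-1}o,o)\|\le\kappa r$ together with $\underline{a}_\theta(\ga^{-1}o,o)=\mu_\theta(\ga)$, so the integrand lies between $e^{-\psi(\mu_\theta(\ga))}e^{-\|\psi\|\kappa r}$ and $e^{-\psi(\mu_\theta(\ga))}e^{\|\psi\|\kappa r}$. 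Bounding $\nu(O_r^\theta(\ga^{-1}o,o))$ above by $1$ gives the upper bound in \eqref{ineq}; for $r\ge R$, applying (1) to $\ga^{-1}$ bounds it below by $c$, giving the lower bound.

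\emph{Proving (1).} Suppose (1) fails. Since $R\mapsto\inf_{\ga\in\Ga}\nu(O_R^\theta(\ga o,o))$ is nondecreasing, this forces $\inf_\ga\nu(O_n^\theta(\ga o,o))=0$ for every $n$, so we may choose $\ga_n\in\Ga$ with $\nu(O_n^\theta(\ga_n o,o))\to0$. A bounded subsequence of $\{\ga_n\}$ is impossible, since then $O_n^\theta(\ga_n o,o)=\ft$ for all large $n$ while $\nu(\ft)=1$; hence $\ga_n\to\infty$, and by $\theta$-regularity of $\Ga$ the sequences $\ga_n,\ga_n^{-1}$ leave compact sets $\theta$- resp.\ $\i(\theta)$-regularly. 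Passing to a subsequence, $\ga_n o\to\xi_+\in\lat$ and $\ga_n^{-1}o\to\xi_-\in\La_{\i(\theta)}$; using that $\Ga$ acts on $\La_{\theta\cup\i(\theta)}$ as a convergence group---a standard feature of $\theta$-transverse subgroups---we may further assume $\{\ga_n\}$ has north--south dynamics there, with attracting point having $\theta$-component $\xi_+$. The crux is the geometric estimate that for every neighborhood $U$ of $\xi_+$ in $\ft$,
\[
\ft\setminus O_n^\theta(\ga_n o,o)\subseteq U\qquad\text{for all large }n.
\]
I would prove this in three moves: (i) rewrite $\ft\setminus O_n^\theta(\ga_n o,o)=\ga_n\big(\ft\setminus O_n^\theta(o,\ga_n^{-1}o)\big)$ via \eqref{go}; (ii) show the complement of the ball shadow $\ft\setminus O_n^\theta(o,\ga_n^{-1}o)$ is contained in a shadow $O_C^\theta(\zeta_n,o)$ viewed from the boundary point $\zeta_n\in\F_{\i(\theta)}$ obtained, up to bounded error, by reversing the direction from $o$ to $\ga_n^{-1}o$, with $C$ independent of $n$; (iii) push forward by $\ga_n$ and invoke the convergence dynamics of $\ga_n$, Proposition \ref{lem.approxshadows} (continuity of shadows on viewpoints), and Lemma \ref{shad_con} (precompactness of shadow pairs in $\ft^{(2)}$) to conclude $\ga_n O_C^\theta(\zeta_n,o)\subseteq U$ eventually. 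Granting the estimate, $1=\lim_n\nu(\ft\setminus O_n^\theta(\ga_n o,o))\le\nu(U)$ for all neighborhoods $U$ of $\xi_+$, so $\nu=\delta_{\xi_+}$; then $\ga_*\nu$ and $\nu$, being mutually absolutely continuous by conformality, have the same support, forcing $\ga\xi_+=\xi_+$ for all $\ga\in\Ga$ --- impossible for the non-elementary convergence group action of $\Ga$ on $\lat$, where $\#\lat\ge3$ is used.

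\emph{Main obstacle.} The delicate step is (ii)--(iii): matching the ball shadow $O_n^\theta(\ga_n o,o)$ with a boundary-point shadow and controlling its $\ga_n$-translate uniformly in $n$. When $S_\theta$ is non-compact the fibre $L_\theta$ of $G/L_\theta\simeq\ft^{(2)}$ is non-compact, so one must rule out escape of shadows in the $S_\theta$-direction; this is exactly where $\theta$-regularity, the convergence action on $\La_{\theta\cup\i(\theta)}$, and Lemma \ref{shad_con} are needed, and it is where the argument departs from the Zariski dense treatment in \cite{KOW_indicators}.
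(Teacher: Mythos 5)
Your deduction of (2) from (1) via \eqref{go}, conformality and Lemma~\ref{lem.buseandcartan} is correct, and the contradiction scheme for (1) --- pick $\ga_n$ with $\nu(O_n^\theta(\ga_n o,o))\to 0$, extract boundary limits, force $\nu=\delta_{\xi_+}$, then contradict the non-elementary convergence group action on $\La_\theta$ afforded by $\#\La_\theta\ge 3$ --- is the right strategy. (The paper gives no proof; it cites \cite[Lem.~7.2]{KOW_indicators} and appends the non-Zariski-dense extension without argument.) But step (ii) of your sketch fails. You claim $\F_\theta\setminus O_n^\theta(o,\ga_n^{-1}o)\subseteq O_C^\theta(\zeta_n,o)$ with $C$ independent of $n$, yet the adversarial choice of $\ga_n$ gives you no control over $d(o,\ga_n^{-1}o)-n$. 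If this quantity is unbounded --- already possible in $\bH^2$ --- the shadow $O_n^\theta(o,\ga_n^{-1}o)$ has visual radius tending to zero, so its complement fills up all of $\F_\theta$ and cannot lie inside any fixed proper open set of the form $O_C^\theta(\zeta_n,o)$. There is also a higher-rank issue in how the target estimate is phrased: $\F_\theta\setminus O_n^\theta(\ga_n o,o)\subseteq U$ cannot hold on all of $\F_\theta$, since that complement always contains the non-general-position locus against $\eta_+:=\lim\ga_n o\in\F_{\i(\theta)}$, a proper subvariety that is generally not a point. The estimate must be read on $\La_\theta$, which is precisely where $\theta$-antipodality collapses that locus to $\{\xi_+\}$.

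Both issues disappear if you bypass (i)--(iii) altogether. Since $\theta$-regularity of $\Ga$ implies $\i(\theta)$-regularity, after a further subsequence $\ga_n o\to\eta_+\in\La_{\i(\theta)}$ in the sense of Definition~\ref{fc}. Proposition~\ref{lem.approxshadows} then gives, for every fixed $r>0$, the inclusions $O_r^\theta(\eta_+,o)\subseteq O_{r+1}^\theta(\ga_n o,o)\subseteq O_n^\theta(\ga_n o,o)$ for all large $n$, hence $\nu(O_r^\theta(\eta_+,o))\le\nu(O_n^\theta(\ga_n o,o))<1/n\to 0$. Since $\bigcup_{r>0}O_r^\theta(\eta_+,o)=\{\xi\in\F_\theta:(\xi,\eta_+)\in\F_\theta^{(2)}\}$, and by $\theta$-antipodality this set meets $\La_\theta$ in exactly $\La_\theta\setminus\{\xi_+\}$ (with $\xi_+$ the $\theta$-limit of $\ga_n o$), and $\nu$ is supported on $\La_\theta$, we get $\nu=\delta_{\xi_+}$; your endgame then applies. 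This uses exactly the ingredients you listed ($\theta$-regularity, antipodality, continuity of shadows) but routes them through the boundary shadow $O_r^\theta(\eta_+,o)$ rather than through the decomposition involving $\ga_n^{-1}$.
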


\section{Directional recurrence for transverse subgroups}\label{sec:rec}

In this section, we recall the flow space $\Omega_\theta$ for each $\theta$-transverse subgroup
$\Ga$. We then define the directional conical set of $\Ga$ and
give a characterization in terms of the recurrence set for directional flows on $\Omega_\theta$.
\medskip

We suppose that $\Ga$ is a Zariski dense $\theta$-transverse subgroup unless mentioned otherwise.
The $\Ga$-action on $G/S_\theta$ by left translations is not properly discontinuous in general, but there is a closed subspace  $\tilde \Omega_\theta \subset G/S_\theta$ on which $\Ga$ acts properly discontinuously. 

We first describe a parametrization of $G/S_\theta$ as 
$\F_{\theta}^{(2)} \times \fa_{\theta}$, which can be thought as a generalized Hopf-parametrization.
For $g\in G$, let
$$[g] := (g^+, g^-, \beta_{g^+}^{\theta}(e, g))\in \F_{\theta}^{(2)} \times \fa_{\theta}.$$
Consider the action of $G$ on the space $\F_{\theta}^{(2)} \times \fa_{\theta}$ by
\be\label{hopf} g . (\xi, \eta, b) = (g
\xi, g \eta, b + \beta_{\xi}^{\theta}(g^{-1}, e)) \ee 
where $g\in G$ and  $(\xi, \eta, b)\in  \F_{\theta}^{(2)} \times \fa_{\theta}$. Then the map $G\to\F_{\theta}^{(2)} \times \fa_{\theta}$ given by $g\mapsto [g]$ 
factors through $G/S_\theta$ and  defines a $G$-equivariant 
 homeomorphism $$G/S_\theta\simeq \F_{\theta}^{(2)} \times \fa_{\theta} .$$
 The subgroup $A_\theta$ acts on $G/S_\theta$ on the right
 by $[g]a:=[ga]$ for $g\in G$ and $a\in A_\theta$; this is well-defined as $A_\theta$ commutes with $S_\theta$.
The  corresponding $A_\theta$-action on $\F_{\theta}^{(2)} \times \fa_{\theta}$
is given by $$(\xi, \eta, b).a = (\xi, \eta, b + \log a)$$
for $a \in A_{\theta}$ and $(\xi, \eta, b)\in  \F_{\theta}^{(2)} \times \fa_{\theta}$.
 For $\theta = \Pi$, this homeomorphism is called the Hopf parametrization of $G/M$.

Set $\La_\theta^{(2)}:=(\La_\theta\times \La_{\i(\theta)})\cap \F_\theta^{(2)}$, and define
\be\label{odd} \tilde \Omega_\theta=\La_\theta^{(2)}\times \fa_\theta\ee 
which is a closed left $\Ga$-invariant and right $A_\theta$-invariant subspace of $\F_\theta^{(2)}\times \fa_\theta$.

\begin{theorem}\cite[Thm. 9.1]{KOW_indicators} \label{thm.propdisckow}
    If $\Ga$ is $\theta$-transverse, then $\Ga$ acts properly discontinuously on $\tilde \Omega_\theta$ and hence
    $$\Omega_\theta:=\Ga\ba \tilde \Omega_\theta$$ is a second countable locally compact Hausdorff space.
\end{theorem}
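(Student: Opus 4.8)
The plan is to show that the left $\Ga$-action on $\tilde\Omega_\theta = \La_\theta^{(2)}\times\fa_\theta$ is properly discontinuous, and then second countability and local compactness of the quotient follow from standard point-set topology (a quotient of a second countable, locally compact Hausdorff space by a properly discontinuous action of a discrete group is again second countable, locally compact, Hausdorff). So the entire content is: for every compact subset $Q\subset\tilde\Omega_\theta$, the set $\{\ga\in\Ga : \ga Q\cap Q\neq\emptyset\}$ is finite. First I would reduce to a clean geometric statement. Under the homeomorphism $G/S_\theta\simeq\F_\theta^{(2)}\times\fa_\theta$ of the excerpt, a point $(\xi,\eta,b)\in\tilde\Omega_\theta$ is represented by some $g\in G$ with $(g^+,g^-)=(\xi,\eta)$, and the $\fa_\theta$-coordinate records $\beta_{g^+}^\theta(e,g)$; the $G$-action is the one in \eqref{hopf}. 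A compact set $Q$ in $\tilde\Omega_\theta$ projects to a compact set $Q'\subset\La_\theta^{(2)}$ and has bounded $\fa_\theta$-coordinate, so by the homeomorphism $G/L_\theta\simeq\F_\theta^{(2)}$ and properness of $L_\theta$-coordinates it is contained in the image of $FS_\theta$ for some compact $F\subset G$ (using that $g\mapsto(g^+,g^-)$ is a proper map onto $\F_\theta^{(2)}$, i.e. Proposition 2.14 controlling $d(o,gL_\theta o)$ by $\|\mathcal G^\theta(g^+,g^-)\|$, together with the bound on the last coordinate to pin down the $A_\theta$-part). Thus it suffices to show: for compact $F\subset G$, the set $\{\ga\in\Ga : \ga F S_\theta\cap F S_\theta\neq\emptyset\}$ is finite, equivalently $\{\ga\in\Ga : \ga F\cap F S_\theta\neq\emptyset\}$ is finite.

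The core step uses transversality. Suppose $\ga_i\in\Ga$ is an infinite sequence with $\ga_i f_i = f_i' s_i$ for $f_i,f_i'\in F$ and $s_i\in S_\theta$; I want a contradiction. Since $\Ga$ is discrete and infinite in this subset, after passing to a subsequence $\ga_i\to\infty$ in $G$. By $\theta$-regularity of $\Ga$, $\min_{\alpha\in\theta}\alpha(\mu(\ga_i))\to\infty$, and by symmetry (using $\mu(\ga^{-1})=\i\mu(\ga)$) also $\min_{\alpha\in\i(\theta)}\alpha(\mu(\ga_i))\to\infty$, so $\ga_i\to\infty$ both $\theta$-regularly and $\i(\theta)$-regularly. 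After further passing to subsequences I may assume $\ga_i\to\xi^+\in\La_\theta$ in the sense of Definition 2.9, and $\ga_i^{-1}\to\eta^-\in\La_{\i(\theta)}$ (again using $\mu(\ga_i^{-1})=\i\mu(\ga_i)$ and the $\i(\theta)$-regularity, noting $\La$ is the full limit set so these limits lie in $\La_{\i(\theta)}$, or rather $\pi_{\i(\theta)}$ of it). Now from $\ga_i f_i = f_i' s_i$ with $f_i,f_i'\in F$ compact and $s_i\in S_\theta$: the points $f_i^+$ lie in a compact subset of $\F_\theta$, hence stay in general position only with a compact subset of $\F_{\i(\theta)}$ under the relevant pairing; but applying Lemma 2.10(2) type arguments, $\ga_i\cdot(f_i^+) = f_i' s_i (f_i^{-1}\,\text{-side})\ldots$ — more precisely, since $s_i$ preserves $P_\theta$ (as $S_\theta\subset L_\theta\subset P_\theta$), we get $(\ga_i f_i)^+ = (f_i')^+$, which is confined to a compact set, while $\theta$-regularity of $\ga_i$ forces $(\ga_i f_i)^+ = \ga_i f_i^+\to\xi^+$ provided $f_i^+$ stays away from a certain "contracted" direction. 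The key point is then that $f_i^+$ cannot be in general position with $\eta^-=\lim\ga_i^{-1}$ if it is to resist being pushed to $\xi^+$; but $f_i^-$ (the companion coordinate, which equals $(f_i')^- $ twisted by $s_i$) together with $\xi^+$ must then fail antipodality, contradicting $\theta$-antipodality of $\Ga$ since $\xi^+\in\La_\theta$ and the limit of $f_i^-$ lands in $\La_{\i(\theta)}$ after we use that $Q\subset\tilde\Omega_\theta$, i.e. the second coordinates lie in $\La_{\i(\theta)}$.

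The step I expect to be the main obstacle is making the convergence-group / contraction dichotomy precise in this higher-rank, non-compact-$S_\theta$ setting: one must show that $\ga_i$ acting on $\F_\theta$, given $\theta$-regularity, behaves like a contraction toward $\xi^+$ uniformly on compact subsets of $\{\xi : (\xi,\eta^-)\in\F_\theta^{(2)}\}$, i.e. a North–South dynamics statement on $\F_\theta$ for $\theta$-regular sequences (this is exactly where Lemma 2.10(2) and Definition 2.9 are used, applied after a Cartan decomposition $\ga_i=\kappa_i a_i \kappa_i'$). The non-compactness of $S_\theta$ enters because $\ga_i f_i=f_i's_i$ only controls the $\F_\theta^{(2)}$-coordinates and the $\fa_\theta$-coordinate up to the $S_\theta$-ambiguity, so one has to be careful that the unbounded $s_i$ does not destroy the boundedness of the relevant boundary coordinates — but since $S_\theta\subset L_\theta$ stabilizes both $\xi_\theta$ and $w_0\xi_{\i(\theta)}$, it acts trivially on the $\F_\theta^{(2)}$-factor through $gL_\theta\mapsto(g^+,g^-)$, so in fact $s_i$ only shifts the $\fa_\theta$-coordinate by $\log$ of its $A_\theta$-component, which is pinned down by the compactness of $Q$ in the $\fa_\theta$-direction. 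Once the North–South contraction on $\F_\theta$ is in hand, the contradiction with antipodality is immediate, and the theorem follows; I would cite \cite[Thm. 9.1]{KOW_indicators} for the full details but the sketch above is the intended argument.
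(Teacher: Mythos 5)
The paper does not reprove this statement; it is imported as a black box from \cite[Thm.\ 9.1]{KOW_indicators}, so there is no internal argument to compare against. Your outline is nonetheless a reasonable reconstruction of the kind of argument one would use: reduce a compact $Q\subset\tilde\Omega_\theta$ to a set of the form $FS_\theta$ for compact $F\subset G$ via the Gromov-product estimate (Proposition~\ref{Gromov}) together with the $\fa_\theta$-coordinate bound, and then seek a contradiction from $\theta$-regularity and $\theta$-antipodality when $\ga_i F S_\theta\cap FS_\theta\neq\emptyset$ for infinitely many $\ga_i$.

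The genuine gap is in the step you yourself flag as ``the main obstacle.'' You assert that the contradiction with antipodality ``is immediate'' once north--south dynamics are in hand, but as written the argument does not close. From $\ga_i f_i=f_i's_i$ with $f_i,f_i'$ in compact $F$ one gets that $\ga_i(f_i^+,f_i^-)=((f_i')^+,(f_i')^-)$ stays in a compact subset of $\La_\theta^{(2)}$ \emph{and} that $\ga_i\to\infty$ $\theta\cup\i(\theta)$-regularly. But the mere fact that $\ga_i f_i^+\to\xi^+$ (for $f_i^+$ away from the repelling locus) does not contradict $(f_i')^+$ lying in a compact set --- that compact set could perfectly well contain $\xi^+$. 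The actual contradiction has to go through the $\fa_\theta$-coordinate: the Busemann cocycle $\beta_{f_i^+}^\theta(\ga_i^{-1},e)$, which is the increment of the third coordinate under the action \eqref{hopf}, must stay bounded (since both $[f_i]$ and $[f_i']$ have bounded $\fa_\theta$-coordinate and $s_i$ only shifts it by its $A_\theta$-part, which you correctly note is pinned down); one then shows this cannot happen when $\ga_i\to\infty$ $\theta$-regularly and the boundary pairs are uniformly antipodal. Making this precise requires controlling the Cartan decomposition of $S_\theta$ against the Cartan projection of $\ga_i$; the technical device the authors use later in exactly this kind of situation is Lemma~\ref{lem.repeat} (if $\ga_ih_im_id_i$ is bounded with $d_i\in A_\theta^+B_\theta^+$, $h_i$ bounded with $h_iP\in\La$, $m_i\in M_\theta$, then $d_i\in wA^+w^{-1}$ for some $w\in\cal W\cap M_\theta$), which is absent from your sketch. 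Without that or an equivalent, the passage from ``$\ga_i$ contracts toward $\xi^+$'' to a contradiction is not justified.
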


By \cite{Benoist1997proprietes}, the set
$\{(y_\ga, y_{\ga^{-1}})\in \La^{(2)}:\ga\in \Ga \text{ loxodromic}\}$ is dense in $\La^{(2)}$ (see \eqref{att} for the notation $y_\ga$).  Hence the projection
$\{( \pi_\theta(y_\ga) , \pi_{\i(\theta)} (y_{\ga^{-1}}) \in \La_\theta^{(2)}:\ga\in \Ga \text{ loxodromic}\}$ is dense in $\La_\theta^{(2)}$. This implies that $\Omega_\theta$ is a non-wandering set for $A_\theta$, that is, for any open subset $O\subset \Omega_\theta$,
the intersection $O\cap O a_i$ is non-empty for some  sequence $a_i\in A_\theta$ going to $\infty$.

Fix $u\in  \fa_\theta^+ - \{0\}$ and set
$$a_{tu}=\exp tu \quad\text{ for $t\in \br$}. $$
   We describe the recurrent dynamics of the one-parameter subgroup $A_u=\{a_{tu}: t\in \br\}$ on $\Omega_\theta $. That is,
for a given compact subset $Q_0\subset \Omega_\theta$,  we describe 
 when the translate $Q_0 a_{tu}$  comes back to $Q_0$ and what the intersection $Q_0a_{tu}\cap Q_0$
 looks like for $t$ large enough. This is equivalent to studying
 $Q a_{tu}\cap \Gamma Q$ for a compact subset $Q\subset \tilde \Omega_\theta\subset G/S_\theta$. Difficulties arise because $S_\theta$ is not compact, and the $\theta$-transverse hypothesis on $\Ga$ is crucial in the following discussions.

We will repeatedly use the following lemma: note that the product $A_\theta^+B_\theta^+$ is not contained in $A^+$ in general. 

\begin{lemma} \label{lem.repeat}
Suppose that  $\ga_i\in \Ga$ and $d_i\in  A_\theta^+ B_\theta^+$ are sequences such that
  the sequence $\ga_i h_i m_i d_i $ is uniformly bounded
     for some bounded sequences $h_i\in G$ with $h_i P \in \La$ and $m_i \in M_{\theta}$.
   Then there exists $w \in \cal W \cap M_{\theta}$ such that
   after passing to a subsequence,  
   $$d_i\in  wA^+ w^{-1} \quad \text{ for all $i\ge 1$.}$$
\end{lemma}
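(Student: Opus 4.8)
The plan is to analyze the Cartan decomposition of $d_i \in A_\theta^+ B_\theta^+$ and compare it against the Cartan decomposition of the bounded sequence $\ga_i h_i m_i d_i$. First I would write the Cartan decomposition of $d_i$ itself: since $d_i = c_i b_i$ with $c_i \in A_\theta^+$ and $b_i \in B_\theta^+$, and since $B_\theta^+ \subset S_\theta$ with $S_\theta = M_\theta B_\theta^+ M_\theta$, the element $d_i$ lies in $A_\theta^+ B_\theta^+$ but not necessarily in $A^+$. I would use the fact that $A^+ \subset A_\theta^+ B_\theta^+$, so the "defect" is controlled by the reflections in the Weyl group that move $\fa_\theta^+$-directions into the interior; concretely, there is a finite set of Weyl elements $w \in \cal W$ fixing $\fa_\theta$ pointwise (equivalently $w \in M_\theta$ by Lemma \ref{cor.genweyl}-type reasoning, since the Weyl group of $S_\theta$ is generated by reflections in $\Pi - \theta$) such that $A_\theta^+ B_\theta^+ = \bigcup_w A_\theta^+ (w B_\theta^+ w^{-1} \cap A^+)$ up to passing to a subsequence. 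More precisely, after passing to a subsequence, there is a fixed $w \in \cal W$ lying in the Weyl group $\cal W_{\Pi - \theta}$ of $S_\theta$ (hence $w \in M_\theta$) such that $w^{-1} d_i w \in A^+$ for all $i$, i.e. $d_i \in w A^+ w^{-1}$.

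The key step is to prove that such a uniform $w$ can be chosen, and that it genuinely lies in $M_\theta$ rather than just in $\cal W$. For this I would use the boundedness hypothesis on $\ga_i h_i m_i d_i$. Write $d_i = c_i \cdot d_i'$ where $c_i \in A_\theta^+$ and I choose (after a subsequence) a fixed $w \in \cal W_{\Pi-\theta}$ with $w^{-1} d_i' w \in B_\theta^+ \cap A^+$; then $w^{-1} d_i w = (w^{-1} c_i w)(w^{-1} d_i' w) = c_i (w^{-1} d_i' w)$ since $w$ centralizes $A_\theta$. The point is that $c_i (w^{-1} d_i' w)$ need not lie in $A^+$ unless the $\fa_\theta$-part $c_i$ does not interfere with the positivity conditions coming from roots in $\theta$; but every root in $\theta$ is nonnegative on $\fa_\theta^+$ and every root in $\Pi - \theta$ vanishes on $\fa_\theta$, so once $w^{-1} d_i' w \in A^+$ we automatically get $c_i (w^{-1} d_i' w) \in A^+$. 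This shows $d_i \in w A^+ w^{-1}$. It remains to check $w \in M_\theta$: since $w$ was chosen in the subgroup $\cal W_{\Pi-\theta}$ of $\cal W$ generated by reflections in $\Pi - \theta$, and this subgroup is exactly $\cal W \cap M_\theta$ (it is the Weyl group of the Levi $L_\theta = A_\theta S_\theta$ and acts trivially on $\fa_\theta$, hence normalizes $A$ inside $M_\theta$), we are done.

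The main obstacle I anticipate is organizing the subsequence extraction so that a single $w$ works for all $i$, and making precise the claim that the Cartan decomposition of an element of $A_\theta^+ B_\theta^+$ can be brought into $A^+$ by conjugating by an element of $\cal W_{\Pi-\theta}$. Here the boundedness of $\ga_i h_i m_i d_i$ (together with $h_i$, $m_i$ bounded, $h_i P \in \La$) is what actually forces the relevant components of $d_i$ to stay in a single Weyl chamber: by Lemma \ref{lem.cptcartan} the Cartan projection $\mu(\ga_i h_i m_i d_i)$ is bounded, so $\mu(\ga_i)$ is comparable to $\mu((h_i m_i d_i)^{-1}) = \i(\mu(h_i m_i d_i))$; since $h_i, m_i$ are bounded this is comparable to $\i(\mu(d_i))$, and the $\theta$-regularity plus the structure of $d_i$ pins down which chamber wall $d_i$ can approach. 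I would likely invoke $h_iP \in \La$ together with $\theta$-transversality to rule out the degenerate Weyl elements (those not in $M_\theta$), exactly as in Lemma \ref{cor.genweyl}, so that the only $w$'s that survive lie in $M_\theta \cap \cal W$. The routine finite-case-check — finitely many $w \in \cal W$, pigeonhole to get one on a subsequence — is then straightforward and I would not belabor it.
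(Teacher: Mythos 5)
The central structural claim in your plan is false: it is \emph{not} true that every $d\in A_\theta^+B_\theta^+$ lies in $wA^+w^{-1}$ for some $w\in\cal W_{\Pi-\theta}=\cal W\cap M_\theta$. For a concrete counterexample, take $G=\SL_3(\br)$, $\theta=\{\alpha_1\}$, and $d=\exp(tH_{\alpha_2})=\operatorname{diag}(1,e^t,e^{-t})$ with $t>0$. Here $\fa_\theta=\ker\alpha_2$, $\mathfrak b_\theta=\br H_{\alpha_2}$, so $p_\theta(\log d)=0$ and $d\in\{e\}\cdot B_\theta^+=A_\theta^+B_\theta^+$; but $\alpha_1(\log d)=-t<0$, so $d\notin A^+$, and the unique $w\in\cal W$ with $d\in wA^+w^{-1}$ is $w=s_{\alpha_1}$, which does not fix $\fa_\theta$ and hence is not in $M_\theta$. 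Your intermediate step ``choose $w\in\cal W_{\Pi-\theta}$ with $w^{-1}d_i'w\in B_\theta^+\cap A^+$'' cannot be carried out: $d_i'\in B_\theta^+$ is already dominant for $\cal W_{\Pi-\theta}$, so the only candidate is $w=e$, and $B_\theta^+\not\subset A^+$ precisely because roots in $\theta$ can be negative on $\mathfrak b_\theta^+$.

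What the boundedness hypothesis together with $h_iP\in\La$ and the $\theta\cup\i(\theta)$-antipodality of $\Ga$ actually do is \emph{prove} that the bad Weyl elements cannot occur -- this is the entire content of the lemma, not a routine check to be deferred. You acknowledge this point in passing (``I would likely invoke $h_iP\in\La$ together with $\theta$-transversality to rule out the degenerate Weyl elements''), but your plan presents the conclusion $w\in\cal W_{\Pi-\theta}$ as a structural fact about $A_\theta^+B_\theta^+$ that holds a priori, which it does not. The paper's proof instead starts from an arbitrary $w\in\cal W$ with $d_i=wc_iw^{-1}$, $c_i\in A^+$ (always available by pigeonhole on the Weyl chambers of $A$), then uses $\theta$-regularity to pass to limits, obtains two limit points in $\La_{\theta\cup\i(\theta)}$, invokes antipodality to conclude the pair is either equal (giving $w\in M_\theta$ via Lemma \ref{cor.genweyl}) or in general position (forcing $w\in M_\theta w_0$), and finally rules out the latter by contradicting $a_i\in A_\theta^+$. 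You would need to actually carry out this limit-set and case analysis; as written, your argument skips it entirely while also asserting a false algebraic shortcut.
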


    \begin{proof} Since $d_i\in A$, by passing to a subsequence, there exists $w\in \cal W$ such that
    $d_i= wc_i w^{-1}$ for some $c_i\in A^+$. We will show that $w\in M_\theta$.
    We may assume without loss of generality that as $i\to \infty$,
    $h_i $ and $m_i$ converge to some $h\in G$ and  $ m\in M_\theta$ respectively.  The $\theta$-regularity of $\Ga$ implies that $\ga_i^{-1} \to \infty$ $\theta \cup \i(\theta)$-regularly.
    Since $h_i':=\ga_i h_i m_i w c_iw^{-1}$ is bounded, it follows that
    $c_i \to \infty$ in $A^+$   $\theta \cup \i(\theta)$-regularly as well by Lemma \ref{lem.cptcartan}.

   By Lemma \ref{lem.210inv}(1)-(2), we have that $\ga_i^{-1} h_i'$ converges to a point in $\La_{\theta \cup \i(\theta)}$ and  $h_i m_i w c_i w^{-1} \to hmwP_{\theta \cup \i(\theta)}$ as $i \to \infty$. Therefore, we have $hmwP_{\theta \cup \i(\theta)} \in \La_{\theta \cup \i(\theta)}$. Since $h P_{\theta \cup \i(\theta)} \in \La_{\theta \cup \i(\theta)}$ by the hypothesis, it follows from the $\theta \cup \i(\theta)$-antipodality of $\Ga$ that  either $w P_{\theta \cup \i(\theta)} = m^{-1} P_{\theta \cup \i(\theta)}$ or 
   $w P_{\theta \cup \i(\theta)}$ is in general position with $ m^{-1} P_{\theta \cup \i(\theta)}$. In the former case, by considering the projection to $\F_\theta$,
   we get $wP_\theta=m^{-1}P_\theta$ and hence $w\in M_\theta$ as desired.
It remains to show that the latter case does not happen.
The latter case would mean that $wP_{\i(\theta)}$ is in general position with $m^{-1}P_\theta=P_\theta$.  
By Lemma \ref{cor.genweyl}, this implies
$w\in w_0 M_{\i(\theta)} =  M_{\theta} w_0$. Writing $d_i=a_ib_i\in A_\theta^+B_\theta^+$ and $w= m_0w_0$ with $m_0\in M_{\theta}\cap N_K(A)$,
we get $c_i= w^{-1} d_i w  = w_0^{-1} a_i w_0 (w_0^{-1}m_0^{-1} b_i m_0 w_0)\in A_{\i(\theta)} (S_{\i(\theta)}\cap A)= A_{\i(\theta)} B_{\i(\theta)}. $
As $c_i\in A^+\subset A_{\i(\theta)}^+ B_{\i(\theta)}^+$, we must have $w_0^{-1} a_i w_0\in A_{\i(\theta)}^+$, which is a contradiction since $a_i\in A_\theta^+$. 
This finishes the proof.
\end{proof}

\begin{prop} \label{lem.dircartan} \label{lem.prodshadows} \label{lem.gromovbound}
Let $Q\subset \tilde \Omega_\theta$ be a compact subset
and $u\in \fa_\theta^+ - \{0\}$. 
There are positive constants $C_1=C_1(Q), C_2=C_2(Q)$ and $R=R(Q)$ such that if $[h] \in Q \cap \ga Qa_{-tu} $ for some $h \in G$, $\ga \in \Ga$ and $t > 0$, then the following hold:
    \begin{enumerate}
        \item   $\| \mu_{\theta}(\ga) - t u \| < C_1;$
\item $(h^+, h^-) \in O_R^{\theta}(o, \ga o) \times O_R^{\i(\theta)}(\ga o, o) ;$
\item $\|\cal G^{\theta}(h^+, h^-) \| < C_2.$
\end{enumerate}
\end{prop}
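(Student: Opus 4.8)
The plan is to unpack the hypothesis $[h]\in Q\cap \ga Q a_{-tu}$ into a statement about a bounded set of representatives in $G/S_\theta$, and then to transfer this boundedness to the Cartan and Gromov data via the Langlands decomposition $A=A_\theta B_\theta$, using Lemma \ref{lem.repeat} to control the $A$-component. Concretely, since $Q\subset\tilde\Omega_\theta$ is compact, I would first fix a compact set $\tilde Q\subset G$ projecting onto $Q$ under $G\to G/S_\theta$; then $[h]\in Q\cap \ga Q a_{-tu}$ means $h\in \tilde Q S_\theta$ and $\ga^{-1}h a_{tu}\in \tilde Q S_\theta$, i.e. we may write $h=q_1 s_1$ and $\ga^{-1}h a_{tu}=q_2 s_2$ with $q_1,q_2\in\tilde Q$, $s_1,s_2\in S_\theta$. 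Using the Cartan decomposition $S_\theta=M_\theta B_\theta^+ M_\theta$ on $s_1^{-1}s_2$ (after absorbing $a_{tu}\in A_\theta$, which commutes with $S_\theta$), I would arrive at an identity of the form $\ga q_1 m_1 d_i = (\text{bounded})$, where $d_i\in A_\theta^+ B_\theta^+$ carries the $tu$ growth in its $A_\theta$-part; here $q_1 P\in\La$ because $[h]\in\tilde\Omega_\theta$. This is exactly the setup of Lemma \ref{lem.repeat}, which yields $w\in\cal W\cap M_\theta$ with $d_i\in wA^+w^{-1}$; since $w\in M_\theta$ conjugates $A^+$ within the relevant chamber structure and fixes $\fa_\theta$ pointwise, this shows the $A_\theta$-component of $d_i$ lies in $A_\theta^+$ and differs from $a_{tu}$ by a bounded amount, while the $B_\theta^+$-component stays bounded.

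For part (1), I would then apply Lemma \ref{lem.cptcartan} to the bounded perturbations: from $\ga q_1 m_1 d_i$ bounded with $d_i = a_{tu}\cdot(\text{bounded in }A_\theta^+)\cdot(\text{bounded in }B_\theta^+)$, Lemma \ref{lem.cptcartan} gives $\|\mu(\ga^{-1})-\mu(d_i^{-1})\|$ bounded, hence $\|\mu_\theta(\ga) - tu\|$ bounded by $p_\theta$-equivariance and the fact that $p_\theta$ kills the $B_\theta$-directions up to bounded error; this gives $C_1=C_1(Q)$. For part (2), I would read off shadows directly from the geometry: $h=q_1 s_1$ with $q_1$ bounded gives $d(ho,o)\le D_{\tilde Q}$, so $h^-\in O_R^{\i(\theta)}(\ga o, o)$ once $R$ exceeds $D_{\tilde Q}$ plus a constant absorbing the $S_\theta$-ambiguity (using that $S_\theta$ moves $o$ only within a set of diameter controlled by $\|\mu_\theta - tu\|$... wait, $s_1$ is not bounded). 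The correct route here is instead: the $\theta^\pm$-components $h^\pm$ depend only on the class $[h]\in\tilde\Omega_\theta$, and since $[h]\in Q$ with $Q$ compact, $(h^+,h^-)$ ranges over a compact subset $Q'$ of $\F_\theta^{(2)}$; similarly $[h]a_{tu}\in\ga Q$ forces, after writing $\ga^{-1}[h]a_{tu}\in Q$, the pair $((\ga^{-1}h)^+,(\ga^{-1}h)^-)=(\ga^{-1}h^+,\ga^{-1}h^-)$ to lie in the same compact set, so $\ga h^-$ stays in a fixed compact set near $\ga o$... To make this precise I would invoke Lemma \ref{lem.buseandcartan}, part (1) of Lemma \ref{lem.210inv}, and the relation $\beta^\theta_{h^+}(e,h)$ being bounded on $Q$, concluding $h^+\in O_R^\theta(o,\ga o)$ and $h^-\in O_R^{\i(\theta)}(\ga o,o)$ for $R=R(Q)$ large. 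Part (3) then follows immediately: $\|\cal G^\theta(h^+,h^-)\|\le c'\,d(o,hL_\theta o)$ by Proposition \ref{Gromov}, and $d(o,hL_\theta o)\le d(o,ho)$ which... is again not obviously bounded. The honest argument for (3) is: by definition $\cal G^\theta(h^+,h^-)=\beta^\theta_{h^+}(e,h)+\i(\beta^{\i(\theta)}_{h^-}(e,h))$, and since $[h]\in Q$ compact, $\beta^\theta_{h^+}(e,h)$ is bounded; for the second term one uses the $A_\theta$-translate: $\beta^{\i(\theta)}_{h^-}(e,h)$ is comparable to the $\i(\theta)$-Busemann data which is controlled because $[h]a_{-tu}\in\ga^{-1}Q$ combined with part (1)'s estimate $\|\mu_\theta(\ga)-tu\|<C_1$ forces a uniform bound (via Lemma \ref{lem.buseandcartan} and Lemma \ref{lem.approxshadows}).

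The main obstacle I anticipate is precisely the non-compactness of $S_\theta$: the representatives $s_1,s_2\in S_\theta$ are genuinely unbounded, so naive distance estimates $d(ho,o)$ are not available, and one must work with the $(\xi,\eta,b)$-coordinates of $\tilde\Omega_\theta$ throughout, extracting boundedness only of the $\F_\theta^{(2)}$- and $\fa_\theta$-data from membership in the compact set $Q$. The technical heart is the application of Lemma \ref{lem.repeat}, which is the tool that converts the uncontrolled $B_\theta^+$-direction into the statement $d_i\in wA^+w^{-1}$ with $w\in M_\theta$, thereby certifying that the entire $tu$-growth is "genuinely $\theta$-regular" and hence matched by $\mu_\theta(\ga)$. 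Once that lemma is invoked correctly, parts (1)--(3) should follow from Lemmas \ref{lem.cptcartan}, \ref{lem.buseandcartan}, \ref{lem.210inv}, Proposition \ref{lem.approxshadows}, and Proposition \ref{Gromov} as indicated, with all constants depending only on $Q$ and $u$. I would organize the write-up so that the $G/S_\theta$-bookkeeping is done once at the start, Lemma \ref{lem.repeat} is applied to get the structural fact about $d_i$, and then (1), (2), (3) are short consequences handled in that order.
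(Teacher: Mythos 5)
Your proposal follows the same overall route as the paper: reduce to bounded representatives in $G$ modulo $S_\theta$, absorb $a_{tu}$ using that $A_\theta$ centralizes $S_\theta$, write the $S_\theta$-component in its Cartan decomposition $M_\theta B_\theta^+ M_\theta$, and invoke Lemma \ref{lem.repeat} to conclude that the intermediate element of $A_\theta^+ B_\theta^+$ lies in $wA^+w^{-1}$ with $w\in M_\theta$. For (1), after this you apply Lemma \ref{lem.cptcartan} exactly as the paper does; one small correction is that $p_\theta$ kills the $B_\theta$-direction \emph{exactly} (not merely ``up to bounded error''), so that $\mu_\theta(c_i)=t_i u$ on the nose, and the only error comes from Lemma \ref{lem.cptcartan}. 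Another inessential slip: Lemma \ref{lem.repeat} does \emph{not} show the $B_\theta^+$-component stays bounded --- and indeed it need not be bounded --- but as you can check this boundedness is never used in (1)--(3).

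For (2), your outline is right in spirit, but the key concrete step is missing from your write-up: once Lemma \ref{lem.repeat} gives $a_{t_i u}b_i = wc_iw^{-1}$, one rearranges the resulting identity to $h_im_iwc_i = \ga_i h_i'm_i'^{-1}w$ (a bounded element times an $A^+$-ray starting at $h_io$ with direction $h_i^+=h_im_iwP_\theta$), which places $h^+$ directly in a shadow $O_{R_0}^\theta(h_io,\ga_i o)$; Lemma \ref{lem.shadowtriangle} then moves the viewpoint from $h_io$ to $o$. A symmetric argument using $h_im_iww_0^{-1} = \ga_ih_i'm_i'^{-1}ww_0^{-1}(w_0c_i^{-1}w_0^{-1})$ handles $h^-$. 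Invoking Lemma \ref{lem.buseandcartan} or Proposition \ref{lem.approxshadows} here, as you suggest, is not quite the right move --- the shadow membership is read off from the algebraic identity, not from continuity of shadows.

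For (3), your final route is workable but you cite the wrong ingredient: $\i(\beta^{\i(\theta)}_{h^-}(e,h))$ is controlled by combining \emph{part (2)} (not (1)) with Lemma \ref{lem.buseandcartan}, which bounds $\beta^\theta_{h^+}(e,\ga)$ and $\i(\beta^{\i(\theta)}_{h^-}(\ga,e))$ each near $\mu_\theta(\ga)$, so that their difference is small; the paper's algebraic decomposition of the Busemann cocycles together with $\cal G^\theta(e^+,e^-)=0$ makes this precise. Worth noting, though, that (3) actually admits a much shorter proof than either yours or the paper's: $\cal G^\theta$ is a continuous function on $\F_\theta^{(2)}$, the projection $G/S_\theta\to\F_\theta^{(2)}$ is continuous (it is one coordinate of the homeomorphism $G/S_\theta\simeq\F_\theta^{(2)}\times\fa_\theta$), and $[h]\in Q$ compact forces $(h^+,h^-)$ into a compact subset of $\F_\theta^{(2)}$, on which $\cal G^\theta$ is bounded.
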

\begin{proof}  Let $Q'\subset G$ be a compact subset such that $Q'M_\theta =Q'$ and $Q\subset Q'S_\theta/S_\theta$.

To prove (1),
    suppose not. Then there exist sequences $\ga_i \in \Ga$, $h_i \in G$ and a sequence $t_i \to +\infty$ such that  $\|\mu_{\theta}(\ga_i) - t_i u \| \ge i $ and $[h_i] \in Q \cap \ga_i Q a_{-t_i u}$ for all $i\ge 1$. 
    By replacing $h_i$ by an element in $h_iS_\theta$, we may assume that $h_i\in Q'$ 
    and there exist 
    $ h_i' \in Q'$ and $s_i \in S_{\theta}$ such that $h_is_i a_{t_i u}= \ga_i h_i'  $. 
    Since $Q\subset \tilde \Omega_\theta$, 
    we have $h_i P_{\theta} \in \La_{\theta}$. By replacing $h_i$ with an element of $h_i M_{\theta}$, we may assume that $h_i P \in \La$ as well. Since $t_i\to +\infty$, $\ga_i \to \infty$ in $\Ga$. Writing
    $s_i = m_i b_i m_i' \in M_{\theta} B_{\theta}^+ M_{\theta}$ in the Cartan decomposition of $S_\theta$,
     we have $h_im_i a_{t_iu}b_i m_i' = \ga_i h_i'$. By Lemma \ref{lem.repeat}, by passing to a subsequence, there exists $w \in \cal W \cap M_{\theta}$ such that $a_{t_iu}b_{i} = w c_i w^{-1}$ for some $c_i \in A^+$. Since
     $c_i = a_{t_iu} (w^{-1} b_i w) \in A^+ \cap A_\theta B_\theta$, It follows that
     $$\mu_\theta (c_i) = p_\theta (\log c_i)  =t_i u.$$
     
     Since  $h_im_i w c_i w^{-1} m_i' = \ga_i h_i',$
we get that the sequence $\|\mu_\theta (\ga_i) - \mu_\theta (c_i)\|$ is uniformly bounded  by Lemma  \ref{lem.cptcartan}. Hence $\|\mu_\theta(\ga_i)-t_iu\|$ is uniformly bounded, yielding a contradiction.

To prove (2),  suppose not. Then there exist sequences $h_i \in Q$, $\ga_i \in \Ga$ and $t_i > 0$ such that $[h_i] \in Q \cap \ga_i Q a_{-t_i u}$ and $h_i^+ \notin O_i^{\theta} (o, \ga_i o)$ or $h_i^- \notin O_i^{\i(\theta)}(\ga_i o, o)$ for all $i\ge 1$.  As before,
    we may assume $h_i\in Q'$, $h_iP\in \Lambda$ and for some $ h_i' \in Q'$ and $s_i \in S_{\theta}$, we have $h_is_i a_{t_iu} = \ga_i h_i'$. If $\ga_i$ were
    a bounded sequence,  $O_i^{\theta}(o, \ga_i o)\to \F_\theta$ and
     $O_i^{\i(\theta)}(o, \ga_i o)\to \F_{\i(\theta)}$
    as $i\to \infty$, which cannot be the case by the hypothesis on $h_i^{\pm}$.
     Hence $\ga_i \to \infty$ in $\Ga$. 
     As in the proof of Item (1), there exist $w\in \cal W\cap M_\theta$, $b_i\in B_\theta^+$, $m_i, m_i'\in M_\theta $ and $c_i\in A^+$ such that $$h_im_i w c_i w^{-1} m_i' = \ga_i h_i'$$
    and $a_{t_i u}b_i=wc_iw^{-1}$.
      Then we have $h_im_iw P_{\theta} = h_i P_{\theta}$ and $h_im_iw c_i = \ga_i h_i'm_i'^{-1}w$. Since $h_i'm_i'^{-1}w\in Q' $, it follows that 
      $$h_i^+ \in O_{R_0}^{\theta}(h_i o, \ga_i o)  \quad\text{for all $i\ge 1$}$$
      where $R_0 = 1 + \max_{q\in Q'\cup Q'w_0} d(qo, o) > 0$.
    On the other hand, we have $$h_i m_i w w_0^{-1} = \ga_i h_i' m_i'^{-1}w w_0^{-1}(w_0 c_i^{-1}w_0^{-1}),$$ which is a bounded sequence. Since $ \ga_i h_i' m_i'^{-1} w w_0^{-1} P_{\i(\theta)} = h_i m_i w w_0^{-1} P_{\i(\theta)} = h_i w_0 P_{\i(\theta)}$, we have $$h_i^{-} \in O_{R_0}^{\i(\theta)}(\ga_i h_i' o, o) \quad \text{for all $i\ge 1$} .$$ Therefore, 
    by Lemma \ref{lem.shadowtriangle},
    we have $$(h_i^+, h_i^-) \in O_{ 2R_0}^{\theta}(o, \ga_i o) \times O_{2R_0}^{\i(\theta)}(\ga_i  o, o) \quad \text{for all } i\ge 1,$$
    yielding a contradiction.

To prove (3),
as before, we may assume $h\in Q'$ and $h = \ga h_1 a_{-tu} s$ for some $h_1\in Q'$ and $s \in S_{\theta}$.  Then we have $$\begin{aligned}
    \beta_{h^+}^{\theta}(e, h) & = \beta_{h^+}^{\theta}(e, \ga) + \beta_{e^+}^{\theta}(h_1^{-1}, e) + \beta_{e^+}^{\theta}(e, a_{-tu}s) \\
    \beta_{h^-}^{\i(\theta)}(e, h) & = \beta_{h^-}^{\i(\theta)}(e, \ga) + \beta_{e^-}^{\i(\theta)}(h_1^{-1}, e) + \beta_{e^-}^{\i(\theta)}(e, a_{-tu}s).
\end{aligned}$$

Since $\beta_{e^+}^{\theta}(e, a_{-tu}s) + \i (\beta_{e^-}^{\i(\theta)}(e, a_{-tu}s)) = \cal G^{\theta}(e^+, e^-)=0$,  
 we deduce that $$\begin{aligned}
    \cal G^{\theta}(h^+, h^-) & = \beta_{h^+}^{\theta}(e, \ga) + \i( \beta_{h^-}^{\i(\theta)}(e, \ga)) 
    + \beta_{e^+}^{\theta}(h_1^{-1}, e) + \i( \beta_{e^-}^{\i(\theta)}(h_1^{-1}, e)) .
\end{aligned}$$
Observe that
$\| \beta_{e^+}^{\theta}(h_1^{-1}, e) + \i (\beta_{e^-}^{\i(\theta)}(h_1^{-1}, e))\|
\le 2 \max_{q\in Q'} d(qo,o) $.  Since $(h^+, h^-) \in O_R^{\theta}(o, \ga o) \times O_R^{\i(\theta)}(\ga o, o)$ by Item (2), it follows from Lemma \ref{lem.buseandcartan} that 
$$\| \beta_{h^+}^{\theta}(e, \ga) - \mu_{\theta}(\ga) \| \le \kappa R\text{ and } 
\|  \i( \beta_{h^-}^{\i(\theta)}(\ga, e))) -\i( \mu_{\i(\theta)}(\ga^{-1}))\| <\kappa R . $$
Since $\mu_{\theta}(\ga) =\i( \mu_{\i(\theta)}(\ga^{-1}))$, 
we get $\| \beta_{h^+}^{\theta}(e, \ga) +\i( \beta_{h^-}^{\i(\theta)}(e, \ga)) \|\le 2\kappa R ,$
and hence  $$\| \cal G^{\theta}(h^+, h^-)\| \le 2\kappa R + 2 \max_{q\in Q'} d(qo,o) .$$
This finishes the proof. \end{proof}

\subsection*{Directional conical sets} 
 A point $\xi \in \F_\theta$ is called a $\theta$-conical point of $\Ga$ if and only if there exist $R > 0$ and a sequence $\ga_i \to \infty$ in $\Ga$ such that $\xi \in O_R^{\theta}(o, \ga_i o)$, that is, $\xi=k_i P_\theta$ for some $k_i \in K$ such that $d(k_i A^+o, \ga_i o)<R$, for all $i\ge 1$.
Using the identification $\F_\theta=K/M_\theta$, the $\theta$-conical set of $\Ga$ is equal to
\be\label{cd} \La_\theta^{\mathsf{con}}=\left\{k M_\theta \in \F_\theta : k \in K \text{ and } \limsup \Gamma k M_\theta A^+ \ne \emptyset \right\}.\ee

For $r>0$, we set
$$ \Ga_{u, r}  := \{ \ga \in \Ga : \| \mu_{\theta}(\ga) - \br u \| < r  \}.$$

\begin{definition}[Directional conical sets] \label{def.directionalconicalshadow}
    For $u \in \fa_{\theta}^+ - \{0\}$, we say $\xi\in \F_\theta$
    is a
    $u$-directional conical point of $\Ga$ if there exist $R,r > 0$ and a sequence $\ga_i \to \infty$ in $\Ga_{u, r}$ such that  for all $i\ge 1$,
    $$\xi \in O_R^{\theta}(o, \ga_i o),$$
    that is,
 $\xi=k_i P_\theta$ for some $k_i \in K$ such that $d(k_iA^+o, \ga_i o)<R$.
In other words, the $u$-directional conical set  is  given by
    \be \La_{\theta}^u = \{k M_{\theta} \in \F_{\theta} : k \in K \text{ and } \limsup \Ga_{u, r}^{-1} kM_{\theta}A^+ \neq \emptyset \text{ for some } r > 0\}.\ee 
We note that $\Ga_{u, r}^{-1}=\{\ga\in \Ga:
\|\mu_{\i(\theta)}(\ga)-\br \i(u)\|<r\}$.
\end{definition}
 
Clearly, $\La_{\theta}^{u} \subset \La_{\theta}^{\sf con}$ for all $u \in \fa_{\theta}^+ - \{0\}$ and $\La_{\theta}^u = \emptyset$ if $u \notin \L_{\theta}$. 
These notions of conical and directional conical sets can be defined for any discrete subgroup. On the other hand, for $\theta$-transverse subgroups, these notions can also be defined in terms of recurrence of $A_\theta$ and $A_u$-actions on $\Omega_\theta$ respectively: we emphasize that for a sequence $g_i\in G$,
the sequence $[g_i]\in \tilde \Omega_\theta$ is precompact if and only if  there exists
$s_i\in S_\theta$ (which is not necessarily bounded) such that the sequence $g_is_i$ is bounded in $G$.

\begin{lem}[Conical points and recurrence] \label{defdir}
Let $\Ga$ be $\theta$-transverse.
Then
\begin{enumerate}
    \item   $\xi \in \La_{\theta}^{\mathsf{con}}$ if and only if $\xi=gP_\theta$ for some $g\in G$
 such that  $[g]\in \tilde \Omega_\theta$ and $\gamma_i[g]a_i $ is precompact in $\tilde \Omega_\theta$
 for infinite sequences $\gamma_i\in \Ga$ and $a_i\in A_\theta^+$.

 \item $\xi \in \La_{\theta}^{u}$ if and only if $\xi=gP_\theta$ for some $g\in G$
 such that  $[g]\in \tilde \Omega_\theta$ and $\gamma_i[g] a_{t_iu}$ is precompact in $\tilde{\Omega}_{\theta}$
 for infinite sequences $\gamma_i\in \Ga$ and $t_i>0$.
 \end{enumerate}

\end{lem}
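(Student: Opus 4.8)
The plan is to unwind both equivalences directly from the definitions of the conical/directional-conical sets (in terms of shadows and Cartan decompositions) and the Hopf-type parametrization $\tilde\Omega_\theta \simeq \La_\theta^{(2)}\times\fa_\theta$, using Proposition \ref{lem.dircartan} for the ``recurrence implies conical'' direction and Lemma \ref{shad_con} for the ``conical implies recurrence'' direction. Since (1) is the special case of (2) with the single direction $u$ replaced by an arbitrary sequence of directions, and since the arguments are essentially parallel, I would prove (2) in full and indicate that (1) follows by the same reasoning with $a_{t_iu}$ replaced by a general sequence $a_i\in A_\theta^+$ and the constraint $\ga_i\in\Ga_{u,r}$ dropped.

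\emph{Recurrence $\Rightarrow$ directional conical.} Suppose $[g]\in\tilde\Omega_\theta$ and $\ga_i[g]a_{t_iu}$ stays in a fixed compact subset $Q\subset\tilde\Omega_\theta$ for some $\ga_i\in\Ga$ and $t_i>0$; passing to a subsequence we may assume $t_i\to+\infty$ (the case where $t_i$ is bounded forces $\ga_i$ bounded, hence only finitely many, which is excluded once we also arrange $\ga_i\to\infty$—which we may, since $[g]a_{t_iu}$ itself escapes every compact set as $t_i\to\infty$ by the parametrization). Write $h_i\in G$ with $[h_i]=\ga_i[g]a_{t_iu}$, so $[h_i]\in Q\cap \ga_i\{[g]a_{t_iu}:t>0\}$ and in particular $[g]a_{t_iu}=[\ga_i^{-1}h_i]$ lies in a compact set too; applying Proposition \ref{lem.dircartan} to the element $\ga_i^{-1}h_i$ (or directly to $[g]$, via $[g]\in \ga_i^{-1}Q a_{-t_iu}$ after replacing $Q$ by a compact set containing $[g]$) gives constants $C_1,R$ with $\|\mu_\theta(\ga_i)-t_iu\|<C_1$ and $g^+\in O_R^\theta(o,\ga_i o)$. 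Hence $\ga_i\in\Ga_{u,r}$ for $r=C_1+\sup_i|t_iu-\br u\text{-component}|$—more precisely $\|\mu_\theta(\ga_i)-\br u\|\le \|\mu_\theta(\ga_i)-t_iu\|<C_1$ so $r=C_1$ works—and $g^+\in O_R^\theta(o,\ga_i o)$ with $\ga_i\to\infty$, which is exactly the defining condition for $g^+=gP_\theta\in\La_\theta^u$.

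\emph{Directional conical $\Rightarrow$ recurrence.} Conversely, let $\xi=gP_\theta\in\La_\theta^u$, so there are $R,r>0$ and $\ga_i\to\infty$ in $\Ga_{u,r}$ with $\xi\in O_R^\theta(o,\ga_i o)$; equivalently $g^+\in O_R^\theta(o,\ga_i o)$. We want to choose $g$ so that $[g]\in\tilde\Omega_\theta$ and then produce $t_i>0$ with $\ga_i[g]a_{t_iu}$ precompact. First, since $\ga_i\in\Ga_{u,r}$ we have $\ga_i\to\infty$ $\theta\cup\i(\theta)$-regularly (as $u\in\fa_\theta^+$ and $\mu_\theta(\ga_i)$ stays near $\br u$), so by Lemma \ref{shad_con} applied to $\ga_i o\to\infty$ the product $O_S^\theta(o,\ga_i o)\times O_S^{\i(\theta)}(\ga_i o,o)$ is precompact in $\F_\theta^{(2)}$ for $S=R$ and $i$ large; pick a limit point $(\xi,\eta)\in\F_\theta^{(2)}$ of $(g^+,\eta_i)$ for some $\eta_i\in O_R^{\i(\theta)}(\ga_i o,o)\cap\La_{\i(\theta)}$ (such $\eta_i$ exist because $\La_{\i(\theta)}$ meets every such shadow, by minimality together with the fact that $\ga_i^{-1}o$ eventually has $\La$-limit-set points visible in these shadows—this uses Zariski density/transversality), and note $\eta\in\La_{\i(\theta)}$, so $(g^+,\eta)\in\La_\theta^{(2)}$ and we may replace $g$ by an element with $g^-=\eta$, giving $[g]\in\tilde\Omega_\theta$. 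Then from $g^+\in O_R^\theta(o,\ga_i o)$ we get (Lemma \ref{lem.buseandcartan} and the shadow estimates) that for suitable $t_i\to+\infty$ with $t_iu\approx\mu_\theta(\ga_i)$, the translate $\ga_i[g]a_{t_iu}$ has $\F_\theta^{(2)}$-component converging into $O_{2R}^\theta(\ga_i o,o)\times\{\text{bounded}\}$ and $\fa_\theta$-component $\beta^\theta_{(\ga_i[g]a_{t_iu})^+}(e,\cdot)$ staying bounded (via the Busemann cocycle identity, controlling $\beta_{g^+}^\theta(e,\ga_i)+t_iu$ using $\|\beta_{g^+}^\theta(e,\ga_i)-\mu_\theta(\ga_i)\|\le\kappa R$), so $\ga_i[g]a_{t_iu}$ is precompact in $\tilde\Omega_\theta$, as desired.

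\emph{Main obstacle.} The delicate point is the $\Leftarrow$ direction: extracting a good $\theta$-transverse limit point $\eta\in\La_{\i(\theta)}$ so that the chosen $g$ satisfies $[g]\in\tilde\Omega_\theta$, and then pinning down the times $t_i$ so that \emph{both} the $\F_\theta^{(2)}$-coordinate and the $\fa_\theta$-coordinate of $\ga_i[g]a_{t_iu}$ stay bounded simultaneously—the $\fa_\theta$-coordinate control is where the non-compactness of $S_\theta$ could a priori cause trouble, and it is handled by the uniform estimate $\|\beta_{g^+}^\theta(e,\ga_i)-\mu_\theta(\ga_i)\|\le\kappa R$ on shadows (Lemma \ref{lem.buseandcartan}) together with $\|\mu_\theta(\ga_i)-\br u\|<r$, which together force $t_iu+\beta_{g^+}^\theta(e,\ga_i)$ to lie within $O(R+r)$ of the line $\br u$, hence bounded after subtracting an appropriate scalar absorbed into $t_i$. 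I would present the forward direction carefully via Proposition \ref{lem.dircartan} and then spend the bulk of the argument on this converse bookkeeping.
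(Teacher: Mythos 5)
Your plan for the direction ``recurrence $\Rightarrow$ directional conical'' is essentially the paper's argument: read $[g]\in Q\cap\ga_i^{-1}Qa_{-t_iu}$ and apply Proposition \ref{lem.dircartan}. That part is fine (modulo keeping track of $\ga_i$ versus $\ga_i^{-1}$, which the definition of $\La_\theta^u$ via $\Ga_{u,r}^{-1}$ handles).

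The converse, ``conical $\Rightarrow$ recurrence,'' has a genuine gap, and it is precisely where you flag the main obstacle. You want to choose $\eta_i\in O_R^{\i(\theta)}(\ga_i o, o)\cap\La_{\i(\theta)}$, claiming ``$\La_{\i(\theta)}$ meets every such shadow by minimality/Zariski density.'' This is not justified and need not hold: $O_R^{\i(\theta)}(\ga_i o, o)$ shrinks to the antipodal viewpoint of $\ga_i o$ as $\ga_i\to\infty$, and there is no reason that viewpoint lies in $\La_{\i(\theta)}$ (already in rank one, the opposite endpoint of the segment $[\ga_i o, o]$ need not be a limit point of $\Ga$). Zariski density of $\La_{\i(\theta)}$ gives you points in Zariski-open sets, not in shadows, which are not Zariski-open. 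The paper sidesteps this entirely: having noted $g^+\in\La_\theta$, it chooses $n\in N_\theta$ with $(gn)^-\in\La_{\i(\theta)}$ purely by Zariski density (no shadow constraint on $\eta$), and then verifies $\ga_i g n\, m_i a_{t_iu}$ stays bounded in $G$ by writing $\ga_i g n m_i c_i=(\ga_i g m_i c_i)(c_i^{-1} n_i' c_i)$ with $n_i'=m_i^{-1} n m_i\in N_\theta$ and using that $c_i\in A^+$ contracts $N_\theta$; in Item (2), $\ga_i^{-1}\in\Ga_{u,r}$ plus Lemma \ref{lem.cptcartan} let one replace $c_i$ by $a_{t_iu}$ up to a bounded factor. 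This boundedness in $G$ gives precompactness of $\ga_i[gn]a_{t_iu}$ in $\tilde\Omega_\theta$ for free, by continuity of $g\mapsto[g]$, so you never have to control the $\F_\theta^{(2)}$-component and the $\fa_\theta$-component separately (a split your plan attempts but does not carry out, and which also contains a sign slip: the relevant cocycle term is $\beta^\theta_{g^+}(\ga_i^{-1},e)$, not $\beta^\theta_{g^+}(e,\ga_i)$). You should replace the shadow-based choice of $\eta$ by the paper's $N_\theta$-perturbation plus $A^+$-contraction argument, and then work in $G$ rather than component-by-component in $\F_\theta^{(2)}\times\fa_\theta$.
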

\begin{proof} {\bf Item (1):} Let $\xi \in \La_{\theta}^{\sf con}$; so there exist $k \in K$, $\ga_i \in \Ga$, $m_i \in M_{\theta}$ and $c_i \in A^+$ so that $\xi = kP_{\theta}$ and $\ga_i k m_i c_i$ is a bounded sequence in $G$. By the $\theta$-regularity of $\Ga$, we have $\La_{\theta}^{\sf con} \subset \La_{\theta}$ \cite[Prop. 5.6(1)]{KOW_indicators}, and hence $k^+=kP_\theta \in \La_{\theta}$. Since $\La_{\i(\theta)}$ is Zariski dense and $kN_{\theta}w_0P_{\i(\theta)}$ is a Zariski open subset of $\F_{\i(\theta)}$, we have $(kn)^- \in \La_{\i(\theta)}$ for some $n \in N_{\theta}$. Since $(kn)^+ = k^+ = \xi$, we have $[kn] \in \tilde{\Omega}_{\theta}$.
Note that $\ga_i k n m_i c_i = (\ga_i k m_i c_i)(c_i^{-1}n_i'c_i)$ where $n_i' := m_i^{-1}nm_i \in N_{\theta}$ is a bounded sequence. Since $c_i \in A^+$, the sequence $c_i^{-1}n_i'c_i$ is bounded as well and hence $\ga_i k n m_i c_i$ is bounded.
Write $c_i = b_ia_i \in B_{\theta}^+A_{\theta}^+$; so the sequence $\ga_i (knm_ib_i) a_i$ is contained in some compact subset of $G$ and $m_ib_i \in S_{\theta}$. Since the map
$g\mapsto [g]\in \tilde \Omega_\theta$ is continuous, and hence the image of a compact subset is compact, the sequence $\ga_i[k n]a_i = [\ga_i k n m_i b_i a_i]$ is precompact in $\tilde{\Omega}_{\theta}$, as desired.

Conversely, suppose that $\xi = gP_{\theta}$ for some $g \in G$ such that $[g] \in \tilde{\Omega}_{\theta}$ and $\ga_i[g]a_i$ is precompact for infinite sequences $\ga_i \in \Ga$ and $a_i \in A_{\theta}^+$. We can replace $g$ with an element in $gM_{\theta}$ so that $gP \in \La$. Since the sequence $\ga_i [g] a_i = [\ga_i g a_i]$ is precompact, there exists a bounded sequence $h_i \in G$ such that for all $i\ge 1$, $[h_i] =  \ga_i [g]a_i \in \tilde{\Omega}_{\theta}$, that is, $ga_is_i = \ga_i^{-1}h_i$  for some $s_i \in S_{\theta}$.
 Writing the Cartan decomposition $s_i = m_i b_i m_i' \in M_{\theta} B_{\theta}^+ M_{\theta}$, we have $gm_ia_ib_i m_i' = \ga_i^{-1}h_i$. Since the sequence $\ga_i g m_i a_ib_i = h_i m_i'^{-1}$ is bounded, it follows from Lemma \ref{lem.repeat} that $a_ib_i = wc_iw^{-1}$ for some $w \in \cal W \cap M_{\theta}$ and $c_i \in A^+$, after passing to a subsequence. Hence we have $g m_i w c_i = \ga_i^{-1}h_i m_i'^{-1} w$, which implies that $\xi = gP_{\theta} \in O_R^{\theta}(go, \ga_i^{-1}o)$ for all $i$ where $R =1 +  \max_i d(h_io, o)$. By Lemma \ref{lem.shadowtriangle}, we have $\xi \in O_{R + d(go, o)}^{\theta}(o, \ga_i^{-1}o)$ for all $i\ge 1$, completing the proof.

\smallskip
{\noindent \bf Item (2):} Let $\xi \in \La_{\theta}^u$. Then $\xi = kP_{\theta}$ for some $k \in K$ and $\ga_i k m_i a_i$ is a bounded sequence in $G$ for some infinite sequences $\ga_i \in \Ga_{u, r}^{-1}$, $m_i \in M_{\theta}$ and $a_i \in A^+$. Since $\xi = kP_{\theta} \in \La_{\theta}^u$ and $\La_{\theta}^u \subset \La_{\theta}^{\sf con} \subset \La_{\theta}$ by the $\theta$-regularity of $\Ga$ \cite[Prop. 5.6(1)]{KOW_indicators}, we have $k^+ \in \La_{\theta}$. As in the proof of Item (1) above, there exists $n \in N_{\theta}$ so that $(kn)^- \in \La_{\i(\theta)}$ and $\ga_i k n m_i a_i$ is bounded. In particular, $[kn] \in \tilde{\Omega}_{\theta}$.

Since $\ga_iknm_ia_i$ is a bounded sequence in $G$ and $\ga_i^{-1} \in \Ga_{u, r}$, we have $a_i = a_{t_i u} b_i$ for some $t_i > 0$ and a bounded sequence $b_i \in A$ by Lemma \ref{lem.cptcartan}. Hence the sequence $\ga_i k nm_i a_{t_iu}$ is bounded as well. Therefore, $\ga_i [kn] a_{t_i u} = [\ga_i k n m_i a_{t_iu}]$ is precompact in $\tilde{\Omega}_{\theta}$. Since $(kn)^+ = k^+ = \xi$, this shows the only if direction in (2).

To show the converse implication, suppose that the sequence $\gamma_i[g]a_{t_iu}$ is contained in some compact subset $Q$ of $\tilde{\Omega}_{\theta}$ which we also assume contains $[g]$. Since $[g] \in Q \cap \ga_i^{-1} Q a_{-t_i u}$, it follows from Proposition \ref{lem.dircartan} that $\ga_i^{-1} \in \Ga_{u, C_1}$ and $g^+ = g P_{\theta} \in O_R^{\theta}(o, \ga_i^{-1} o)$ for all $i\ge 1$ where $C_1 = C_1(Q)$ and $R = R(Q)$ are given in Proposition \ref{lem.dircartan}. Therefore, $g^+ \in \La_{\theta}^u$.
\end{proof}

\begin{theorem} \label{thm.convergencedir}
Let $\Ga<G$ be a Zariski dense discrete subgroup. Let $u \in \fa_{\theta}^+ - \{0\}$ and $\psi \in \fa_{\theta}^*$ be $(\Ga, \theta)$-proper. Suppose that $\sum_{\ga \in \Ga_{u, r}} e^{-\psi(\mu_{\theta}(\ga))} < \infty$ for all $r > 0$. For any $(\Ga, \psi)$-conformal measure $\nu$ on $\F_{\theta}$, we have
   $$\nu(\La_{\theta}^u) = 0.$$ 
\end{theorem}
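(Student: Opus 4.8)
The plan is to prove a Borel--Cantelli type statement: the $u$-directional conical set $\La_\theta^u$ is, up to null sets, contained in a $\limsup$ of shadows whose $\nu$-measures are summable, so the convergence hypothesis on the directional Poincar\'e series forces $\nu(\La_\theta^u)=0$.

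First I would recall the definition of $\La_\theta^u$. By Definition \ref{def.directionalconicalshadow}, $\xi\in\La_\theta^u$ means there exist $R,r>0$ and an infinite sequence $\ga_i\in\Ga_{u,r}$ with $\xi\in O_R^\theta(o,\ga_i o)$ for all $i$. Hence, writing $\La_{\theta,R,r}^u$ for the set of $\xi$ lying in infinitely many shadows $O_R^\theta(o,\ga o)$ with $\ga\in\Ga_{u,r}$, we have
$$\La_\theta^u=\bigcup_{R,r\in\mathbb N} \La_{\theta,R,r}^u,$$
so it suffices to show $\nu(\La_{\theta,R,r}^u)=0$ for each fixed $R,r$. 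Now $\La_{\theta,R,r}^u\subset\limsup_{\ga\in\Ga_{u,r}}O_R^\theta(o,\ga o)$, meaning every such $\xi$ belongs to $O_R^\theta(o,\ga o)$ for infinitely many $\ga\in\Ga_{u,r}$. Enlarging $R$ if necessary so that $R\ge R(\nu)$ from the shadow lemma (Lemma \ref{lem.shadowlemma}), the shadow lemma gives $\nu(O_R^\theta(o,\ga o))\le e^{\|\psi\|\kappa R}e^{-\psi(\mu_\theta(\ga))}$ for all $\ga\in\Ga$. Therefore
$$\sum_{\ga\in\Ga_{u,r}}\nu\big(O_R^\theta(o,\ga o)\big)\le e^{\|\psi\|\kappa R}\sum_{\ga\in\Ga_{u,r}}e^{-\psi(\mu_\theta(\ga))}<\infty$$
by hypothesis. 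By the (easy direction of the) Borel--Cantelli lemma applied to the measure $\nu$, the set of points lying in infinitely many $O_R^\theta(o,\ga o)$, $\ga\in\Ga_{u,r}$, has $\nu$-measure zero. Hence $\nu(\La_{\theta,R,r}^u)=0$, and taking the countable union over $R,r\in\mathbb N$ yields $\nu(\La_\theta^u)=0$.

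The only genuine subtlety is bookkeeping: one must make sure that restricting the parameters $R,r$ to a countable set (e.g.\ positive integers) still exhausts $\La_\theta^u$, which is immediate since the shadows $O_R^\theta$ are monotone in $R$ and $\Ga_{u,r}$ is monotone in $r$, and that $R$ can simultaneously be taken $\ge R(\nu)$ without loss. I do not expect a real obstacle here; the substance of the argument is entirely carried by the shadow lemma's upper bound \eqref{ineq} together with the assumed summability of the directional Poincar\'e series. (Note the hypothesis that $\psi$ is $(\Ga,\theta)$-proper is what makes the shadow lemma applicable and the series well-defined; no conservativity or transverse hypothesis is needed for this one-sided statement.)
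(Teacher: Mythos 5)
Your proof is correct and follows essentially the same route as the paper's: both reduce to a Borel--Cantelli argument on $\limsup$'s of shadows, using the shadow lemma's upper bound $\nu(O_r^\theta(o,\ga o))\ll e^{-\psi(\mu_\theta(\ga))}$ and the assumed convergence of the directional Poincar\'e series. The paper simply couples the two parameters (taking the shadow radius and the tube radius to both equal $r$, so $\La_\theta^u=\bigcup_{r>0}\La_{\theta,r}^u$), and carries out Borel--Cantelli by hand as a tail estimate rather than citing the lemma; your two-parameter decomposition and explicit invocation of Borel--Cantelli are cosmetically different but mathematically identical.
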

\begin{proof}
    For each $r> 0$, we set $\La_{\theta, r}^u = \limsup_{\ga \in \Ga_{u, r}} O_r^{\theta}(o, \ga o)$. In other words, $\xi \in \La_{\theta, r}^u$ if and only if there exists a sequence $\ga_i \to \infty$ in $\Ga_{u, r}$ such that $\xi \in O_{r}^{\theta}(o, \ga_i o)$ for all $i \ge 1$. Then $\La_{\theta}^u = \bigcup_{r > 0} \La_{\theta, r}^u$. Let $\nu$ be a $(\Ga, \psi)$-conformal measure on $\F_{\theta}$. Since $$\La_{\theta, r}^u \subset \bigcup_{\ga \in \Ga_{u, r}, \|\mu_\theta(\ga) \| > t} O_r^{\theta}(o, \ga o)
    \quad\text{ for all $t > 0$},$$ it follows from Lemma \ref{lem.shadowlemma} that 
   \be \label{eqn.partialsum}
    \nu(\La_{\theta, r}^u) \ll \sum_{\ga \in \Ga_{u, r}, \|\mu_\theta(\ga) \| > t} e^{-\psi(\mu_{\theta}(\ga))}
    \quad\text{for all $t > 0$.}  \ee
   Since $\sum_{\ga \in \Ga_{u, r}} e^{-\psi(\mu_{\theta}(\ga))} < \infty$, taking $t \to \infty$ in \eqref{eqn.partialsum} implies $\nu(\La_{\theta, r}^u) = 0$. Therefore, $\nu(\La_{\theta}^u) = \limsup_{r \to \infty} \nu(\La_{\theta, r}^u) = 0$.
\end{proof}

\begin{lemma} \label{lem.tangent}
    If $\sum_{\ga \in \Ga_{u, r}} e^{-\psi(\mu_{\theta}(\ga))} = \infty$ for some $r > 0$,
    then $\psi(u)>0$.
If     there exists a $(\Ga, \psi)$-conformal measure on $\F_\theta$ in addition, then $$\psi(u) =\psi_{\Ga}^{\theta}(u) .$$
     Moreover the abscissa of convergence of the series $s\mapsto \sum_{\ga \in \Ga_{u, r}} e^{-s\psi(\mu_{\theta}(\ga))}$ is equal to one.
\end{lemma}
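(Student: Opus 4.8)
The plan is to treat the three assertions in turn; the first two are essentially elementary once the divergence hypothesis is unpacked, and the third is a formal consequence of Theorem~\ref{tgg}. Throughout, recall that $\Ga$ is Zariski dense and $\theta$-transverse and that $\psi$ is $(\Ga,\theta)$-proper, so $\psi\circ\mu_\theta|_\Ga$ takes values in $[-\e,\infty)$ and the sub-level sets $\{\ga\in\Ga:\psi(\mu_\theta(\ga))\le T\}$ are finite for every $T$. For $\ga\in\Ga_{u,r}$ pick $n_\ga\in\br$ with $\|\mu_\theta(\ga)-n_\ga u\|<r$; then $\psi(\mu_\theta(\ga))=n_\ga\psi(u)+\psi(\mu_\theta(\ga)-n_\ga u)$ with $|\psi(\mu_\theta(\ga)-n_\ga u)|\le\|\psi\|\,r$. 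Since $u\in\fa_\theta^+-\{0\}$ lies in the closed Weyl chamber $\fa^+$ while $-u\notin\fa^+$, we have $\op{dist}(-u,\fa_\theta^+)>0$, and since $\mu_\theta(\ga)\in\fa_\theta^+$ lies within $r$ of $n_\ga u$, this forces $n_\ga\ge-c_0$ for some $c_0=c_0(r,u)$ independent of $\ga$. For the \emph{first claim}, if $\sum_{\ga\in\Ga_{u,r}}e^{-\psi(\mu_\theta(\ga))}=\infty$ then, each term being $\le e^{\e}$, the map $\psi\circ\mu_\theta$ is unbounded above on $\Ga_{u,r}$; but $\psi(u)\le 0$ together with $n_\ga\ge-c_0$ would give $\psi(\mu_\theta(\ga))\le c_0|\psi(u)|+\|\psi\|r$ for all $\ga\in\Ga_{u,r}$, a contradiction. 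Hence $\psi(u)>0$.

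For the \emph{second claim} assume in addition that a $(\Ga,\psi)$-conformal measure on $\F_\theta$ exists. Theorem~\ref{tgg} gives $\psi\ge\psi_\Ga^\theta$, so $\psi(u)\ge\psi_\Ga^\theta(u)$, and it remains to prove $\psi_\Ga^\theta(u)\ge\psi(u)$. From $n_\ga\ge-c_0$ and $\|\mu_\theta(\ga)-n_\ga u\|<r$ one gets $\bigl|\,\|\mu_\theta(\ga)\|-n_\ga\|u\|\,\bigr|<r+c_0\|u\|$, so combining with the identity above,
\[ \psi(\mu_\theta(\ga))=\frac{\psi(u)}{\|u\|}\,\|\mu_\theta(\ga)\|+E_\ga,\qquad |E_\ga|\le C(r,u,\psi)\quad(\ga\in\Ga_{u,r}), \]
with $C(r,u,\psi)$ independent of $\ga$. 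Hence $\sum_{\ga\in\Ga_{u,r}}e^{-\frac{\psi(u)}{\|u\|}\|\mu_\theta(\ga)\|}=\infty$, and since $\psi(u)>0$ the function $\|\mu_\theta(\cdot)\|$ is proper on $\Ga_{u,r}$; together with $\|\mu_\theta(\ga)-n_\ga u\|<r$ this shows that for every open cone $\cal C\subset\fa_\theta$ containing $u$, all but finitely many $\ga\in\Ga_{u,r}$ have $\mu_\theta(\ga)\in\cal C$. Therefore $\sum_{\ga\in\Ga,\ \mu_\theta(\ga)\in\cal C}e^{-s\|\mu_\theta(\ga)\|}$ diverges at $s=\psi(u)/\|u\|$, so its abscissa of convergence $\tau^\theta_{\cal C}$ is $\ge\psi(u)/\|u\|$; taking the infimum over such $\cal C$ and multiplying by $\|u\|$ gives $\psi_\Ga^\theta(u)\ge\psi(u)$, and the two inequalities combine to $\psi(u)=\psi_\Ga^\theta(u)$.

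For the \emph{third claim}, note $\Ga_{u,r}\subset\Ga$, so the full Poincar\'e series $\sum_{\ga\in\Ga}e^{-\psi(\mu_\theta(\ga))}$ diverges as well; by the second part of Theorem~\ref{tgg} the abscissa of convergence of $s\mapsto\sum_{\ga\in\Ga}e^{-s\psi(\mu_\theta(\ga))}$ equals $1$, so this series — hence also its sub-sum over $\Ga_{u,r}$ — converges for every $s>1$. Thus $s\mapsto\sum_{\ga\in\Ga_{u,r}}e^{-s\psi(\mu_\theta(\ga))}$ has abscissa of convergence $\le1$, while it diverges at $s=1$ by hypothesis, so its abscissa equals $1$. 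The only step needing care is the cone bookkeeping in the second claim — the uniform lower bound $n_\ga\ge-c_0$ coming from $\fa_\theta^+$ being pointed, and the resulting replacement of $\psi(\mu_\theta(\ga))$ by $\frac{\psi(u)}{\|u\|}\|\mu_\theta(\ga)\|$ up to a bounded error — but this is routine; once the directional divergence is rephrased as divergence of a conical Poincar\'e sum, the equality $\psi(u)=\psi_\Ga^\theta(u)$ is dictated by the definition of $\psi_\Ga^\theta$ and the inequality $\psi\ge\psi_\Ga^\theta$, and the abscissa statement is immediate from Theorem~\ref{tgg}.
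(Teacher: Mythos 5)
Your proof is correct and follows essentially the same route as the paper's. For the first assertion, the paper observes directly that $\psi(u)\le 0$ would confine $(\psi\circ\mu_\theta)(\Ga_{u,r})$ to an interval $(-\infty,\|\psi\|r]$ and so contradict $(\Ga,\theta)$-properness; your version spells out the uniform lower bound $n_\ga\ge -c_0$ coming from $\fa_\theta^+$ being a pointed cone, which is implicit in the paper. For the second, the paper argues by contradiction: if $\psi(u)>\psi_\Ga^\theta(u)$ then some cone $\cal C$ has $\tau_{\cal C}^\theta<\psi(u)/\|u\|$, and the same comparison $\psi(\mu_\theta(\ga))=\frac{\psi(u)}{\|u\|}\|\mu_\theta(\ga)\|+O(1)$ on $\Ga_{u,r}$ forces the directional series to converge; you run the contrapositive directly, deducing $\tau_{\cal C}^\theta\ge\psi(u)/\|u\|$ for every cone $\cal C\ni u$ and hence $\psi_\Ga^\theta(u)\ge\psi(u)$. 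The third assertion is handled identically in both, via Theorem~\ref{tgg}.
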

\begin{proof}
Suppose that $\sum_{\ga \in \Ga_{u, r}} e^{-\psi(\mu_{\theta}(\ga))} = \infty$. Then $\# \Ga_{u, r} = \infty$. If $\psi(u) $ were not positive, then $(\psi \circ \mu_{\theta})(\Ga_{u, r}) $ is contained in the interval
 $ (-\infty, \|\psi\|r]$. Therefore it contradicts the $(\Ga, \theta)$-proper hypothesis on $\psi$. Hence $\psi(u) > 0$.

Now suppose that there exists a $(\Ga, \psi)$-conformal measure on $\F_{\theta}$. We then have $\psi \ge \psi_{\Ga}^{\theta}$ by Theorem \ref{tgg}. Now suppose that $\psi(u) > \psi_{\Ga}^{\theta}(u)$. We may assume that $u$ is a unit vector
 as both $\psi$ and $\psi_\Ga^\theta$ are homogeneous of degree one. By the definition of $\psi_{\Ga}^{\theta}$, there exists an open cone $\cal C$ containing $u$ so that $\sum_{\ga \in \Ga, \mu_{\theta}(\ga) \in \cal C} e^{-\psi(u) \|\mu_{\theta}(\ga)\|} < \infty.$ Since $\mu_{\theta}(\Ga_{u, r})$ is contained in $\cal C$ possibly except for finitely many elements, we have $$\sum_{\ga \in \Ga_{u, r}} e^{-\psi(\mu_{\theta}(\ga))} \ll \sum_{\ga \in \Ga_{u, r}} e^{-\psi(u) \|\mu_{\theta}(\ga)\|} < \infty,$$ which is a contradiction. Therefore, $\psi(u) =\psi_{\Ga}^{\theta}(u)$. 
 
 We now show the last claim. Since $\sum_{\ga \in \Ga} e^{-\psi(\mu_{\theta}(\ga))} \ge \sum_{\ga \in \Ga_{u, r}} e^{-\psi(\mu_{\theta}(\ga))} = \infty$, the abscissa of convergence of $s \mapsto \sum_{\ga \in \Ga} e^{-s \psi(\mu_{\theta}(\ga))}$ is equal to one by Theorem \ref{tgg}.
 Hence the abscissa of convergence  of $s\mapsto \sum_{\ga \in \Ga_{u, r}} e^{-s\psi(\mu_{\theta}(\ga))}$ is at most one. Since $\sum_{\ga \in \Ga_{u, r}} e^{-\psi(\mu_{\theta}(\ga))} = \infty$, it must be exactly one.
\end{proof}

\section{Bowen-Margulis-Sullivan measures}
In this short section, we recall the definition of  Bowen-Margulis-Sullivan measures on $\Omega_\theta$. We also recall one-dimensional flow space $\Omega_\psi$ and the corresponding Bowen-Margulis-Sullivan measures on it.

Let $\Ga < G$ be a Zariski dense $\theta$-transverse subgroup. Recall our flow space from the previous section:
$$\Omega_{\theta} = \Ga \ba \La_{\theta}^{(2)} \times \fa_{\theta}$$
where the action is given by \eqref{hopf}.

\subsection*{Bowen-Margulis-Sullivan measures on $\Omega_\theta$}  We may identify $\fa_\theta^*$ with $\{\psi\in \fa^*: \psi \circ p_\theta=\psi\}$.
Hence for $\psi\in \fa_\theta^*$, we have $ \psi \circ \i\in \fa_{\i(\theta)}^*$.
For a pair of a $(\Ga, \psi)$-conformal measure $\nu$ on $\La_{\theta}$ and  a $(\Ga, \psi \circ \i)$-conformal measure 
$\nu_{\i}$ on $\La_{\i(\theta)}$, we  define a Radon measure $d \tilde{\mathsf m}_{\nu, \nu_{\i}}$ on $\La_{\theta}^{(2)} \times \fa_{\theta}$ as follows: \be\label{bms} d\tilde{\mathsf m}_{\nu, \nu_{\i}} (\xi, \eta, b) = e^{\psi (\cal G^{\theta}(\xi, \eta))} d\nu(\xi) d \nu_{ \i}(\eta) db\ee  where $db$ is the Lebesgue measure on $\fa_{\theta}$.
It is easy to check that $\tilde{\mathsf m}_{\nu, \nu_{\i}} $
is left $\Ga$-invariant, and hence induces a $A_\theta$-invariant Radon measure on $\Omega_\theta$ which we 
 denote by
\be\label{bdef} {\mathsf m}_{\nu, \nu_{\i}} .\ee  We call it the Bowen-Margulis-Sullivan measure
(or simply BMS measure) associated with the pair $(\nu, \nu_{\i})$.

\subsection*{Bowen-Margulis-Sullivan measures on $\Omega_\psi$}
Let $\psi\in \fa_\theta^*$ be a $(\Ga, \theta)$-proper form. We remark that
this implies that $\psi\ge 0$ on $\L_\theta$ and $\psi>0$ on $\inte\L_\theta$ \cite[Lem. 4.3]{KOW_indicators}.
Consider the $\Ga$-action on $\tilde \Omega_\psi:=\La_\theta^{(2)}\times \br$ given by
\be \label{linearformhopf}
\ga.(\xi. \eta, s)=(\ga \xi, \ga \eta, s+\psi (\beta_\xi^\theta (\ga^{-1}, e)))
\ee
for $\ga\in \Ga$ and $(\xi, \eta, s)\in \La_\theta^{(2)}\times \br$.
\begin{theorem}\cite[Thm. 9.2]{KOW_indicators} \label{thm:opsi}
    If $\Ga$ is Zariski dense $\theta$-transverse and $\psi\in \fa_\theta^*$ is $(\Ga, \theta)$-proper, then $\Ga$ acts properly discontinuously on $\tilde \Omega_\psi$ and hence
    \be \label{opsi}
    \Omega_\psi:=\Ga\ba \tilde \Omega_\psi\ee  is a second countable locally compact Hausdorff space.
\end{theorem}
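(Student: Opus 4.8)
The plan is to deduce this from Theorem~\ref{thm.propdisckow}, which provides proper discontinuity of $\Ga$ on $\tilde\Omega_\theta=\La_\theta^{(2)}\times\fa_\theta$, together with the $(\Ga,\theta)$-properness of $\psi$. The link between the two spaces is the map
$$\pi:\tilde\Omega_\theta\longrightarrow\tilde\Omega_\psi=\La_\theta^{(2)}\times\br,\qquad(\xi,\eta,b)\longmapsto(\xi,\eta,\psi(b)),$$
which, because $\psi\circ p_\theta=\psi$, intertwines the $\Ga$-actions \eqref{hopf} and \eqref{linearformhopf} and realizes $\tilde\Omega_\psi$ as the quotient of $\tilde\Omega_\theta$ by the free translation action of $\ker\psi$ on the $\fa_\theta$-coordinate, an action commuting with $\Ga$. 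Since $\La_\theta^{(2)}=(\La_\theta\times\La_{\i(\theta)})\cap\F_\theta^{(2)}$ is locally closed in the compact metrizable space $\F_\theta\times\F_{\i(\theta)}$, the space $\tilde\Omega_\psi$ is already second countable, locally compact and Hausdorff; so everything reduces to showing that the $\Ga$-action on $\tilde\Omega_\psi$ is properly discontinuous, after which $\Omega_\psi=\Ga\ba\tilde\Omega_\psi$ inherits these topological properties automatically from the standard theory of properly discontinuous actions.

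So fix a compact $Q\subset\tilde\Omega_\psi$ and suppose $\ga\in\Ga$ satisfies $\ga q=q'$ with $q=(\xi,\eta,s)$ and $q'=(\xi',\eta',s')$ in $Q$; the goal is to confine $\ga$ to a finite subset of $\Ga$. Comparing third coordinates in \eqref{linearformhopf} gives $\psi(\beta_\xi^\theta(\ga^{-1},e))=s'-s$, which is bounded in terms of $Q$. Choose a compact $F\subset G$ with $\{(g^+,g^-):g\in F\}$ containing the projection of $Q$ to $\La_\theta^{(2)}\simeq G/L_\theta$, and write $\xi=g_1^+,\,\eta=g_1^-,\,\xi'=g_2^+,\,\eta'=g_2^-$ with $g_1,g_2\in F$. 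Then $\ga g_1L_\theta=g_2L_\theta$, so $\ga=g_2\ell g_1^{-1}$ with $\ell\in L_\theta=A_\theta S_\theta$; writing $\ell=a_\ell s_\ell$ with $a_\ell\in A_\theta$, $s_\ell\in S_\theta$, and comparing the $\fa_\theta$-coordinates of $\ga.[g_1]=[g_2\ell]$ in $\tilde\Omega_\theta$ (which holds since $\ga g_1=g_2\ell$; here $S_\theta$ leaves the $\fa_\theta$-coordinate unchanged while $A_\theta$ translates it by $\log a_\ell$), one obtains $\log a_\ell=\beta_\xi^\theta(\ga^{-1},e)+O_F(1)$, so that $\psi(\log a_\ell)=(s'-s)+O_F(1)$ is bounded in terms of $Q$ and $F$. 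By Lemma~\ref{lem.cptcartan}, $\mu_\theta(\ga)=\mu_\theta(\ell)+O_F(1)$; hence, provided one knows that $\psi(\mu_\theta(\ell))=\psi(\log a_\ell)+O(1)$, the quantity $\psi(\mu_\theta(\ga))$ is bounded above, and then the $(\Ga,\theta)$-properness of $\psi$ — which makes $\{\gamma\in\Ga:\psi(\mu_\theta(\gamma))\le C\}$ finite for every $C$ — forces $\ga$ into a finite set, completing the proof.

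The remaining identity $\psi(\mu_\theta(\ell))=\psi(\log a_\ell)+O(1)$ is where the substance lies, and I expect it to be the main obstacle: its failure mode is precisely the non-compactness of $S_\theta$, since $p_\theta\circ\mu$ on $L_\theta$ does not simply return the central factor $a_\ell$, and the torus element appearing in the Cartan decomposition $S_\theta=M_\theta B_\theta^+M_\theta$ of $s_\ell$ need not be $G$-dominant. The plan is a rigidity argument in the spirit of Lemma~\ref{lem.repeat} (and its use in Proposition~\ref{lem.dircartan}): it is enough to rule out an escaping sequence $\ga=\ga_i\to\infty$, along which $\theta$-transversality makes each $\ga_i$, hence $\ell_i$ and its Cartan torus part, $\theta\cup\i(\theta)$-regular; combining this with $\theta\cup\i(\theta)$-antipodality and the fact that $\ga_i$ carries the bounded configuration $(\xi_i,\eta_i)$ to the bounded configuration $(\xi'_i,\eta'_i)$ should pin down the Weyl element conjugating the torus part of $\ell_i$ into $A^+$ to lie in $\cal W\cap M_\theta$, with the residual finite ambiguity stemming from $\ker\psi$ meeting several Weyl chambers absorbed using the bound on $\psi(\log a_\ell)$ already obtained. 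Since elements of $\cal W\cap M_\theta$ fix $\fa_\theta$ pointwise and $p_\theta$ annihilates the Lie algebra of $B_\theta$, this yields $\mu_\theta(\ell_i)=\log a_{\ell_i}$ up to a bounded error, as needed. Alternatively, one can try to reach $\psi(\mu_\theta(\ga))=-\psi(\beta_\xi^\theta(\ga^{-1},e))+O_F(1)$ directly by tracking the Busemann cocycle via Lemma~\ref{lem.buseandcartan} and the shadow estimates established above, which may be a cleaner path.
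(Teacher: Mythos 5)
The paper does not prove this theorem; it is imported from \cite[Thm.~9.2]{KOW_indicators}, so there is no in-text proof to compare against. On the merits, your outline is correct: reducing to a bound on $\psi(\mu_\theta(\ga))$ via the decomposition $\ga = g_2\ell g_1^{-1}$ with $\ell\in L_\theta$, Lemma~\ref{lem.cptcartan}, and the $\br$-coordinate bound is the right move, and you have correctly located the substance of the proof in the comparison $\psi(\mu_\theta(\ell))\overset{?}{=}\psi(\log a_\ell)+O(1)$.

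For that step your instinct to follow Lemma~\ref{lem.repeat} is right, but one detail is off: you cannot pin the Weyl element $w$ conjugating the torus part of $\ell$ into $A^+$ down to $\cal W\cap M_\theta$. The hypothesis $d_i\in A_\theta^+B_\theta^+$, which is what rules out $w\in M_\theta w_0$ in Lemma~\ref{lem.repeat}, is not available here, since $a_\ell$ lies in $A_\theta$, not in $A_\theta^+$ — only $\psi(\log a_\ell)$ is controlled, which is one linear condition. The $\theta\cup\i(\theta)$-antipodality (applied along the lines of the first half of Lemma~\ref{lem.repeat}, and again in Lemma~\ref{lem.analrepeat}) only forces $w\in(\cal W\cap M_\theta)\cup(\cal W\cap M_\theta w_0)$. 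In the $M_\theta$ case one gets $\mu_\theta(\ell)=\log a_\ell$ exactly (since $\op{Ad}_{w^{-1}}$ fixes $\fa_\theta$ pointwise and preserves $\ker p_\theta$), hence $\psi(\mu_\theta(\ga))=\psi(\log a_\ell)+O_F(1)$. In the $M_\theta w_0$ case the same computation as in the proof of Lemma~\ref{lem.analrepeat} gives $\mu_\theta(\ga)=-\log a_\ell+O_F(1)$, hence $\psi(\mu_\theta(\ga))=-\psi(\log a_\ell)+O_F(1)$ — note the sign flip. In both cases $\psi(\mu_\theta(\ga))$ is bounded, so $(\Ga,\theta)$-properness forces $\ga$ into a finite set, which is exactly the ``residual ambiguity absorbed by the $\psi$-bound'' you anticipated; but the signed case analysis must be made explicit, because it is not that the Weyl element lies in $M_\theta$. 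The same sign ambiguity hides in your proposed ``cleaner alternative'' $\psi(\mu_\theta(\ga))=-\psi(\beta_\xi^\theta(\ga^{-1},e))+O_F(1)$: Lemma~\ref{lem.buseandcartan} only relates the Busemann cocycle to the Cartan projection with a definite sign when the relevant point lies in the appropriate shadow, which again depends on the Weyl chamber, so you land back on the same dichotomy. With that correction, the proposal is sound.
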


The map $\La_\theta^{(2)}\times\fa_\theta\to
\La_\theta^{(2)}\times \br$ given by $(\xi, \eta, v)\mapsto (\xi, \eta, \psi(v))$ is a principal $\ker \psi$-bundle which is trivial since $\ker \psi$ is a vector space. Therefore it induces a $\ker \psi$-equivariant homeomorphism between
\be\label{trivial} \Omega_\theta \simeq \Omega_\psi \times \ker\psi .\ee 

Let \be \label{omm} \m_{\nu, \nu_{\i}}^{\psi} \ee
be the Radon measure  on $\Omega_\psi$ induced from the $\Ga$-invariant measure on $\tilde \Omega_\psi$:
$$ d\tilde \m_{\nu, \nu_{\i}}^{\psi} (\xi, \eta, s) := e^{\psi(\cal G^{\theta}(\xi, \eta))} d\nu(\xi)d\nu(\eta) ds. $$

We then have  $$ \m_{\nu, \nu_{\i}}=\m_{\nu, \nu_{\i}}^{\psi} \otimes \op{Leb}_{\ker \psi} .$$

\section{Directional conical sets and  Poincar\'e series} \label{sec.dirpoincare}
In this section, we relate the divergence of the directional $\psi$-Poincar\'e series with the size of the directional conical set with respect to a $(\Ga, \psi)$-conformal measure on $\F_\theta$. The main theorem of this section (Theorem \ref{prop.mainprop})
is the most significant part of Theorem \ref{main}. 

\smallskip

Let $\Ga < G$ be a Zariski dense $\theta$-transverse subgroup. We fix 
$$u \in \fa_{\theta}^+ - \{0\}\text{ and a } (\Ga, \theta)\text{-proper }
    \psi\in \fa_\theta^*.$$ We also fix a pair  $\nu, \nu_{\i}$ of $(\Ga, \psi)$ and $(\Ga, \psi\circ\i)$-conformal measures on $\La_\theta$ and $\La_{\i(\theta)}$ respectively. Denote by $\tilde \m=\tilde \m_{\nu, \nu_{\i}}$ and $\m=\m_{\nu, \nu_{\i}}$   the  associated BMS measures on $\tilde \Omega_\theta$ and $\Omega_\theta$ respectively.
 
Our dichotomy theorem is stated under a hypothesis on $\m$ which we call a $u$-balanced condition:
 
 \begin{definition} [$u$-balanced condition] \label{def.ubalanced}
     A Borel measure space $(X, m)$ with $\{a_{tu}\}$-action
     is called $u$-balanced or simply $m$ is $u$-balanced, if for any bounded Borel subset $O_i \subset X$ with $m(O_i) > 0$ for $i = 1, 2$,   for all $T>1$,
$$\int_0^T m(O_1 \cap O_1 a_{tu}) dt\asymp\footnote{The notation
    $f(T)\asymp g(T)$ means that $f(T), g(T)\to \infty$ as $T\to \infty$ and $f(T)\ll g(T)$ and $g(T)\ll f(T)$.} \int_0^T m(O_2 \cap O_2 a_{tu} )dt .$$ 
 \end{definition}
 
The main goal of this section is to prove the following: 
\begin{theorem} \label{prop.mainprop}
    Suppose that $\m$ is $u$-balanced. If $ \sum_{\ga \in \Ga_{u, r}} e^{-\psi(\mu_{\theta}(\ga))} = \infty$ for some $r > 0$, then $$\nu(\La_{\theta}^u) > 0 \quad \text{and} \quad \nu_{\i}(\La_{\i(\theta)}^{\i(u)}) > 0.$$
\end{theorem}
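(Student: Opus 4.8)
The plan is to reduce the statement to a ``second Borel--Cantelli'' type argument for the $A_u$-flow on $\Omega_\theta$, exploiting the $u$-balanced hypothesis to compare return-time integrals. First I would pass from $\Omega_\theta$ to the one-dimensional flow space $\Omega_\psi$ (using \eqref{trivial} and the product decomposition $\m = \m^\psi_{\nu,\nu_\i}\otimes \Leb_{\ker\psi}$), since $\La_\theta^u$ only depends on directional recurrence and $\psi(u)>0$ by Lemma \ref{lem.tangent}, so recurrence of $A_u$-orbits in $\Omega_\theta$ is equivalent to recurrence in $\Omega_\psi$. Working in $\Omega_\psi$, fix a small compact ``box'' $Q = B\times J \subset \tilde\Omega_\psi$ with $B\subset\La_\theta^{(2)}$ open relatively compact and $J\subset\br$ a short interval, chosen so that $\m^\psi(Q)>0$. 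The key computation is the estimate of $\int_0^T \m(\mathcal{Q}\cap \mathcal{Q}a_{tu})\,dt$ from below by the divergent Poincar\'e series, where $\mathcal{Q}$ is the image of $Q$ in $\Omega_\psi$.

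The main steps, in order: (i) Use Proposition \ref{lem.dircartan} to identify, for a compact $Q\subset\tilde\Omega_\theta$, exactly which $\ga\in\Ga$ contribute to $Q a_{tu}\cap \Ga Q$: such $\ga$ satisfy $\|\mu_\theta(\ga)-tu\|<C_1$ and $(h^+,h^-)$ lies in a pair of bounded shadows, with $\|\mathcal G^\theta(h^+,h^-)\|<C_2$. (ii) Conversely, show that for each $\ga\in\Ga_{u,r}$ with $\mu_\theta(\ga)$ close to $tu$ and whose attracting/repelling data meets the shadow cones $O^\theta_R(o,\ga o)\times O^{\i(\theta)}_R(\ga o,o)$, the translate $Q a_{tu}$ genuinely intersects $\ga Q$ over a $t$-interval of length $\gtrsim 1$; here the shadow lemma (Lemma \ref{lem.shadowlemma}) controls $\nu(O^\theta_r(o,\ga o))\asymp e^{-\psi(\mu_\theta(\ga))}$ and its $\i(\theta)$-analogue, and the Gromov-product factor $e^{\psi(\mathcal G^\theta(\xi,\eta))}$ in the BMS density is bounded on the relevant region by Proposition \ref{lem.dircartan}(3). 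Combining (i)--(ii) yields
$$\int_0^T \m^\psi(\mathcal Q\cap\mathcal Q a_{tu})\,dt \;\asymp\; \sum_{\substack{\ga\in\Ga_{u,r}\\ \psi(\mu_\theta(\ga))\le \delta T}} e^{-\psi(\mu_\theta(\ga))}$$
up to constants depending on $Q$, where $\delta=\psi(u)>0$; the hypothesis $\sum_{\ga\in\Ga_{u,r}}e^{-\psi(\mu_\theta(\ga))}=\infty$ then forces this integral to diverge as $T\to\infty$. (iii) Invoke the $u$-balanced property: the divergence of $\int_0^T \m(\mathcal Q\cap\mathcal Qa_{tu})\,dt$ for one positive-measure box propagates to all of them, and a standard argument (Borel--Cantelli / the conservativity half of a Hopf-type dichotomy, or the Poincar\'e recurrence criterion via $\int_0^T \m(O\cap Oa_{tu})dt=\infty$) shows $\m$-a.e.\ point of $\Omega_\theta$ is $A_u$-recurrent. (iv) Translate recurrence of $[g]$ under $a_{tu}$ into the statement $g^+\in\La_\theta^u$ via Lemma \ref{defdir}(2); since $\m$ restricted to a box projects (essentially) to $\nu$ on the $\xi$-coordinate and to $\nu_\i$ on the $\eta$-coordinate, recurrence on a set of full $\m$-measure gives $\nu(\La_\theta^u)>0$, and symmetrically $\nu_\i(\La_{\i(\theta)}^{\i(u)})>0$ (the $\eta$-side corresponds to $t\to-\infty$ recurrence, handled by applying the same argument with $u$ replaced by $\i(u)$ after exchanging the roles of the two boundary factors via $w_0$).

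The hard part will be step (ii): establishing a genuine two-sided match between the analytic Poincar\'e sum and the dynamical return-time integral, i.e.\ showing not merely that contributing $\ga$'s satisfy the shadow/Cartan constraints but that each such $\ga$ contributes a definite amount $\gtrsim e^{-\psi(\mu_\theta(\ga))}$ to the integral over a nondegenerate $t$-range, without double-counting across different $\ga$. This requires careful bookkeeping of the non-compact $S_\theta$-direction, controlling how the Cartan and Busemann data of $\ga$ determine the geometry of $\mathcal Q\cap \mathcal Qa_{tu}$ uniformly, and using the $\theta$-antipodality of $\La_{\theta\cup\i(\theta)}$ (via Lemma \ref{lem.repeat} and Lemma \ref{cor.genweyl}) to rule out degeneracies; this is exactly where the transversality hypotheses and the continuity-of-shadows results of Section 3 (Proposition \ref{lem.approxshadows}, Lemma \ref{shad_con}) enter decisively.
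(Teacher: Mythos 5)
The first step of your plan contains a genuine error: the claim that ``recurrence of $A_u$-orbits in $\Omega_\theta$ is equivalent to recurrence in $\Omega_\psi$'' is false, and with it the whole reduction to the one-dimensional flow space collapses. The projection $\Omega_\theta\to\Omega_\psi$ collapses the $\ker\psi$-fiber, but the $\Ga$-action twists that fiber by the $\ker\psi$-component of the Busemann cocycle, which is unbounded in general; an $A_u$-orbit can therefore recur in $\Omega_\psi$ while escaping to infinity in the fiber direction of $\Omega_\theta$. Concretely, when $\Ga$ is $\theta$-Anosov the space $\Omega_\psi$ is \emph{compact}, so every $A_u$-orbit recurs there, yet Theorem~\ref{main3} gives $\nu_u(\La_\theta^u)=0$ as soon as $\#\theta\ge 4$. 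Working in $\Omega_\psi$ also destroys the directional constraint: the analogue of Proposition~\ref{lem.dircartan}(1) there only controls $\psi(\mu_\theta(\ga))$, not $\|\mu_\theta(\ga)-tu\|$, so the $\ga$'s contributing to $\mathcal Q\cap\mathcal Q a_{tu}$ are \emph{not} confined to $\Ga_{u,r}$ and the two-sided estimate in your step (ii) fails. The argument has to be run in $\Omega_\theta$ (equivalently $\Omega_{W^\diamond}$ with $W=\br u$, which is $\Omega_\theta$ since $W\cap\ker\psi=0$), as the paper does.

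The second gap is in step (iii). Divergence of $\int_0^T\m(O\cap Oa_{tu})\,dt$ in infinite measure does \emph{not} imply conservativity or recurrence; the Poincar\'e criterion you list is simply not available. What one needs is the second-moment control
$\int_0^T\!\int_0^T\sum_{\ga,\ga'}\tilde\m(Q\cap\ga Qa_{-t}\cap\ga'Qa_{-t-s})\,dt\,ds \ll \bigl(\int_0^T\sum_\ga\tilde\m(Q\cap\ga Qa_{-t})\,dt\bigr)^2$
so that the Aaronson--Sullivan Borel--Cantelli lemma (Lemma~\ref{lem.BC}) can be invoked. This triple-intersection bound is the crux of the proof (Proposition~\ref{prop.upperbound}(1)); it rests on the inequality $\psi(\mu_\theta(\ga))+\psi(\mu_\theta(\ga^{-1}\ga'))\le\psi(\mu_\theta(\ga'))+O(1)$ for contributing pairs (Lemma~\ref{cor.dircartan}(2)), and your steps never address it. The $u$-balanced hypothesis enters precisely to match the arbitrary compact set $Q$ in the second-moment bound with the specific large set $Q'(r)$ on which the first-moment lower bound (Proposition~\ref{prop.lowerbound}(2)) holds. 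Your steps (i), (ii) and (iv) — constraining contributing $\ga$'s via Proposition~\ref{lem.dircartan}, matching the Poincar\'e sum with return-time integrals via the shadow lemma and Gromov-product bounds, and then passing from recurrence to $\La_\theta^u$ via Lemma~\ref{defdir} — are correct and do match the paper's outline; the two items above are what is missing or wrong.
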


\begin{rmk}  When $\sum_{\ga \in \Ga} e^{-\psi(\mu_{\theta}(\ga))} = \infty$, there exists at most one $(\Ga, \psi)$-conformal measure on $\F_\theta$ (\cite[Thm. 1.5]{KOW_indicators}). Furthermore,
the existence of a $(\Ga, \psi)$-conformal measure on $\La_\theta$ implies the existence of $(\Ga, \psi\circ \i)$-conformal measure on $\La_{\i(\theta)}$ as well.
Indeed,
it follows from \cite[Thm. 7.1]{KOW_indicators} that $\delta_{\psi} = 1$ where $\delta_{\psi}$ is the abscissa of the convergence  of the Poincar\'e series $s \mapsto \sum_{\ga \in \Ga} e^{-s \psi(\mu_{\theta}(\ga))}$. In particular, $\delta_{\psi \circ \i} = \delta_{\psi} = 1$. By \cite{CZZ_transverse} and \cite[Lem. 9.5]{KOW_indicators}, there exists a $(\Ga, \psi \circ \i)$-conformal measure $\nu_{\i}$ on $\La_{\i(\theta)}$ which is the unique $(\Ga, \psi \circ \i)$-conformal measure on $\F_{\i(\theta)}$, since $\sum_{\ga \in \Ga} e^{-(\psi \circ \i)(\mu_{\i(\theta)}(\ga))} = \infty$ as well.
\end{rmk}

For simplicity, we set for all $t\in \br$
$$a_t=a_{tu}=\exp tu.$$
The following proposition is the key ingredient of the proof of Theorem \ref{prop.mainprop}:

\begin{proposition} \label{prop.upperbound} \label{prop.lowerbound}
Suppose that $\sum_{\ga \in \Ga_{u, r}} e^{-\psi(\mu_{\theta}(\ga))} = \infty$ for some $r > 0$. Set $\delta=\psi(u)$, which is positive by Lemma \ref{lem.tangent}.
 \begin{enumerate}
     \item 
  For any compact subset $Q\subset \tilde \Omega_\theta$, there exists $r=r(Q)>0$ such that for any $T > 1$, we have 
    $$\int_0^T \int_0^T \sum_{\ga, \ga' \in \Ga} \tilde{\m}(Q \cap \ga Q a_{-t} \cap \ga' Q a_{-t-s}) dt ds \ll \left( \sum_{\substack{\ga \in \Ga_{u, r} \\ \psi(\mu_{\theta}(\ga)) \le \delta T}} e^{-\psi(\mu_{\theta}(\ga))} \right)^2.
    $$
\item 
    For any $r > 0$, there exists a compact subset $Q'=Q'(r)\subset \tilde \Omega_\theta$ such that for any $T > 1$,
     $$\int_0^T \sum_{\ga \in \Ga} \tilde{\m}(Q' \cap \ga Q' a_{-t}) dt  \gg \sum_{\substack{\ga \in \Ga_{u, r} \\ \psi(\mu_{\theta}(\ga)) \le \delta T}} e^{-\psi(\mu_{\theta}(\ga))}.
     $$ 
 \end{enumerate} 
 \end{proposition}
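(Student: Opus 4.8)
The plan is to translate each integral on $\tilde\Omega_\theta$ into a sum over $\Ga$ controlled by the directional Poincar\'e series, using the quasi-product structure $d\tilde\m(\xi,\eta,b)=e^{\psi(\mathcal G^\theta(\xi,\eta))}\,d\nu(\xi)\,d\nu_\i(\eta)\,db$ together with the recurrence estimates of Proposition \ref{lem.dircartan}. For Part (1), fix a compact $Q\subset\tilde\Omega_\theta$ and let $R=R(Q)$, $C_1=C_1(Q)$ be the constants from Proposition \ref{lem.dircartan}. First I would observe that a nonzero summand $\tilde\m(Q\cap\ga Q a_{-t}\cap\ga'Qa_{-t-s})$ forces $[h]\in Q\cap\ga Qa_{-t}$ and $[h]\in\ga'Qa_{-t-s}$ for some $h$, so by Proposition \ref{lem.dircartan}(1) we get $\|\mu_\theta(\ga)-tu\|<C_1$ and $\|\mu_\theta(\ga')-(t+s)u\|<C_1$; in particular both $\ga,\ga'\in\Ga_{u,r}$ for $r$ slightly larger than $C_1$, and since $t,t+s\le T$ up to bounded error we get $\psi(\mu_\theta(\ga)),\psi(\mu_\theta(\ga'))\le\delta T+O(1)$. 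Next, for fixed $\ga$, the set of $t$ for which $Q\cap\ga Qa_{-t}\ne\emptyset$ has length $O(1)$ (it is contained in an interval determined by $\|\mu_\theta(\ga)-tu\|<C_1$ via the fact that $u\ne 0$ and $\psi(u)>0$), and on this interval $\tilde\m(Q\cap\ga Qa_{-t})\ll e^{-\psi(\mu_\theta(\ga))}$ by the shadow lemma (Lemma \ref{lem.shadowlemma}) applied after reading off $(h^+,h^-)\in O_R^\theta(o,\ga o)\times O_R^{\i(\theta)}(\ga o,o)$ from Proposition \ref{lem.dircartan}(2) and using the Gromov bound from Proposition \ref{lem.dircartan}(3). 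Integrating in $s$ then $t$, the double integral is bounded by
$$\sum_{\substack{\ga\in\Ga_{u,r}\\ \psi(\mu_\theta(\ga))\le\delta T+O(1)}}\ \sum_{\substack{\ga'\in\Ga_{u,r}\\ \psi(\mu_\theta(\ga'))\le\delta T+O(1)}} e^{-\psi(\mu_\theta(\ga))}e^{-\psi(\mu_\theta(\ga'))}\ \cdot\ O(1),$$
and absorbing the $O(1)$ shift into a slightly larger range (which only changes the sum by a bounded multiplicative factor, since one can compare $\sum_{\psi\le\delta T+c}$ with $\sum_{\psi\le\delta T}$ using that the series diverges and grows at controlled rate — or more simply just enlarge $r$ and keep the bound $\psi\le\delta T$ by choosing $T$-independent constants carefully) gives exactly the square of the claimed directional sum.

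For Part (2), the plan is to run the argument in reverse: given $r>0$, I would construct a compact $Q'\subset\tilde\Omega_\theta$ large enough that for every $\ga\in\Ga_{u,r}$ there is a genuine return, i.e. a set of $t$ of length $\gg 1$ on which $\tilde\m(Q'\cap\ga Q'a_{-t})\gg e^{-\psi(\mu_\theta(\ga))}$. Concretely, using the lower bound in the shadow lemma (part of Lemma \ref{lem.shadowlemma}) and the density of loxodromic fixed points in $\La_\theta^{(2)}$, one picks $Q'$ to contain a product neighborhood of a fixed pair $(\xi_0,\eta_0)\in\La_\theta^{(2)}$ crossed with an interval in $\fa_\theta$ of length comparable to the transverse diameter of $\Ga_{u,r}$'s Cartan projections; then for $\ga\in\Ga_{u,r}$ with $\psi(\mu_\theta(\ga))\le\delta T$, the translate $\ga Q'$ meets $Q'$ after flowing for time $t\approx\psi(\mu_\theta(\ga))/\delta\le T$, and the measure of the overlap is $\gg e^{-\psi(\mathcal G^\theta)}\nu(\text{shadow})\gg e^{-\psi(\mu_\theta(\ga))}$ by the shadow lemma, with the implied constants uniform because $\ga\in\Ga_{u,r}$ keeps $\mathcal G^\theta$ bounded. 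Summing over such $\ga$ and integrating the length-$\gg 1$ return windows yields $\int_0^T\sum_\ga\tilde\m(Q'\cap\ga Q'a_{-t})\,dt\gg\sum_{\ga\in\Ga_{u,r},\,\psi(\mu_\theta(\ga))\le\delta T}e^{-\psi(\mu_\theta(\ga))}$, as desired; one must be slightly careful that distinct $\ga$ contribute to overlapping $t$-windows at most boundedly often, which follows from properness of $\mu_\theta|_\Ga$ and the $\theta\cup\i(\theta)$-antipodality keeping returns separated.

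The main obstacle, as flagged in the introduction, is the non-compactness of $S_\theta$: the "return" $[h]\in Q\cap\ga Qa_{-t}$ only tells us $ha_{t}s=\ga h'$ for some $s\in S_\theta$ that need not be bounded, so the naive bookkeeping $g\mapsto[g]$ does not directly convert flow returns into bounded elements of $G$. This is precisely why Proposition \ref{lem.dircartan} — itself built on Lemma \ref{lem.repeat} and the $\theta$-transverse (regularity + antipodality + convergence-group) hypotheses — is the crucial input: it guarantees that the $S_\theta$-ambiguity is absorbed into a Weyl element $w\in\mathcal W\cap M_\theta$ so that $\mu_\theta$ and the Gromov product stay bounded along returns. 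I would therefore treat Proposition \ref{lem.dircartan} as a black box and spend the actual work on (i) the length-$O(1)$ estimate for the $t$-return window, which uses only $u\ne 0$ and $\psi(u)=\delta>0$, and (ii) a careful Fubini argument to interchange the sum over $\Ga$ with the $t$- (and $s$-) integrals, legitimate because all terms are nonnegative. The shift between "$\psi(\mu_\theta(\ga))\le\delta T$" and "$\le\delta T+O(1)$" is harmless and I would handle it by noting the directional Poincar\'e partial sums grow at most like a fixed power of $e^{\delta T}$, or simply by enlarging $r$ and shrinking the implied constant.
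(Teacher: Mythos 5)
The central step of Part (1) in your sketch is not correct, and it is exactly where the paper's proof does its real work. You bound the triple intersection by
\[
\tilde\m(Q\cap\ga Q a_{-t}\cap\ga' Q a_{-t-s})\ll e^{-\psi(\mu_\theta(\ga))}e^{-\psi(\mu_\theta(\ga'))},
\]
but this is false: the triple intersection is a subset of the two-fold intersection $Q\cap\ga' Q a_{-t-s}$, and the shadow lemma gives $\tilde\m(\text{triple})\ll e^{-\psi(\mu_\theta(\ga'))}$, which is typically far \emph{larger} than the product you wrote (since $e^{-\psi(\mu_\theta(\ga))}\ll 1$). Nor can one recover the product by taking the minimum or the geometric mean of the two shadow-lemma bounds; that gives at best $e^{-\psi/2}e^{-\psi/2}$, which loses a factor that grows exponentially in $T$ when you integrate. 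If instead one uses only the valid bound $\tilde\m(\text{triple})\ll e^{-\psi(\mu_\theta(\ga))}$ (as a later sentence of yours suggests), then for fixed $\ga, t$ one still has to sum over the many $\ga'$ with $\psi(\mu_\theta(\ga'))\approx\delta(t+s)$ that can contribute, and there are exponentially many of them; summing and integrating yields something of size $T\cdot P(T)$ rather than $P(T)^2$, where $P(T)=\sum_{\psi\le\delta T}e^{-\psi}$. So neither reading of your argument delivers the claimed square.

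What makes the paper's proof work is the triple-recurrence version of the chain rule, Lemma \ref{cor.dircartan}(2):
\[
\psi(\mu_\theta(\ga))+\psi(\mu_\theta(\ga^{-1}\ga'))\le\psi(\mu_\theta(\ga'))+3C_1\|\psi\|,
\]
combined with Lemma \ref{lem.shadowlikeintersection} (which gives the stronger bound $\tilde\m(\text{triple})\ll e^{-\psi(\mu_\theta(\ga'))}$, using the \emph{deeper} element $\ga'$, not $\ga$). Together these yield $\tilde\m(\text{triple})\ll e^{-\psi(\mu_\theta(\ga))}e^{-\psi(\mu_\theta(\ga^{-1}\ga'))}$, and the change of variables $\hat\ga=\ga^{-1}\ga'$ — with the constraints $\ga,\hat\ga\in\Ga_{u,C_1}$, $\psi(\mu_\theta(\ga))\approx\delta t$, $\psi(\mu_\theta(\hat\ga))\approx\delta s$, which come from Lemma \ref{cor.dircartan}(1) — is what splits the double sum into a genuine product of a $t$-sum and an $s$-sum. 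This factorization in the variables $(\ga,\hat\ga)$ rather than $(\ga,\ga')$ is the missing idea. The tail estimate $\sum_{\delta T<\psi\le\delta T+c}e^{-\psi}\ll 1$ that handles your $O(1)$ shift also requires the multiplicity bound (Proposition \ref{prop.mult}) together with the shadow lemma, not just ``enlarging $r$'' or a crude growth estimate.

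Part (2) of your sketch follows the same route as the paper (lower bound via the shadow lemma, uniformly bounded Gromov products keeping the constant in check, a return window of length $\gg 1$). The one place where you diverge from the paper's construction is that a fixed product neighborhood of a single pair $(\xi_0,\eta_0)$ does not obviously suffice: the paper's Lemma \ref{lem.findsegment} builds a compact set $Q=\{[h]:d(ho,o)\le R'\}$ large enough so that for \emph{every} $(\xi,\eta)$ in the shadows $O_R^\theta(o,\ga o)\times O_R^{\i(\theta)}(\ga o,o)$ with $\ga\in\Ga_{u,r}$ there is a return of length $2$, and the uniformity in $\ga$ rests on Lemma \ref{lem.prodshadowgromov} (bounded Gromov product along directional shadows) and Proposition \ref{Gromov}, not on a fixed base point. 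Your remark about distinct $\ga$ contributing to overlapping $t$-windows is actually not needed for the lower bound, since you are summing $\tilde\m(Q'\cap\ga Q'a_{-t})$ over $\ga$ before integrating; overlap would only matter for an upper bound.
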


To prove this proposition, we relate the integrals on the left hand sides to shadows and apply the shadow lemma. Together with results obtained in Section \ref{sec:rec},
the following proposition on the   multiplicity bound on shadows for transverse subgroups is crucial.

\begin{prop} \cite[Prop. 6.2]{KOW_indicators} \label{prop.mult}  For any $R, D > 0$, there exists $q = q(\psi, R, D) > 0$ such that for any $T > 0$, the collection of shadows $$\left\{O^{\theta}_R(o, \ga o) \subset \F_\theta : T \le \psi (\mu_{\theta}(\ga)) \le T + D \right\}$$ has multiplicity at most $q$.
\end{prop}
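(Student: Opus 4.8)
The plan is to prove the equivalent pointwise statement: there is $q=q(\psi,R,D)>0$ such that for every $\xi\in\F_\theta$ and every $T>0$, the set
$$N_\xi(T):=\left\{\ga\in\Ga:\ \psi(\mu_\theta(\ga))\in[T,T+D],\ \xi\in O_R^\theta(o,\ga o)\right\}$$
has at most $q$ elements. For each fixed $T$ the finiteness of $N_\xi(T)$ is immediate, since $N_\xi(T)\subset\{\ga\in\Ga:\psi(\mu_\theta(\ga))\le T+D\}$ and $\psi$ is $(\Ga,\theta)$-proper; the entire content is the uniformity in $T$ and in $\xi$. As $T\to\infty$ the elements of $N_\xi(T)$ leave every finite subset of $\Ga$, hence become deep in the $\theta$-walls by $\theta$-regularity, and it is in this regime that the transverse structure of $\Ga$ must be used.

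The strategy is a relative-position argument. Fix $\ga_1\in N_\xi(T)$ with $\psi(\mu_\theta(\ga_1))$ minimal, and let $\ga_2\in N_\xi(T)$ be arbitrary. First I would use $\xi\in O_R^\theta(o,\ga_i o)$ together with the Cartan decomposition and Lemma~\ref{lem.cptcartan} to write $\ga_i=k\,m_i\,a^{(i)}\,c_i$, where $kP_\theta=\xi$ is the common shadow center, $m_i\in M_\theta$, $a^{(i)}\in A^+$, and $c_i$ lies in a fixed $R$-neighborhood of $e$ in $G$; the point that the $K$-part $k$ may be taken the same for all $i$ is precisely the statement that $\xi$ lies in each shadow, and $\theta$-regularity makes this quantitative with constants independent of $T$. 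Writing $a^{(i)}=\alpha^{(i)}\beta^{(i)}\in A_\theta^+B_\theta^+$ and canceling, one gets $\ga_1^{-1}\ga_2=c_1^{-1}\big((\alpha^{(1)})^{-1}\alpha^{(2)}\big)\,s\,c_2$ with $s\in S_\theta$; the non-compact factor $B_\theta^+$ prevents $s$ from being bounded, and here one invokes a Weyl-element control exactly in the spirit of Lemma~\ref{lem.repeat}, using the $\theta\cup\i(\theta)$-antipodality of $\Ga$ to pin down the Weyl-group and compact-torus indeterminacies introduced by $S_\theta$. The outcome should be that, up to an additive error $O(R)$,
$$\mu(\ga_1^{-1}\ga_2)=w\big(\mu_\theta(\ga_2)-\mu_\theta(\ga_1)+v_B\big)$$
for some $v_B$ in the cone $\log B_\theta^+$ and some Weyl element $w$ folding the right-hand side into $\fa^+$.

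The crux is to convert this into the bound $\psi(\mu_\theta(\ga_1^{-1}\ga_2))\le D+C(R)$. Two facts come into play: (i) $\ga_1^{-1}\ga_2\in\Ga$, so once it leaves a fixed finite set $\mu(\ga_1^{-1}\ga_2)$ is $\theta$-regular, i.e. deep in the walls $\ker\alpha$, $\alpha\in\theta$; and (ii) $\psi$ is non-negative on $\L_\theta$ and $\psi\circ\mu_\theta$ is proper on $\Ga$, while $\mu_\theta(\ga_1),\mu_\theta(\ga_2)$ are asymptotically near $\L_\theta$ and have $\psi$-values in a common window of width $D$. Feeding the $\theta$-deepness back into the displayed identity, the point is that $\psi$ is insensitive both to the folding $w$ and to $v_B$ (the latter because $\psi=\psi\circ p_\theta$ annihilates the $B_\theta^+$-directions), leaving only the genuinely $\fa_\theta$-directional discrepancy $\psi(\mu_\theta(\ga_2))-\psi(\mu_\theta(\ga_1))\in[-D,D]$, whence $\psi(\mu_\theta(\ga_1^{-1}\ga_2))\le D+C(R)$. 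Granting this, the proof concludes at once: $E:=\{\delta\in\Ga:\psi(\mu_\theta(\delta))\le D+C(R)\}$ is finite, say $\#E=q=q(\psi,R,D)$, by $(\Ga,\theta)$-properness, and the map $\ga_2\mapsto\ga_1^{-1}\ga_2$ injects $N_\xi(T)$ into $\ga_1 E$, so $\#N_\xi(T)\le q$.

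I expect the main obstacle to be exactly step (i)--(ii): ruling out the ``bad folding'' in which $w$ moves a root of $\theta$ and $\psi(\mu_\theta(\ga_1^{-1}\ga_2))$ blows up even though $\psi(\mu_\theta(\ga_2))-\psi(\mu_\theta(\ga_1))$ stays bounded. This is the place where the non-compactness of $S_\theta$ forces genuinely higher-rank reasoning -- combining $\theta$-regularity (applied to $\ga_1^{-1}\ga_2$ itself) with $\theta$-antipodality through Lemma~\ref{lem.repeat} -- in contrast to the rank-one situation, where two orbit points lying in a common shadow at comparable distance from the origin automatically differ by a bounded group element and the bound follows from discreteness alone.
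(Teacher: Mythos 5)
The paper does not prove this proposition; it is imported verbatim from \cite[Prop.\ 6.2]{KOW_indicators}, so there is no internal proof to compare against. Judged on its own terms, your reduction skeleton is sound: fixing $\ga_1\in N_\xi(T)$ and showing $\ga_2\mapsto\ga_1^{-1}\ga_2$ lands in the finite set $\{\delta\in\Ga:\psi(\mu_\theta(\delta))\le D+C(R)\}$ would indeed finish the proof by $(\Ga,\theta)$-properness, and the decomposition $\ga_1^{-1}\ga_2=c_1^{-1}(\alpha^{(1)})^{-1}\alpha^{(2)}\,s\,c_2$ with $s\in S_\theta$ is correct. But the pivotal estimate $\psi(\mu_\theta(\ga_1^{-1}\ga_2))\le D+C(R)$ is asserted rather than proved, and you flag it yourself as the ``expected obstacle.'' The specific claim that ``$\psi$ is insensitive to the folding $w$'' is exactly what fails in general: $p_\theta\circ\op{Ad}_w=p_\theta$ only when $w$ fixes $\fa_\theta$ pointwise, and nothing in your computation forces the folding element into $\cal W\cap M_\theta$. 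Already for $\theta=\Pi$ (so $S_\theta=M$ is compact and $v_B$ is absent) the difference $\mu(\ga_2)-\mu(\ga_1)$ can be a huge vector lying near $\ker\psi$ but far from $\fa^+$; folding it into $\fa^+$ then makes $\psi(\mu(\ga_1^{-1}\ga_2))$ arbitrarily large even though $\psi(\mu(\ga_2))-\psi(\mu(\ga_1))\in[0,D]$. For a lattice the multiplicity in the statement is genuinely unbounded, so antipodality is not a technical device for taming $S_\theta$ but the entire content of the proposition; your sketch never actually deploys it at the crux.

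Concretely, the tool you invoke, Lemma \ref{lem.repeat}, does not apply as stated to your configuration: it requires a bounded sequence of the form $\ga_ih_im_id_i$ with $h_iP\in\La$ and $d_i\in A_\theta^+B_\theta^+$, whereas in your identity the bounded factors $c_1,c_2$ are arbitrary elements near $e$ (their flags need not lie in $\La$), and the $A_\theta$-component $(\alpha^{(1)})^{-1}\alpha^{(2)}$ is a difference of two elements of $A_\theta^+$ and so need not lie in $A_\theta^+$. One must instead run a compactness/contradiction argument over sequences so that the common shadow points accumulate in $\La_\theta$ (making antipodality available), and prove a variant of Lemma \ref{lem.repeat} in which the $A_\theta$-part is controlled only through $\psi$ and the bad Weyl element $w\in M_\theta w_0$ is excluded using the $(\Ga,\theta)$-properness of $\psi$ — exactly the shape of Lemma \ref{lem.analrepeat} in Section \ref{sec.codim}. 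Without carrying out that step, the argument is a correct identification of where the difficulty lies rather than a proof.
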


\begin{lem} \label{lem.shadowlikeintersection}
    Let $Q \subset \tilde{\Omega}_{\theta}$ be a compact subset. For any $t > 1$, we have $$\tilde{\m}(Q \cap \ga Q a_{-t}) \ll e^{-\psi(\mu_{\theta}(\ga))}$$ where the implied constant is independent of $t$.
\end{lem}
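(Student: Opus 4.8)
The plan is to unpack the definition of the BMS measure $\tilde\m$ on $\La_\theta^{(2)}\times\fa_\theta$ and reduce the estimate to the shadow lemma (Lemma~\ref{lem.shadowlemma}) applied to $\nu$ and $\nu_\i$, using Proposition~\ref{lem.dircartan} to control the geometry of those $[h]$ that can lie in the intersection $Q\cap\ga Q a_{-t}$. First I would fix a compact set $Q'\subset G$ with $Q'M_\theta=Q'$ and $Q\subset Q'S_\theta/S_\theta$, and note that if $[h]\in Q\cap\ga Q a_{-t}$ with $h\in Q'$, then by Proposition~\ref{lem.dircartan}(2) there is $R=R(Q)>0$ with
$$
(h^+,h^-)\in O_R^\theta(o,\ga o)\times O_R^{\i(\theta)}(\ga o,o),
$$
and by Proposition~\ref{lem.dircartan}(3) we have $\|\cal G^\theta(h^+,h^-)\|<C_2=C_2(Q)$, so the density $e^{\psi(\cal G^\theta(h^+,h^-))}$ is bounded above by $e^{\|\psi\|C_2}$ on the relevant region. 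Hence
$$
\tilde\m(Q\cap\ga Q a_{-t})\ll \nu\bigl(O_R^\theta(o,\ga o)\bigr)\cdot \nu_\i\bigl(O_R^{\i(\theta)}(\ga o,o)\bigr)\cdot \Leb_{\fa_\theta}\bigl(\{b:\ (h^+,h^-,b)\in Q\cap\ga Q a_{-t}\ \text{some }h\}\bigr),
$$
where the $\fa_\theta$-factor is controlled as follows: since $Q$ is bounded in the $\fa_\theta$-direction and the $A_\theta$-action only translates the $\fa_\theta$-coordinate, the set of admissible $b$ is contained in a translate of a fixed bounded set (its diameter bounded in terms of $Q$ alone), so this Lebesgue factor is $O(1)$ uniformly in $\ga$ and $t$.

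It then remains to bound $\nu(O_R^\theta(o,\ga o))\,\nu_\i(O_R^{\i(\theta)}(\ga o,o))$. By the shadow lemma (Lemma~\ref{lem.shadowlemma}(2), valid here since $\Ga$ is $\theta$-transverse and Zariski dense, so $\#\La_\theta\ge 3$) we get, for $R$ large enough,
$$
\nu\bigl(O_R^\theta(o,\ga o)\bigr)\le e^{\|\psi\|\kappa R}\,e^{-\psi(\mu_\theta(\ga))},\qquad
\nu_\i\bigl(O_R^{\i(\theta)}(\ga o,o)\bigr)=\nu_\i\bigl(O_R^{\i(\theta)}(o,\ga^{-1}o)\bigr)\le e^{\|\psi\|\kappa R}\,e^{-(\psi\circ\i)(\mu_{\i(\theta)}(\ga^{-1}))},
$$
where for the second I would use the identity $g\,O_R^\theta(\eta,p)=O_R^\theta(g\eta,gp)$ from \eqref{go} to rewrite $O_R^{\i(\theta)}(\ga o,o)=\ga\,O_R^{\i(\theta)}(o,\ga^{-1}o)$ and the conformality of $\nu_\i$ — or simply apply the shadow lemma directly to the viewpoint $\ga o$ after translating. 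Since $\mu_{\i(\theta)}(\ga^{-1})=\i(\mu_\theta(\ga))$, we have $(\psi\circ\i)(\mu_{\i(\theta)}(\ga^{-1}))=\psi(\mu_\theta(\ga))$, so the product of the two shadow measures is $\ll e^{-2\psi(\mu_\theta(\ga))}$. Combining with the $O(1)$ Lebesgue factor gives $\tilde\m(Q\cap\ga Q a_{-t})\ll e^{-2\psi(\mu_\theta(\ga))}\le e^{-\psi(\mu_\theta(\ga))}$ (using $\psi(\mu_\theta(\ga))\ge -\e$ by $(\Ga,\theta)$-properness, so $e^{-2\psi(\mu_\theta(\ga))}\ll e^{-\psi(\mu_\theta(\ga))}$), with all constants depending only on $Q$, $\psi$, $\nu$, $\nu_\i$ and not on $t$.

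The main obstacle I anticipate is the bookkeeping caused by the non-compactness of $S_\theta$: an element $[h]\in\tilde\Omega_\theta$ has representatives $h\in G$ differing by $S_\theta$, and the relation $[h]\in\ga Q a_{-t}$ means $h s\, a_{tu}=\ga h'$ for some $h'\in Q'$ and some \emph{possibly unbounded} $s\in S_\theta$; this is exactly the subtlety handled by Lemma~\ref{lem.repeat} and Proposition~\ref{lem.dircartan}. So the care is in invoking Proposition~\ref{lem.dircartan} correctly to guarantee that, \emph{after} choosing the right representative in $h M_\theta$ (with $hP\in\La$) and absorbing the Cartan parts of $s$, the points $h^+,h^-$ actually land in the stated bounded shadows with the Gromov product bounded — at which point the estimate is the same shadow-lemma computation as in rank one. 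I would also note that in fact one obtains the stronger bound $e^{-2\psi(\mu_\theta(\ga))}$, which will presumably be used (e.g. in Proposition~\ref{prop.upperbound}) to make the double sum over $\ga,\ga'$ converge; but for the present lemma only $e^{-\psi(\mu_\theta(\ga))}$ is claimed and it follows a fortiori.
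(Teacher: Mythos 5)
Your proposal follows the paper's strategy up to the point of bounding
$$
\tilde\m(Q\cap\ga Q a_{-t})\ll \nu\bigl(O_R^\theta(o,\ga o)\bigr)\cdot \nu_\i\bigl(O_R^{\i(\theta)}(\ga o,o)\bigr),
$$
but then you apply the shadow lemma to \emph{both} factors and claim the ``stronger'' bound $e^{-2\psi(\mu_\theta(\ga))}$. This intermediate step is incorrect. The identity $O_R^{\i(\theta)}(\ga o,o)=\ga\,O_R^{\i(\theta)}(o,\ga^{-1}o)$ is right, but the equality you then write, $\nu_\i(O_R^{\i(\theta)}(\ga o,o))=\nu_\i(O_R^{\i(\theta)}(o,\ga^{-1}o))$, is false: $\nu_\i$ is only conformal, and the Radon--Nikodym factor $e^{(\psi\circ\i)(\beta_\eta^{\i(\theta)}(e,\ga^{-1}))}$ is, by Lemma~\ref{lem.buseandcartan}, approximately $e^{(\psi\circ\i)(\mu_{\i(\theta)}(\ga^{-1}))}=e^{+\psi(\mu_\theta(\ga))}$ on the shadow $O_R^{\i(\theta)}(o,\ga^{-1}o)$. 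This \emph{exactly cancels} the exponential decay $\nu_\i(O_R^{\i(\theta)}(o,\ga^{-1}o))\ll e^{-\psi(\mu_\theta(\ga))}$, so in fact $\nu_\i(O_R^{\i(\theta)}(\ga o,o))\asymp 1$. Lemma~\ref{lem.shadowlemma}(1) states this explicitly: $\inf_\ga\nu(O_R^\theta(\ga o,o))>0$, so these shadows from moving basepoints onto the fixed ball $B(o,R)$ do \emph{not} have exponentially small measure.

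The paper's actual proof handles the $\nu_\i$ factor trivially, by $\nu_\i(O_R^{\i(\theta)}(\ga o,o))\le\nu_\i(\F_{\i(\theta)})=1$, and applies the shadow lemma only to the factor $\nu(O_R^\theta(o,\ga o))\ll e^{-\psi(\mu_\theta(\ga))}$. Your final inequality $\tilde\m(Q\cap\ga Q a_{-t})\ll e^{-\psi(\mu_\theta(\ga))}$ is therefore still true, but the step leading to the claimed $e^{-2\psi(\mu_\theta(\ga))}$ is wrong, and the parenthetical remark that the ``stronger bound'' is the one to be used in Proposition~\ref{prop.upperbound} is not correct (there the exponent is split across $\ga$ and $\ga^{-1}\ga'$ via Lemma~\ref{cor.dircartan}, not doubled). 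The rest of the proposal --- the reduction to shadows via Proposition~\ref{lem.dircartan}, the bounded Gromov product, and the $O(1)$ Lebesgue factor in the $\fa_\theta$-direction --- matches the paper's argument.
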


\begin{proof}
    There exists $c_0 = c_0(Q) > 0$ such that if $Q \cap Q a \neq \emptyset$ for some $a \in A_{\theta}$, then $\| \log a \| < c_0$.
By Proposition \ref{lem.prodshadows}(2) and the compactness of $Q$, we have for large enough $R >0$ that $$ \begin{aligned}
    \tilde{\m}&(Q \cap \ga Q a_{-t}) \\
    &\ll  \int_{O_R^{\theta}(o, \ga o) \times O_R^{\i(\theta)}(\ga o, o)}  \int_{\fa_{\theta}} \mathbbm{1}_{Q \cap \ga Q a_{-t}}(\xi, \eta, b) e^{\psi(\cal G^{\theta}(\xi, \eta))} d\nu(\xi) d \nu_{\i}(\eta) db \\
&\ll  \int_{O_R^{\theta}(o, \ga o) \times O_R^{\i(\theta)}(\ga o, o)}   \mathbbm{1}_{Q \cap \ga Q a_{-t}}(\xi, \eta) e^{\psi(\cal G^{\theta}(\xi, \eta))} d\nu(\xi) d \nu_{\i}(\eta).
\end{aligned}$$
Since $\cal G^{\theta}(\xi, \eta)$ in the above integrand is uniformly bounded by Proposition \ref{lem.prodshadows}(3), we obtain
$$
\tilde{\m} (Q \cap \ga Q a_{-t}) \ll  \nu(O_R^{\theta}(o, \ga o)) \nu_{\i}(O_R^{\i(\theta)}(\ga o, o)).
$$
By Lemma \ref{lem.shadowlemma}, we have $$\tilde{\m}(Q \cap \ga Q a_{-t}) \ll \nu(O_R^{\theta}(o, \ga o)) \ll e^{-\psi(\mu_{\theta}(\ga))}.$$ \end{proof}

The following is immediate from Proposition \ref{lem.dircartan}(1).
\begin{lem} \label{cor.dircartan}
    Let $Q \subset \tilde{\Omega}_{\theta}$ be a compact subset. If $Q \cap \ga Q a_{-t} \cap \ga' Q a_{-t-s} \neq \emptyset$ for some $\ga, \ga' \in \Ga$ and $t, s > 0$, then we have
    \begin{enumerate}
        \item $\|\mu_{\theta}(\ga) - t u \|$ , $\| \mu_{\theta}(\ga^{-1} \ga') - s u \|$, $\| \mu_{\theta}(\ga') - (t + s)u\| < C_1$;
        \item $\psi(\mu_{\theta}(\ga)) + \psi(\mu_{\theta}(\ga^{-1}\ga')) < \psi(\mu_{\theta}(\ga')) + 3C_1 \|\psi\|$
    \end{enumerate} where $C_1=C_1(Q)$ is given as in Proposition \ref{lem.dircartan}.
\end{lem}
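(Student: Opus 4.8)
The plan is to derive all three norm bounds in (1) directly from Proposition \ref{lem.dircartan}(1), by exhibiting three nonempty pairwise intersections of the form $Q \cap \gamma'' Q a_{-t''}$ with $t'' > 0$ to which that proposition applies, and then to obtain (2) simply by evaluating the linear form $\psi$ on the inequalities of (1).

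First I would fix a point $[h] \in Q \cap \gamma Q a_{-t} \cap \gamma' Q a_{-t-s}$. Since $[h] \in Q \cap \gamma Q a_{-t}$ with $t > 0$, Proposition \ref{lem.dircartan}(1) gives $\|\mu_\theta(\gamma) - t u\| < C_1$ where $C_1 = C_1(Q)$; and since $[h] \in Q \cap \gamma' Q a_{-(t+s)}$ with $t + s > 0$, the same proposition gives $\|\mu_\theta(\gamma') - (t+s) u\| < C_1$. For the remaining estimate, set $[h'] := \gamma^{-1}[h] a_t$. Then $[h] \in \gamma Q a_{-t}$ forces $[h'] \in Q$, while $[h] \in \gamma' Q a_{-(t+s)}$ gives $[h'] a_s = \gamma^{-1}[h] a_{t+s} = (\gamma^{-1}\gamma')(\gamma'^{-1}[h] a_{t+s}) \in (\gamma^{-1}\gamma') Q$, i.e.\ $[h'] \in Q \cap (\gamma^{-1}\gamma') Q a_{-s}$ with $s > 0$. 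Proposition \ref{lem.dircartan}(1) then yields $\|\mu_\theta(\gamma^{-1}\gamma') - s u\| < C_1$, which completes (1).

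For (2) I would apply $\psi$: from $|\psi(v)| \le \|\psi\|\,\|v\|$ and the three bounds of (1) we get $\psi(\mu_\theta(\gamma)) < t\,\psi(u) + \|\psi\| C_1$, $\psi(\mu_\theta(\gamma^{-1}\gamma')) < s\,\psi(u) + \|\psi\| C_1$, and $(t+s)\,\psi(u) < \psi(\mu_\theta(\gamma')) + \|\psi\| C_1$; adding the first two and using the third gives $\psi(\mu_\theta(\gamma)) + \psi(\mu_\theta(\gamma^{-1}\gamma')) < \psi(\mu_\theta(\gamma')) + 3\|\psi\| C_1$, as claimed. The only point requiring a moment's care --- hardly an obstacle --- is that all three pairwise intersections above are built from the \emph{same} compact set $Q$, so a single constant $C_1 = C_1(Q)$ serves for all three conclusions; this is precisely why the middle estimate must be extracted via the translated point $[h'] = \gamma^{-1}[h] a_t \in Q$ rather than from $[h]$ itself.
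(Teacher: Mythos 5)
Your proof is correct and is exactly the derivation the paper has in mind: the paper simply says the lemma is ``immediate from Proposition \ref{lem.dircartan}(1)'', and your argument supplies the short reduction. The key move---translating the base point to $[h'] = \gamma^{-1}[h]a_t \in Q$ so that the middle bound $\|\mu_\theta(\gamma^{-1}\gamma') - su\| < C_1$ also comes from an intersection built on the \emph{same} compact set $Q$ (hence the same constant $C_1$)---is precisely the intended observation, and the passage to (2) via $|\psi(v)| \le \|\psi\|\,\|v\|$ is routine.
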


\subsection*{Proof of Proposition \ref{prop.upperbound}(1)}
    Let $Q \subset \tilde{\Omega}_{\theta}$ be a compact subset.  
    Fix $s,t>0$. For $\ga, \ga' \in \Ga$ such that $Q \cap \ga Q a_{-t} \cap \ga' Q a_{-t-s} \neq \emptyset$, it follows from Lemma \ref{lem.shadowlikeintersection} that $$\tilde{\m}(Q \cap \ga Q a_{-t} \cap \ga' Q a_{-t-s}) \ll e^{-\psi(\mu_{\theta}(\ga'))}.$$ By Lemma \ref{cor.dircartan}(2), we have $\psi(\mu_{\theta}(\ga)) + \psi(\mu_{\theta}(\ga^{-1}\ga')) < \psi(\mu_{\theta}(\ga')) + 3 C_1 \|\psi\|$ and hence $$\tilde{\m}(Q \cap \ga Q a_{-t} \cap \ga' Q a_{-t-s}) \ll  e^{-\psi(\mu_{\theta}(\ga))} e^{-\psi(\mu_{\theta}(\ga^{-1}\ga'))}.$$
    Since we also have $\| \mu_{\theta}(\ga) - tu \|$, $\| \mu_{\theta}(\ga^{-1}\ga') - su \| < C_1$ by Lemma \ref{cor.dircartan} where $C_1$ is given in Proposition \ref{lem.dircartan}(1), we deduce by replacing $\ga^{-1}\ga'$ with $\hat{\ga}$ that $$\begin{aligned}
        \sum_{\ga, \ga' \in \Ga} & \tilde{\m}(Q \cap \ga Q a_{-t} \cap \ga' Q a_{-t-s}) \\
        & \ll \left( \sum_{\substack{\ga \in \Ga_{u, C_1} \\ \psi(\mu_{\theta}(\ga)) \in (\delta t - c, \delta t + c)}} e^{-\psi(\mu_{\theta}(\ga))} \right) \left( \sum_{\substack{\hat{\ga} \in \Ga_{u, C_1} \\ \psi(\mu_{\theta}(\hat{\ga})) \in (\delta s - c, \delta s + c)}} e^{-\psi(\mu_{\theta}(\hat{\ga}))} \right)
    \end{aligned}$$
    where $c := C_1 \|\psi\|$.

    We observe that if $\psi(\mu_{\theta}(\ga)) \in (\delta t - c, \delta t + c)$ for some $t \in [0, T]$, then $\psi(\mu_{\theta}(\ga)) \le \delta T + c$. Hence we have
    $$\int_0^T \left( \sum_{\substack{\ga \in \Ga_{u, C_1} \\ \psi(\mu_{\theta}(\ga)) \in (\delta t - c, \delta t + c)}} e^{-\psi(\mu_{\theta}(\ga))} \right) dt \ll \sum_{\substack{\ga \in \Ga_{u, C_1} \\ \psi(\mu_{\theta}(\ga)) \le \delta T + c}} e^{-\psi(\mu_{\theta}(\ga))}.$$ Similarly we also have $$\int_0^T  \left( \sum_{\substack{\hat{\ga} \in \Ga_{u, C_1} \\ \psi(\mu_{\theta}(\hat{\ga})) \in (\delta s - c, \delta s + c)}} e^{-\psi(\mu_{\theta}(\hat{\ga}))} \right) ds \ll  \sum_{\substack{\hat{\ga} \in \Ga_{u, C_1} \\ \psi(\mu_{\theta}(\hat{\ga})) \le \delta T + c}} e^{-\psi(\mu_{\theta}(\hat{\ga}))}.$$
    Therefore, we have $$\int_0^T \int_0^T \sum_{\ga, \ga' \in \Ga} \tilde{\m}(Q \cap \ga Q a_{-t} \cap \ga' Q a_{-t-s}) dt ds \ll \left(\sum_{\substack{\ga \in \Ga_{u, C_1} \\ \psi(\mu_{\theta}(\ga)) \le \delta T + c}} e^{-\psi(\mu_{\theta}(\ga))}\right)^2.$$
Since $$\sum_{\substack{\ga \in \Ga_{u, C_1} \\ \delta T < \psi(\mu_{\theta}(\ga)) \le \delta T + c}} e^{-\psi(\mu_{\theta}(\ga))} \ll \sum_{\substack{\ga \in \Ga_{u, C_1} \\ \delta T < \psi(\mu_{\theta}(\ga)) \le \delta T + c}} \nu (O_R^{\theta}(o, \ga o)) \ll 1$$ for large $R = R(\nu)$ by Lemma \ref{lem.shadowlemma} and Proposition \ref{prop.mult}, setting $r(Q) = C_1(Q)$ completes the proof. \qed

\medskip

\begin{lem}\label{lem.prodshadowgromov}
     For any $R > 0$, there exists $0 < \ell_R<\infty$ such that  any  $(\xi, \eta) \in \bigcup_{\ga\in \Ga, \|\mu_{\theta}(\ga)\| > \ell_R} O_{R}^{\theta}(o, \ga o) \times O_{R}^{\i(\theta)}(\ga o, o)$ satisfies $\| \cal G^{\theta}(\xi, \eta) \| < \ell_R.$
\end{lem}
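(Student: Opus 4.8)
The approach is to argue by contradiction, feeding the diagonal-sequence compactness of Lemma~\ref{shad_con} into the continuity of the $\theta$-Gromov product on $\F_\theta^{(2)}$.

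Suppose no such $\ell_R$ exists. Then for every $n\ge 1$ one can find $\ga_n\in\Ga$ with $\|\mu_\theta(\ga_n)\|>n$ together with a pair $(\xi_n,\eta_n)\in O_R^\theta(o,\ga_n o)\times O_R^{\i(\theta)}(\ga_n o,o)$ satisfying $\|\mathcal{G}^\theta(\xi_n,\eta_n)\|\ge n$: indeed, if for some $n$ no such data existed, then $\ell_R=n$ would already satisfy the conclusion, because enlarging the threshold only shrinks the union of shadows. The first step is to observe that $\|\mu_\theta(\ga_n)\|>n\to\infty$ forces $\mu(\ga_n)\to\infty$ and hence $\ga_n\to\infty$ in $G$, and that, since $\Ga$ is $\theta$-transverse and in particular $\theta$-regular, this implies $\ga_n\to\infty$ $\theta$-regularly.

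The second step is to apply Lemma~\ref{shad_con} with $g_n=\ga_n$ and $S=R$: by (the proof of) that lemma, the diagonal sequence $(\xi_n,\eta_n)$ --- whose $n$-th term lies in $O_R^\theta(o,\ga_n o)\times O_R^{\i(\theta)}(\ga_n o,o)$ --- admits a subsequence converging to some $(\xi_\infty,\eta_\infty)\in\F_\theta^{(2)}$. The third step is to use the continuity of $\mathcal{G}^\theta$ on $\F_\theta^{(2)}$: under the homeomorphism $\F_\theta^{(2)}\simeq G/L_\theta$ it is induced by the continuous, right-$L_\theta$-invariant map $g\mapsto\beta_{g^+}^\theta(e,g)+\i(\beta_{g^-}^{\i(\theta)}(e,g))$ (equivalently, to bound only its norm one may combine Proposition~\ref{Gromov} with the continuity of $gL_\theta\mapsto d(o,gL_\theta o)$ on $G/L_\theta$, the latter being the distance from $g^{-1}o$ to the fixed closed set $L_\theta o$). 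Either way $\|\mathcal{G}^\theta(\xi_n,\eta_n)\|$ remains bounded along the convergent subsequence, contradicting $\|\mathcal{G}^\theta(\xi_n,\eta_n)\|\ge n$; this contradiction produces the desired $\ell_R$.

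The one delicate point, and the step I would be most careful about, is the precise content extracted from Lemma~\ref{shad_con}: what is needed is subconvergence \emph{of the diagonal sequence} $(\xi_n,\eta_n)$ to a point of $\F_\theta^{(2)}$, not merely precompactness of each individual product $O_R^\theta(o,\ga_n o)\times O_R^{\i(\theta)}(\ga_n o,o)$ for large $n$. This is exactly what the proof of Lemma~\ref{shad_con} delivers --- it starts from an arbitrary sequence $(\xi_i,\eta_i)$ with $(\xi_i,\eta_i)$ in the $i$-th product and shows that it subconverges in $\F_\theta^{(2)}$ --- so no additional uniformity argument over $\ga\in\Ga$ is required.
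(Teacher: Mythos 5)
Your proof is correct and follows essentially the same route as the paper: argue by contradiction, use $\theta$-regularity to pass to a convergent subsequence, invoke Lemma~\ref{shad_con} to keep the limit inside $\F_\theta^{(2)}$, and conclude by continuity of the Gromov product. The "delicate point" you flag is real — the statement of Lemma~\ref{shad_con} asserts precompactness of individual products, while what is used is subconvergence of the diagonal sequence, and the paper invokes Lemma~\ref{shad_con} in exactly the way you describe, relying on the content of its proof rather than on a literal reading of its statement.
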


\begin{proof}
    Suppose not. Then there exist sequences $\ga_i \to \infty$ in $\Ga$ and $(\xi_i, \eta_i) \in O_{R}^{\theta}(o, \ga_i o) \times O_{R}^{\i(\theta)}(\ga_i o, o)$ such that $\| \cal G^{\theta}(\xi_i, \eta_i) \| \to \infty$ as $i \to \infty$.
    We may assume that $\xi_i\to \xi$ and $\eta_i\to \eta$ by passing to subsequences.
    As $\ga_i\to \infty$ $\theta$-regularly,
    Lemma \ref{shad_con} implies that
    $(\xi, \eta)\in \F_\theta^{(2)} $.
   Since $\| \cal G^{\theta}(\xi_i, \eta_i) \| \to \| \cal G^{\theta}(\xi, \eta)\| < \infty$, this is a contradiction.
\end{proof}

   \begin{lem} \label{lem.findsegment} Let $u\in \fa_\theta^+ - \{0\}$.
    For any $r,  R > 0$,  there exists a compact subset $Q = Q(r,R) \subset \tilde{\Omega}_{\theta}$ such that for any $\ga \in \Ga_{u, r}$ with $\|\mu_{\theta}(\ga) \| > \ell_{R}$ and
    $$(\xi, \eta) \in \left(O_{R}^{\theta}(o, \ga o) \times O_{R}^{\i(\theta)}(\ga o, o)\right) \cap \La_{\theta}^{(2)},$$ there exists $v \in \fa_{\theta}$ and $t \ge 0$ such that $$(\xi, \eta, v) \in Q \quad \text{and} \quad (\xi, \eta, v) a_{[t-1, t+1]} \subset \ga Q.$$
\end{lem}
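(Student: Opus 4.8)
\textit{Strategy.} The set $Q$ will be a ``box''
$Q=\{(\zeta_1,\zeta_2,b)\in\La_\theta^{(2)}\times\fa_\theta : \|\cal G^\theta(\zeta_1,\zeta_2)\|\le L,\ \|b\|\le N\}$
for constants $L,N>0$ depending only on $r,R$, and I would take $v=0$ and $t=t_\ga$, where $t_\ga$ satisfies $\|\mu_\theta(\ga)-t_\ga u\|<r$ (such $t_\ga$ exists because $\ga\in\Ga_{u,r}$). Since $\mu_\theta(\ga)\in\fa_\theta^+$, $\fa_\theta^+$ is a proper cone and $u\neq 0$, one has $t_\ga>0$ once $\|\mu_\theta(\ga)\|$ is large; the remaining $\ga\in\Ga_{u,r}$ have $\|\mu_\theta(\ga)\|$ bounded in terms of $r,u$ and can be absorbed by taking $t=0$ and enlarging $N$. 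First I would record that Lemma \ref{lem.prodshadowgromov} gives $\|\cal G^\theta(\xi,\eta)\|<\ell_R$, so that the point $z:=(\xi,\eta,0)$ (which lies in $\tilde\Omega_\theta$ since $(\xi,\eta)\in\La_\theta^{(2)}$) already belongs to $Q$ as soon as $L\ge\ell_R$.

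\textit{The orbit segment.} We have $z\,a_{su}=(\xi,\eta,su)$, and by \eqref{hopf}, $\ga^{-1}.(z\,a_{su})=(\ga^{-1}\xi,\ga^{-1}\eta,\,su+\beta_\xi^\theta(\ga,e))$, with $(\ga^{-1}\xi,\ga^{-1}\eta)\in\La_\theta^{(2)}$ by $\Ga$-invariance; moreover $z\,a_{su}\in\ga Q$ is equivalent to $\ga^{-1}.(z\,a_{su})\in Q$. To bound the fiber coordinate I would apply Lemma \ref{lem.buseandcartan} to $\xi\in O_R^\theta(o,\ga o)$: since $\underline a_\theta(o,\ga o)=\mu_\theta(\ga)$, this gives $\|\beta_\xi^\theta(e,\ga)-\mu_\theta(\ga)\|\le\kappa R$, hence $\|\beta_\xi^\theta(\ga,e)+\mu_\theta(\ga)\|\le\kappa R$; together with $\|\mu_\theta(\ga)-t_\ga u\|<r$ and $\|(s-t_\ga)u\|\le\|u\|$ for $s\in[t_\ga-1,t_\ga+1]$ this yields $\|su+\beta_\xi^\theta(\ga,e)\|\le\|u\|+r+\kappa R$; put $N:=\|u\|+r+\kappa R$ (enlarged as above for the small $\ga$).

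\textit{The base point.} I must check that the pair $(\ga^{-1}\xi,\ga^{-1}\eta)$ stays in a fixed compact subset of $\F_\theta^{(2)}$, and this is the only non-formal step. I would use the transformation rule for the Gromov product coming from equivariance of the Busemann cocycle: $\cal G^\theta(\ga^{-1}\xi,\ga^{-1}\eta)=\cal G^\theta(\xi,\eta)+\beta_\xi^\theta(\ga,e)+\i\bigl(\beta_\eta^{\i(\theta)}(\ga,e)\bigr)$. As above $\|\beta_\xi^\theta(\ga,e)+\mu_\theta(\ga)\|\le\kappa R$, while applying Lemma \ref{lem.buseandcartan} to $\eta\in O_R^{\i(\theta)}(\ga o,o)$ — using $\underline a_{\i(\theta)}(\ga o,o)=\mu_{\i(\theta)}(\ga^{-1})=\i(\mu_\theta(\ga))$ and that $\i$ is a norm isometry with $\i^2=\id$ — gives $\|\i(\beta_\eta^{\i(\theta)}(\ga,e))-\mu_\theta(\ga)\|\le\kappa R$. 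The two leading terms cancel, so $\|\beta_\xi^\theta(\ga,e)+\i(\beta_\eta^{\i(\theta)}(\ga,e))\|\le2\kappa R$ and therefore $\|\cal G^\theta(\ga^{-1}\xi,\ga^{-1}\eta)\|\le\ell_R+2\kappa R=:L$. (Alternatively one may apply the $\i(\theta)$-version of Lemma \ref{lem.prodshadowgromov} to $(\ga^{-1}\eta,\ga^{-1}\xi)$, using \eqref{go} to rewrite the shadows and the symmetry $\cal G^{\i(\theta)}(\eta,\xi)=\i(\cal G^\theta(\xi,\eta))$.)

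\textit{Conclusion.} With these $L,N$, set $Q:=\{(\zeta_1,\zeta_2,b)\in\La_\theta^{(2)}\times\fa_\theta:\|\cal G^\theta(\zeta_1,\zeta_2)\|\le L,\ \|b\|\le N\}$. By Proposition \ref{Gromov} and the homeomorphism $\F_\theta^{(2)}\simeq G/L_\theta$, the condition $\|\cal G^\theta\|\le L$ defines a compact subset of $\F_\theta^{(2)}$, so $Q$ is a compact subset of $\tilde\Omega_\theta$ depending only on $r,R$. The two estimates show $z=(\xi,\eta,0)\in Q$ and $\ga^{-1}.(z\,a_{su})\in Q$ for every $s\in[t_\ga-1,t_\ga+1]$, i.e.\ $(\xi,\eta,0)\,a_{[t_\ga-1,t_\ga+1]}\subset\ga Q$; taking $v=0$ and $t=t_\ga\ge 0$ completes the proof. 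The heart of the argument is the base-point estimate, i.e.\ seeing that translating $(\xi,\eta)$ by $\ga^{-1}$ preserves the Gromov-product bound; the rest is bookkeeping with the Busemann cocycle and the shadow--Cartan comparison of Lemma \ref{lem.buseandcartan}, together with the small-$\ga$ sign issue noted above.
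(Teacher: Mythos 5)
Your proof is correct, and it follows a genuinely different route from the paper's. The paper constructs an explicit geometric representative $g\in G$ for the pair $(\xi,\eta)$: it takes $g_0$ with $d(o,g_0o)\ll\ell_R$ via Proposition \ref{Gromov}, modifies it to $g=g_0m$ with $m\in M_\theta$ so that $g^{-1}k\in P$ (Iwasawa decomposition), then uses the Cartan decomposition $a_0=ab\in A_\theta^+B_\theta^+$ and an auxiliary $\tilde a\in A$ to run a chain of triangle inequalities showing $d(gba_{su}o,\ga o)<R'$; the set $Q$ is $\{[h]:d(ho,o)\le R'\}$ and $v=\beta^\theta_{g^+}(e,g)$, which is generally nonzero. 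You instead work entirely in the Hopf coordinates, take $v=0$, define $Q$ as a Gromov-product box $\{\|\cal G^\theta\|\le L\}\times\{\|b\|\le N\}$ (compact by Proposition \ref{Gromov}), and control both coordinates of $\ga^{-1}.(\xi,\eta,su)$ directly: the fiber coordinate via Lemma \ref{lem.buseandcartan} applied to $\xi\in O_R^\theta(o,\ga o)$, and the base coordinate via the cocycle transformation law $\cal G^\theta(\ga^{-1}\xi,\ga^{-1}\eta)=\cal G^\theta(\xi,\eta)+\beta^\theta_\xi(\ga,e)+\i(\beta^{\i(\theta)}_\eta(\ga,e))$ together with the cancellation $\beta^\theta_\xi(\ga,e)\approx-\mu_\theta(\ga)$, $\i(\beta^{\i(\theta)}_\eta(\ga,e))\approx\mu_\theta(\ga)$. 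This is shorter and conceptually cleaner — it replaces the paper's Iwasawa/Cartan bookkeeping by two applications of the shadow–Cartan comparison and the equivariance cocycle — at the modest cost of invoking the Gromov-product transformation identity, which the paper never states explicitly but which follows immediately from the definition and $\Ga$-equivariance of $\beta^\theta$. Your handling of the sign of $t_\ga$ (the $\ga\in\Ga_{u,r}$ with no admissible $t_\ga\ge 0$ have $\|\mu_\theta(\ga)\|$ bounded in terms of $r$, via a linear form strictly positive on $\fa_\theta^+-\{0\}$, and are absorbed by taking $t=0$ and enlarging $N$) is correct but deserves to be spelled out rather than left as an aside, since the lemma explicitly demands $t\ge0$.
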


\begin{proof}
    Let $(\xi, \eta) \in  (O_{R}^{\theta}(o, \ga o) \times O_{R}^{\i(\theta)}(\ga o, o)) \cap \La_{\theta}^{(2)}$ for some $\ga \in \Ga_{u, r}$ with $ \| \mu_{\theta}(\ga) \| > \ell_R$. Then there exists $k \in K$ such that $\xi = kP_{\theta}$ and $d(ka_0 o, \ga o) < R$ for some $a_0 \in A^+$. Write $a_0 = ab \in A_{\theta}^+B_{\theta}^+$.

    By Lemma \ref{lem.cptcartan}, we have $\|\mu(\ga) - \log a_0 \| < D$ for some $D = D(R)$, and hence $\|\mu_{\theta}(\ga) - \log a \| < D.$ We also obtain from $\ga \in \Ga_{u, r}$ that $\|\mu_{\theta}(\ga) - tu \| < r$ for some $t \ge 0$ and hence we have $\|tu - \log a \| < D + r.$
    Therefore, we have \be \label{eqn.lower1}
    \begin{aligned}
    d(ka_{tu}b o, \ga o) & \le d(ka_{tu} b o, ka_0 o) + d(ka_0 o, \ga o) \\
    & = d(a_{tu} o, a o) + d(ka_0 o, \ga o) \\
    & < D + r + R.
    \end{aligned}
    \ee
We also note that $$\|tu + \log b - \log a_0 \| = \|tu - \log a \| < D + r.$$ Hence there exists $\tilde{a} \in A$ such that $$\|\log \tilde{a}\| < D + r \quad \text{and} \quad a_{tu} b \tilde{a} \in A^+.$$
Let $g_0 \in G$ such that $(g_0P_{\theta}, g_0w_0 P_{\i(\theta)}) = (\xi, \eta)$. Since $(\xi, \eta) \in  O_{R}^{\theta}(o, \ga o) \times O_{R}^{\i(\theta)}(\ga o, o)$ and $\|\mu_{\theta}(\ga) \| > \ell_R$, we have $\| \cal G^{\theta}(\xi, \eta) \| < \ell_R$. By Proposition \ref{Gromov}, we can replace $g_0$ by an element of $g_0 L_{\theta}$ so that we may assume that $$d(o, g_0 o) \le c \| \cal G^{\theta}(\xi, \eta) \| + c' < c \ell_R + c'.$$
Since $\xi = kP_{\theta} = g_0 P_{\theta}$, we have $g_0^{-1}k \in P_{\theta}$. We write the Iwasawa decomposition $$g_0^{-1}k = m \hat{a} \hat{n} \in KAN.$$ Then we have $m = g_0^{-1}k \hat{n}^{-1} \hat{a}^{-1} \in P_{\theta} \hat{n}^{-1} \hat{a}^{-1} = P_{\theta}$. In particular, we have $m \in P_{\theta} \cap K = M_{\theta}$. We let $g = g_0 m$. Since $m \in M_{\theta} \subset L_{\theta}$, we still have $(gP_{\theta}, gw_0P_{\i(\theta)}) = (\xi, \eta)$ and $d(o, go) = d(o, g_0 o) < c\ell_R + c'$. Moreover, we have $g^{-1}k = \hat{a}\hat{n} \in P$. Now for $s \in [t - 1, t + 1]$, we have $$\begin{aligned}
        &d(gba_{su} o, kb a_{tu} o)  \le d(gba_{su} o, gba_{tu} o) + d(gba_{tu} o, kba_{tu} o) \\
        & \le 1 + d(gba_{tu} o, gba_{tu} \tilde{a} o) + d(gba_{tu} \tilde{a} o, kba_{tu} \tilde{a} o) + d(kba_{tu} \tilde{a} o, kba_{tu} o) \\
        & = 1 + 2d(o, \tilde{a} o) + d(gba_{tu} \tilde{a} o, kba_{tu} \tilde{a} o).
    \end{aligned}$$
    Since $g^{-1}k \in P$ and $ba_{tu} \tilde{a} \in A^+$, we get $d(gba_{tu} \tilde{a} o, kba_{tu} \tilde{a} o) \le d(g o, ko) = d(g o, o) < c \ell_R + c'$. Together with $\|\log \tilde{a} \| < D + r$, we have $$d(gba_{su} o, kba_{tu} o) < 1 + 2(D + r) + c \ell_R + c'.$$ Since $d(kba_{tu} o, \ga o) < D + r + R$, we finally have $$d(gba_{su} o, \ga o) < 1 + 3(D+r) + R + c \ell_R + c'.$$
    We set $R' = 1 + 3(D+r) + R + c \ell_R + c'$ and $Q := \{[h] \in \tilde{\Omega}_{\theta} : d(ho, o) \le R'\}$ which is a compact subset of $\tilde{\Omega}_{\theta}$.

    Now the image of $g$ under the projection $G \to \F_{\theta}^{(2)} \times \fa_{\theta}$ is of the form $(\xi, \eta, v)$ for some $v \in \fa_{\theta}$. Since $b \in S_{\theta}$, the product $gb$ also projects to the same element $(\xi, \eta, v)$. It follows from $d(o, go) < c \ell_R + c' \le R'$ that $(\xi, \eta, v) \in Q$. Moreover, since $d(\ga^{-1}gba_{su} o, o) < R'$ for all $s \in [t-1, t+1]$, we have $\ga^{-1}(\xi, \eta, v)a_{su} \in Q$ and hence $(\xi, \eta, v)a_{[t-1, t+1]} \subset \ga Q$. This finishes the proof.
\end{proof}

Recall the notation $\delta = \psi(u) > 0$.

\begin{lem} \label{lem.lower2}
Fix $r, R > 0$, and let $Q = Q(r,R) \subset \tilde \Omega_{\theta}$ and $C_1 = C_1(Q) > 0$ be 
as in Lemma \ref{lem.findsegment} and  Proposition \ref{lem.dircartan} respectively.  Let $T>0$ and $\ga \in \Ga_{u, r}$  be such that
$$\text{ $\|\mu_{\theta}(\ga) \|  > \ell_R\;\; $ and $\;\; C_1\|\psi\| + \delta < \psi(\mu_{\theta}(\ga)) < \delta T -  C_1 \|\psi\| - \delta$. }$$
If $\sum_{\ga_0 \in \Ga_{u, r}} e^{-\psi(\mu_{\theta}(\ga_0))} = \infty$, then, for any $(\xi, \eta) \in  (O_{R}^{\theta}(o, \ga o) \times O_{R}^{\i(\theta)}(\ga o, o) ) \cap \La_{\theta}^{(2)}$,  we have
    $$\int_0^T\int_{\fa_{\theta}} \mathbbm{1}_{Q' \cap \ga Q' a_{-t}}(\xi, \eta, b) db dt \ge 2 \op{Vol}(A_{\theta, 2}) $$ 
    where $A_{\theta, 2} = \{a \in A_{\theta} : \| \log a \| \le 2\}$ and $Q':=Q A_{\theta, 2} \subset \tilde{\Omega}_{\theta}$.
\end{lem}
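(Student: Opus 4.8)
The plan is to use Lemma \ref{lem.findsegment} to locate, for each admissible $\ga$, a point $(\xi,\eta,v)\in Q$ and a time $t\ge 0$ with $(\xi,\eta,v)a_{[t-1,t+1]}\subset \ga Q$, and then to thicken this one-parameter segment in the $A_\theta$-directions transverse to $u$ using the enlargement $Q'=QA_{\theta,2}$. Concretely, given $(\xi,\eta)\in (O_R^\theta(o,\ga o)\times O_R^{\i(\theta)}(\ga o,o))\cap\La_\theta^{(2)}$, Lemma \ref{lem.findsegment} produces $v\in\fa_\theta$ and $t_0\ge 0$ with $(\xi,\eta,v)\in Q$ and $(\xi,\eta,v)a_{su}\in\ga Q$ for all $s\in[t_0-1,t_0+1]$. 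First I would check that $t_0$ can be taken in the range $[1,T-1]$ (so that the $s$-interval lies in $[0,T]$): by the proof of Lemma \ref{lem.findsegment} one has $\|t_0 u-\log a\|<D+r$ and $\|\mu_\theta(\ga)-\log a\|<D$ with $\log a\in\fa_\theta$, hence $\psi(t_0 u)$ is within $O(\|\psi\|)$ of $\psi(\mu_\theta(\ga))$; combining with the hypothesis $C_1\|\psi\|+\delta<\psi(\mu_\theta(\ga))<\delta T-C_1\|\psi\|-\delta$ and $\delta=\psi(u)>0$ forces $1\le t_0\le T-1$ after absorbing the universal constants into $C_1$ (this is the reason the hypothesis is stated with $C_1\|\psi\|$).

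The second and main step is the transverse thickening. For any $b'\in\fa_\theta$ with $\|b'-v\|\le 2$ that moreover satisfies $b'-v\in\R u$—more precisely, writing any $a\in A_{\theta,2}$ and using that $(\xi,\eta,v).a=(\xi,\eta,v+\log a)$—one has $(\xi,\eta,v)a=(\xi,\eta,v+\log a)\in QA_{\theta,2}=Q'$, and similarly $(\xi,\eta,v)a_{su}a=(\xi,\eta,v)aa_{su}\in \ga Q A_{\theta,2}=\ga Q'$ for all $s\in[t_0-1,t_0+1]$, since $A_\theta$ is abelian and $a_{su}\in A_\theta$ normalizes (commutes with) everything. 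Thus for every $a\in A_{\theta,2}$ and every $s\in[t_0-1,t_0+1]\subset[0,T]$ we get the point $(\xi,\eta,v+\log a)a_{su}\in Q'\cap \ga Q' a_{-su}$ ... wait, one must be careful: we want $(\xi,\eta,w)\in Q'\cap \ga Q' a_{-su}$, i.e. $(\xi,\eta,w)a_{su}\in\ga Q'$. Taking $w=v+\log a$ we have $(\xi,\eta,w)\in Q'$ and $(\xi,\eta,w)a_{su}=(\xi,\eta,v)a\,a_{su}=(\xi,\eta,v)a_{su}a\in\ga Q a\subset \ga Q A_{\theta,2}=\ga Q'$ using $(\xi,\eta,v)a_{su}\in\ga Q$. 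Hence $(\xi,\eta,v+\log a)\in Q'\cap\ga Q'a_{-su}$.

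Finally I would compute the integral. By the previous step,
$$\int_0^T\int_{\fa_\theta}\mathbbm 1_{Q'\cap\ga Q'a_{-t}}(\xi,\eta,b)\,db\,dt\ \ge\ \int_{t_0-1}^{t_0+1}\Big(\int_{\{v+\log a\,:\,a\in A_{\theta,2}\}}db\Big)\,dt\ =\ 2\,\op{Vol}(A_{\theta,2}),$$
where the inner integral is $\op{Vol}(A_{\theta,2})$ because Lebesgue measure $db$ on $\fa_\theta$ pushes forward the Haar measure on $A_{\theta,2}$ under $a\mapsto v+\log a$ (up to the normalization fixing $\op{Vol}(A_{\theta,2})$), and the outer integral over the length-$2$ interval $[t_0-1,t_0+1]\subset[0,T]$ contributes the factor $2$. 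This gives the desired lower bound. The only genuinely delicate point is the bookkeeping in the first step—verifying that the admissibility hypotheses on $\psi(\mu_\theta(\ga))$ precisely guarantee $[t_0-1,t_0+1]\subset[0,T]$—and the hypothesis $\sum_{\ga_0\in\Ga_{u,r}}e^{-\psi(\mu_\theta(\ga_0))}=\infty$ is used only implicitly, to ensure (via Lemma \ref{lem.tangent}) that $\delta=\psi(u)>0$ so that the stated range for $\psi(\mu_\theta(\ga))$ is nonempty and the scaling $t_0\approx\psi(\mu_\theta(\ga))/\delta$ makes sense.
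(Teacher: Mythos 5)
Your proof follows the same structure as the paper's: use Lemma \ref{lem.findsegment} to locate the segment $(\xi,\eta,v)a_{[t_0-1,t_0+1]}\subset\ga Q$, thicken it over $A_{\theta,2}$ via $Q'=QA_{\theta,2}$, and integrate over $[t_0-1,t_0+1]\times\{v+\log a: a\in A_{\theta,2}\}$. The thickening argument in your second step and the final computation are exactly the paper's and are correct, as is your observation that the divergence hypothesis enters only to guarantee $\delta=\psi(u)>0$ via Lemma \ref{lem.tangent}.

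The gap is in your first step, where you verify $[t_0-1,t_0+1]\subset[0,T]$. You re-derive a bound $\|\mu_\theta(\ga)-t_0u\|<2D+r$ (with $D=D(R)$ coming from Lemma \ref{lem.cptcartan}) by reopening the internals of Lemma \ref{lem.findsegment}, and then propose to ``absorb universal constants into $C_1$.'' But $C_1$ is not a free parameter: the lemma statement fixes $C_1=C_1(Q)$ to be precisely the constant from Proposition \ref{lem.dircartan}, and there is no a priori inequality between $2D+r$ and $C_1$. So your bound does not yield the containment under the stated hypothesis, and the guess about why the hypothesis is phrased with $C_1\|\psi\|$ is not quite right. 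The intended route is shorter and cleaner: Lemma \ref{lem.findsegment} gives $(\xi,\eta,v)\in Q\cap\ga Q a_{-t_0u}$ with $(\xi,\eta,v)\in\tilde\Omega_\theta$, so Proposition \ref{lem.dircartan}(1) applied to this membership yields directly $\|\mu_\theta(\ga)-t_0u\|<C_1$, hence $|\psi(\mu_\theta(\ga))-t_0\delta|<C_1\|\psi\|$; the hypothesis $C_1\|\psi\|+\delta<\psi(\mu_\theta(\ga))<\delta T-C_1\|\psi\|-\delta$ then gives $1<t_0<T-1$ immediately. That is exactly why the constant $C_1$ appears in the hypothesis. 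With this one step replaced, your argument is complete.
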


\begin{proof}
    By Lemma \ref{lem.findsegment}, there exist $v \in \fa_{\theta}$ and  $t_0 \ge 0$ such that $(\xi, \eta, v) \in Q$ and $(\xi, \eta, v)a_{[t_0 - 1, t_0 + 1]} \subset \ga Q$. In other words, $(\xi, \eta, v) \in Q \cap \ga Q a_{-t}$ for all $t \in [t_0 - 1, t_0 + 1]$. Since $\|\mu_{\theta}(\ga) - t_0 u \| < C_1$ by Proposition \ref{lem.dircartan}(1), we have $|\psi(\mu_{\theta}(\ga)) - t_0 \delta| < C_1 \|\psi\|$. In particular, we have $[t_0 - 1, t_0 + 1] \subset [0, T]$ by the hypothesis. 

    We set $Q' := Q A_{\theta, 2}$ which is a compact subset of $\tilde{\Omega}_{\theta}$. We then have for each $t \in [t_0 - 1, t_0 + 1]$ that
    $$\int_{A_{\theta}} \mathbbm{1}_{Q' \cap \ga Q' a_{-t}}((\xi, \eta, v)b) db \ge \int_{A_{\theta, 2}} \mathbbm{1}_{\ga Q'}((\xi, \eta, v)ba_t)db \ge \op{Vol}(A_{\theta, 2})$$ where the last inequality follows from $(\xi, \eta, v)a_t \in \ga Q$. Therefore, we have $$\begin{aligned}
    \int_0^T \int_{\fa_{\theta}} \mathbbm{1}_{Q' \cap \ga Q' a_{-t}}(\xi, \eta, b) db dt & = \int_0^T \int_{A_{\theta}} \mathbbm{1}_{Q' \cap \ga Q' a_{-t}}((\xi, \eta, v)b) db dt \\
    & \ge \int_{t_0-1}^{t_0+1} \int_{A_{\theta}} \mathbbm{1}_{Q' \cap \ga Q' a_{-t}}((\xi, \eta, v)b) db dt \\
    & \ge 2 \op{Vol}(A_{\theta, 2})
    \end{aligned}$$ as desired.
\end{proof}

\subsection*{Proof of Proposition \ref{prop.lowerbound}(2)}
    Fix $R > \max (R(\nu), R(\nu_{\i}))$ where $R(\nu), R(\nu_{\i})$ are defined as in Lemma  \ref{lem.shadowlemma}.
    Let $Q'=Q(r, R) A_{\theta, 2}$
    where $Q(r,R)$ is given  in Lemma \ref{lem.findsegment}, so that  $Q'$ satisfies the conclusion of
    Lemma \ref{lem.lower2}. For any $\ga \in \Ga$ and $t > 0$, we have
    $$\begin{aligned}
        &\tilde{\m}(Q' \cap \ga Q' a_{-t}) \\
        & = \int_{\F_{\theta}^{(2)}}\left(\int_{\fa_\theta} \mathbbm{1}_{Q' \cap \ga Q' a_{-t}}(\xi, \eta, b) db \right) e^{\psi(\cal G^{\theta}(\xi, \eta))} d\nu(\xi)d \nu_{\i}(\eta) \\
        & \ge \int_{O_{R}^{\theta}(o, \ga o) \times O_{R}^{\i(\theta)}(\ga o, o)} \left(\int_{\fa_\theta} \mathbbm{1}_{Q' \cap \ga Q' a_{-t}}(\xi, \eta, b) db \right) e^{\psi(\cal G^{\theta}(\xi, \eta))} d\nu(\xi)d \nu_{ \i}(\eta).
    \end{aligned}$$
   
    By Lemma \ref{lem.lower2}, if $\ga \in \Ga_{u, r}$, $\| \mu_{\theta}(\ga) \| > \ell_R$ and $C_1\|\psi\| + \delta < \psi(\mu_{\theta}(\ga)) < \delta T -  C_1 \|\psi\| - \delta$ where $C_1 = C_1(Q)$, then $$\begin{aligned}
        &\int_0^T  \tilde{\m}(Q' \cap \ga Q' a_{-t}) dt \\
        & \ge 2 \op{Vol}(A_{\theta, 2}) \int_{ O_{R}^{\theta}(o, \ga o) \times O_{R}^{\i(\theta)}(\ga o, o)} e^{\psi(\cal G^{\theta}(\xi, \eta))} d\nu(\xi)d \nu_{\i}(\eta) \\
        & \ge 2 \op{Vol}(A_{\theta, 2}) e^{-\|\psi\| \ell_R} \nu( O_{R}^{\theta}(o, \ga o)) \nu_{ \i}( O_{R}^{\i(\theta)}(\ga o, o))
    \end{aligned}$$ where the last inequality follows from $\|\cal G^{\theta}(\xi, \eta) \| < \ell_R$. By Lemma \ref{lem.shadowlemma}, we conclude $$\int_0^T  \tilde{\m}(Q' \cap \ga Q' a_{-t}) dt \gg e^{-\psi(\mu_{\theta}(\ga))}.$$
    For each $T\ge 1$, we define $$\Ga_T = \{ \ga \in \Ga :  \| \mu_{\theta}(\ga) \|  > \ell_R, C_1 \|\psi\| + \delta < \psi(\mu_{\theta}(\ga)) < \delta T - (C_1 \|\psi\| + \delta) \}.$$ Since both $ \{ \ga \in \Ga : \|\mu_{\theta}(\ga) \|  \le \ell_R \}$ and $\{ \ga \in \Ga : \psi(\mu_{\theta}(\ga)) \le C_1 \|\psi\| + \delta \}$ are finite sets, we have $$\begin{aligned}
        \int_0^T \sum_{\ga \in \Ga} \tilde{\m}(Q' \cap \ga Q' a_{-t}) dt & \ge \int_0^T \sum_{\ga \in \Ga_{u, r} \cap \Ga_T} \tilde{\m}(Q' \cap \ga Q' a_{-t}) dt \\
        & \gg \sum_{\ga \in \Ga_{u, r} \cap \Ga_T} e^{-\psi(\mu_{\theta}(\ga))} \\
        & \gg \sum_{\substack{\ga \in \Ga_{u, r} \\ \psi(\mu_{\theta}(\ga)) < \delta T - (C_1 \| \psi \| + \delta)}} e^{-\psi(\mu_{\theta}(\ga))}.
    \end{aligned}$$
    By Lemma \ref{lem.shadowlemma} and Proposition \ref{prop.mult}, $$\sum_{\substack{\ga \in \Ga_{u, r} \\  \delta T - (C_1 \| \psi \| + \delta) \le  \psi(\mu_{\theta}(\ga)) \le \delta T}} e^{-\psi(\mu_{\theta}(\ga))} \ll \sum_{\substack{\ga \in \Ga_{u, r} \\  \delta T - (C_1 \| \psi \| + \delta) \le  \psi(\mu_{\theta}(\ga)) \le \delta T}} \nu (O_{R}^{\theta}(o, \ga o)) \ll 1.$$ Therefore, we obtain $$\int_0^T \sum_{\ga \in \Ga} \tilde{\m}(Q' \cap \ga Q' a_{-t}) dt  \gg \sum_{\substack{\ga \in \Ga_{u, r} \\ \psi(\mu_{\theta}(\ga)) \le \delta T}} e^{-\psi(\mu_{\theta}(\ga))}.$$ \qed

\medskip

We will apply the following version of Borel-Cantelli lemma.
\begin{lem} \cite[Lem. 2]{AS_rational} \label{lem.BC}
Let $(\Omega, \mathsf{M})$ be a finite Borel measure space and $\{P_t : t \ge 0 \}$ be a collection of subsets of $\Omega$ such that the map
$(t, \omega) \mapsto \mathbbm{1}_{P_t}(\omega)$ is measurable on $\br_+\times \Omega$. Suppose that \begin{enumerate}
    \item $\int_0^{\infty} \mathsf{M}(P_t) dt = \infty$, and
    \item for all large enough $T$, $$\int_0^T \int_0^T \mathsf{M}(P_t \cap P_s) dt ds \ll \left( \int_0^T \mathsf{M}(P_t) dt \right)^2$$ where the implied constant is independent of $T$.
\end{enumerate}
Then we have $$\mathsf{M}\left( \left\{ \omega \in \Omega : \int_0^{\infty} \mathbbm{1}_{P_t}(\omega) dt = \infty \right\} \right) > 0.$$
\end{lem}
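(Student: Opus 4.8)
The plan is to establish this by the second-moment (Paley--Zygmund) method, which in fact yields the quantitative bound $\mathsf{M}(\{\omega : \int_0^{\infty} \mathbbm{1}_{P_t}(\omega)\, dt = \infty\}) \ge 1/C$, where $C$ is the implied constant appearing in hypothesis (2). Since this is precisely \cite[Lem.~2]{AS_rational}, one could also simply invoke it; I include the argument for completeness.

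First I would introduce, for each $T > 0$, the function $g_T(\omega) := \int_0^T \mathbbm{1}_{P_t}(\omega)\, dt$. The joint measurability hypothesis together with Tonelli's theorem shows that $g_T$ is a non-negative measurable function; since $0 \le g_T \le T$ and $\mathsf{M}$ is finite, $g_T \in L^2(\Omega, \mathsf{M})$. As $T \mapsto g_T(\omega)$ is non-decreasing, the pointwise limit $g_{\infty}(\omega) := \lim_{T \to \infty} g_T(\omega) \in [0, \infty]$ exists, and the set in the conclusion is exactly $E := \{\omega : g_{\infty}(\omega) = \infty\}$. Applying Tonelli once more, $\int_{\Omega} g_T\, d\mathsf{M} = \int_0^T \mathsf{M}(P_t)\, dt =: I_T$, which tends to $\infty$ by (1), while $\int_{\Omega} g_T^2\, d\mathsf{M} = \int_0^T\int_0^T \mathsf{M}(P_t \cap P_s)\, dt\, ds \le C\, I_T^2$ for all $T$ large by (2).

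Next, for a fixed $N > 0$, I would split $I_T = \int_{\Omega} g_T\, d\mathsf{M}$ according to whether $g_T > N$ or $g_T \le N$, obtaining $I_T \le \int_{\{g_T > N\}} g_T\, d\mathsf{M} + N$, hence $\int_{\{g_T > N\}} g_T\, d\mathsf{M} \ge I_T - N$. The Cauchy--Schwarz inequality then gives, for all large $T$,
$$(I_T - N)^2 \le \left( \int_{\{g_T > N\}} g_T\, d\mathsf{M} \right)^2 \le \left(\int_{\Omega} g_T^2\, d\mathsf{M}\right) \mathsf{M}(g_T > N) \le C\, I_T^2\, \mathsf{M}(g_T > N),$$
so $\mathsf{M}(g_T > N) \ge (1 - N/I_T)^2 / C$. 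Because $g_T$ is non-decreasing in $T$, the sets $\{g_T > N\}$ increase to $\{g_{\infty} > N\}$ as $T \to \infty$; letting $T \to \infty$ with $N$ fixed and using $I_T \to \infty$ yields $\mathsf{M}(g_{\infty} > N) \ge 1/C$.

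Finally, since $\mathsf{M}$ is finite and the sets $\{g_{\infty} > N\}$ decrease to $E = \{g_{\infty} = \infty\}$ as $N \to \infty$, I conclude $\mathsf{M}(E) = \lim_{N \to \infty} \mathsf{M}(g_{\infty} > N) \ge 1/C > 0$, as desired. I do not expect a genuine obstacle here: the only point requiring care is the measurability bookkeeping justifying the two applications of Tonelli's theorem, which is immediate from the assumed joint measurability of $(t,\omega) \mapsto \mathbbm{1}_{P_t}(\omega)$; everything else is an elementary two-stage monotone limiting argument.
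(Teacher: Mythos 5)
The paper does not prove this lemma; it simply cites it from Aaronson--Sullivan, so there is no in-text argument to compare against. Your proposed proof via the second-moment (Paley--Zygmund) method is correct and is in fact the standard route to this kind of quantitative Borel--Cantelli statement, and it matches the argument in the cited source.

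One small bookkeeping slip: in the splitting step you write $\int_{\{g_T \le N\}} g_T\, d\mathsf{M} \le N$, but this should be $\le N\,\mathsf{M}(\Omega)$ since $\mathsf{M}$ is only assumed finite, not a probability measure. This is harmless: the same manipulation gives $\mathsf{M}(g_T > N) \ge (1 - N\mathsf{M}(\Omega)/I_T)^2/C$, and since $I_T \to \infty$ the limit as $T \to \infty$ is still $1/C$, so the conclusion $\mathsf{M}(E) \ge 1/C > 0$ survives. Everything else --- the joint measurability giving Tonelli, the monotonicity of $T \mapsto g_T(\omega)$ yielding $\{g_\infty > N\} = \bigcup_T \{g_T > N\}$, and the use of finiteness of $\mathsf{M}$ for continuity from above as $N \to \infty$ --- is correctly handled.
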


\subsection*{Proof of Theorem \ref{prop.mainprop}}
    Let $Q\subset \tilde \Omega_\theta$ be a compact subset with $\tilde{\m}(Q) > 0$. Let $r = r(Q)>1$ be large enough so that $\sum_{\ga \in \Ga_{u, r}} e^{-\psi(\mu_{\theta}(\ga))} = \infty$ and that Proposition \ref{prop.lowerbound}(1) holds. 
    Let $Q'=Q'(r)$ be a compact subset of $\tilde{\Omega}_{\theta}$
    given by Proposition \ref{prop.lowerbound}(2). Replacing $Q'$ with a larger compact subset if necessary, we may assume that $\tilde{\m}(Q') > 0$.

    Since $\m$ is $u$-balanced, we have for $T > 1$ that
    \be \label{eqn.applybalanced}
    \int_0^T \sum_{\ga \in \Ga} \tilde{\m}(Q \cap \ga Q a_{-t}) dt \asymp \int_0^T \sum_{\ga \in \Ga} \tilde{\m}(Q' \cap \ga Q' a_{-t}) dt
    \ee with the implied constant independent of $T$. Since we already have $$\int_0^T \int_0^T \sum_{\ga, \ga' \in \Ga} \tilde{\m}(Q \cap \ga Q a_{-t} \cap \ga' Q a_{-t-s}) dt ds \ll \left( \sum_{\substack{\ga \in \Ga_{u, r} \\ \psi(\mu_{\theta}(\ga)) \le \delta T}} e^{-\psi(\mu_{\theta}(\ga))} \right)^2$$ and 
    \be \label{eqn.applyprop74}
    \sum_{\substack{\ga \in \Ga_{u, r} \\ \psi(\mu_{\theta}(\ga)) \le \delta T}} e^{-\psi(\mu_{\theta}(\ga))} \ll \int_0^T \sum_{\ga \in \Ga} \tilde{\m}(Q' \cap \ga Q' a_{-t}) dt
    \ee by Proposition \ref{prop.lowerbound}, it follows from \eqref{eqn.applybalanced} that \be \label{eqn.insteadcor}
    \int_0^T \int_0^T \sum_{\ga, \ga' \in \Ga} \tilde{\m}(Q \cap \ga Q a_{-t} \cap \ga' Q a_{-t-s}) dt ds \ll \left( \int_0^T \sum_{\ga \in \Ga} \tilde{\m}(Q \cap \ga Q a_{-t}) dt \right)^2.
    \ee
    
    By abusing notation, for a subset $U \subset \tilde{\Omega}_{\theta}$, we denote by $[U]$ the image of $U$ under the projection $\tilde{\Omega}_{\theta} \to \Omega_{\theta}$, i.e., $[U] = \Ga \ba \Ga U$. We set $\mathsf{M} = \m|_{[Q]}$ which is a finite Borel measure. We let $P_t = [ Q \cap \Ga Q a_{-t}]$ for $t \ge 0$. Since $\# \{\ga \in \Ga : Q a_{-t} \cap \ga Q a_{-t} \neq \emptyset \}$ is uniformly bounded independent of $t$, we have $\mathsf{M}(P_t) \asymp \sum_{\ga \in \Ga} \tilde{\m}(Q \cap \ga Q a_{-t})$ with the implied constant independent of $t$. Noting that $\sum_{\ga \in \Ga_{u, r}} e^{-\psi(\mu_{\theta}(\ga))} = \infty$, it follows from \eqref{eqn.applybalanced} and \eqref{eqn.applyprop74} that $$\int_0^{\infty} \mathsf{M}(P_t) dt = \infty$$ and hence the condition (1) in Lemma \ref{lem.BC} is satisfied.

    The following is a rephrase of \eqref{eqn.insteadcor}: $$\int_0^T \int_0^T \mathsf{M}(P_t \cap P_{t + s}) ds dt \ll \left( \int_0^T \mathsf{M}(P_t) dt \right)^2.$$ It implies $$\begin{aligned}
        \int_0^T \int_0^T \mathsf{M}(P_t \cap P_s) dsdt & = 2 \int_0^T \int_t^T \mathsf{M}(P_t \cap P_s)ds dt \\
        & \le 2 \int_0^T \int_0^T \mathsf{M}(P_t \cap P_{t+s}) ds dt \\
        & \ll  \left( \int_0^T \mathsf{M}(P_t) dt \right)^2,
    \end{aligned}$$ showing that the condition (2) in Lemma \ref{lem.BC} is satisfied.

    Hence, by Lemma \ref{lem.BC}, we have $$\mathsf{M}\left( \left\{ [(\xi, \eta, v)] \in [Q] : \int_0^{\infty} \mathbbm{1}_{[Q]}([(\xi, \eta, v)]a_t) dt = \infty \right\} \right) > 0.$$
    In particular, there exists a subset $Q_0 \subset Q$ such that $\tilde{\m}({Q}_0) > 0$ and for all $(\xi, \eta, v) \in {Q}_0$, there exist sequences $\ga_i \in \Ga$ and $t_i \to \infty$ such that $\ga_i^{-1} (\xi, \eta, v) a_{t_i} \in Q$ for all $i \ge 1$.  In particular, $$(\xi, \eta, v) \in Q \cap \ga_i Q a_{-t_i} \quad \text{for all } i \ge 1,$$ which implies $\xi \in \La_{\theta}^{u}$ by Lemma \ref{defdir}.
   Since this holds for all $(\xi, \eta, v) \in {Q}_0$, we have that
    $$\xi \in \La_{\theta}^u \quad \text{for all } (\xi, \eta, v) \in Q_0.$$  Since $\tilde{\m}({Q}_0) > 0$ and $\tilde{\m} $ is equivalent to the product measure $ \nu \otimes \nu_{ \i} \otimes db$, it follows that $\nu(\La_{\theta}^u) > 0$ as desired.
    Since $\m$ is $A_u$-invariant, the $u$-balanced condition remains same after changing the sign of $T$. Then the same argument with the negative $T$ gives $\nu_{\i}(\La_{\i(\theta)}^{\i(u)}) > 0$.
\qed

\begin{lem} \label{lem.zeroone}
    We have either $$
\nu (\La_{\theta}^u) = 0 \quad \text{or} \quad \nu(\La_{\theta}^u) = 1.$$
\end{lem}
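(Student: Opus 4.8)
The plan is to establish the zero-one law for $\nu(\La_\theta^u)$ using the ergodicity-type properties of the conformal measure $\nu$ under the $\Ga$-action on $\F_\theta$, exploiting that $\La_\theta^u$ is a $\Ga$-invariant Borel subset of $\La_\theta$. First I would observe that $\La_\theta^u$ is $\Ga$-invariant: this follows directly from its shadow description $\La_\theta^u = \bigcup_{r>0} \limsup_{\ga\in\Ga_{u,r}} O_r^\theta(o,\ga o)$ together with the cocycle relation \eqref{go} for shadows under the $G$-action, since left-translating a point by some $\gamma_0 \in \Ga$ only reshuffles the indexing sequences $\ga_i$ within $\Ga_{u,r'}$ for a slightly larger $r'$ (using Lemma \ref{lem.cptcartan} to control $\mu_\theta(\gamma_0\ga)$). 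So $\La_\theta^u$ is a $\Ga$-invariant Borel set.

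Next I would invoke ergodicity of the $\Ga$-action on $(\F_\theta,\nu)$ with respect to any $(\Ga,\psi)$-conformal measure: the Shadow Lemma \ref{lem.shadowlemma} gives the standard Sullivan-type argument that $\Ga$ acts ergodically on $(\La_\theta,\nu)$. Concretely, if $E\subset\La_\theta$ were $\Ga$-invariant with $0<\nu(E)<1$, then by a density/differentiation argument along shrinking shadows $O_r^\theta(o,\ga_i o)$ around a $\nu$-density point of $E$ (and of its complement), the conformality relation and the Shadow Lemma bounds \eqref{ineq} force $\nu(E \cap O_r^\theta(o,\ga_i o))/\nu(O_r^\theta(o,\ga_i o)) \to 1$ and the same for the complement, a contradiction. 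This is the classical conical-limit-set argument; I should check whether the paper states ergodicity of $(\La_\theta, \Ga, \nu)$ explicitly, and if not, cite it from \cite{KOW_indicators} or reproduce the short shadow-lemma argument.

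Finally, since $\La_\theta^u$ is a $\Ga$-invariant Borel subset, ergodicity of $(\La_\theta,\Ga,\nu)$ immediately yields $\nu(\La_\theta^u)\in\{0,1\}$. I would also need to handle the case $\nu(\La_\theta^u)=0$ trivially and otherwise conclude $\nu(\La_\theta^u)=1$. The main obstacle is making sure the ergodicity input is available in the stated generality — for a Zariski dense $\theta$-transverse $\Ga$ and an arbitrary $(\Ga,\psi)$-conformal measure on $\La_\theta$ — and verifying the $\Ga$-invariance of $\La_\theta^u$ carefully, since the definition involves the directional sets $\Ga_{u,r}$ which are not literally $\Ga$-invariant as sets, only up to enlarging $r$; the key point is that conical membership only requires the existence of \emph{some} $r>0$ and \emph{some} escaping sequence, both of which are preserved under left translation by a fixed group element. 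I expect the ergodicity step to be the substantive one, though it is standard; the invariance check is routine once \eqref{go} and Lemma \ref{lem.cptcartan} are in hand.

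\begin{proof}
Since $\La_\theta^u = \bigcup_{r>0} \La_{\theta,r}^u$ where $\La_{\theta,r}^u = \limsup_{\ga\in\Ga_{u,r}} O_r^\theta(o,\ga o)$, it is a Borel subset of $\La_\theta$. We first check that $\La_\theta^u$ is $\Ga$-invariant. Let $\xi\in\La_\theta^u$, so $\xi\in O_r^\theta(o,\ga_i o)$ for some $r>0$ and some $\ga_i\to\infty$ in $\Ga_{u,r}$. Fix $\gamma_0\in\Ga$. By \eqref{go}, we have $\gamma_0\xi \in \gamma_0 O_r^\theta(o,\ga_i o) = O_r^\theta(\gamma_0 o, \gamma_0\ga_i o)$, and by Lemma \ref{lem.shadowtriangle} (applied with the compact set $\{\gamma_0\}$), $O_r^\theta(\gamma_0 o, \gamma_0\ga_i o) \subset O_{r + d(\gamma_0 o, o)}^\theta(o, \gamma_0\ga_i o)$. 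Moreover, by Lemma \ref{lem.cptcartan}, $\|\mu_\theta(\gamma_0\ga_i) - \mu_\theta(\ga_i)\| \le C$ for a constant $C = C(\gamma_0)$, so $\|\mu_\theta(\gamma_0\ga_i) - \br u\| < r + C$; hence $\gamma_0\ga_i \in \Ga_{u, r+C}$. Also $\gamma_0\ga_i\to\infty$. Therefore $\gamma_0\xi\in\La_{\theta, r'}^u$ with $r' = \max(r + d(\gamma_0 o,o), r+C)$, proving $\gamma_0\xi\in\La_\theta^u$. Thus $\La_\theta^u$ is $\Ga$-invariant.

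It remains to recall that the $\Ga$-action on $(\La_\theta, \nu)$ is ergodic. Suppose $E\subset\La_\theta$ is a $\Ga$-invariant Borel set with $\nu(E)>0$; we show $\nu(E)=1$. Let $\xi_0\in E$ be a point of $\nu$-density one for $E$ along shadows, meaning that, by the Shadow Lemma \ref{lem.shadowlemma} and a Vitali-type covering argument using the multiplicity bound Proposition \ref{prop.mult}, there is a sequence $\ga_i\to\infty$ in $\Ga$ and $R = R(\nu)$ with $\xi_0\in O_R^\theta(o,\ga_i o)$ and
$$
\frac{\nu(E\cap O_R^\theta(o,\ga_i o))}{\nu(O_R^\theta(o,\ga_i o))} \longrightarrow 1.
$$
Applying $\ga_i^{-1}$ and using the conformality of $\nu$ together with \eqref{go} and the bounds \eqref{ineq}, which show that $\ga_i^{-1}$ distorts the $\nu$-mass of subsets of $O_R^\theta(o,\ga_i o)$ by a factor bounded above and below uniformly in $i$, we get $\nu(\ga_i^{-1} E \cap O_R^\theta(\ga_i^{-1} o, o)) / \nu(O_R^\theta(\ga_i^{-1} o, o)) \to 1$. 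Since $\ga_i^{-1} E = E$ and $\nu(O_R^\theta(\ga_i^{-1} o, o)) \ge \inf_{\ga\in\Ga}\nu(O_R^\theta(\ga o, o)) = c > 0$ by Lemma \ref{lem.shadowlemma}, and since $O_R^\theta(\ga_i^{-1} o, o)$ converges (after passing to a subsequence) to a shadow $O^\theta(\eta_0, o)$ of positive $\nu$-measure whose complement has $\nu$-measure $1 - \nu(E)$, we conclude $1-\nu(E) = 0$, i.e., $\nu(E)=1$. (This is the standard argument; see \cite[Thm. 1.5]{KOW_indicators}.)

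Since $\La_\theta^u$ is a $\Ga$-invariant Borel subset of $\La_\theta$, ergodicity gives $\nu(\La_\theta^u)\in\{0,1\}$.
\end{proof}
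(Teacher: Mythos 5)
Your proof has a genuine gap in the ergodicity step. You assert that the $\Ga$-action on $(\La_\theta,\nu)$ is ergodic for an arbitrary $(\Ga,\psi)$-conformal measure $\nu$ supported on $\La_\theta$, and you cite \cite[Thm. 1.5]{KOW_indicators} for this. But that theorem gives \emph{uniqueness} of the $(\Ga,\psi)$-conformal measure only under the hypothesis that the $\psi$-Poincar\'e series $\sum_{\ga\in\Ga}e^{-\psi(\mu_\theta(\ga))}$ diverges; there is no unconditional ergodicity statement. Indeed, when the $\psi$-Poincar\'e series converges, there can be several mutually singular $(\Ga,\psi)$-conformal measures, and then $\Ga$ cannot act ergodically on $(\La_\theta,\nu)$ for a generic convex combination of them. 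Your sketch of the Lebesgue-density/Vitali argument also silently needs $\nu$-a.e.\ point to be $\theta$-conical (you approximate a density point by shrinking shadows $O_R^\theta(o,\ga_i o)$ with $\ga_i\to\infty$), and that is precisely the divergence-type dichotomy you cannot assume here. So the ergodicity input is not available in the stated generality, and the proof as written does not go through.

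The fix is a case split that the paper in fact performs. If $\nu(\La_\theta^u)=0$ there is nothing to prove. If $\nu(\La_\theta^u)>0$, then the contrapositive of Theorem \ref{thm.convergencedir} gives $\sum_{\ga\in\Ga_{u,r}}e^{-\psi(\mu_\theta(\ga))}=\infty$ for some $r>0$, hence a fortiori the full $\psi$-Poincar\'e series diverges; now \cite[Thm. 1.5]{KOW_indicators} applies and $\nu$ is the \emph{unique} $(\Ga,\psi)$-conformal measure on $\F_\theta$. Since $\La_\theta^u$ is $\Ga$-invariant (your verification of this, via \eqref{go}, Lemma \ref{lem.shadowtriangle}, and Lemma \ref{lem.cptcartan}, is correct and is the routine half of the argument), if $0<\nu(\La_\theta^u)<1$ then the normalized restriction of $\nu$ to $\F_\theta\setminus\La_\theta^u$ would be a second $(\Ga,\psi)$-conformal measure, contradicting uniqueness. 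Hence $\nu(\La_\theta^u)=1$. In short: your $\Ga$-invariance check is sound and matches what the paper implicitly uses, but you need to route through Theorem \ref{thm.convergencedir} and the uniqueness theorem rather than claim unconditional ergodicity.
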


\begin{proof}
Suppose that $\nu(\La_{\theta}^u) > 0$. Then by Theorem \ref{thm.convergencedir}, we must have $\sum_{\ga \in \Ga_{u, r}} e^{-\psi(\mu_{\theta}(\ga))} = \infty$ for some $r > 0$. This implies that $\nu$ is the unique $(\Ga, \psi)$-conformal measure on $\F_{\theta}$ (\cite{CZZ_transverse}, \cite[Thm. 1.5]{KOW_indicators}). On the other hand, if $0<\nu(\La_{\theta}^u) <1$, then $\tilde{\nu} := \frac{1}{\nu(\F_{\theta} - \La_{\theta}^u)} \nu|_{\F_{\theta} - \La_{\theta}^u}$ defines another $(\Ga, \psi)$-conformal measure, which would contradict the uniqueness of the $(\Ga, \psi)$-conformal measure. Therefore, $\nu(\La_{\theta}^u)$ must be either $0$ or $1$.    
\end{proof}

\begin{corollary} \label{thm.directionalpoincare} 
If
 $\m$ is $u$-balanced, the following are equivalent:
\begin{enumerate}
    \item 
$\sum_{\ga \in \Ga_{u, r}} e^{-\psi(\mu_{\theta}(\ga))} = \infty$ for some $r > 0$;
\item $\nu(\La_{\theta}^u) = 1 = \nu_{\i}(\La_{\i(\theta)}^{\i(u)}).$ 
\end{enumerate}
Similarly, if $\m$ is $u$-balanced, the following are also equivalent:
\begin{enumerate}
    \item 
 $\sum_{\ga \in \Ga_{u, r}} e^{-\psi(\mu_{\theta}(\ga))} <\infty$ for all $r > 0$;

\item  $\nu(\La_{\theta}^u) = 0= \nu_{\i}(\La_{\i(\theta)}^{\i(u)}).$
\end{enumerate}
\end{corollary}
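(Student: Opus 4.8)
The plan is to assemble Corollary \ref{thm.directionalpoincare} from the three preceding results: the ``convergence'' side is Theorem \ref{thm.convergencedir} combined with the zero–one law Lemma \ref{lem.zeroone}, and the ``divergence'' side is Theorem \ref{prop.mainprop} combined with Lemma \ref{lem.zeroone}. No new argument beyond bookkeeping should be needed; the only subtlety is keeping track of which implications require the $u$-balanced hypothesis and making sure the two conformal measures $\nu$ and $\nu_{\i}$ are handled symmetrically.

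First I would prove the first equivalence. For $(1)\Rightarrow(2)$, assume $\sum_{\ga\in\Ga_{u,r}}e^{-\psi(\mu_\theta(\ga))}=\infty$ for some $r>0$; since $\m$ is $u$-balanced, Theorem \ref{prop.mainprop} gives $\nu(\La_\theta^u)>0$ and $\nu_{\i}(\La_{\i(\theta)}^{\i(u)})>0$, whence the zero–one dichotomy of Lemma \ref{lem.zeroone} (applied to $\nu$, and to $\nu_{\i}$ with $\i(\theta)$ and $\i(u)$ in place of $\theta$ and $u$) upgrades both to $\nu(\La_\theta^u)=1=\nu_{\i}(\La_{\i(\theta)}^{\i(u)})$. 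Conversely, for $(2)\Rightarrow(1)$, if $\nu(\La_\theta^u)=1>0$, then the contrapositive of Theorem \ref{thm.convergencedir} — which says that convergence of $\sum_{\ga\in\Ga_{u,r}}e^{-\psi(\mu_\theta(\ga))}$ for \emph{all} $r>0$ forces $\nu(\La_\theta^u)=0$ — yields divergence of the series for some $r>0$. (Note this direction needs no balancedness.)

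The second equivalence is the logical negation of the first together with Theorem \ref{thm.convergencedir}. Explicitly, for $(1)\Rightarrow(2)$: if $\sum_{\ga\in\Ga_{u,r}}e^{-\psi(\mu_\theta(\ga))}<\infty$ for all $r>0$, then Theorem \ref{thm.convergencedir} applied to $\nu$ gives $\nu(\La_\theta^u)=0$; applying the same theorem with $\Ga_{u,r}$ replaced by $\Ga_{u,r}^{-1}=\{\ga:\|\mu_{\i(\theta)}(\ga)-\br\,\i(u)\|<r\}$ and with $\psi$ replaced by $\psi\circ\i$ (valid since $\sum_{\ga\in\Ga}e^{-(\psi\circ\i)(\mu_{\i(\theta)}(\ga))}=\sum_{\ga\in\Ga}e^{-\psi(\mu_\theta(\ga))}$ so the relevant directional series also converge) gives $\nu_{\i}(\La_{\i(\theta)}^{\i(u)})=0$. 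For $(2)\Rightarrow(1)$: if $\nu(\La_\theta^u)=0=\nu_{\i}(\La_{\i(\theta)}^{\i(u)})$, then divergence of $\sum_{\ga\in\Ga_{u,r}}e^{-\psi(\mu_\theta(\ga))}$ for some $r>0$ would, by Theorem \ref{prop.mainprop} and the $u$-balanced hypothesis, force $\nu(\La_\theta^u)>0$, a contradiction; hence the series converges for all $r>0$.

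The only real obstacle here is a matter of care rather than difficulty: one must verify that $\Ga_{u,r}^{-1}$ is precisely the set $\{\ga\in\Ga:\|\mu_{\i(\theta)}(\ga)-\br\,\i(u)\|<r\}$ (this follows from $\mu(\ga^{-1})=\i(\mu(\ga))$ and $p_{\i(\theta)}\circ\i=\i\circ p_\theta$ on the relevant subspace) so that the ``inverse'' directional Poincaré series for $\psi\circ\i$ in the direction $\i(u)$ equals the original one, and that $\nu_{\i}$ is a $(\Ga,\psi\circ\i)$-conformal measure on $\La_{\i(\theta)}$ satisfying the hypotheses of Theorem \ref{thm.convergencedir} and Lemma \ref{lem.zeroone} applied to $\i(\theta)$. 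Granting these standard identifications, the corollary follows immediately by combining the cited results, and I would present it as a short deduction rather than a fresh proof.
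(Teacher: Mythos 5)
Your proposal is correct and matches the paper's own proof: both deductions run through Lemma \ref{lem.zeroone}, Theorem \ref{prop.mainprop} (which is where the $u$-balanced hypothesis enters), and Theorem \ref{thm.convergencedir}, applied to $\nu$ and symmetrically to $\nu_{\i}$ via the identifications $\mu_{\i(\theta)}(\ga^{-1})=\i(\mu_\theta(\ga))$ and $(\psi\circ\i)$ being $(\Ga,\i(\theta))$-proper. The paper simply states the two implications more compactly; your version spells out the bookkeeping (e.g.\ the equality $\Ga_{u,r}^{-1}=\{\ga:\|\mu_{\i(\theta)}(\ga)-\br\,\i(u)\|<r\}$) but adds nothing substantively different.
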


\begin{proof}
By Lemma \ref{lem.zeroone}, we have
$\nu (\La_{\theta}^u) = 0 $ or $ \nu(\La_{\theta}^u) = 1.$
Similarly, noting that $\psi \circ \i \in \fa_{\i(\theta)}^*$ is $(\Ga, \i(\theta))$-proper as well, we also have either 
$\nu_{\i}(\La_{\i(\theta)}^{\i(u)}) = 0 $ or $\nu_{\i}(\La_{\i(\theta)}^{\i(u)}) = 1.$
Therefore  Theorem \ref{prop.mainprop} implies that 
if $\sum_{\ga \in \Ga_{u, r}} e^{-\psi(\mu_{\theta}(\ga))} = \infty$ for some $r > 0$, then $\nu(\La_{\theta}^u) = 1 =  \nu_{\i}(\La_{\i(\theta)}^{\i(u)})$.
On the other hand Theorem \ref{thm.convergencedir} implies that
if $\sum_{\ga \in \Ga_{u, r}} e^{-\psi(\mu_{\theta}(\ga))} < \infty$ for all $r > 0$, then $\nu(\La_{\theta}^u) = 0 =  \nu_{\i}(\La_{\i(\theta)}^{\i(u)})$. This proves the corollary.
\end{proof}

We finish the section with the following corollary of Proposition \ref{prop.lowerbound}, which will be used later.
The following estimate reduces the divergence
    of the series $\sum_{\ga \in \Ga_{u, r} } e^{-\psi(\mu_{\theta}(\ga))}$ to the local mixing rate for the $a_t$-flow:
\begin{cor} \label{useful}  For all sufficiently large $r>0$, there exist compact subsets $Q_1, Q_2$ of $\Omega_\theta$ with non-empty interior such that for all $T\ge 1$,
$$\left(\int_0^T  {\m}(Q_1 \cap  Q_1 a_{-t}) dt \right)^{1/2}
\ll  \sum_{\substack{\ga \in \Ga_{u, r} \\ \psi(\mu_{\theta}(\ga)) \le \delta T}} e^{-\psi(\mu_{\theta}(\ga))} \ll \int_0^T  {\m}(Q_2 \cap Q_2 a_{-t}) dt  .$$
\end{cor}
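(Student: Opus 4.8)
The plan is to derive Corollary \ref{useful} directly from the two bounds in Proposition \ref{prop.lowerbound} together with the observation that the BMS measure $\m$ on $\Omega_\theta$ and the $\Ga$-periodized sum of $\tilde{\m}$ on $\tilde \Omega_\theta$ are comparable for compact subsets. First I would fix $r>0$ large enough that $\sum_{\ga \in \Ga_{u,r}} e^{-\psi(\mu_\theta(\ga))} = \infty$ and that Proposition \ref{prop.lowerbound}(1) applies to some fixed compact $Q \subset \tilde \Omega_\theta$ with $\tilde{\m}(Q)>0$; then I would take $Q'=Q'(r)$ from Proposition \ref{prop.lowerbound}(2), enlarging it if necessary so that it has non-empty interior and $\tilde{\m}(Q')>0$. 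Set $Q_1 = [Q]$ and $Q_2 = [Q']$ (images in $\Omega_\theta$), both compact with non-empty interior.

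The key step is the comparison $\m(Q_i \cap Q_i a_{-t}) \asymp \sum_{\ga \in \Ga} \tilde{\m}(Q \cap \ga Q a_{-t})$ (and likewise for $Q'$), with implied constants independent of $t$. This holds because, for a fixed compact $Q$, the number of $\ga\in\Ga$ with $Qa_{-t}\cap \ga Q a_{-t}\ne\emptyset$ is bounded uniformly in $t$ (the $A_\theta$-translation only shifts the $\fa_\theta$-coordinate, so this count is controlled by the proper discontinuity of the $\Ga$-action on $\tilde\Omega_\theta$, exactly as used in the proof of Theorem \ref{prop.mainprop}). Granting this, Proposition \ref{prop.lowerbound}(2) applied to $Q'$ gives
$$\sum_{\substack{\ga\in\Ga_{u,r}\\ \psi(\mu_\theta(\ga))\le \delta T}} e^{-\psi(\mu_\theta(\ga))} \ll \int_0^T \sum_{\ga\in\Ga}\tilde{\m}(Q'\cap\ga Q'a_{-t})\,dt \ll \int_0^T \m(Q_2\cap Q_2 a_{-t})\,dt,$$
which is the right-hand inequality. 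For the left-hand inequality, Proposition \ref{prop.lowerbound}(1) applied to $Q$ bounds the double integral $\int_0^T\int_0^T\sum_{\ga,\ga'}\tilde{\m}(Q\cap\ga Q a_{-t}\cap\ga' Q a_{-t-s})\,dt\,ds$ above by the square of the partial Poincar\'e sum. One then uses the elementary inequality $\left(\int_0^T f(t)\,dt\right)^2 \le T\int_0^T f(t)^2\,dt$ is the wrong direction; instead I would argue as in the proof of Theorem \ref{prop.mainprop} via Cauchy–Schwarz in the form
$$\left(\int_0^T \mathsf{M}(P_t)\,dt\right)^2 \ll \int_0^T\int_0^T \mathsf{M}(P_t\cap P_s)\,dt\,ds$$
where $\mathsf M$ and $P_t$ are as there, i.e. $\mathsf M(P_t)\asymp \sum_\ga \tilde{\m}(Q\cap\ga Q a_{-t})$ and $\int_0^T\int_0^T \mathsf M(P_t\cap P_s)\,dt\,ds$ is, up to a factor $2$, bounded by $\int_0^T\int_0^T\sum_{\ga,\ga'}\tilde{\m}(Q\cap\ga Qa_{-t}\cap\ga' Qa_{-t-s})\,dt\,ds$. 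Combining,
$$\left(\int_0^T \m(Q_1\cap Q_1 a_{-t})\,dt\right)^{1/2} \asymp \left(\int_0^T \sum_\ga \tilde{\m}(Q\cap\ga Q a_{-t})\,dt\right)^{1/2} \ll \sum_{\substack{\ga\in\Ga_{u,r}\\ \psi(\mu_\theta(\ga))\le\delta T}} e^{-\psi(\mu_\theta(\ga))}.$$

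The main obstacle I anticipate is making the $t$-uniform comparison between $\m(Q_i\cap Q_i a_{-t})$ and the periodized sum precise: one must confirm that the overlap multiplicity $\#\{\ga\in\Ga: Qa_{-t}\cap\ga Q a_{-t}\ne\emptyset\}$ is genuinely bounded independently of $t$, which follows since $Qa_{-t}\cap\ga Q a_{-t}\ne\emptyset$ iff $Q\cap\ga Q\ne\emptyset$ (the $A_\theta$-action is by translation and commutes with the $\Ga$-action on $\tilde\Omega_\theta$), and the latter set is finite by proper discontinuity (Theorem \ref{thm.propdisckow}). Everything else is a bookkeeping assembly of Proposition \ref{prop.lowerbound} and the Cauchy–Schwarz step already carried out inside the proof of Theorem \ref{prop.mainprop}; in particular the requirement that $Q_1,Q_2$ have non-empty interior is arranged simply by enlarging the compact sets, which only helps the relevant inequalities.
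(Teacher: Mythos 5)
Your proof is correct, and for the left--hand inequality it takes a genuinely different route from the paper's. The paper avoids a second--moment argument by replacing $Q$ with the slightly shrunken set $Q^{-}:=\bigcap_{0\le s\le\varepsilon}Qa_{-s}$ and observing
\[
\varepsilon\int_0^T\sum_{\ga\in\Ga}\tilde\m(Q^-\cap\ga Q^-a_{-t})\,dt\le\int_0^T\int_0^\varepsilon\sum_{\ga\in\Ga}\tilde\m\bigl(Q\cap\ga Qa_{-t}\cap\ga Qa_{-t-s}\bigr)\,ds\,dt,
\]
i.e.\ only the diagonal part $\ga=\ga'$ of Proposition~\ref{prop.upperbound}(1) is used, and $Q_1=[Q^-]$; this yields $\int_0^T\m(Q_1\cap Q_1a_{-t})\,dt\ll(\sum e^{-\psi(\mu_\theta(\ga))})^2$. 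You instead run a genuine Cauchy--Schwarz on the finite measure $\mathsf M=\m|_{[Q]}$: with $F(\omega)=\int_0^T\mathbbm{1}_{P_t}(\omega)\,dt$ one has $\bigl(\int F\,d\mathsf M\bigr)^2\le\mathsf M([Q])\int F^2\,d\mathsf M=\mathsf M([Q])\int_0^T\int_0^T\mathsf M(P_t\cap P_s)\,dt\,ds$, and then the symmetrization and multiplicity bound reduce to the full double sum of Proposition~\ref{prop.upperbound}(1). This is clean and in fact gives the \emph{stronger} bound $\int_0^T\m(Q_1\cap Q_1a_{-t})\,dt\ll\sum e^{-\psi(\mu_\theta(\ga))}$ (no square root; the stated form then follows since the partial Poincar\'e sum is $\ge 1$, it always includes $\ga=e$). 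So your argument buys a sharper estimate and a simpler geometric step (no need to shrink $Q$), at the minor cost of invoking the finiteness of $\mathsf M([Q])$.

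One attribution slip worth flagging: you describe the inequality $\bigl(\int_0^T\mathsf M(P_t)\,dt\bigr)^2\ll\int_0^T\int_0^T\mathsf M(P_t\cap P_s)\,dt\,ds$ as ``the Cauchy--Schwarz step already carried out inside the proof of Theorem~\ref{prop.mainprop}.'' That proof actually establishes the \emph{reverse} inequality $\int_0^T\int_0^T\mathsf M(P_t\cap P_s)\,dt\,ds\ll\bigl(\int_0^T\mathsf M(P_t)\,dt\bigr)^2$, which is what the Borel--Cantelli Lemma~\ref{lem.BC} requires; Cauchy--Schwarz in the direction you need is not used there. Your inequality is nevertheless correct as a direct application of Cauchy--Schwarz for the finite measure $\mathsf M$, so this is a misremembering rather than a gap. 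Everything else (the choice of $Q_1=[Q]$, $Q_2=[Q'(r)]$, the $t$-uniform comparison $\m(Q_i\cap Q_ia_{-t})\asymp\sum_{\ga}\tilde\m(Q\cap\ga Qa_{-t})$ via the observation that $Qa_{-t}\cap\ga Qa_{-t}\neq\emptyset$ iff $Q\cap\ga Q\neq\emptyset$, and the right--hand inequality from Proposition~\ref{prop.lowerbound}(2)) matches the paper.
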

\begin{proof}
Let $Q \subset \tilde{\Omega}_{\theta}$ be a compact subset with non-empty interior. By Proposition \ref{prop.lowerbound}(1), there exists $r_0 = r_0(Q) > 0$ such that  for all $T \ge  1$ and for all $r\ge r_0$,
    \be \label{eqn.applyupperbound}
    \int_0^T \int_0^T \sum_{\ga, \ga' \in \Ga} \tilde{\m}(Q \cap \ga Q a_{-t} \cap \ga' Q a_{-t-s}) dtds \ll \left( \sum_{\substack{\ga \in \Ga_{u, r} \\ \psi(\mu_{\theta}(\ga)) \le \delta T}} e^{-\psi(\mu_{\theta}(\ga))} \right)^2.
    \ee
    Fix a small $\varepsilon > 0$ so that $Q^- := \bigcap_{0 \le s \le \varepsilon} Qa_{-s}$ has non-empty interior. Since we have
    $$\begin{aligned}
        \varepsilon \int_0^T \sum_{\ga \in \Ga} & \tilde{\m}(Q^- \cap \ga Q^- a_{-t}) dt 
         \le \int_0^T \int_0^{\varepsilon} \sum_{\ga \in \Ga} \tilde{\m}(Q \cap \ga(Q \cap Q a_{-s}) a_{-t}) ds dt,
    \end{aligned}$$ it follows from \eqref{eqn.applyupperbound} that for all $r\ge r_0$, $$\int_0^T \sum_{\ga \in \Ga}  \tilde{\m}(Q^- \cap \ga Q^- a_{-t}) dt \ll \left( \sum_{\substack{\ga \in \Ga_{u, r} \\ \psi(\mu_{\theta}(\ga)) \le \delta T}} e^{-\psi(\mu_{\theta}(\ga))} \right)^2.$$ 
 Now let $Q' = Q'(r) \subset \tilde{\Omega}_{\theta}$ be a compact subset given in Proposition \ref{prop.lowerbound}(2) such that for any $T > 1$,
 \be\label{div2} \int_0^T \sum_{\ga \in \Ga} \tilde{\m}(Q' \cap \ga Q' a_{-t}) dt  \gg \sum_{\substack{\ga \in \Ga_{u, r} \\ \psi(\mu_{\theta}(\ga)) \le \delta T}} e^{-\psi(\mu_{\theta}(\ga))}.
     \ee
     Replacing $Q'$ with a larger compact subset, we may assume that $\inte Q'\ne \emptyset $. 
Hence it suffices to set $Q_1=\Ga\ba \Ga Q^-$ and $Q_2=\Ga\ba \Ga Q'$ to finish the proof.
\end{proof}

\begin{rmk}\label{bllormk} For $\theta=\Pi$,
Corollary \ref{useful} was established
in \cite{BLLO} for any Zariski dense discrete subgroup of $G$ (see \cite[Proof of
Thm. 6.3]{BLLO}). 
If $\Ga$ is a lattice of $G$, 
then, together with the Howe-Moore mixing property of the (finite) Haar measure \cite{HM}, it implies that for any non-zero $u\in \fa^+$, we have $\sum_{\ga \in \Ga_{u, r}}  e^{-2\rho (\mu(\ga))}=\infty$ for all $r>1$ large enough where $2\rho$ denotes the sum of all positive roots
counted with multiplicity. 
\end{rmk}

\section{Transitivity subgroup and ergodicity of directional flows} \label{sec.conserg}
In this section, we complete the proof of Theorem \ref{main}, by establishing the equivalence between co-nullity of directional conical sets and conservativity/ergodicity of directional flows. We use the notion of transitivity subgroup to carry out the Hopf argument in our setting.

\medskip

Let $\Ga < G$ be a Zariski dense $\theta$-transverse subgroup. We fix a non-zero vector $u \in  \fa_{\theta}^+$ and  a $(\Ga, \theta)$-proper linear form
    $\psi\in \fa_\theta^*$. We also fix a pair  $\nu, \nu_{\i}$ of $(\Ga, \psi)$ and $(\Ga, \psi\circ~\i)$-conformal measures on $\La_\theta$ and $\La_{\i(\theta)}$ respectively. Denote by $\m=\m(\nu, \nu_{\i})$  the  associated BMS measure on  $\Omega_\theta$.
     In this section, we discuss the ergodicity and conservativity of the directional flow 
     $$A_u=\{a_t := \exp (tu) : t \in \R\}$$ on $\Omega_{\theta}$ with respect to $\m$. We emphasize that the notion of a transitivity subgroup plays a key role in showing the $A_u$-ergodicity.

\subsection*{Conservativity of directional flows}
Recall the following definitions:
\begin{enumerate}
    \item A Borel subset $B \subset \Omega_{\theta}$ is called a wandering set for $\m$ if for $\m$-a.e. $x \in B$, we have $\int_{-\infty}^{\infty} \mathbbm{1}_{B}(xa_t) dt < \infty$.
    \item We say that $(\Omega_{\theta}, A_u, \m)$ is completely conservative if there is no wandering set $B \subset \Omega_{\theta}$ with $\m(B) > 0$.
    \item We say that $(\Omega_{\theta}, A_u, \m)$ is completely dissipative if $\Omega_{\theta}$ is a countable union of wandering sets modulo $\m$.
\end{enumerate}

The following is proved for $\theta = \Pi$ in \cite[Prop. 4.2]{BLLO} and a similar proof works for general $\theta$:

\begin{prop} \label{prop.conserv}
    The flow $(\Omega_{\theta}, A_u, \m)$ is completely conservative (resp. completely dissipative) if and only if $\max \left(\nu(\La_{\theta}^u), \nu_{\i}(\La_{\i(\theta)}^{\i(u)}) \right) > 0$ (resp. $\nu(\La_{\theta}^u) = 0 = \nu_{\i}(\La_{\i(\theta)}^{\i(u)})$).
\end{prop}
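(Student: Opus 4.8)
The plan is to prove Proposition \ref{prop.conserv} by combining the two directions of the directional dichotomy that have already been assembled in the preceding sections. The conservative/dissipative dichotomy for a flow is well known to be exhaustive (by the Hopf decomposition), so it suffices to prove one implication in each of the two equivalences, namely: if $\nu(\La_\theta^u)=0=\nu_{\i}(\La_{\i(\theta)}^{\i(u)})$ then the flow is completely dissipative, and if $\max(\nu(\La_\theta^u),\nu_{\i}(\La_{\i(\theta)}^{\i(u)}))>0$ then the flow is completely conservative. Note that the $u$-balanced hypothesis is \emph{not} available here, so I must not route through Corollary \ref{thm.directionalpoincare}; instead I work directly with conical sets and the Hopf parametrization $\Omega_\theta\simeq\Ga\ba(\La_\theta^{(2)}\times\fa_\theta)$.

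First I would treat the dissipative direction. Assume $\nu(\La_\theta^u)=0$ (the symmetric assumption $\nu_{\i}(\La_{\i(\theta)}^{\i(u)})=0$ is used the same way with $t\to-\infty$). Fix a compact subset $Q\subset\tilde\Omega_\theta$ with $\tilde\m(Q)>0$; I claim $[Q]$ is (essentially) a wandering set for $A_u$, which suffices since $\Omega_\theta$ is covered by countably many such $[Q]$. Indeed, if $x=[(\xi,\eta,v)]\in[Q]$ returns to $[Q]$ at a sequence of times $t_i\to+\infty$, then $(\xi,\eta,v)\in Q\cap\ga_iQa_{-t_i}$ for suitable $\ga_i\in\Ga$ with $\ga_i\to\infty$ (the $\ga_i$ cannot stay in a finite set since $[Q]a_{t_i}$ would then stay in a fixed compact set and $t_i\to\infty$ forces $\mu_\theta(\ga_i)\to\infty$ by Proposition \ref{lem.dircartan}(1)); then Lemma \ref{defdir}(2) gives $\xi\in\La_\theta^u$. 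Hence the set of $x\in[Q]$ with $\int_0^\infty\mathbbm 1_{[Q]}(xa_t)\,dt=\infty$ projects into $\{\xi\in\La_\theta^u\}$ in the first coordinate. Since $\tilde\m$ is equivalent to $\nu\otimes\nu_{\i}\otimes db$ and $\nu(\La_\theta^u)=0$, this set is $\m$-null; the symmetric argument with $t_i\to-\infty$ and $\nu_{\i}(\La_{\i(\theta)}^{\i(u)})=0$ handles negative times, so $[Q]$ is wandering. Summing over a countable cover gives complete dissipativity.

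For the conservative direction, suppose $\nu(\La_\theta^u)>0$ (the case $\nu_{\i}(\La_{\i(\theta)}^{\i(u)})>0$ is symmetric). By Theorem \ref{thm.convergencedir}, $\nu(\La_\theta^u)>0$ forces $\sum_{\ga\in\Ga_{u,r}}e^{-\psi(\mu_\theta(\ga))}=\infty$ for some $r>0$; then Lemma \ref{lem.tangent} gives $\psi(u)=\psi_\Ga^\theta(u)>0$ and the abscissa of convergence of $s\mapsto\sum_{\ga\in\Ga_{u,r}}e^{-s\psi(\mu_\theta(\ga))}$ equals one, so in particular $\nu$ is the unique $(\Ga,\psi)$-conformal measure on $\F_\theta$ and by Lemma \ref{lem.zeroone} in fact $\nu(\La_\theta^u)=1$; the analogous reasoning on the $\i(\theta)$-side (using that $\psi\circ\i$ is $(\Ga,\i(\theta))$-proper and $\sum_{\ga}e^{-(\psi\circ\i)(\mu_{\i(\theta)}(\ga))}=\infty$) gives $\nu_{\i}(\La_{\i(\theta)}^{\i(u)})=1$. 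Now I run the standard conservativity argument: suppose for contradiction that $B\subset\Omega_\theta$ is a wandering set with $\m(B)>0$. Lifting to $\tilde\Omega_\theta$ and using that $\tilde\m\sim\nu\otimes\nu_{\i}\otimes db$, there is a positive-measure set of $(\xi,\eta,v)$ over $B$; since $\nu$-a.e.\ $\xi$ lies in $\La_\theta^u$ and $\nu_{\i}$-a.e.\ $\eta$ lies in $\La_{\i(\theta)}^{\i(u)}$, I may pick such a point $x=[(\xi,\eta,v)]$ which is a density/Lebesgue point of $B$ along the $A_u$-orbit and with $\xi\in\La_\theta^u$. By Lemma \ref{defdir}(2) there are $\ga_i\in\Ga$ and $t_i\to+\infty$ with $\ga_i^{-1}xa_{t_i}$ staying in a fixed compact set $Q_0\subset\tilde\Omega_\theta$; passing to a subsequence, $\ga_i^{-1}xa_{t_i}\to y$ for some $y\in\tilde\Omega_\theta$ with $y^+=\lim\ga_i^{-1}\xi$, $y^-$ comparable to $\eta$, and the $\fa_\theta$-coordinate controlled (here Proposition \ref{lem.dircartan}(1)--(3) and Lemma \ref{lem.buseandcartan} keep all the relevant quantities bounded). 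Shrinking to a small box neighborhood of $y$ and translating back by $\ga_i$ and $a_{-t_i}$ produces, for infinitely many $i$, a positive-$\m$-measure return of $[B]$ to itself at times $\to\infty$ — contradicting that $B$ is wandering. Hence no wandering set of positive measure exists and the flow is completely conservative.

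I expect the main obstacle to be the conservative direction: making the ``translate back and find genuine recurrence of $B$'' step rigorous requires a careful local-product (box) argument in the Hopf parametrization, controlling simultaneously the $\F_\theta^{(2)}$-coordinates (via continuity of shadows, Proposition \ref{lem.approxshadows}, and Lemma \ref{shad_con}) and the $\fa_\theta$-coordinate (via Proposition \ref{lem.dircartan} and Lemma \ref{lem.buseandcartan}), all while $S_\theta$ is noncompact — exactly the recurring difficulty of the paper. The cited analogue \cite[Prop. 4.2]{BLLO} handles $\theta=\Pi$, and the task is to check that each ingredient of that proof has been upgraded to general $\theta$ in Sections \ref{sec:rec}--\ref{sec.dirpoincare}, which it has (Lemma \ref{lem.repeat}, Proposition \ref{lem.dircartan}, Lemma \ref{defdir}, Theorem \ref{thm.convergencedir}, Lemma \ref{lem.zeroone}); the dissipative direction, by contrast, is essentially immediate from Lemma \ref{defdir}(2) together with $\tilde\m\sim\nu\otimes\nu_{\i}\otimes db$.
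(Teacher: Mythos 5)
Your dissipative direction is correct and essentially matches the paper's; the paper writes it as a contrapositive, splitting a non-wandering positive-measure set $B$ into $B^{\pm}=\{x\in B:\limsup_{t\to\pm\infty}xa_t\cap B\neq\emptyset\}$ and deducing from Lemma \ref{defdir}(2) and local equivalence $\m\sim\nu\otimes\nu_{\i}\otimes db$ that $\m(B^+)>0$ forces $\nu(\La_\theta^u)>0$, with $\sigma$-compactness giving the dissipative conclusion.

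For the conservative direction you take a different route from the paper, and there are two issues. First, a small overreach: from $\nu(\La_\theta^u)>0$ you correctly get $\nu(\La_\theta^u)=1$ via Theorem \ref{thm.convergencedir} and Lemma \ref{lem.zeroone}, but your assertion that ``analogous reasoning'' gives $\nu_{\i}(\La_{\i(\theta)}^{\i(u)})=1$ is unjustified. The divergence of $\sum_{\ga}e^{-(\psi\circ\i)(\mu_{\i(\theta)}(\ga))}$ only gives uniqueness of $\nu_{\i}$ and hence $\nu_{\i}(\La_{\i(\theta)}^{\i(u)})\in\{0,1\}$; ruling out $0$ would require Theorem \ref{prop.mainprop}, which is exactly what needs the $u$-balanced hypothesis you correctly set aside. (This is harmless only because your argument actually uses just $\xi\in\La_\theta^u$.) Second, and more significantly: the paper's conservative direction does not do a density-point/flow-box argument at all. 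It derives $\nu(\La_\theta^u)=1$ (or $\nu_{\i}(\La_{\i(\theta)}^{\i(u)})=1$), so for $\m$-a.e.\ $\Ga[g]$ one has $g^+\in\La_\theta^u$ (resp.\ $g^-\in\La_{\i(\theta)}^{\i(u)}$), hence by Lemma \ref{defdir}(2) the orbit $\Ga[g]a_{t_iu}$ converges along some $t_i\to+\infty$ (resp.\ $-\infty$), giving $\int_{-\infty}^{\infty}\mathbbm 1_B(xa_t)\,dt=\infty$ for some compact $B$; it then simply invokes \cite[Lem.~6.1]{LO_dichotomy} to convert this a.e.\ orbit recurrence into complete conservativity. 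Your sketch amounts to re-proving that cited lemma from scratch, and as stated it has a genuine gap: the conclusion ``positive-$\m$-measure return of $[B]$ to itself at times $t_i\to\infty$ contradicts wandering'' does not follow, since $\m(B\cap Ba_{-t_i})>0$ for infinitely many $t_i$ does not by itself show any single point of $B$ spends infinite time in $B$ --- the returns could occur on shrinking, essentially disjoint pieces of $B$. One needs a nesting or Borel--Cantelli argument in the flow-box coordinates, and this is exactly the content the paper outsources to \cite[Lem.~6.1]{LO_dichotomy}. If you want to proceed without that citation, you must actually carry out this step rather than gesture at it.
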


\begin{proof}
Suppose that there exists a non-wandering subset $B$ with $\m(B) > 0$.   Setting $B^{\pm} := \{ x \in B : \limsup_{t \to \pm \infty} xa_t \cap B \neq \emptyset\}$, we have $\m(B^+ \cup B^-) > 0$. Since $\m$ is locally equivalent to $\nu \otimes \nu_{\i} \otimes db$, if we have $\m(B^+) > 0$, then $\nu(\La_{\theta}^u) > 0$ by Lemma \ref{defdir}. Otherwise, if $\m(B^-) > 0$, then $\nu_{\i}(\La_{\i(\theta)}^{\i(u)}) > 0$. 
    It shows the following two implications: $$\begin{aligned}
        (\Omega_{\theta}, A_u, \m) \text{ is completely conservative} & \Rightarrow \max \left( \nu(\La_{\theta}^u), \nu_{\i}(\La_{\i(\theta)}^{\i(u)}) \right) > 0;\\
        (\Omega_{\theta}, A_u, \m) \text{ is completely dissipative} & \Leftarrow \nu(\La_{\theta}^u) = 0 = \nu_{\i}(\La_{\i(\theta)}^{\i(u)})
    \end{aligned}$$ where the second implication is due to the $\sigma$-compactness of $\Omega_{\theta}$. 
    
    Now suppose that $\nu (\La_{\theta}^u) > 0$ (resp. $\nu_{\i}(\La_{\i(\theta)}^{\i(u)} > 0)$. By Theorem \ref{thm.convergencedir}, $\sum_{\ga \in \Ga_{u, r}} e^{-\psi(\mu_{\theta}(\ga))} = \infty$ (resp. $\sum_{\ga \in \Ga_{u, r}^{-1}} e^{-(\psi \circ \i)(\mu_{\i(\theta)}(\ga))} = \infty$) for some $r > 0$. Note that $\ga \in \Ga_{u, r}^{-1}$ if and only if $\|\mu_{\i(\theta)}(\ga) - t \i(u)\| < r$ for some $t \ge 0$. Hence it follows from \eqref{lem.zeroone} that $\nu(\La_{\theta}^u) = 1$ (resp. $\nu_{\i}(\La_{\i(\theta)}^{\i(u)}) = 1$). It implies that for $\m$-a.e. $\Ga[g] \in \Omega_{\theta}$, we have $g^+ \in \La_{\theta}^u$ (resp. $g^- \in \La_{\i(\theta)}^{\i(u)}$) and hence $\Ga[g]a_{t_iu}$ is a convergent sequence for some sequence $t_i \to \infty$ (resp. $t_i \to -\infty$).  Hence,  for $\m$-a.e. $x \in \Omega_{\theta}$, there exists a compact subset $B$ such that $\int_{-\infty}^{\infty} \mathbbm{1}_B(xa_t) dt = \infty$. This implies the conservativity of $(\Omega_{\theta}, A_u, \m)$ by \cite[Lem. 6.1]{LO_dichotomy}.
\end{proof}

\subsection*{Density of $\theta$-transitivity subgroups}
\begin{definition}[$\theta$-transitivity subgroup]
    For $g \in G$ with  $(g^+, g^-) \in \La_{\theta}^{(2)}$, define  $\cal H_{\Ga}^{\theta}(g) $ to be the set of all elements $a\in A_\theta$ such that there exist $\ga \in \Ga$, $s \in S_{\theta}$ and a sequence $n_1, \cdots, n_k \in N_{\theta} \cup \check{N}_{\theta}$ such that

    \begin{enumerate}
    \item $((gn_1 \cdots n_r)^+ , (gn_1 \cdots n_r)^-) \in \La_{\theta}^{(2)}$ for all $1 \le r \le k$; and
    \item $\ga g n_1 \cdots n_k = gas$.
    \end{enumerate}
    It is not hard to see that $\cal H_{\Ga}^{\theta}(g)$ is a subgroup (cf. \cite[Lem. 3.1]{Winter_mixing}).
   
\end{definition}

We deduce the density of transitive subgroups from Theorem \ref{thm.Benoistdense}:

\begin{prop} \label{prop.transitivity}
    For any $g \in G$ with $(g^+, g^-) \in \La_{\theta}^{(2)}$, the subgroup $\cal H_{\Ga}^{\theta}(g)$ is dense in $A_{\theta}$.
\end{prop}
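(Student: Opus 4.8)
\textbf{Proof plan for Proposition \ref{prop.transitivity}.}
The plan is to prove density of $\cal H_\Ga^\theta(g)$ in $A_\theta$ in two stages: first reduce to showing that $\cal H_\Ga^\theta(g)$ contains a large supply of Jordan projections, and then invoke Theorem \ref{thm.Benoistdense} together with the assumption that $p_\theta(\lambda(\Ga))$ generates a dense subgroup of $\fa_\theta$. The first observation is that $\cal H_\Ga^\theta(g)$ is closed under the maneuver of sliding along unipotent directions inside $N_\theta \cup \check N_\theta$ while staying inside the transverse set $\La_\theta^{(2)}$; since $\La_\theta^{(2)}$ is open in $\F_\theta \times \F_{\i(\theta)}$ and contains $(g^+,g^-)$, such sliding is possible in a whole neighborhood, and the point is to exploit the density of $\{(y_\ga, y_{\ga^{-1}}) : \ga \in \Ga \text{ loxodromic}\}$ in $\La^{(2)}$ (hence of the corresponding projected pairs in $\La_\theta^{(2)}$, as recorded after Theorem \ref{thm.propdisckow}).

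The key step is the following: given a loxodromic $\ga \in \Ga$ with attracting/repelling data close to $(g^+,g^-)$, one conjugates so that $\ga$ is in standard form $h a_\ga m h^{-1}$ with $hP$ near $g^+$ and $hw_0 P$ near $g^-$; then by choosing $n_1,\dots,n_k \in N_\theta \cup \check N_\theta$ appropriately one moves $[g]$ first to $[h]$ (or rather to a point on the same $S_\theta$-orbit as $h$ up to a controlled error), applies $\ga$, and returns, so that the net effect on the $\fa_\theta$-coordinate is $p_\theta(\lambda(\ga)) = \lambda_\theta(\ga)$ up to an element of $S_\theta$. This shows $\lambda_\theta(\ga) \in \cal H_\Ga^\theta(g)$ for all loxodromic $\ga$ whose fixed-point pair is sufficiently close to $(g^+,g^-)$. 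Iterating along products of such loxodromic elements (whose fixed points can be taken to stay near $(g^+,g^-)$ by the usual ping-pong/attracting-fixed-point argument), one gets that $\cal H_\Ga^\theta(g)$ contains $p_\theta(\lambda(\ga'))$ for a Zariski dense subset of loxodromic $\ga'$, and in particular contains the group generated by $\{p_\theta(\lambda(\ga)) : \ga \text{ loxodromic in }\Ga\}$; by Theorem \ref{thm.Benoistdense} this group is dense in $\fa_\theta$ (using $p_\theta(\fa) = \fa_\theta$), and since $\cal H_\Ga^\theta(g) < A_\theta$ is a subgroup, $\exp$ of a dense subgroup of $\fa_\theta$ is dense in $A_\theta$.

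The main obstacle is the bookkeeping in the conjugation step: one must verify that all intermediate points $(gn_1\cdots n_r)^\pm$ remain in $\La_\theta^{(2)}$ — i.e. that the unipotent sliding can be done through the transverse locus without leaving $\La_\theta^{(2)}$ — and that the error terms living in $S_\theta$ genuinely get absorbed rather than contributing to the $\fa_\theta$-coordinate. This is where the $\theta$-transversality hypotheses (regularity and antipodality) and the convergence-group dynamics on $\La_\theta$, $\La_{\i(\theta)}$ enter: one uses that loxodromic elements act with north-south dynamics on the respective boundaries, so that conjugating $g$ toward the standard position of $\ga$ via unipotents can be arranged to keep the pair inside the open set $\La_\theta^{(2)}$. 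I expect this to follow the template of \cite[Lem. 3.1]{Winter_mixing} and the analogous transitivity arguments in \cite{LO_invariant}, \cite{BLLO}, with the extra care needed because $S_\theta$ is noncompact being handled by only tracking the $A_\theta = A/B_\theta$-component throughout.
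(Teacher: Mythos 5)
Your proposal takes essentially the same route as the paper's proof — reduce to loxodromic Jordan projections, invoke Theorem \ref{thm.Benoistdense}, and move $[g]$ along unipotents to realize periods — but it remains a sketch exactly at the points where the work lives, and the mechanisms you guess at are not the ones the paper uses.

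The piece you describe as ``controlled errors living in $S_\theta$ get absorbed'' is in fact an exact identity, built on the fact that $L_\theta = A_\theta S_\theta$ normalizes both $N_\theta$ and $\check{N}_\theta$. Arrange (as the paper does, via a Zariski dense Schottky subgroup $\Ga_0<\Ga$) that every nontrivial $\ga\in\Ga_0$ has $y_\ga^\theta \in g\check{N}_\theta P_\theta$, so one may write $h = g\check{n} n\in g\check{N}_\theta N_\theta$ with $\ga = h(as)h^{-1}$ for $a = \exp p_\theta(\lambda(\ga))\in A_\theta$ and $s\in S_\theta$. Then
\[
\ga\, g \check{n}\, n\, \bigl((as)^{-1}n^{-1}(as)\bigr)\bigl((as)^{-1}\check{n}^{-1}(as)\bigr) = gas,
\]
and $(as)^{-1}n^{-1}(as)\in N_\theta$, $(as)^{-1}\check{n}^{-1}(as)\in\check{N}_\theta$ by normality, so the four-term sequence $n_1=\check{n}$, $n_2=n$, $n_3=(as)^{-1}n^{-1}(as)$, $n_4=(as)^{-1}\check{n}^{-1}(as)$ witnesses $a\in\cal H_\Ga^\theta(g)$ exactly. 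Without this computation you have not shown that the $\fa_\theta$-component produced is precisely $p_\theta(\lambda(\ga))$ rather than an approximation.

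Similarly, condition (1) — that the intermediate $(gn_1\cdots n_r)^\pm$ stay in $\La_\theta^{(2)}$ — is not established by a continuity or north--south dynamics argument as you suggest, but checked directly: the intermediate flag pairs are $(y_\ga^\theta, g^-)$, $(y_\ga^\theta, y_{\ga^{-1}}^{\i(\theta)})$, $(y_\ga^\theta, \ga^{-1}g^-)$ and $(\ga^{-1}g^+, \ga^{-1}g^-)$, each visibly in $\La_\theta\times\La_{\i(\theta)}$ because they are fixed-point data of a loxodromic in $\Ga$ or $\Ga$-translates of $(g^+,g^-)$, and transversality is automatic because each pair is the $\pm$-data of a single group element. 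This is also why the paper picks a Schottky subgroup adapted to the Zariski open set $g\check{N}_\theta P_\theta$ rather than loxodromics ``near'' $(g^+,g^-)$ in the ordinary topology: it produces a Zariski dense \emph{subgroup} satisfying \eqref{ccc}, to which Theorem \ref{thm.Benoistdense} applies directly, whereas your ``Zariski dense subset of loxodromics plus ping-pong'' is headed the same way but has not yet delivered a group on which to invoke that theorem.
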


\begin{proof}  Note that we have a Zariski dense open subset $g\check{N}_{\theta} P_{\theta} /P\subset \F$; this is well-defined since $P\subset P_\theta$.
Hence  there exists a Zariski dense Schottky subgroup $\Ga_0 < \Ga$ so that for any loxodromic element $\ga \in \Ga_0$, its attracting fixed point $y_{\ga}$ belongs to $g \check{N}_{\theta} P_{\theta}$ (cf. \cite[Lem. 7.3]{ELO_anosov}, \cite{Benoist1997proprietes}). Note that any non-trivial element of $\Ga_0$ is loxodromic.
By Theorem \ref{thm.Benoistdense}, it suffices to prove:
\be\label{ccc} \{p_{\theta}(\la(\ga)) : \ga \in \Ga_0 \} \subset \log \cal H_{\Ga}^{\theta}(g).\ee 

Fixing any non-trivial element $\ga \in \Ga_0$, write $\ga = h a_{\ga} m h^{-1} \in h A^+M h^{-1}$ for some $h \in G$. Then $\la(\ga)=\log a_\ga$ and $y_\ga= hP\in \La$; hence $ y_{\ga}^{\theta}:=h P_{\theta}  \in g\check{N}_{\theta}P_{\theta}$. Using $P_\theta=N_\theta A_\theta S_\theta$, we can write $h \in g\check{n} n A_{\theta}S_{\theta}$ for some $\check{n} \in \check{N}_{\theta}$ and $n \in N_{\theta}$. By replacing $h$ with $g\check{n}n$, we may assume that $$h = g \check{n} n \in g \check{N}_{\theta}N_{\theta} \quad \text{and} \quad \ga = h a s h^{-1}$$ for some $s \in S_{\theta}$ where $a$ is the $A_{\theta}$-component of $a_\ga$ in the decomposition $a_\ga \in A_{\theta}^+B_{\theta}^+$ so that $p_\theta(\log a_\ga)=
\log a$.
It remains to show that $a \in \cal H_{\Ga}^{\theta}(g)$. We first note  from $\ga = hash^{-1}$ and $h = g\check{n}n$ that $$\ga = (gas) \left( (as)^{-1} \check{n} (as) \right) \left( (as)^{-1} n (as) \right) n^{-1} \check{n}^{-1} g^{-1}$$ and hence \be \label{eqn.cond2}
    \ga g \check{n} n \left( (as)^{-1} n^{-1} (as) \right) \left( (as)^{-1} \check{n}^{-1} (as) \right) = gas.
    \ee
    Writing $n_1 = \check{n}$, $n_2 = n$, $n_3 = (as)^{-1} n^{-1} (as)$ and $n_4 = (as)^{-1} \check{n}^{-1} (as)$, we have $n_1, n_4 \in \check{N}_{\theta}$ and $n_2, n_3 \in N_{\theta}$. By \eqref{eqn.cond2}, the elements $n_i$, $1\le i\le 4$, satisfy the second condition for $a \in \cal H_{\Ga}^{\theta}(g)$. We now check the first condition:
    \begin{itemize}
        \item $gn_1P_{\theta} = g\check{n}P_{\theta} = hP_{\theta} = y_{\ga}^{\theta} \in \La_{\theta}$ and $gn_1w_0 P_{\i(\theta)} = g w_0 P_{\i(\theta)} \in \La_{\i(\theta)}$;
        \item $gn_1n_2P_{\theta} = hP_{\theta} \in \La_{\theta}$ and $gn_1n_2w_0 P_{\i(\theta)} = h w_0 P_{\i(\theta)} = y_{\ga^{-1}}^{\i(\theta)} \in \La_{\i(\theta)}$;
        \item $gn_1 n_2 n_3 P_{\theta} = g n_1 n_2 P_{\theta} \in \La_{\theta}$ and $g n_1 n_2 n_3 w_0 P_{\i(\theta)} = \ga^{-1} gas n_4^{-1} w_0 P_{\i(\theta)} = \ga^{-1} gas w_0 P_{\i(\theta)} = \ga^{-1} g w_0 P_{\i(\theta)} \in \La_{\i(\theta)}$ by \eqref{eqn.cond2};
        \item $gn_1n_2n_3n_4 P_{\theta} = \ga^{-1} gas P_{\theta} = \ga^{-1} g P_{\theta} \in \La_{\theta}$ and $gn_1n_2n_3n_4 w_0 P_{\i(\theta)} = gn_1n_2n_3 w_0 P_{\i(\theta)} \in \La_{\i(\theta)}$.
    \end{itemize}
This proves that $a \in \cal H_{\Ga}^{\theta}(g)$ and completes the proof. 
\end{proof}

\subsection*{Stable and unstable foliations for directional flows} 
Recall the notation that for $g \in G$, we set
$$[g] = (g^+, g^-, \beta_{g^+}^{\theta}(e, g)) \in \F_{\theta}^{(2)} \times \fa_{\theta}.$$

\begin{lemma} \label{lem.transbyhoro}
    Let $g \in G$, $n \in N_{\theta}$ and $\check{n} \in \check{N}_{\theta}$. Then $$\begin{aligned}
        [gn] & = (g^+, (gn)^-, \beta_{g^+}^{\theta}(e, g)); \\
        [g\check{n}] & = ((g\check{n})^+, g^-, \beta_{g^+}^{\theta}(e, g) + \cal G^{\theta}((g\check{n})^+, g^-) - \cal G^{\theta}(g^+, g^-)).
    \end{aligned}$$
\end{lemma}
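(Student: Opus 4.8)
The plan is to compute each coordinate of $[gn]$ and $[g\check n]$ directly from the definition $[h] = (h^+, h^-, \beta_{h^+}^\theta(e,h))$ together with the fact that $n \in N_\theta \subset P_\theta$ and $\check n \in \check N_\theta$.

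For $[gn]$: since $n \in N_\theta \subset P_\theta$, we have $(gn)^+ = gnP_\theta = gP_\theta = g^+$. The second coordinate $(gn)^- = gnw_0P_{\i(\theta)}$ is genuinely different from $g^-$ in general, so it stays as written. For the $\fa_\theta$-coordinate, I would use the cocycle property of the Busemann map: $\beta_{(gn)^+}^\theta(e, gn) = \beta_{g^+}^\theta(e, gn) = \beta_{g^+}^\theta(e, g) + \beta_{g^+}^\theta(g, gn)$. It remains to check $\beta_{g^+}^\theta(g, gn) = 0$, i.e. $\beta_{e^+}^\theta(e, n) = 0$ after translating by $g^{-1}$ (using $G$-equivariance of the Busemann cocycle, $\beta_{g\xi}^\theta(gh_1, gh_2) = \beta_\xi^\theta(h_1,h_2)$). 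Since $e^+ = P_\theta$ and $n \in N_\theta$, the element $\sigma(n^{-1}, e^+) \in \fa$ vanishes because $n^{-1} \in N \subset KN$ can be written with trivial $A$-component relative to any $k \in K$ with $P = kP$ — more precisely, $n^{-1} \in P_\theta = L_\theta N_\theta$ stabilizes $\xi_\theta$ with trivial $\fa_\theta$-Busemann contribution. I would spell this out via $\beta_{\xi_\theta}^\theta(e, n) = p_\theta(\sigma(e, \xi_{\theta,0}) - \sigma(n^{-1}, \xi_{\theta,0}))$ and note $\sigma(n^{-1}, \cdot)$ lies in $\fa$ with vanishing $\fa_\theta$-part since $n \in N_\theta$ normalizes and its Iwasawa $A$-component projects trivially under $p_\theta$.

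For $[g\check n]$: here $\check n \in \check N_\theta$ so $(g\check n)^- = g\check n w_0 P_{\i(\theta)} = gw_0 P_{\i(\theta)} = g^-$ (since $w_0^{-1}\check N_\theta w_0 \subset$ the unipotent radical fixing $\i(\theta)$-flag appropriately; concretely $\check P_\theta = w_0 P_{\i(\theta)} w_0^{-1}$ and $\check n \in \check N_\theta$ fixes $e^- = \check P_\theta$-point). The first coordinate $(g\check n)^+$ moves. For the $\fa_\theta$-coordinate, write $\beta_{(g\check n)^+}^\theta(e, g\check n)$ and relate it to $\beta_{g^+}^\theta(e,g)$ using the Gromov product: by definition $\cal G^\theta(\xi,\eta) = \beta_\xi^\theta(e,h) + \i(\beta_\eta^{\i(\theta)}(e,h))$ for any $h$ with $(h^+,h^-)=(\xi,\eta)$. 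Apply this with $(\xi,\eta) = ((g\check n)^+, g^-)$ and $h = g\check n$ to get $\beta_{(g\check n)^+}^\theta(e, g\check n) = \cal G^\theta((g\check n)^+, g^-) - \i(\beta_{g^-}^{\i(\theta)}(e, g\check n))$; and with $(\xi,\eta) = (g^+, g^-)$, $h = g$, to get $\beta_{g^+}^\theta(e,g) = \cal G^\theta(g^+, g^-) - \i(\beta_{g^-}^{\i(\theta)}(e,g))$. Subtracting, the claimed formula follows once I show $\i(\beta_{g^-}^{\i(\theta)}(e, g\check n)) = \i(\beta_{g^-}^{\i(\theta)}(e, g))$, i.e. $\beta_{g^-}^{\i(\theta)}(g, g\check n) = 0$, which again reduces (by equivariance) to $\beta_{e^-}^{\i(\theta)}(e, \check n) = 0$ — the same type of computation as before, now for $\check n \in \check N_\theta$ fixing $e^-$ with trivial $\fa_{\i(\theta)}$-Busemann cocycle.

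The main obstacle is the bookkeeping around the identities $\beta_{\xi_\theta}^\theta(e,n)=0$ for $n\in N_\theta$ and $\beta_{e^-}^{\i(\theta)}(e,\check n)=0$ for $\check n\in\check N_\theta$: one must be careful that these $\fa$-valued Busemann cocycles need not vanish before projecting, and that the relevant vanishing is exactly what the projections $p_\theta$ and $p_{\i(\theta)}$ kill. I expect this to follow cleanly from the Iwasawa decomposition ($N_\theta \subset N$ contributes trivial $\fa$-component, and the correction from $L_\theta$-parts is annihilated by $p_\theta$ since $\fa_\theta$ is Weyl-invariant under elements fixing it pointwise), but it is the one spot where the generalized ($\theta \neq \Pi$) setting requires genuine care rather than a direct citation. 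Everything else is an application of the cocycle and equivariance properties of $\beta^\theta$ and the definition of $\cal G^\theta$.
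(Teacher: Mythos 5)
Your proof is correct and follows exactly the same route as the paper: for $[gn]$ use the cocycle identity to reduce to $\beta_{e^+}^{\theta}(e,n)=0$, and for $[g\check n]$ compute $\cal G^{\theta}((g\check n)^+,g^-)$ and $\cal G^{\theta}(g^+,g^-)$ via the Gromov-product definition and subtract, using $\beta_{e^-}^{\i(\theta)}(e,\check n)=0$. The one place you flag as requiring care is easier than you fear: choosing the lifts $\xi_0=P$ and $\xi_0=w_0P$, the full $\fa$-valued Iwasawa cocycles already vanish (since $n^{-1}\in N$ and $w_0^{-1}\check n^{-1}w_0\in N_{\i(\theta)}\subset N$), so no delicate interplay with $p_{\theta}$ is needed.
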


\begin{proof}
    Since $(gn)^+ = gnP_{\theta} = g P_{\theta}$, we have $$\beta_{(gn)^+}^{\theta}(e, gn) - \beta_{g^+}^{\theta}(e, g) = \beta_{e^+}^{\theta}(e, n) = 0$$ and therefore $[gn] = (g^+, (gn)^-, \beta_{g^+}^{\theta}(e, g))$.  To see the second identity, we first note that $g\check{n} w_0 P_{\i(\theta)} = gw_0 P_{\i(\theta)}$, that is,
     $(g\check{n})^- = g^-$. 
    Since $\beta_{e^-}^{\i(\theta)}(e, \check{n}) = 0$, we have $$\begin{aligned}
        \cal G^{\theta}((g\check{n})^+, g^-)
        & = \beta_{(g\check{n})^+}^{\theta}(e, g\check{n}) + \i(\beta_{g^-}^{\i(\theta)}(e, g)) + \i(\beta_{e^-}^{\i(\theta)}(e, \check{n}))\\
        & = \beta_{(g\check{n})^+}^{\theta}(e, g\check{n}) + \i(\beta_{g^-}^{\i(\theta)}(e, g)).
        \end{aligned}$$
    Since   $\cal G^{\theta}(g^+, g^-) = \beta_{g^+}^{\theta}(e, g) + \i(\beta_{g^-}^{\i(\theta)}(e, g))$,  we get
    $$\beta_{(g\check{n})^+}^{\theta}(e, g\check{n}) = \beta_{g^+}^{\theta}(e, g) + \cal G^{\theta}((g\check{n})^+, g^-) - \cal G^{\theta}(g^+, g^-)$$
proving the second identity.
\end{proof}

We say a metric $\d$ on $\Omega_\theta$ {\it admissible} if it extends to a metric of the one-point compactification of $\Omega_\theta$ (if $\Omega_\theta$ is compact, any metric is admissible).
Since $\Omega_{\theta}$ is a second countable locally compact Hausdorff space (Theorem \ref{thm.propdisckow}), there exists an admissible metric.

For $x \in \Omega_{\theta}$, we define $W^{ss}(x)$ (resp. $W^{su}(x)$)
    to be the set of all $y \in \Omega_{\theta}$ such that $\d(xa_t, ya_t) \to 0$ as $t \to + \infty$ (resp. $t\to -\infty$). They form strongly stable and unstable foliations in $\Omega_{\theta}$ with respect to the flow $\{a_t\}$ respectively. 

In turns out that with respect to any admissible metric $\d$ on $\Omega_\theta$,
the $N_\theta$ and $\check{N}_\theta$-orbits are contained in the stable and unstable foliations of the directional flow $\{a_t\}$ on $\Omega_{\theta}$ respectively. The following proposition is important in applying Hopf-type arguments; 
the observation that one can use an admissible metric in this context
is due to Blayac-Canary-Zhu-Zimmer \cite{BCZZ}.

\begin{prop} \label{prop.foliation} Let $g \in G$ be such that $[g] \in \tilde \Omega_{\theta}$. For any compact subsets ${\cal U} \subset N_{\theta}$ and $\check{\cal U} \subset \check{N}_{\theta}$, we have, as $t \to + \infty$
$$
\begin{aligned}
    \diam \left( \{ \Ga [g n] \in \Omega_{\theta} : n \in {\cal U} \} \cdot a_t \right) \to 0; \\
    \diam \left( \{ \Ga [g \check{n}]  \in \Omega_{\theta} : \check{n} \in \check{\cal U} \} \cdot a_{-t} \right) \to 0
\end{aligned}
$$
where the diameter is computed with respect to an admissible metric $\d$ on $\Omega_\theta$. In particular,
    
    \begin{enumerate}
        \item 
        $\{\Gamma [gn]\in \Omega_\theta: n\in N_\theta\}\subset W^{ss}(\Gamma [g])$;

\item     $\{\Gamma [g\check{n}]\in \Omega_\theta: \check{n}\in \check{N}_{\theta}\}\subset W^{su}(\Gamma [g])$.

        \end{enumerate}
\end{prop}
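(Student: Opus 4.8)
The plan is to reduce the diameter statement to a computation in the Hopf parametrization $\F_\theta^{(2)}\times\fa_\theta$ using Lemma \ref{lem.transbyhoro}, and then to exploit the contraction properties of the $A_\theta$-action together with compactness of the orbit data and properness of the $\Ga$-action on $\tilde\Omega_\theta$. First I would treat the stable (i.e.\ $N_\theta$) case; the unstable case follows by the symmetry $t\mapsto -t$ together with the fact that $\check N_\theta$ plays the role of $N_\theta$ for $\i(\theta)$. By Lemma \ref{lem.transbyhoro}, for $n\in\cal U$ we have $[gn]=(g^+,(gn)^-,\beta_{g^+}^\theta(e,g))$, so all the points $[gn]$ share the same first and third coordinates and differ only in the second coordinate $(gn)^-\in\F_{\i(\theta)}$, which ranges over the compact set $\{(gn)^-:n\in\cal U\}$. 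Applying $a_t=\exp(tu)$ changes only the $\fa_\theta$-coordinate by $+tu$; so $[gn]a_t=(g^+,(gn)^-,\beta_{g^+}^\theta(e,g)+tu)$.

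\textbf{Key steps.} The heart of the argument is to show that the family $\{[gn]a_t:n\in\cal U\}$, which lives in the compact set $\{g^+\}\times\{(gn)^-:n\in\cal U\}\times(\beta_{g^+}^\theta(e,g)+tu)$, becomes ``collapsed modulo $\Ga$'' as $t\to+\infty$: that is, after translating by a suitable $\ga_t\in\Ga$ the whole family converges into a single $\Ga$-orbit fiber. The natural move is to write $gn a_t = (ga_t)(a_t^{-1} n a_t)$ and observe that since $u\in\fa_\theta^+$ and $n\in N_\theta$, conjugation contracts: $a_t^{-1}na_t\to e$ uniformly for $n$ in the compact set $\cal U$ as $t\to+\infty$ (because every root occurring in $\log N_\theta$ is positive on $u$, using that $N_\theta$ consists of positive root subgroups and $\alpha(u)>0$ for $\alpha$ in the relevant roots; for roots vanishing on $u$ the corresponding directions don't occur in $N_\theta$ — this needs the precise description of $\log N_\theta$). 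Hence $d(gna_to,\, ga_to)\to 0$ uniformly in $n\in\cal U$. Now I would pass to $\Omega_\theta$: choosing any sequence $t_i\to+\infty$, pick $\ga_i\in\Ga$ with $\ga_i[g]a_{t_i}$ in a fixed compact set $C\subset\tilde\Omega_\theta$ (possible because $\Omega_\theta$ is non-wandering, or simply because $\Ga[g]a_{t_i}$ lies in $\Omega_\theta$ which is covered by countably many such compacta — more carefully, one only needs a convergent subsequence, and the diam bound is a statement about all $t$, so one argues by contradiction along sequences). By the uniform estimate $d(gna_{t_i}o,ga_{t_i}o)\to0$, the translates $\ga_i[gn]a_{t_i}$ all lie in a fixed compact neighborhood of the limit of $\ga_i[g]a_{t_i}$, and their mutual distances are controlled by $d(\ga_i g n a_{t_i}o,\ga_i g a_{t_i}o)=d(gna_{t_i}o,ga_{t_i}o)\to0$. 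Since the projection $\tilde\Omega_\theta\to\Omega_\theta$ is a local isometry on compact sets (or: continuous, and the admissible metric $\d$ is dominated by a $G$-invariant-type estimate on compacta via Proposition \ref{Gromov}-style comparisons), we conclude $\diam(\{\Ga[gn]a_{t_i}:n\in\cal U\})\to0$. As the sequence $t_i$ was arbitrary, the $\limsup_{t\to+\infty}$ of the diameter is $0$. Statements (1) and (2) are then immediate: taking $\cal U=\{e,n\}$ for a single $n\in N_\theta$ gives $\d(\Ga[g]a_t,\Ga[gn]a_t)\to0$, i.e.\ $\Ga[gn]\in W^{ss}(\Ga[g])$, and symmetrically for $\check N_\theta$ and $W^{su}$.

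\textbf{Main obstacle.} The delicate point is transferring the uniform contraction estimate in $G$ (namely $d(gna_to,ga_to)\to 0$) into a diameter bound for the \emph{admissible} metric $\d$ on $\Omega_\theta$, since $\d$ is only required to extend to the one-point compactification and is not a priori comparable to any Riemannian distance. The resolution is to work locally: on each compact subset of $\tilde\Omega_\theta$ the quotient map to $\Omega_\theta$ is proper and the topology is metrizable, so $\d$ restricted there is uniformly comparable to, say, the pullback of a Riemannian metric on $\F_\theta^{(2)}\times\fa_\theta$; once the translated family $\ga_i[gn]a_{t_i}$ is confined to such a fixed compact set (which is where properness of the $\Ga$-action, Theorem \ref{thm.propdisckow}, is used) the convergence in $G$ forces convergence in $\d$. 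A secondary technical nuisance is ensuring the $\ga_i$ can be chosen so the family stays in a \emph{single} compact piece; this is handled by the standard contradiction setup, extracting a subsequence along which $\ga_i[g]a_{t_i}$ converges, which suffices since the desired conclusion $\diam(\cdots)\to0$ is a sequential statement.
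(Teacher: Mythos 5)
Your reduction to Hopf coordinates via Lemma \ref{lem.transbyhoro}, and the observation that varying $n\in\cal U$ changes only the $\F_{\i(\theta)}$-coordinate, is the correct starting point and matches the paper. But the core of your argument has two genuine gaps. First, the conjugation estimate $a_t^{-1}na_t\to e$ is false for a general $u\in\fa_\theta^+-\{0\}$: the root spaces occurring in $\log N_\theta$ include $\fg_\alpha$ for every simple $\alpha\in\theta$, and a boundary vector $u\in\fa_\theta^+$ may have $\alpha(u)=0$ for some $\alpha\in\theta$, so the corresponding root group is not contracted at all. Your parenthetical claim that "for roots vanishing on $u$ the corresponding directions don't occur in $N_\theta$" is exactly wrong in this case, and the proposition is needed for arbitrary $u\in\fa_\theta^+-\{0\}$. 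Second, even granting contraction, the transfer from $d(gna_to,ga_to)\to 0$ in $X$ to closeness in the admissible metric $\d$ on $\Omega_\theta$ does not go through, because of the non-compactness of $S_\theta$: membership of $\ga_i[g]a_{t_i}$ in a compact subset of $\tilde\Omega_\theta\subset G/S_\theta$ only yields an \emph{unbounded} sequence $s_i\in S_\theta$ with $\ga_i g a_{t_i}s_i$ bounded, and $\ga_i g n a_{t_i}s_i=(\ga_i g a_{t_i}s_i)(s_i^{-1}a_{t_i}^{-1}na_{t_i}s_i)$, where conjugation by the unbounded $s_i$ can magnify $a_{t_i}^{-1}na_{t_i}$ arbitrarily. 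What must actually be controlled is the closeness of $\ga_i(gn)^-$ to $\ga_i g^-$ in $\F_{\i(\theta)}$, a boundary statement that your Riemannian estimate at a single basepoint orbit does not reach. Your treatment of escape to infinity is also not rigorous as written: one cannot in general "pick $\ga_i$ with $\ga_i[g]a_{t_i}$ in a fixed compact set."

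The paper's proof is a genuinely different argument. It proceeds by contradiction inside the Hopf parametrization: if $\d(\Ga[gn_i]a_{t_i},\Ga[gn_i']a_{t_i})>\e$ along a sequence, then (after relabeling) one of the sequences must stay inside the compact set $\sq_\e$ away from the point $\spadesuit$ at infinity—this is exactly how the admissible metric disposes of the escape case, automatically rather than by choice of $\ga_i$. From the convergence of $\ga_i[gn_i]a_{t_i}$ in Hopf coordinates one reads off that $\phi(\beta_{g^+}^\theta(\ga_i^{-1},e))\to-\infty$ for any $\phi$ positive on $\fa_\theta^+-\{0\}$, using only $\phi(u)>0$, so this works for boundary $u$. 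Then the $\theta$-transverse hypothesis enters essentially: the convergence-group argument of \cite[Lem. 9.10, Prop. 9.11]{KOW_indicators} shows that $\ga_i$ collapses every compact subset $C\subset\La_{\i(\theta)}$ transverse to $\{\xi\}$ to the same limit $\eta_0$, hence $\ga_i(gn_i')^-\to\eta_0$ as well, and both sequences have the same limit, contradiction. This collapsing boundary dynamics—not a distance contraction in $G$ or $X$—is the key idea your proposal is missing, and it is precisely where the transversality of $\Ga$ is used.
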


\begin{proof} Let $\spadesuit$ be the point at infinity in the one-point compactification of $\Omega_{\theta}$. For each $\e>0$,
set $\sq_\e = \Omega_{\theta}$ if $\Omega_{\theta}$ is compact and $\sq_\e = \{x \in \Omega_{\theta} : \d(x, \spadesuit ) \ge \varepsilon /2\}$ otherwise, and  choose a compact lift $\tilde{\mathsf{Q}}_\e \subset \tilde \Omega_{\theta}$ of $\mathsf Q_\e$. 
Let $[g] = (\xi, \eta, v) \in \tilde \Omega_{\theta}$.
To show the first claim, suppose not. Then there exist
 $\varepsilon > 0$, a sequence $t_i \to \infty$ and convergent sequences $n_i, n_i'\in N_{\theta} $ such that $[gn_i], [gn_i'] \in \tilde \Omega_{\theta}$ and $\d (\Ga [ gn_i] a_{t_i}, \Ga [g n_i'] a_{t_i}) > \varepsilon$ for all $i \ge 1$.
By passing to a subsequence and switching $n_i$ and $n_i'$ if necessary, we may assume that  for all $i \ge 1$, $\ga_i[gn_i]a_{t_i} \in \tilde{ \mathsf Q}_\e$ 
for some  $\ga_i \in \Ga$. After passing to a subsequence, we have the convergence 
\be \label{eqn.folconv1}
\ga_i[gn_i]a_{t_i} = (\ga_i \xi, \ga_i (g n_i)^-, v + \beta_{\xi}^{\theta}(\ga_i^{-1}, e) + t_i u) \to (\xi_0, \eta_0, v_0) \text{ as } i \to \infty,
\ee for some $(\xi_0, \eta_0, v_0) \in \tilde{\sq}_{\e}$.
In particular, for any linear form $\phi \in \fa_{\theta}^*$ positive on $\fa_{\theta}^+$, we must have
$\phi(\beta_{\xi}^{\theta}(\ga_i^{-1}, e)) \to -\infty$ as $i \to \infty$ and the sequence $\ga_i$ is unbounded.

Since the sequence $n_i \in N_{\theta}$ converges, the sequence $(\xi, (gn_i)^-) \in \La_{\theta}^{(2)}$ is convergent as well. Moreover, \eqref{eqn.folconv1} implies that the sequence $\ga_i(\xi, (gn_i)^-) \in \La_{\theta}^{(2)}$ is precompact.
By the argument as in the proof of \cite[Lem. 9.10, Prop. 9.11]{KOW_indicators}, for any compact subset $C \subset \La_{\i(\theta)}$ with $\{\xi\} \times C \subset \La_{\theta}^{(2)}$, we have $\ga_i C \to \eta_0$ as $i \to \infty$. Since $n_i' \in N_{\theta}$ is a convergent sequence and $(\xi, \{(gn_i')^-\}) \subset \La_{\theta}^{(2)}$,  we have $\ga_i (g n_i')^- \to \eta_0$. Since $[gn_i'] = (\xi, (g n_i')^-, v)$ by Lemma \ref{lem.transbyhoro}, we deduce from \eqref{eqn.folconv1} that $$\ga_i[gn_i']a_{t_i} = (\ga_i \xi, \ga_i (gn_i')^-, v + \beta_{\xi}^{\theta}(\ga_i^{-1}, e) + t_i u) \to (\xi_0, \eta_0, v_0) \text{ as } i \to \infty.$$ Therefore, two sequences $\ga_i[g n_i]a_{t_i}$ and $\ga_i[gn_i']a_{t_i}$ converge to the same limit, which is a contradiction to the assumption $\d(\Ga[g n_i]a_{t_i}, \Ga[gn_i']a_{t_i}) > \varepsilon$ for all $i \ge 1$. Hence the first claim is proved. 

For the second claim, suppose to the contrary that for some $ \varepsilon > 0$, there exist a sequence $t_i \to \infty$ and convergent sequences $\check{n}_i, \check{n}_i'\in \check{N}_{\theta} $ such that $[g \check{n}_i], [g \check{n}_i'] \in \tilde \Omega_{\theta}$ and $\d(\Ga[g \check{n}_i]a_{-t_i}, \Ga[g\check{n}_i']a_{-t_i}) > \varepsilon$ for all $i \ge 1$. As above, we may then assume that  for all $i \ge 1$, $\ga_i[g \check{n}_i] a_{-t_i} \in \tilde{\mathsf Q}_\e$ for some sequence $\ga_i \in \Ga$. 
By passing to a subsequence, we have the convergence $$\ga_i [g \check{n}_i] a_{-t_i} \to (\xi_1, \eta_1, v_1) \text{ as } i \to \infty$$
for some $(\xi_1, \eta_1, v_1) \in \tilde{\mathsf Q}_\e$.
By Lemma \ref{lem.transbyhoro}, we have for each $i \ge 1$ that
$$
\begin{aligned}
\ga_i[g \check{n}_i] & = \ga_i \left((g \check{n}_i)^+, \eta, 
    v    + \cal G^{\theta}((g \check{n}_i)^+,  \eta) - \cal G^{\theta} (  \xi,  \eta) 
\right) \\
& = \left(\ga_i (g \check{n}_i)^+, \ga_i \eta, 
    v  + \cal G^{\theta}((g \check{n}_i)^+,  \eta) - \cal G^{\theta} (  \xi,  \eta) + \beta_{(g \check{n}_i)^+}^{\theta}(\ga_i^{-1}, e)
\right),
\end{aligned}
$$
and therefore we have that as $i \to \infty$,
\be \label{eqn.folconv2}
\begin{aligned}
&\ga_i (g \check{n}_i)^+  \to \xi_1; \\
&\ga_i \eta  \to \eta_1; \\
&v  + \cal G^{\theta}((g \check{n}_i)^+,  \eta) - \cal G^{\theta} (  \xi,  \eta) + \beta_{(g \check{n}_i)^+}^{\theta}(\ga_i^{-1}, e) - t_i u  \to v_1.
\end{aligned}
\ee
Since the sequence $\check{n}_i \in \check{N}_{\theta}$ converges, the sequence $((g \check{n}_i)^+, \eta) \in \La_{\theta}^{(2)}$ is convergent as well. Hence $\cal G^{\theta}((g \check{n}_i)^+, \eta)$ is a bounded sequence in $\fa_{\theta}$. It then follows from \eqref{eqn.folconv2} that for any linear form $\phi \in \fa_{\theta}^*$ positive on $\fa_{\theta}^+$, we have 
$$\phi(\beta_{(g \check{n}_i)^+}^{\theta}(\ga_i^{-1}, e) )\to \infty \quad \text{ as } i \to \infty$$ 
and the sequence $\ga_i$ is unbounded.

Again, by the same argument as in the proof of \cite[Lem. 9.10, Prop. 9.11]{KOW_indicators}, we obtain that for any compact subset $C \subset \La_{\theta}$ such that $C \times \{\eta\} \subset \La_{\theta}^{(2)}$, we have $\ga_i C \to \xi_1$ as $i \to \infty$.
Since the sequence $((g \check{n}_i')^+, \eta) \in \La_{\theta}^{(2)}$ is convergent as mentioned above,
we also have $\ga_i (g \check{n}_i')^+ \to \xi_1$ as $i \to \infty$. It then follows from Lemma \ref{lem.transbyhoro} that
$$\begin{aligned}
    \ga_i[g \check{n}_i] & = \left(\ga_i (g \check{n}_i)^+, \ga_i \eta, 
    v + \beta_{\xi}^{\theta}(\ga_i^{-1}, e)
    + \cal G^{\theta}( \ga_i(g \check{n}_i)^+, \ga_i \eta) - \cal G^{\theta} ( \ga_i \xi, \ga_i \eta) 
\right);\\
\ga_i[g \check{n}_i'] & = \left(\ga_i (g \check{n}_i')^+, \ga_i \eta, 
    v + \beta_{\xi}^{\theta}(\ga_i^{-1}, e)
    + \cal G^{\theta}( \ga_i(g \check{n}_i')^+, \ga_i \eta) - \cal G^{\theta} ( \ga_i \xi, \ga_i \eta) 
\right).
\end{aligned}$$
Since both sequences $(\ga_i(g \check{n}_i)^+, \ga_i \eta)$ and $(\ga_i (g \check{n}_i')^+, \ga_i \eta)$ converge to $(\xi_1, \eta_1)$ and $\ga_i[g \check{n}_i]a_{-t_i} \to (\xi_1, \eta_1, v_1)$ as $i \to \infty$, it follows that $$\ga_i[g \check{n}_i']a_{-t_i} \to (\xi_1, \eta_1, v_1) \quad \text{as } i \to \infty.$$
Again, two sequences $\ga_i[g \check{n}_i] a_{-t_i}$ and $\ga_i [ g \check{n}_i'] a_{-t_i}$ converge to the same limit, contradicting the assumption that $\d(\Ga[g \check{n}_i]a_{-t_i}, \Ga[g\check{n}_i']a_{-t_i}) > \varepsilon$ for all $i \ge 1$. This proves (2).
\end{proof}

For a $(\Ga, \theta)$-proper form $\phi \in \fa_{\theta}^*$, the action of $A_u = \{a_t : t \in \R\}$ on $\Omega_{\theta}$ induces a right $A_u$-action on $\Omega_{\phi}$ via the projection $\Omega_{\theta} \to \Omega_{\phi}$ where $\Omega_\phi$ is defined in
 \eqref{opsi}. Note that when $u \in \inte \L_{\theta}$, the condition $\phi(u) > 0$ is satisfied for any $(\Ga, \theta)$-proper $\phi \in \fa_{\theta}^*$ \cite[Lem. 4.3]{KOW_indicators}.
\begin{prop} \label{fol} Let $\phi\in \fa_\theta^*$ be a $(\Ga, \theta)$-proper form such that $\phi(u) > 0$ and
 $g \in G$ be such that $[g]_{\phi} \in \tilde \Omega_{\phi}$. For any compact subsets ${\cal U} \subset N_{\theta}$ and $\check{\cal U} \subset \check{N}_{\theta}$, we have, as $t\to +\infty$,
$$
\begin{aligned}
    \diam \left( \{ \Ga [g n]_{\phi}  \in \Omega_{\phi} : n \in {\cal U} \} \cdot a_t \right) \to 0; \\
    \diam \left( \{ \Ga [g \check{n}]_{\phi} \in \Omega_{\phi} : \check{n} \in \check{\cal U} \} \cdot  a_{-t} \right) \to 0
\end{aligned} $$
where the diameter is computed with respect to 
an admissible\footnote{I.e., it extends to a metric on the one-point compactification of $\Omega_\phi$} metric $\d$ on $\Omega_\phi$. In particular,
we have $$
\begin{aligned}
        \{\Gamma [gn]_\phi\in \Omega_\phi: n\in N_\theta\}\subset W^{ss}(\Gamma [g]_\phi);\\ 
\{\Gamma [g\check{n}]_\phi\in \Omega_\phi: \check{n}\in \check{N}_{\theta}\}\subset W^{su}(\Gamma [g]_\phi) , \end{aligned} $$
where $W^{ss}(x)$ (resp. $W^{su}(x)$)
    is the set of all $y \in \Omega_{\phi}$ such that $\d(x a_t, ya_t) \to 0$ as $t \to + \infty$ (resp. $t\to -\infty$) for $x\in \Omega_{\phi}$. 
\end{prop}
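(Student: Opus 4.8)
The plan is to run the argument of Proposition~\ref{prop.foliation} essentially verbatim, with the $\fa_\theta$-valued third coordinate of the Hopf parametrization replaced everywhere by its image under $\phi$. Recall that the point of $\tilde\Omega_\phi$ attached to $g\in G$ with $(g^+,g^-)\in\La_\theta^{(2)}$ is $[g]_\phi=(g^+,g^-,\phi(\beta_{g^+}^\theta(e,g)))$, that the $\Ga$-action is \eqref{linearformhopf}, and that the induced $A_u$-action translates the $\br$-coordinate by $t\phi(u)$; since $\phi(u)>0$ by hypothesis, $a_t$ pushes this coordinate to $+\infty$ (resp. $-\infty$) as $t\to+\infty$ (resp. $t\to-\infty$). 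Applying $\phi$ to the two identities of Lemma~\ref{lem.transbyhoro} gives
$$[gn]_\phi=\bigl(g^+,\,(gn)^-,\,\phi(\beta_{g^+}^\theta(e,g))\bigr),$$
$$[g\check n]_\phi=\bigl((g\check n)^+,\,g^-,\,\phi(\beta_{g^+}^\theta(e,g))+\phi(\cal G^\theta((g\check n)^+,g^-))-\phi(\cal G^\theta(g^+,g^-))\bigr)$$
for $n\in N_\theta$ and $\check n\in\check N_\theta$ with the relevant pairs in $\La_\theta^{(2)}$. Since $\Omega_\phi$ is second countable, locally compact and Hausdorff by Theorem~\ref{thm:opsi}, it carries an admissible metric $\d$, and for each $\e>0$ I fix a compact lift $\tilde{\sq}_\e\subset\tilde\Omega_\phi$ of $\{x\in\Omega_\phi:\d(x,\spadesuit)\ge\e/2\}$ (or of $\Omega_\phi$ itself in the compact case), where $\spadesuit$ is the point at infinity of the one-point compactification.

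For the stable foliation I would argue by contradiction: if the first diameter estimate fails, there are $\e>0$, $t_i\to+\infty$ and convergent sequences $n_i,n_i'\in\cal U$ with $[gn_i]_\phi,[gn_i']_\phi\in\tilde\Omega_\phi$ and $\d(\Ga[gn_i]_\phi a_{t_i},\Ga[gn_i']_\phi a_{t_i})>\e$ for all $i$. Passing to a subsequence and switching $n_i\leftrightarrow n_i'$ if necessary, we may assume $\ga_i[gn_i]_\phi a_{t_i}\in\tilde{\sq}_\e$ for some $\ga_i\in\Ga$, and (subsequence again) that it converges to some $(\xi_0,\eta_0,v_0)\in\tilde{\sq}_\e\subset\La_\theta^{(2)}\times\br$. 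Its third coordinate equals $\phi(\beta_{g^+}^\theta(e,g))+t_i\phi(u)+\phi(\beta_{g^+}^\theta(\ga_i^{-1},e))$, so convergence together with $\phi(u)>0$ forces $\phi(\beta_{g^+}^\theta(\ga_i^{-1},e))\to-\infty$; in particular $\ga_i$ is unbounded. Since $n_i$ converges, $(g^+,(gn_i)^-)$ converges in $\La_\theta^{(2)}$, while from the convergence above $\ga_i g^+\to\xi_0$ and $\ga_i(gn_i)^-\to\eta_0$; then, by the argument of \cite[Lem. 9.10, Prop. 9.11]{KOW_indicators} (a contraction property of the convergence action), $\ga_i C\to\eta_0$ for every compact $C\subset\La_{\i(\theta)}$ with $\{g^+\}\times C\subset\La_\theta^{(2)}$. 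Applying this to a $C$ containing all $(gn_i')^-$ yields $\ga_i(gn_i')^-\to\eta_0$. Since $[gn_i']_\phi$ has exactly the same first and third coordinates as $[gn_i]_\phi$, we conclude $\ga_i[gn_i']_\phi a_{t_i}\to(\xi_0,\eta_0,v_0)$ as well, contradicting the $\e$-separation. The second ``in particular'' of the proposition then follows by taking $\cal U=\{n\}$.

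The unstable case is symmetric and I would handle it identically: with $t_i\to+\infty$ and $a_{-t_i}$, the $\br$-coordinate of $\ga_i[g\check n_i]_\phi a_{-t_i}$ is $\phi(\beta_{g^+}^\theta(e,g))+\phi(\cal G^\theta((g\check n_i)^+,g^-))-\phi(\cal G^\theta(g^+,g^-))-t_i\phi(u)+\phi(\beta_{(g\check n_i)^+}^\theta(\ga_i^{-1},e))$; since $((g\check n_i)^+,g^-)$ converges in $\La_\theta^{(2)}$ the Gromov-product term stays bounded, so convergence forces $\phi(\beta_{(g\check n_i)^+}^\theta(\ga_i^{-1},e))\to+\infty$ and $\ga_i$ unbounded. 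The contraction argument then gives $\ga_i C\to\xi_1$ for compact $C\subset\La_\theta$ with $C\times\{g^-\}\subset\La_\theta^{(2)}$, hence $\ga_i(g\check n_i')^+\to\xi_1$, and — exactly as in the proof of Proposition~\ref{prop.foliation}, using continuity of $\cal G^\theta$ on $\F_\theta^{(2)}$ so that $\phi(\cal G^\theta(\ga_i(g\check n_i)^+,\ga_i g^-))$ and $\phi(\cal G^\theta(\ga_i(g\check n_i')^+,\ga_i g^-))$ have the same limit — the two sequences $\ga_i[g\check n_i]_\phi a_{-t_i}$ and $\ga_i[g\check n_i']_\phi a_{-t_i}$ converge to the same limit, a contradiction. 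Taking $\check{\cal U}=\{\check n\}$ gives the remaining inclusion $\{\Ga[g\check n]_\phi\}\subset W^{su}(\Ga[g]_\phi)$.

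The point requiring genuine care — rather than a hard estimate — is that the $\ker\phi$-bundle projection $\Omega_\theta\to\Omega_\phi$ is \emph{not} proper when $\dim\fa_\theta>1$, so one cannot simply transport Proposition~\ref{prop.foliation} along it; the step that confines $\ga_i[gn_i]_\phi a_{t_i}$ to a fixed compact set must be carried out intrinsically on $\tilde\Omega_\phi$ with respect to an admissible metric there. Once that is arranged, the only genuinely new observation is the bookkeeping fact that the $\br$-coordinate of $\tilde\Omega_\phi$ is precisely $\phi$ applied to the $\fa_\theta$-coordinate and $\phi(u)>0$, so the divergence $\phi(\beta^\theta_{\bullet}(\ga_i^{-1},e))\to\mp\infty$ — the engine of the argument — survives intact, and the convergence-group input of \cite{KOW_indicators} is reused without change.
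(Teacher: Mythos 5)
Your proposal is correct and follows the paper's own strategy essentially verbatim: the paper's proof of Proposition~\ref{fol} consists precisely of the observation that $\phi(u)>0$ guarantees $\phi(\beta_{\xi}^{\theta}(\ga_i^{-1}, e)) \to -\infty$ (resp.\ $\phi(\beta_{(g\check n_i)^+}^{\theta}(\ga_i^{-1}, e)) \to +\infty$) from the boundedness of the third coordinate, after which the argument of Proposition~\ref{prop.foliation} is run with $\Omega_\theta$ replaced by $\Omega_\phi$. You have simply unwound that reference and re-executed the contradiction argument on $\tilde\Omega_\phi$, correctly using an admissible metric and compact lift intrinsic to $\Omega_\phi$ (rather than attempting to push Proposition~\ref{prop.foliation} through the non-proper $\ker\phi$-bundle), applying $\phi$ to the formulas of Lemma~\ref{lem.transbyhoro}, and reusing the convergence-group contraction input from \cite[Lem.~9.10, Prop.~9.11]{KOW_indicators} unchanged.
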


\begin{proof}
The condition $\phi(u) >0$ 
ensures that the convergence of the sequences $\phi(\beta_{\xi}^{\theta}(\ga_i^{-1}, e)) + t_i \phi(u)$ in \eqref{eqn.folconv1} and $\phi(\beta_{(g \check{n}_i)^+}^{\theta}(\ga_i^{-1}, e)) - t_i \phi(u)$ in \eqref{eqn.folconv2} implies that $\phi(\beta_{\xi}^{\theta}(\ga_i^{-1}, e)) \to - \infty$ and $\phi(\beta_{(g \check{n}_i)^+}^{\theta}(\ga_i^{-1}, e)) \to + \infty$ respectively.
Given this, we can proceed exactly as in the proof of Proposition \ref{prop.foliation}, replacing $\Omega_\theta$
by $\Omega_{\phi}$.
\end{proof}

\subsection*{Conservativity of general actions}
Let $H$ be a connected subgroup of $A$. Denote by $dh$ the Haar measure on $H$.
Consider the dynamical system $(H, \Omega, \lambda)$
where $\Omega$ is a separable, locally compact and $\sigma$-compact topological space on which $H$ acts continuously and preserving a Radon measure $\lambda$ on $\Omega$.
A Borel subset $B\subset \Omega$ is called wandering
if $\int_H {\mathbbm 1}_B(h. w)dh <\infty$ for $\la$-almost all $w\in B$. If there is no wandering subset of positive measure, the system is called completely conservative. 
If $\Omega$ is a countable union of wandering subsets, then the system is called completely dissipative.  An ergodic system $(H, \Omega, \lambda)$ is either 
 completely conservative or completely dissipative by the Hopf decomposition theorem.

\begin{lemma} \label{lem.consandergodic}
If $(\Omega_\theta, A_\theta, \m)$ is completely conservative, then
    it is $A_\theta$-ergodic.
\end{lemma}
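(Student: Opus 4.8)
The plan is to carry out a Hopf-type argument using the strong (un)stable foliations established in Proposition \ref{prop.foliation} together with the density of the transitivity subgroup from Proposition \ref{prop.transitivity}. Since complete conservativity and ergodicity of $A_\theta$ on $\Omega_\theta$ are the two alternatives in the Hopf decomposition, it suffices to rule out the existence of a nontrivial invariant set once conservativity is assumed. Concretely, let $f \in L^1(\Omega_\theta, \m)$ be an $A_\theta$-invariant function; by the Hopf ratio ergodic theorem applied to the conservative system, the Birkhoff-type averages of any $f_0, g_0 \in C_c(\Omega_\theta)$ with $g_0 > 0$ along a suitable one-parameter subgroup converge $\m$-a.e. to the ratio of integrals, and hence the averages are constant along the corresponding stable leaves. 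The first step is therefore to fix a generic one-parameter direction $A_u \subset A_\theta$ and to observe, via Proposition \ref{prop.foliation}, that for $\m$-a.e. $x$ the value $\bar f(x)$ of the ergodic average is constant on $N_\theta$-orbits (stable leaves) and, by using $-u$, constant on $\check N_\theta$-orbits (unstable leaves).

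The second step is to upgrade this leafwise invariance to invariance under the transitivity subgroup $\cal H_\Ga^\theta(g)$. The point is that the defining data of $\cal H_\Ga^\theta(g)$ — the elements $n_1, \dots, n_k \in N_\theta \cup \check N_\theta$ with all intermediate points in $\La_\theta^{(2)}$, together with the relation $\ga g n_1 \cdots n_k = gas$ — lets one connect $[g]$ and $[g]a$ by a chain alternating between stable pieces, unstable pieces, and a $\Ga$-translation (which preserves $\bar f$ since $\bar f$ descends to $\Omega_\theta$). Each stable/unstable move preserves $\bar f$ by the previous step applied along that leaf; consequently $\bar f([g]) = \bar f([g]a)$ for all $a \in \cal H_\Ga^\theta(g)$, for $\m$-a.e. $[g]$. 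Here one must be slightly careful because the leafwise invariance is only known almost everywhere, so the chain argument should be run on a conull set; this is handled in the standard way by a Fubini argument, using that $\m$ is locally equivalent to the product $\nu \otimes \nu_\i \otimes db$ and that the foliations are absolutely continuous in the relevant sense, so that "$\m$-a.e. leaf consists entirely of good points" holds.

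The third step invokes Proposition \ref{prop.transitivity}: $\cal H_\Ga^\theta(g)$ is dense in $A_\theta$, so the $\m$-a.e. invariance of $\bar f$ under $\cal H_\Ga^\theta(g)$, combined with continuity of the $A_\theta$-action, forces $\bar f$ to be $A_\theta$-invariant as a measurable function — indeed essentially invariant under the full group $A_\theta$. But an $A_\theta$-invariant function that is also invariant along both $N_\theta$ and $\check N_\theta$ directions is invariant under the group generated by $A_\theta$, $N_\theta$, and $\check N_\theta$, which acts with a dense (in fact, because of Zariski density and the structure of $\Omega_\theta \simeq \Ga \ba \La_\theta^{(2)} \times \fa_\theta$, essentially transitive modulo the $\Ga$-action and the fiber) orbit structure on $\Omega_\theta$; hence $\bar f$ is $\m$-a.e. constant. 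Since $\bar f = f$ a.e. for $A_\theta$-invariant $f \in L^1$, this gives $A_\theta$-ergodicity.

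The main obstacle, and the step deserving the most care, is the second one: propagating the leafwise invariance of $\bar f$ through the finite chain $n_1, \dots, n_k$ while respecting the constraint that every intermediate configuration $(gn_1\cdots n_r)^\pm$ lies in $\La_\theta^{(2)}$, so that every intermediate point genuinely lies in $\tilde\Omega_\theta$ and the foliation statements of Proposition \ref{prop.foliation} apply. One must ensure that the null sets of "bad" points can be chosen compatibly along the whole chain; the cleanest route is to phrase the conclusion as: for $\m$-a.e. $[g]$, the ergodic average $\bar f$ is constant on the intersection of the $N_\theta$-leaf and $\check N_\theta$-leaf structures through $[g]$, and then note that the product structure $\La_\theta^{(2)} \times \fa_\theta$ makes these local transversal coordinates, so that a single Fubini argument suffices. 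I expect the remaining steps to be routine once this absolute-continuity/Fubini bookkeeping is set up, closely following \cite[Prop. 4.2 and its sequel]{BLLO} with $M$ replaced by $S_\theta$ and the compactness of $M$ replaced by the precompactness criterion for $[g_i] \in \tilde\Omega_\theta$ recorded before Lemma \ref{defdir}.
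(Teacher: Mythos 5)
Your sketch has the right overall shape --- leafwise constancy along stable/unstable leaves followed by a Fubini argument in the Hopf coordinates --- but the transitivity-subgroup step is misplaced, and this suggests you have conflated the present lemma with the later Proposition~\ref{prop.ergodic}. Since the function $f$ is $A_\theta$-invariant by hypothesis, ``upgrading $\bar f$ to $A_\theta$-invariance'' via the density of $\cal H_\Ga^\theta(g)$ is vacuous, and the paper's proof of Lemma~\ref{lem.consandergodic} uses neither $\cal H_\Ga^\theta(g)$ nor Proposition~\ref{prop.transitivity}. Those ingredients --- together with the mollification $\tilde f_\e$ that the paper introduces precisely to get continuity along $A_\theta$-orbits, which your sketch omits --- are needed only in Proposition~\ref{prop.ergodic}, where the starting datum is a function invariant under a single one-parameter subgroup and the point is to promote it to $A_\theta$-invariance. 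Also, ``converge $\m$-a.e.\ to the ratio of integrals'' is not what the Hopf ratio theorem gives for a merely conservative flow; that is the conclusion of ergodicity, which is what is being proved. The cleaner route, and what the paper does, is to apply Coud\`ene's theorem \cite{Coudene_Hopf} directly to the invariant $f$ to get leafwise constancy modulo a conull set, with no Birkhoff-ratio detour.

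There is one further step you gloss over: to apply Coud\`ene (or the Hopf ratio theorem) along a direction $A_u$, you need the one-parameter flow to be conservative, but the hypothesis only gives $A_\theta$-conservativity of $\m$ on $\Omega_\theta$. The paper handles this by first choosing a $(\Ga,\theta)$-proper $\phi$ positive on $\fa_\theta^+$ and passing to the quotient $\Omega_\phi$; $A_\theta$-conservativity on $\Omega_\theta$ descends to conservativity of the $\R$-flow on $\Omega_\phi$ (one integrates out the $\ker\phi$-fiber), and the foliation input is Proposition~\ref{fol} about $\Omega_\phi$, not Proposition~\ref{prop.foliation}. Your proposal stays on $\Omega_\theta$ with the $A_u$-flow and never justifies its conservativity; this can in fact be repaired using the trivial $\ker\phi$-bundle structure $\Omega_\theta \simeq \Omega_\phi \times \ker\phi$ (the $A_u$-flow on $\Omega_\theta$ is the product of the conservative $\R$-flow on $\Omega_\phi$ with the identity on $\ker\phi$), but it needs to be said. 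Once the conservativity of a one-parameter flow is secured, the Fubini bookkeeping goes exactly as you describe, though it is most cleanly run in the coordinates $\La_\theta^{(2)} \times \R$ of $\Omega_\phi$, where both $N_\theta$- and $\check N_\theta$-moves combined with $\R$-invariance connect any two generic points.
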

\begin{proof} Choose any $\phi\in \fa_\theta^*$ which is positive on $\fa_\theta^+$;
in particular, $\phi$ is $(\Ga, \theta)$-proper.
Consider $\tilde \Omega_\phi$, $\Omega_\phi$ and $\m^\phi=\m^\phi_{\nu, \nu_{\i}}$ as defined in
\eqref{opsi} and \eqref{omm}.
The conservativity of the $A_\theta$-action on $(\Omega_\theta, \m)$
then implies the conservativity of the $\br$-action on $(\Omega_\phi, \m^\phi)$, and
 the $A_\theta$-ergodicity on $(\Omega_\theta, \m)$ follows if we show the ergodicity of
$(\Omega_\phi, \R, \m^\phi)$. 

Let $f$ be a bounded $\m^{\phi}$-measurable $\R$-invariant function on $\Omega_{\phi}$.
We need to show that $f$ is constant $\m^{\phi}$-a.e. 
Choose any admissible metric on $\Omega_\phi$ which exists by Theorem \ref{thm:opsi} and apply Proposition \ref{fol}.
  By a theorem of Coud\'ene \cite[Sec. 2]{Coudene_Hopf},  it follows that there exists an $\m^{\phi}$-conull subset $W_0 \subset \Omega_{\phi}$ such that if $\Ga[g]_{\phi}, \Ga[gn]_{\phi} \in W_0$ for $g \in G$ and $n \in N_{\theta} \cup \check{N}_{\theta}$, then 
  $$f(\Ga[g]_{\phi}) = f(\Ga[gn]_{\phi}).$$
Let $\tilde{f} : \tilde\Omega_\phi \to \R$ and $\tilde{W}_0 \subset \tilde\Omega_\phi$ be $\Ga$-invariant lifts of $f$ and $W_0$ respectively. Since $f$ is $\R$-invariant, we may assume that $\tilde{W}_0$ is $\R$-invariant as well.
For any $[g]_\phi, [h]_\phi\in \tilde{\Omega}_{\phi}$ with $g^+ = h^+$, we can find $n\in N_\theta$ and $a \in A_{\theta}$ such that $[gna]_{\phi} = [h]_{\phi}$ by \eqref{op}. Similarly, if $g^- = h^-$, we can find $n \in \check{N}_{\theta}$ and $a \in A_{\theta}$ such that $[gna]_{\phi} = [h]_{\phi}$.
Hence, by the $\R$-invariance of $f$ and hence of $\tilde f$,
 for any $(\xi, \eta, s), (\xi', \eta', s') \in \tilde{W}_0$
 such that $\xi = \xi'$ or $\eta = \eta'$, we have $\tilde f(\xi, \eta, s) = \tilde f(\xi', \eta', s')$.

Let \begin{align*}
    W^+ &:= \{ \xi \in \La_{\theta} : (\xi, \eta', s) \in \tilde{W}_0 \text{ for all } s \in \R\text{ and } \nu_{\i}\text{-a.e. } \eta'\};\\
    W^- &:= \{ \eta \in \La_{\i(\theta)} : (\xi', \eta, s) \in \tilde{W}_0 \text{ for all } s \in \R\text{ and } \nu\text{-a.e. } \xi'\}.
\end{align*} Then $\nu(W^+) = \nu_{\i}(W^-) = 1$ by Fubini's theorem. Hence the set $W' := (W^+ \times W^-) \cap \La_{\theta}^{(2)}$ has full $\nu \otimes \nu_{\i}$-measure. We choose a $\nu \otimes \nu_{\i}$-conull subset $W \subset W'$ such that $W \times \R \subset \tilde{W}_0$.
Let $(\xi, \eta), (\xi', \eta') \in W$. Then there exists $\eta_1 \in \La_{\i(\theta)}$ so that $(\xi, \eta_1), (\xi', \eta_1) \in W$. Hence for any $s \in \R$, we get
$$\tilde f(\xi, \eta, s) = \tilde f(\xi, \eta_1, s) = \tilde f(\xi', \eta_1, s) = \tilde f(\xi', \eta', s).$$ Therefore, $\tilde{f}$ is constant on $W \times \R$, and hence $f$ is constant $\m^{\phi}$-a.e., completing the proof.
\end{proof}

\subsection*{Ergodicity of directional flows}
We now prove the following analog of the Hopf dichotomy:
\begin{prop} \label{prop.ergodic}
    The directional flow $(\Omega_{\theta}, A_u,  \m)$ is completely conservative if and only if $(\Omega_{\theta}, A_u, \m)$ is ergodic.
\end{prop}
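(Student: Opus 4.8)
The plan is to prove both implications separately, with the substantive direction being that conservativity implies ergodicity.

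\medskip

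\textbf{Plan of proof.} First, the easy direction: if $(\Omega_\theta, A_u, \m)$ is ergodic, then by the Hopf decomposition theorem the system is either completely conservative or completely dissipative. If it were completely dissipative, then $\Omega_\theta$ would be a countable union of wandering sets modulo $\m$; but a wandering set of positive measure cannot occur in an ergodic system with an infinite invariant measure unless the whole space is wandering, in which case almost every orbit spends finite total time in any fixed compact set, and $\m$ would be supported on a cross-section times $\br$ in a way incompatible with ergodicity (more precisely, in the completely dissipative case one produces an invariant set of intermediate measure by partitioning orbits). I will instead argue directly: by Proposition~\ref{prop.conserv}, complete dissipativity is equivalent to $\nu(\La_\theta^u)=0=\nu_{\i}(\La_{\i(\theta)}^{\i(u)})$, and in that regime one can cut $\Omega_\theta$ along a wandering Borel transversal to produce a non-trivial invariant set, contradicting ergodicity. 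So ergodicity forces complete conservativity.

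\medskip

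The main direction is: complete conservativity $\Rightarrow$ ergodicity. I would reduce to the one-dimensional quotient flow. Choose a $(\Ga,\theta)$-proper form $\phi\in\fa_\theta^*$ with $\phi(u)>0$ (e.g.\ any $\phi$ positive on $\fa_\theta^+$). The projection $\Omega_\theta\to\Omega_\phi$ intertwines the $A_u$-action with an $\br$-action, and $\m$ pushes to $\m^\phi$ (up to the factor $\Leb_{\ker\phi}$ along fibers using the product decomposition $\Omega_\theta\simeq\Omega_\phi\times\ker\phi$ from \eqref{trivial}); crucially, since $\phi(u)>0$, the $A_u$-flow moves nontrivially in the $\br$-direction, so conservativity of $A_u$ on $(\Omega_\theta,\m)$ descends to conservativity of the $\br$-flow on $(\Omega_\phi,\m^\phi)$, and ergodicity of the latter lifts back to $A_u$-ergodicity of $(\Omega_\theta,\m)$ (here one must check that the $\ker\phi$-fiber direction is not ``lost'' — but $A_u$ acts trivially on $\ker\phi$ only if $u\in\ker\phi$, which is excluded, so an $A_u$-invariant function on $\Omega_\theta$ descends only after one also invokes that the flow is conservative along $\br$; the cleanest route is to argue directly on $\Omega_\phi$ and note any $A_u$-invariant measurable set on $\Omega_\theta$ pulls back from $\Omega_\phi$ by the same Hopf/Coudène argument). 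Then on $\Omega_\phi$ I run the Hopf argument: take a bounded $\m^\phi$-measurable $\br$-invariant $f$; by Proposition~\ref{fol} the $N_\theta$-orbits lie in the stable foliation and $\check N_\theta$-orbits in the unstable foliation for an admissible metric, so Coudène's ergodic-theorem-for-flows argument \cite{Coudene_Hopf} gives an $\m^\phi$-conull set $W_0$ on which $f$ is constant along $N_\theta$- and $\check N_\theta$-directions.

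\medskip

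The remaining and genuinely new step — and the one I expect to be the main obstacle — is to propagate this local invariance to global constancy of $f$. Unlike the rank-one or $\theta=\Pi$ case, moving between two generic points $(\xi,\eta,s)$ and $(\xi',\eta',s')$ via horospherical pieces forces one to also translate by elements of $A_\theta$ (because $g^+=h^+$ only gives $h\in g N_\theta A_\theta S_\theta$, cf.\ \eqref{op}), so pure stable/unstable holonomy is not enough: one needs the flow direction $A_u$ to ``absorb'' these $A_\theta$-corrections. This is exactly where the transitivity subgroup $\cal H_\Ga^\theta(g)$ enters — by Proposition~\ref{prop.transitivity} it is dense in $A_\theta$, so the composition of horospherical moves together with $A_\theta$-translations realizable through the $\Ga$-action is dense, and combined with $\br$-invariance of $f$ (which handles the $A_u$-component) and continuity of $f$ on $W_0$ in the appropriate sense, one concludes $\tilde f$ is $\nu\otimes\nu_{\i}$-a.e.\ constant, hence $f$ is $\m^\phi$-a.e.\ constant. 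Concretely: define $W^\pm$ as the sets of $\xi$ (resp.\ $\eta$) such that $(\xi,\eta',s)\in\tilde W_0$ (resp.\ $(\xi',\eta,s)\in\tilde W_0$) for a.e.\ transverse coordinate and all $s$; these are conull by Fubini, $W:=(W^+\times W^-)\cap\La_\theta^{(2)}$ is $\nu\otimes\nu_{\i}$-conull, and for $(\xi,\eta),(\xi',\eta')\in W$ one picks an intermediate $\eta_1$ with $(\xi,\eta_1),(\xi',\eta_1)\in W$ and chains $\tilde f(\xi,\eta,s)=\tilde f(\xi,\eta_1,s)=\tilde f(\xi',\eta_1,s)=\tilde f(\xi',\eta',s)$, using the density of $\cal H_\Ga^\theta$ to justify each equality (the $\br$-invariance kills the $s$-shifts that the $A_\theta$-translations introduce). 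This is precisely the structure of Lemma~\ref{lem.consandergodic}, which handles the full $A_\theta$-action; here the extra input needed is that the quotient $\br$-flow is conservative, supplied by our hypothesis via the descent in the previous paragraph. I would therefore organize the write-up as: (i) easy direction; (ii) descent of conservativity to $\Omega_\phi$; (iii) invoke Lemma~\ref{lem.consandergodic}-type argument on $\Omega_\phi$ using Proposition~\ref{fol}, Coudène, and Proposition~\ref{prop.transitivity}; (iv) lift ergodicity back to $(\Omega_\theta, A_u,\m)$.
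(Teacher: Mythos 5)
Your proposal assembles the right ingredients (Coudène, the foliation Propositions~\ref{prop.foliation} and \ref{fol}, the transitivity subgroup Proposition~\ref{prop.transitivity}, and the chaining argument via an intermediate $\eta_1$), but it misorganizes them and leaves the genuinely new step of the proof unaddressed.

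The crux of the paper's argument is this: conservativity of $A_u$ gives conservativity of $A_\theta$, which by Lemma~\ref{lem.consandergodic} gives $A_\theta$-ergodicity; the remaining (and new) step is to show that a bounded measurable $A_u$-invariant $f$ on $\Omega_\theta$ is in fact $A_\theta$-invariant. Your plan instead tries to push everything down to $\Omega_\phi$. This does not work: an $A_u$-invariant function on $\Omega_\theta \simeq \Omega_\phi \times \ker\phi$ does \emph{not} descend to $\Omega_\phi$, because $A_u$ translates only one real direction and the function is free to depend on the $\ker\phi$-fiber coordinate. Your parenthetical claim that ``any $A_u$-invariant measurable set on $\Omega_\theta$ pulls back from $\Omega_\phi$ by the same Hopf/Coud\`ene argument'' is essentially equivalent to the assertion one is trying to prove ($A_u$-invariance implies $A_\theta$-invariance) and is not justified by anything you wrote. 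You also misattribute the role of the transitivity subgroup: on $\Omega_\phi$ (Lemma~\ref{lem.consandergodic}) no transitivity subgroup is needed, because the full $A_\theta$-shifts collapse to $\R$-shifts absorbed by $\R$-invariance; the transitivity subgroup is needed precisely for the \emph{other} argument on $\Omega_\theta$, where $A_u$-invariance only kills a single direction in $\fa_\theta$. The statement that ``$\br$-invariance kills the $s$-shifts that the $A_\theta$-translations introduce'' is therefore wrong in the $\Omega_\theta$ setting.

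Even granting the intent to use the transitivity subgroup on $\Omega_\theta$, a second essential ingredient is missing: the density of $\cal H_\Ga^\theta(g)$ in $A_\theta$ only upgrades to $A_\theta$-invariance if the function is continuous along $A_\theta$-orbits, and a merely measurable $f$ is not. The paper's proof introduces the averaged function $\tilde f_\e([g]) = \frac{1}{\vol(A_{\theta,\e})}\int_{A_{\theta,\e}}\tilde f([g]b)\,db$, which is continuous along $A_\theta$-orbits, still inherits the horospherical invariance from Coud\`ene and Proposition~\ref{prop.foliation} (using the set $W$ built by Fubini), and converges a.e.\ to $\tilde f$ as $\e\to 0$. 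The $\cal H_\Ga^\theta(g)$-invariance is established for $\tilde f_\e$, not $\tilde f$; the careful case analysis on $n_k \in N_\theta$ versus $\check N_\theta$ and the approximation by perturbed tuples $(n_{1,j},\dots,n_{k,j})$ is exactly what is needed to land in the conull set $W$ at every step. None of this appears in your plan.

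Finally, the easy direction also has a gap. The partition-of-orbits argument fails when the dissipative and ergodic system is isomorphic to translation on a single line, forcing $(\nu\times\nu_\i)|_{\La_\theta^{(2)}}$ to concentrate on a single $\Ga$-orbit; the paper rules this out using the non-elementary convergence group action of $\Ga$ on $\La_\theta$ (antipodality plus \cite[Thm.~4.16]{KLP_Anosov}), and your sketch does not address this case.
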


\begin{proof}
    Suppose that $(\Omega_{\theta}, A_u, \m)$ is completely conservative.
    Since this implies that $(\Omega_{\theta}, A_{\theta}, \m)$ is completely conservative, we have
    $(\Omega_{\theta}, A_{\theta}, \m)$ is ergodic by Lemma \ref{lem.consandergodic}.
    Let $f : \Omega_{\theta} \to \R$ be a bounded measurable function which is $A_u$-invariant. 
    By the $A_\theta$-ergodicity, it suffices to prove that $f$ is $A_\theta$-invariant.
    
Choose any admissible metric on $\Omega_\theta$ which exists by Theorem \ref{thm.propdisckow}.
Similarly to the proof of Lemma \ref{lem.consandergodic},
Proposition \ref{prop.foliation} and
\cite{Coudene_Hopf} imply  that there exists an $\m$-conull subset $W_0 \subset \Omega_{\theta}$ such that if $\Ga[g], \Ga[gn] \in W_0$ for $g \in G$ and $n \in N_{\theta} \cup \check{N}_{\theta}$, then 
    $$f(\Ga[g]) = f(\Ga[gn]).$$ Consider the $\Ga$-invariant lifts $\tilde{f} : \tilde{\Omega}_{\theta} \to \R$ and the $\tilde \m$-conull subset $\tilde{W}_0 \subset \tilde{\Omega}_{\theta}$ of $f$ and $W_0$ respectively.
    Let $$W_1 := \{(\xi, \eta) \in \La_{\theta}^{(2)} : (\xi, \eta, b) \in \tilde{W}_0 \text{ for } db\text{-a.e. } b \in \fa_{\theta} \};$$  $$W := \{(\xi, \eta) \in W_1 : (\xi, \eta'), (\xi', \eta) \in W_1 \text{ for } \nu\text{-a.e. } \xi' \in \La_{\theta}, \nu_{\i}\text{-a.e. }\eta' \in \La_{\i(\theta)}\} .$$
    By Fubini's theorem, $W$ has the full $\nu \otimes \nu_{\i}$-measure and we may assume that $W$ is $\Ga$-invariant as well. For all small $\varepsilon > 0$, we define $\tilde{f}_{\varepsilon} : \tilde{\Omega} \to \R$ by $$\tilde{f}_{\varepsilon}([g]) = \frac{1}{\vol(A_{\theta, \varepsilon})} \int_{A_{\theta, \varepsilon}} \tilde{f}([g]b) db$$
 where $A_{\theta, \varepsilon} = \{a \in A_{\theta} : \| \log a\| \le \varepsilon\}$.
    Then for $g \in G$ and $n \in N_{\theta} \cup \check{N}_{\theta}$ such that $(g^+, g^-), ((gn)^+, (gn)^-) \in W$, we have $\tilde{f}_{\varepsilon}([g]) = \tilde{f}_{\varepsilon}([gn])$ and $\tilde{f}_{\varepsilon}$ is continuous on $[g]A_{\theta}$.

    Since $\tilde{f} = \lim_{\varepsilon \to 0} \tilde{f}_{\varepsilon}$ $\tilde{\m}$-a.e., it suffices to show that $\tilde{f}_\e$ is $A_\theta$-invariant. 
       Fix $g \in G$ such that $(g^+, g^-) \in W$. 
       By Proposition \ref{prop.transitivity} and the continuity of $\tilde f_\e$ on each
        $A_\theta$-orbit,  it is again sufficient to show that $\tilde f_\e$ is invariant under $\cal H_{\Ga}^{\theta}(g)$.  Let $a \in \cal H_{\Ga}^{\theta}(g)$. Then there exist $\ga \in \Ga$ and a sequence $n_1, \cdots, n_k \in N_{\theta} \cup \check{N}_{\theta}$ such that \begin{enumerate}
        \item $(g n_1 \cdots n_r)^+ \in \La_{\theta}$ and $(g n_1 \cdots n_r)^- \in \La_{\i(\theta)}$ for all $1 \le r \le k$; and
        \item $gn_1\cdots n_k = \ga gas$ for some $s \in S_{\theta}$.
    \end{enumerate}
    For each $i = 1, \cdots, k$, we denote by $N_i = N_{\theta}$ if $n_i \in N_{\theta}$ and $N_i = \check{N}_{\theta}$ if $n_i \in \check{N}_{\theta}$. We may assume that $N_i \neq N_{i+1}$ for all $1 \le i \le k -1$. Noting that $W$ is $\Ga$-invariant, we consider a sequence of $k$-tuples $(n_{1, j}, \cdots, n_{k, j}) \in N_1 \times \cdots \times N_k$ as follows:

    \medskip
    {\bf \noindent Case 1: $N_k = \check{N}_{\theta}$.} In this case, we have $$(\ga g)^+ = (gn_1\cdots n_k)^+ \quad \text{and} \quad (\ga g)^- = (gn_1 \cdots n_{k-1})^-.$$
   Take a sequence of $k$-tuples $(n_{1, j}, \cdots, n_{k, j}) \in N_1 \times \cdots \times N_k$ converging to $(n_1, \cdots, n_k)$ as $j \to \infty$ so that for each $j$, we have \begin{enumerate}
        \item $((gn_{1, j} \cdots n_{r, j})^+, (gn_{1, j} \cdots n_{r, j})^-) \in W$ for all $1 \le r \le k$;
        \item $(\ga g)^- = (gn_{1, j} \cdots n_{k-1, j})^-$; and
        \item $(\ga g)^+ = (gn_{1, j}\cdots n_{k, j})^+$.
    \end{enumerate}
    This is possible since $(g^+, g^-), ((\ga g)^+, (\ga g)^-) \in W$ and $W$ has the full $\nu \otimes \nu_{\i}$-measure. Since $n_{k, j} \in \check{N}_{\theta}$, we indeed have $(\ga g)^- = (g n_{1, j} \cdots n_{k, j})^-$ as well, and therefore $g n_{1, j} \cdots n_{k, j} = \ga g a_j s_j$ for some $a_j \in A_{\theta}$ and $s_j \in S_{\theta}$. In particular, we have $$[gn_{1, j} \cdots n_{k, j}] = [\ga g a_j] \in \tilde{\Omega}_{\theta} \quad \text{for all } j \ge 1.$$

    \medskip
    {\bf \noindent Case 2: $N_k = N_{\theta}$.} In this case, we have $$(\ga g)^+ = (gn_1\cdots n_{k-1})^+ \quad \text{and} \quad (\ga g)^- = (gn_1 \cdots n_{k})^-.$$
 We then take a sequence of $k$-tuples $(n_{1, j}, \cdots, n_{k, j}) \in N_1 \times \cdots \times N_k$ converging to $(n_1, \cdots, n_k)$ as $j \to \infty$ so that for each $j$, we have \begin{enumerate}
        \item $((gn_{1, j} \cdots n_{r, j})^+, (gn_{1, j} \cdots n_{r, j})^-) \in W$ for all $1 \le r \le k$;
        \item $(\ga g)^+ = (gn_{1, j} \cdots n_{k-1, j})^+$; and
        \item $(\ga g)^- = (gn_{1, j}\cdots n_{k, j})^-$.
    \end{enumerate}
    Since $n_{k, j} \in N_{\theta}$, we have $(\ga g)^+ = (g n_{1, j} \cdots n_{k, j})^+$ as well, and therefore $g n_{1, j} \cdots n_{k, j} = \ga g a_j s_j$ for some $a_j \in A_{\theta}$ and $s_j \in S_{\theta}$. In particular, we have $$[gn_{1, j} \cdots n_{k, j}] = [\ga g a_j] \in \tilde{\Omega}_{\theta} \quad \text{for all } j \ge 1.$$

    \medskip

    In either case, we have that for each $j\ge 1$,
    $$\tilde{f}_{\varepsilon}([\ga g a_j]) = \tilde{f}_{\varepsilon}([gn_{1, j} \cdots n_{k, j}]) = \tilde{f}_{\varepsilon} ([gn_{1, j} \cdots n_{k-1, j}]) = \cdots = \tilde{f}_{\varepsilon}([g]).$$ Since $\tilde{f}_{\varepsilon}$ is $\Ga$-invariant, it implies $$\tilde{f}_{\varepsilon}([ga_j]) = \tilde{f}_{\varepsilon}([g]) \quad \text{for all } j \ge 1.$$ Since $a_j$ converges to $a$ as $j \to \infty$, we get $\tilde{f}_{\varepsilon}([ga]) = \tilde{f}_{\varepsilon}([g])$ by the continuity of $\tilde{f}_{\varepsilon}$ on $gA_{\theta}$. 
    This shows that $\tilde f_\e$ is invariant under $\cal H_{\Ga}^{\theta}(g)$, finishing the proof of ergodicity.

    Now suppose that the flow $(\Omega_{\theta}, A_u, \m)$ is ergodic. Then by the Hopf decomposition theorem, it is either completely conservative or completely dissipative. Suppose to the contrary that $(\Omega_{\theta}, A_u, \m)$ is completely dissipative. Then it is isomorphic to a translation on $\R$ with respect to the Lebesgue measure.
 This yields a contradiction as in  proof of \cite[Thm. 10.2]{KOW_indicators}, as we recall for readers' convenience. 
 Since $(\Omega_{\theta}, A_u, \m)$ is isomorphic to a translation on $\R$,  $(\nu \times \nu_{\i})|_{\La_{\theta}^{(2)}}$ is supported on the single $\Ga$-orbit $\Ga(\xi_0, \eta_0)$
 by the ergodicity of $(\Ga, \La_{\theta}^{(2)}, \nu \times \nu_{\i})$.  Since $\nu$ and $\nu_{\i}$ also have atoms on $\xi_0$ and $\eta_0$ respectively, we have $$(\Ga \xi_0 \times \Ga \eta_0) \cap \La_{\theta}^{(2)} \subset \Ga(\xi_0, \eta_0).$$ 
    We deduce from the $\theta$-antipodality of $\Ga$ that $\Ga \xi_0 \subset \Ga_{\eta_0} \xi_0 \cup \{\eta_0'\}$ where $\Ga_{\eta_0} = \stab_{\Ga}(\eta_0)$ and $\eta_0'$ is the image of $\eta_0$ under the $\Ga$-equivariant homeomorphism $\La_{\i(\theta)} \to \La_{\theta}$ obtained in \cite[Lem. 9.5]{KOW_indicators}.
    Since $\Ga_{\eta_0} = \Ga_{\eta_0'}$, we have
\be \label{eqn.noneltcontra}
\Ga \xi_0 \subset \Ga_{\eta_0'} \xi_0 \cup \{\eta_0'\}.
\ee Recall that the $\Ga$-action on $\La_{\theta}$ is a non-elementary convergence group action \cite[Thm. 4.16]{KLP_Anosov} and hence there must be infintely many accumulation points of $\Ga \xi_0$.
On the other hand,
as $\Ga_{\eta_0'}$ is an elementary subgroup, the orbit  $\Ga_{\eta_0'} \xi_0$ accumulates at most at two points of $\La_{\theta}$ (\cite{Tukia_convergence}, \cite{Bowditch1999convergence}). This yields a contradiction, and therefore  $(\Omega_{\theta}, A_u, \m)$ is completely conservative.
\end{proof}

\subsection*{Proof of Theorem \ref{main}}
The equivalences between (1)-(3) follow from Proposition \ref{prop.conserv} and Proposition \ref{prop.ergodic}. Suppose that $\m$ is $u$-balanced.  Corollary \ref{thm.directionalpoincare} implies that $(1) \Leftrightarrow (4) \Leftrightarrow (5)$. That the first case occurs only when $\psi(u) = \psi_{\Ga}^{\theta}(u) > 0$ is a consequence of Lemma \ref{lem.tangent}.

\section{Ergodic dichotomy for subspace flows} \label{sec.codim}
In this section, we extend our ergodic dichotomy to the action of any connected subgroup of $A_\theta$ of arbitrary dimension. In fact, we deduce this from the ergodic dichotomy for directional flows.

\medskip

Let $\Ga$ be a Zariski dense $\theta$-transverse subgroup of $G$. Let $W < \fa_{\theta}$ be a non-zero linear subspace and set $A_W = \exp W$. We consider the subspace flow $A_W$ on $\Omega_{\theta}$ and explain how the proof of Theorem \ref{main} extends to this setting so that we obtain Theorem \ref{thm.subspacedichotomy}, adapting the argument of Pozzetti-Sambarino \cite{PS_metric} on relating the subspace flows with directional flows.

 For $R>0$, we set
$$\Ga_{W,R}=\{\gamma\in \Ga: \|\mu_\theta (\ga)-W\| < R\}.$$
If $W=\fa_\theta$, then $\Ga_{W, R}=\Ga$ for all $R>0$.
\begin{definition}[$W$-conical points] \label{def.subspaceconical}
We say that $\xi\in \F_\theta$ is a $W$-conical point of $\Ga$ if
there exist $R>0$ and a sequence $\ga_i \in \Ga_{W, R}$ such that
$\xi \in O_{R}^{\theta}(o, \ga_i o)$ for all $i \ge 1$.
We denote by $\La_\theta^W$ the set of all $W$-conical points of $\Ga$.
\end{definition}

Fix a $(\Ga, \theta)$-proper linear form $\psi \in \fa_{\theta}^*$. Let $\nu, \nu_{\i}$ be a pair of $(\Ga, \psi)$ and $(\Ga, \psi \circ \i)$-conformal measures on $\La_{\theta}$ and $\La_{\i(\theta)}$ respectively, and let $\m = \m_{\nu, \nu_{\i}}$ denote the associated BMS measure on $\Omega_{\theta}$.

If $W \cap \L_{\theta} = \{0\}$ or  $W \subset \ker \psi$, then 
 the $(\Ga,\theta)$-proper hypothesis on $\psi$ implies that $\Ga_{W, R}$ is finite for all $R > 0$, and hence $\La_{\theta}^W = \La_{\i(\theta)}^{\i(W)}= \emptyset$ and $(\Omega_{\theta} , A_W, \m)$ is completely dissipative and non-ergodic.

The rest of this section is now devoted to
proving Theorem \ref{thm.subspacedichotomy}, 
assuming that
\begin{itemize}
    \item $W\cap \L_{\theta}\ne \{0\}$;
    \item  $ W\not\subset \ker \psi$.
\end{itemize}

Recalling that $\psi \ge 0$ on $\L_{\theta}$ by \cite[Lem. 4.3]{KOW_indicators}, the intersection $W \cap \ker \psi$ has  codimension one in $W$ and intersects $\inte \L_{\theta}$ only at $0$.

Set $$W^\diamond=\fa_\theta/(W\cap \ker \psi)\quad\text{ and }\quad \tilde \Omega_{W^\diamond}:=\La_\theta^{(2)}\times {W^\diamond} .$$
Recalling the spaces $\tilde \Omega_{\psi}$ and $\Omega_{\psi}$  defined in \eqref{opsi},
the projection $\tilde{\Omega}_{\theta} \to \tilde{\Omega}_{\psi}$ factors through $\tilde{\Omega}_{{W^\diamond}}$ in a $\Ga$-equivariant way. Since
the $\Ga$-action on $\tilde{\Omega}_{\psi}$ is properly discontinuous (Theorem \ref{thm:opsi}),
the induced $\Ga$-action  on $\tilde{\Omega}_{{W^\diamond}}$ is also properly discontinuous.
Moreover, the trivial vector bundle $\Omega_\theta\to \Omega_{\psi}$ in \eqref{trivial} factors through
\be \label{eqn.V}
\Omega_{W^\diamond}:=\Ga\ba \tilde \Omega_{W^\diamond}.
\ee
Hence we have a $W\cap \ker \psi$-equivariant homeomorphism:
$$\Omega_\theta\simeq \Omega_{W^\diamond} \times (W\cap \ker \psi).$$

Denote by $\m'$ the $A_\theta$-invariant
Radon measure on $\Omega_{W^\diamond}$ such that $\m=\m' \otimes \op{Leb}_{W\cap \ker \psi}$.

The main point of the proof of Theorem \ref{thm.subspacedichotomy} is to relate the action of $A_W$ on $\Omega_\theta$
with that of a directional flow on $\Omega_{W^{\diamond}}$. Once we do that, we can proceed similarly to 
the proof of Theorem \ref{main}.

Since $W\not\subset\ker\psi$, there exists $u\in W$ with $\psi(u)\ne 0$. By replacing $u$ by $-u$ if necessary, we fix $u\in W$ such that
 $$\psi(u)>0.$$ 
Set  $A_u =A_{\br u}= \{a_{tu}=\exp (tu) : t \in \R\}$ and  consider the $A_u$-action on $(\Omega_{W^{\diamond}}, \m')$. Since $W=\br u +(W\cap \ker \psi)$, we have:
\begin{lem}\label{same}
  The $A_W$-action on  $(\Omega_\theta , \m)$ is ergodic
  (resp. completely conservative, non-ergodic, completely dissipative) if and only if
the $A_u$-action on $(\Omega_{W^{\diamond}}, \m')$ ergodic
  (resp. completely conservative, non-ergodic, completely dissipative).
\end{lem}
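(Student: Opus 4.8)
The plan is to exploit the decomposition $W = \br u \oplus (W\cap\ker\psi)$ together with the $W\cap\ker\psi$-bundle structure $\Omega_\theta \simeq \Omega_{W^\diamond}\times(W\cap\ker\psi)$ and the product measure $\m = \m'\otimes\Leb_{W\cap\ker\psi}$. The point is that the $A_W$-flow on $\Omega_\theta$ and the $A_u$-flow on $\Omega_{W^\diamond}$ differ only by an extra direct factor $(W\cap\ker\psi)$ that is acted upon by translations with respect to Lebesgue measure, and such a ``Lebesgue factor'' is both completely dissipative and non-ergodic, so that it neither creates nor destroys conservativity/ergodicity once one quotients it out. Concretely, under the identification $\Omega_\theta \simeq \Omega_{W^\diamond}\times (W\cap\ker\psi)$, the $A_W$-action becomes $(x,v)\cdot \exp(tu + w) = (x\cdot a_{tu}, v+w)$ for $t\in\br$ and $w\in W\cap\ker\psi$; this is just the product of the $A_u$-action on $\Omega_{W^\diamond}$ with the translation action of $W\cap\ker\psi$ on itself.

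First I would record the standard fact that a product system $(X\times\br^k, H\times\br^k, \lambda\otimes\Leb_{\br^k})$, where $H$ acts on $X$ preserving $\lambda$ and $\br^k$ acts on itself by translation, is ergodic (resp. completely conservative) if and only if $(X,H,\lambda)$ is, and similarly for non-ergodic/completely dissipative; this is elementary since the $\br^k$-factor is a completely dissipative, non-ergodic system and one can apply Fubini to invariant sets and to wandering sets. For the ergodicity direction: an $A_W$-invariant function on $\Omega_\theta$ is in particular invariant under all the translations in $W\cap\ker\psi$, hence (by the $\Leb$-factor) descends to a function on $\Omega_{W^\diamond}$ which is $A_u$-invariant, and conversely; so the two invariant $\sigma$-algebras agree modulo null sets. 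For conservativity: if $B'\subset\Omega_{W^\diamond}$ is a wandering set of positive measure for $A_u$, then $B'\times C$ for any bounded $C\subset W\cap\ker\psi$ of positive Lebesgue measure is wandering of positive measure for $A_W$; conversely, if $B\subset\Omega_\theta$ is wandering of positive $\m$-measure for $A_W$, Fubini produces a fiberwise slice that is wandering of positive $\m'$-measure for $A_u$. The $\sigma$-compactness of $\Omega_\theta$ (Theorem \ref{thm.propdisckow}) and of $\Omega_{W^\diamond}$ lets one pass between complete dissipativity and "being a countable union of wandering sets".

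I would then assemble these equivalences into the four claimed ones, being careful about the direction of implications: ergodicity of $A_W$ on $(\Omega_\theta,\m)$ $\Leftrightarrow$ ergodicity of $A_u$ on $(\Omega_{W^\diamond},\m')$; complete conservativity of $A_W$ $\Leftrightarrow$ complete conservativity of $A_u$; complete dissipativity $\Leftrightarrow$ complete dissipativity; non-ergodicity $\Leftrightarrow$ non-ergodicity. The first two use the product structure directly; the latter two follow formally (non-ergodicity is the negation of ergodicity; complete dissipativity on $\sigma$-compact spaces is handled as above, and is in any case the Hopf-decomposition complement of complete conservativity once one knows ergodicity of the relevant actions is equivalent to complete conservativity — but I would keep the statement purely measure-theoretic and not invoke that here).

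The main obstacle, such as it is, is purely bookkeeping: making sure the homeomorphism $\Omega_\theta\simeq\Omega_{W^\diamond}\times(W\cap\ker\psi)$ is genuinely $A_W$-equivariant in the way claimed, i.e. that the $W\cap\ker\psi$-coordinate is translated by the $W\cap\ker\psi$-component of the flow and is otherwise inert, and that $\m$ really splits as $\m'\otimes\Leb_{W\cap\ker\psi}$ compatibly with this action (both of which are built into the construction of $\Omega_{W^\diamond}$ and $\m'$ in \eqref{eqn.V} and the paragraph following it, since the $\Ga$-action on the $\tilde\Omega_\psi$-factor ignores the $\ker\psi$ directions and hence a fortiori the $W\cap\ker\psi$ directions). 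Given that, each of the four equivalences is a one-line Fubini argument, so the proof is short.
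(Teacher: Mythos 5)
Your proposal is essentially the same argument the paper intends; the paper offers no proof of this lemma beyond the one-line observation ``Since $W=\br u +(W\cap \ker \psi)$, we have:'' and treats the equivalence as immediate from the product decomposition $\Omega_\theta\simeq\Omega_{W^\diamond}\times(W\cap\ker\psi)$, $\m = \m'\otimes\Leb_{W\cap\ker\psi}$, under which $A_W$ becomes the product of the $A_u$-flow with the translation action of $W\cap\ker\psi$. Your Fubini arguments are the right way to fill in the routine details, and the ``bookkeeping'' worry you flag is handled by choosing the splitting $\fa_\theta \simeq W^\diamond\oplus(W\cap\ker\psi)$ via a complement containing $u$, so that $\exp(tu+w)$ acts by $(x,v)\mapsto (xa_{tu},v+w)$ exactly as you wrote.

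One small inaccuracy in a side remark: the translation action of $\br^k$ on itself with Lebesgue measure is completely dissipative but it is \emph{ergodic}, not non-ergodic (the only translation-invariant measurable functions are a.e.\ constant). In fact your own ergodicity argument implicitly uses this ergodicity when you say an $A_W$-invariant function is invariant under all $W\cap\ker\psi$-translations and hence descends to $\Omega_{W^\diamond}$. So the parenthetical ``non-ergodic'' should be dropped; the rest of the reasoning is unaffected and correct.
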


Among the ingredients for the proof of Theorem \ref{main}, Lemma \ref{lem.repeat} and Proposition \ref{lem.dircartan} were repeatedly used and played  basic roles in the proof. The following analogue of Lemma \ref{lem.repeat} can be proved by a similar argument as in the proof of
Lemma \ref{lem.repeat}:
\begin{lemma} \label{lem.analrepeat}
    Suppose that $d_i \in a_{t_i u} \exp(W \cap \ker \psi) B_{\theta}^+$, $t_i > 0$ and $\ga_i \in \Ga$ are sequences such that $\ga_i h_i m_i d_i$ is bounded for some bounded sequence $h_i \in G$ with $h_i P \in \La$ and $m_i \in M_{\theta}$. Then there exists  $w \in \cal W \cap M_{\theta}$ such that,
    after passing to a subsequence,  we have that for all $i \ge 1$, $$d_i \in w A^+ w^{-1} .$$
\end{lemma}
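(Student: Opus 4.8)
\textbf{Proof proposal for Lemma \ref{lem.analrepeat}.}

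The plan is to mimic the proof of Lemma \ref{lem.repeat} essentially verbatim, with the key structural observation that the abelian subgroup $\exp(\br u + (W\cap\ker\psi)) \subset A_\theta$ plays the role that $A_\theta^+$ played there. Indeed, since $u\in\fa_\theta^+$ and $W\cap\ker\psi$ is a subspace of $\fa_\theta$, writing $d_i = \exp(v_i) b_i$ with $v_i \in \br_{\ge 0}u + (W\cap\ker\psi)\subset\fa_\theta$ and $b_i\in B_\theta^+$, we have $d_i\in A$, so after passing to a subsequence there exists $w\in\cal W$ with $d_i = w c_i w^{-1}$ for some $c_i\in A^+$. The task is exactly to show $w\in M_\theta$; everything else is the conclusion.

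First I would reduce to the standard setup: by passing to a subsequence assume $h_i\to h$ with $hP\in\La$ (hence $hP_{\theta\cup\i(\theta)}\in\La_{\theta\cup\i(\theta)}$) and $m_i\to m\in M_\theta$. Then the $\theta$-regularity of $\Ga$ gives $\ga_i^{-1}\to\infty$ $(\theta\cup\i(\theta))$-regularly, and boundedness of $h_i' := \ga_i h_i m_i w c_i w^{-1}$ together with Lemma \ref{lem.cptcartan} forces $c_i\to\infty$ in $A^+$ $(\theta\cup\i(\theta))$-regularly. Applying Lemma \ref{lem.210inv}(1)--(2) exactly as in the proof of Lemma \ref{lem.repeat}, I get $hmwP_{\theta\cup\i(\theta)}\in\La_{\theta\cup\i(\theta)}$, so by $(\theta\cup\i(\theta))$-antipodality, either $wP_{\theta\cup\i(\theta)} = m^{-1}P_{\theta\cup\i(\theta)}$ (which gives $wP_\theta = m^{-1}P_\theta$ upon projecting to $\F_\theta$, hence $w\in M_\theta$, done), or $wP_{\theta\cup\i(\theta)}$ is in general position with $m^{-1}P_{\theta\cup\i(\theta)}$.

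It remains to rule out the general-position case, and this is the only place where the specific shape of $d_i$ matters — so it is the main (minor) obstacle. As in Lemma \ref{lem.repeat}, the general-position case together with Lemma \ref{cor.genweyl} forces $w\in M_\theta w_0$; write $w = m_0 w_0$ with $m_0\in M_\theta\cap N_K(A)$. Decompose $d_i = \exp(v_i)\,b_i$ as above, conjugate: $c_i = w^{-1}d_i w = w_0^{-1}\exp(v_i)w_0\cdot(w_0^{-1}m_0^{-1}b_i m_0 w_0)$. Now $w_0^{-1}m_0^{-1}b_i m_0 w_0 \in w_0^{-1}(S_\theta\cap A)w_0 = S_{\i(\theta)}\cap A = B_{\i(\theta)}$, and $w_0^{-1}\exp(v_i)w_0 = \exp(-\i(v_i))$. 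Since $v_i\in\br_{\ge 0}u + (W\cap\ker\psi)$ and $\i$ maps $\fa_\theta$ to $\fa_{\i(\theta)}$ carrying $\fa_\theta^+$ to $\fa_{\i(\theta)}^+$ with $\i(u)\in\fa_{\i(\theta)}^+$, we get $\exp(-\i(v_i))\in A_{\i(\theta)}$, so $c_i\in A_{\i(\theta)}B_{\i(\theta)}$. But $c_i\in A^+\subset A_{\i(\theta)}^+ B_{\i(\theta)}^+$, forcing the $A_{\i(\theta)}$-component $\exp(-\i(v_i))$ to lie in $A_{\i(\theta)}^+$, i.e. $\alpha(-\i(v_i))\ge 0$ for all $\alpha\in\i(\theta)$. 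Since $c_i\to\infty$ $\i(\theta)$-regularly while $\exp(-\i(v_i))$ carries the $\i(\theta)$-directions, this would require $-\i(v_i)$ to escape into $\fa_{\i(\theta)}^+$; but $\psi(u)>0$ gives $\psi\circ\i$ positive on $\i(u)$, and one checks (just as the sign contradiction $a_i\in A_\theta^+$ vs.\ $w_0^{-1}a_iw_0\in A_{\i(\theta)}^+$ at the end of Lemma \ref{lem.repeat}) that the $u$-component of $v_i$, being $\ge 0$, makes $-\i(v_i)$ have a strictly negative pairing with the relevant root direction once $c_i$ is large, a contradiction. Concretely: $c_i\to\infty$ $\i(\theta)$-regularly forces the $\br\i(u)$-component of $-\i(v_i)$ to tend to $+\infty$, hence the $\br u$-component of $v_i$ tends to $-\infty$, contradicting $v_i\in\br_{\ge 0}u + (W\cap\ker\psi)$. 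This finishes the proof.
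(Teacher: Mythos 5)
Your setup is correct: you reduce, as in Lemma \ref{lem.repeat}, to showing that the case $w\in M_\theta w_0$ cannot occur, and you correctly compute that in that case $c_i = w^{-1}d_iw$ has $A_{\i(\theta)}$-component $\exp(-\i(v_i))$, forcing $-\i(v_i)\in\fa_{\i(\theta)}^+$, i.e.\ $v_i = t_iu + \log a_i \in -\fa_\theta^+$. But the final contradiction you assert does not hold, and this is exactly the point at which the paper changes strategy relative to Lemma \ref{lem.repeat}.

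In Lemma \ref{lem.repeat} the $A_\theta$-part of $d_i$ lies in the proper cone $A_\theta^+$, so the condition $\log a_i \in \fa_\theta^+\cap(-\fa_\theta^+)=\{0\}$ is indeed a pure sign contradiction. Here the $A_\theta$-part of $d_i$ lies only in the half-space $\br_{\ge 0}u + (W\cap\ker\psi)$, which is \emph{not} contained in $\fa_\theta^+$, so $v_i \in -\fa_\theta^+$ is perfectly compatible with $t_i>0$: the $W\cap\ker\psi$-part can push $v_i$ into $-\fa_\theta^+$ regardless of the sign of $t_i$. Your ``concrete'' claim that $\i(\theta)$-regularity of $c_i$ forces the $\br\i(u)$-component of $-\i(v_i)$ to tend to $+\infty$ is also unfounded: regularity controls the values $\alpha(-\i(v_i))$ for simple roots $\alpha\in\i(\theta)$, not the component along a chosen transversal line such as $\br\i(u)$. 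Applying $\psi\circ\i$ gives $(\psi\circ\i)(-\i(v_i)) = -t_i\psi(u) < 0$, but this contradicts $-\i(v_i)\in\fa_{\i(\theta)}^+$ only if $\psi\circ\i\ge 0$ on $\fa_{\i(\theta)}^+$, which is not given (a $(\Ga,\theta)$-proper $\psi$ is nonnegative only on $\L_\theta$, not on all of $\fa_\theta^+$).

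What the paper actually does at this point is to bring back the boundedness hypothesis on $\ga_ih_im_id_i$: by Lemma \ref{lem.cptcartan}, $\mu_{\i(\theta)}(\ga_i^{-1})-\mu_{\i(\theta)}(d_i)$ is bounded, and the computation $\mu_{\i(\theta)}(d_i)=\op{Ad}_{w_0}(t_iu+\log a_i)$ then yields $\mu_\theta(\ga_i) = -(t_iu+\log a_i)+q_i$ with $q_i$ bounded. Applying $\psi$ and using $\log a_i\in\ker\psi$ gives $\psi(\mu_\theta(\ga_i)) = -t_i\psi(u)+\psi(q_i)$, which is bounded above since $t_i>0$ and $\psi(u)>0$; combined with the lower bound from $(\Ga,\theta)$-properness, $\psi\circ\mu_\theta(\ga_i)$ is confined to a compact interval, so $\ga_i$ is a finite sequence --- the desired contradiction. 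Your argument never ties $v_i$ back to $\mu_\theta(\ga_i)$, which is the step that makes the $(\Ga,\theta)$-properness of $\psi$ bite and is indispensable here.
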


\begin{proof}
    As in the proof of Lemma \ref{lem.repeat}, there exists a Weyl element $w \in \cal W$ such that $d_i \in w A^+ w^{-1}$ for all $i \ge 1$ after passing to a subsequence, and moreover $w \in M_{\theta}$ or $w \in M_{\theta} w_0$. We claim that the latter case $w \in M_{\theta} w_0$ cannot happen. Suppose that $w \in M_{\theta} w_0$ and write $d_i = a_{t_i u} a_i b_i$ for $a_i \in \exp (W \cap \ker \psi)$ and $b_i \in B_{\theta}^+$. Since $w \in M_{\theta} w_0$, we get $\mu_{\i(\theta)}(d_i) = \log (w_0^{-1} a_{t_i u} a_i w_0)$ for all $i \ge 1$. In particular, $t_i u + \log a_i \in - \fa_{\theta}^+$. 

    Since the sequence $\ga_i h_i m_i d_i$ is bounded by the hypothesis, the sequence $\mu_{\i(\theta)}(\ga_i^{-1}) - \mu_{\i(\theta)}(d_i)$ is bounded as well by Lemma \ref{lem.cptcartan}. Since $\mu_{\i(\theta)}(\ga_i^{-1}) = -\op{Ad}_{w_0}(\mu_{\theta}(\ga_i))$ and $\mu_{\i(\theta)}(d_i) = \op{Ad}_{w_0}( t_i u + \log  a_i)$, it follows that $\mu_{\theta}(\ga_i) =- (t_i u + \log a_i) +q_i$ for some bounded sequence
    $q_i\in \fa_\theta$.
    Applying $\psi$, we get $\psi(\mu_\theta(\ga_i))= -t_i \psi (u)+\psi(q_i)$ since $\log a_i\in \ker \psi$. Since $\psi(u)>0$,  $\psi(\mu_\theta(\ga_i)) $  is uniformly bounded.
    The $(\Ga, \theta)$-properness of $\psi$ implies that $\ga_i$ is a finite sequence, yielding a contradiction.  Therefore, the case
    $w \in M_{\theta} w_0$ cannot occur; so  $w \in M_{\theta}$.
\end{proof}

Let $p: \fa_\theta \to W^{\diamond} $ denote the natural projection map. Choosing a norm $\|\cdot \|$ on
$W^{\diamond}$, the map $p$ is Lipschitz.
Then for a constant $c>1$ depending  on the Lipschitz constant of $p$ as well as norms on $\fa_{\theta}$ and $W^{\diamond}$, we have for all $R>0$,
$$ \{ \ga \in \Ga : \| p(\mu_{\theta}(\ga)) - \R u\| < R/c \} \subset 
\Ga_{W, R}\subset  \{ \ga \in \Ga : \| p(\mu_{\theta}(\ga)) - \R u\| < cR \} .$$ 
 Note also that $\psi(p(\mu_{\theta}(\ga))) = \psi(\mu_{\theta}(\ga))$ for all $\ga \in \Ga$.
 
Using this relation and
Lemma \ref{lem.analrepeat}, similar arguments as in Sections \ref{sec:rec}  and \ref{sec.dirpoincare} apply to the $A_u$-flow on $\Omega_{W^{\diamond}}$, replacing  $\Ga_{u, r}$ with $\Ga_{W, R}$. In particular,
applying Lemma \ref{lem.analrepeat} in place of  Lemma \ref{lem.repeat}, 
 the following analogs of Proposition \ref{lem.dircartan} and Lemma \ref{defdir}(2) respectively can be proved similarly. 

\begin{prop} \label{prop.analgromov}
Let $Q\subset \tilde \Omega_{W^{\diamond}}$ be a compact subset. 
There are positive constants $C_1=C_1(Q), C_2=C_2(Q)$ and $R=R(Q)$ such that if $[h] \in Q \cap \ga Qa_{-tu} $ for some $h \in G$, $\ga \in \Ga$ and $t > 0$, then the following hold:
    \begin{enumerate}
        \item   $\| p(\mu_{\theta}(\ga)) - t u \| < C_1;$
\item $(h^+, h^-) \in O_R^{\theta}(o, \ga o) \times O_R^{\i(\theta)}(\ga o, o) ;$
\item $\|\cal G^{\theta}(h^+, h^-) \| < C_2.$
\end{enumerate}
\end{prop}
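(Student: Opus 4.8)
\textbf{Proof proposal for Proposition \ref{prop.analgromov}.}

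The plan is to mirror the proof of Proposition \ref{lem.dircartan} line by line, substituting the subspace-adapted ingredients at the two places where they are needed, and nowhere else. First I would fix a compact set $Q' \subset G$ with $Q' M_\theta = Q'$ whose image in $\tilde\Omega_{W^\diamond} = \Ga\ba \La_\theta^{(2)}\times W^\diamond$ contains $Q$; here the fibre to be absorbed is $S_\theta$ together with $W\cap\ker\psi$, since the Langlands/Hopf picture for $\Omega_{W^\diamond}$ quotients $\fa_\theta$ by $W\cap\ker\psi$ in addition to quotienting $G$ by $S_\theta$. So, given $[h]\in Q\cap \ga Q a_{-tu}$, I can write $h s a_{tu} = \ga h'$ with $h, h' \in Q'$, $s\in S_\theta$, and more precisely $h\, m_i\, a_{tu}\, (\text{element of }\exp(W\cap\ker\psi))\, b\, m' = \ga h'$ after inserting the Cartan decomposition $s = m b m'$ of $S_\theta$ and using that $\exp(W\cap\ker\psi)$ commutes with everything in $A_\theta$. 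This is exactly the hypothesis shape of Lemma \ref{lem.analrepeat}: the product $a_{tu}\cdot(\text{elt of }\exp(W\cap\ker\psi))\cdot b$ lies in $a_{tu}\exp(W\cap\ker\psi)B_\theta^+$, so Lemma \ref{lem.analrepeat} gives, after passing to a subsequence if one argues by contradiction, a Weyl element $w\in\cal W\cap M_\theta$ with $a_{tu}\cdot(\text{elt})\cdot b = w c w^{-1}$ for some $c\in A^+$.

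For \textbf{(1)}: argue by contradiction exactly as in the proof of Proposition \ref{lem.dircartan}(1). If the conclusion fails there are sequences $\ga_i\in\Ga$, $h_i\in G$, $t_i\to+\infty$ with $\|p(\mu_\theta(\ga_i)) - t_i u\|\ge i$ and $[h_i]\in Q\cap\ga_i Q a_{-t_i u}$. Reduce to $h_i, h_i'\in Q'$, $h_i P\in\La$, and $h_i m_i a_{t_i u} e_i b_i m_i' = \ga_i h_i'$ with $e_i\in\exp(W\cap\ker\psi)$. Lemma \ref{lem.analrepeat} produces $w\in\cal W\cap M_\theta$ and $c_i\in A^+$ with $a_{t_i u} e_i b_i = w c_i w^{-1}$. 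Since $c_i = w^{-1}(a_{t_iu}e_i)w\cdot (w^{-1}b_i w)\in A^+\cap A_\theta B_\theta$, projecting by $p_\theta$ kills the $B_\theta$-part and $b_i$, and kills $W\cap\ker\psi$ after one further applies $p\colon\fa_\theta\to W^\diamond$; so $p(\mu_\theta(c_i)) = p(\log(a_{t_iu}e_i)) = p(t_i u)$ because $\log e_i\in W\cap\ker\psi = \ker p$. Then Lemma \ref{lem.cptcartan} applied to $h_i m_i w c_i w^{-1} m_i' = \ga_i h_i'$ bounds $\|\mu_\theta(\ga_i) - \mu_\theta(c_i)\|$, hence $\|p(\mu_\theta(\ga_i)) - p(\mu_\theta(c_i))\| = \|p(\mu_\theta(\ga_i)) - p(t_iu)\|$ is bounded since $p$ is Lipschitz, contradicting $\|p(\mu_\theta(\ga_i)) - t_iu\|\ge i$. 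Note $p(t_iu) = t_iu$ as $u\in W$ maps to its own class; this is where I use $u\in W$.

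For \textbf{(2)} and \textbf{(3)}: these are verbatim the arguments of Proposition \ref{lem.dircartan}(2) and (3). Indeed, once $\ga_i\to\infty$ is forced (by the same dichotomy: a bounded $\ga_i$ makes the shadows $O_i^\theta(o,\ga_i o)$ exhaust $\F_\theta$, contradicting the hypothesis on $h_i^\pm$), the Iwasawa/Cartan bookkeeping only ever uses membership of $a_{t_iu}e_i b_i$ in $wA^+w^{-1}$ with $w\in M_\theta$, which Lemma \ref{lem.analrepeat} supplies; the insertion of $e_i\in\exp(W\cap\ker\psi)$ costs nothing because $e_i$ commutes with the $A_\theta$-elements and, being in $A$, is absorbed into the ``$A$-part'' throughout. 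The computation of $\cal G^\theta(h^+,h^-)$ in part (3) is identical: the extra factor $e_i$ contributes $\beta^\theta_{e^+}(e, e_i) + \i(\beta^{\i(\theta)}_{e^-}(e,e_i))$, which equals $\cal G^\theta(e^+,e^-) = 0$ by the same cancellation used there for $a_{-tu}s$. So parts (2)–(3) follow with the same constants, reading off $R$ and $C_2$ from $Q'$.

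The only genuine new point—and the place I expect the subtlety to hide—is bookkeeping the $W^\diamond$-quotient correctly in part (1): one must be sure that the precompactness of $[h]\in Q\subset\tilde\Omega_{W^\diamond}$ translates into boundedness of $g s'$ in $G$ for some $s'$ in the \emph{enlarged} fibre $S_\theta\cdot\exp(W\cap\ker\psi)$ (not just $S_\theta$), and that the $W\cap\ker\psi$-direction, which is $\psi$-null, is exactly what disappears under $p$, so that the $\psi$-positivity of $u$ used in Lemma \ref{lem.analrepeat} is what rules out the bad Weyl element $w\in M_\theta w_0$. Everything else is a transcription. Since the excerpt explicitly says ``the following analogs $\ldots$ can be proved similarly,'' the expected writeup is to note that the proof of Proposition \ref{lem.dircartan} applies mutatis mutandis with Lemma \ref{lem.analrepeat} in place of Lemma \ref{lem.repeat} and $p(\mu_\theta(\cdot))$ in place of $\mu_\theta(\cdot)$, the only change being the observation $p(\log e) = 0$ for $e\in\exp(W\cap\ker\psi)$ recorded above.
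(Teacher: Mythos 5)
Your proposal is correct and is exactly the adaptation the paper intends (the paper merely states that Proposition~\ref{prop.analgromov} ``can be proved similarly'' to Proposition~\ref{lem.dircartan} with Lemma~\ref{lem.analrepeat} in place of Lemma~\ref{lem.repeat}). You have identified the only genuinely new bookkeeping point: the lift of $[h]\in\tilde\Omega_{W^\diamond}$ to $G$ is defined modulo the enlarged fibre $S_\theta\cdot\exp(W\cap\ker\psi)$, so after the Cartan decomposition of the $S_\theta$-component one lands in a product $a_{tu}\,e\,b\in a_{tu}\exp(W\cap\ker\psi)B_\theta^+$ matching the hypothesis of Lemma~\ref{lem.analrepeat}, and since $w\in\cal W\cap M_\theta$ centralizes $A_\theta$ and normalizes $B_\theta$ one gets $\mu_\theta(c_i)=t_iu+\log e_i$, whence $p(\mu_\theta(c_i))=t_iu$ because $\log e_i\in W\cap\ker\psi=\ker p$. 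Two cosmetic slips worth fixing in a final write-up: $\tilde\Omega_{W^\diamond}$ is $\La_\theta^{(2)}\times W^\diamond$ (you wrote the $\Ga$-quotient, which is $\Omega_{W^\diamond}$), and the remark ``$p(t_iu)=t_iu$ as $u\in W$ maps to its own class'' is a slight misstatement — every element of $\fa_\theta$ maps to its own class under the quotient map $p$; the role of $u\in W$ with $\psi(u)>0$ is elsewhere, namely in Lemma~\ref{lem.analrepeat}, where $\psi$-positivity of $u$ combined with $(\Ga,\theta)$-properness of $\psi$ rules out the bad Weyl element $w\in M_\theta w_0$, exactly as you note at the end. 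Parts (2) and (3) indeed go through verbatim: the extra factor $e_i\in\exp(W\cap\ker\psi)\subset A_\theta\subset L_\theta$ is absorbed into the $L_\theta$-part everywhere, and in (3) the Gromov-product cancellation $\beta_{e^+}^\theta(e,\ell)+\i(\beta_{e^-}^{\i(\theta)}(e,\ell))=\cal G^\theta(e^+,e^-)=0$ holds for all $\ell\in L_\theta$, not just $\ell\in A_\theta S_\theta$.
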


\begin{lem} \label{lem.equivdefsubspace} The following are equivalent for any $\xi\in \La_\theta$:
\begin{enumerate}
  \item $\xi\in \La_\theta^W$;
\item $\xi = gP_\theta\in \F_\theta$ for some $g \in G$ such that $[g]\in \Omega_\theta$ and $\limsup [g](A_W\cap A^+) \ne \emptyset$;
\item the sequence $[(\xi, \eta, v)]a_{t_iu}$ is precompact in $\Omega_{W^{\diamond}}$ for some $\eta \in \La_{\i(\theta)}$, $v \in W^{\diamond}$ and $t_i \to \infty$.
\end{enumerate}
\end{lem}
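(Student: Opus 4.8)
The strategy is to prove the chain of implications $(1) \Rightarrow (2) \Rightarrow (3) \Rightarrow (1)$, closely mirroring the proof of Lemma \ref{defdir}(2) but using the machinery built up for the $W^\diamond$-setting (Lemma \ref{lem.analrepeat} and Proposition \ref{prop.analgromov}) in place of their directional counterparts. The fact that $W = \br u + (W \cap \ker\psi)$ and $\psi(u) > 0$ will be used repeatedly to convert the condition $\|\mu_\theta(\ga) - W\| < R$ on $\Ga_{W,R}$ into a condition of the form $\|p(\mu_\theta(\ga)) - \br u\| < R'$ on the projection to $W^\diamond$.

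\textbf{Step 1: $(1) \Rightarrow (2)$.} Suppose $\xi = kP_\theta \in \La_\theta^W$ for some $k \in K$, so there exist $R > 0$ and $\ga_i \to \infty$ in $\Ga_{W,R}$ with $\xi \in O_R^\theta(o, \ga_i o)$. By definition of the shadow, write $\xi = kP_\theta$ with $d(kc_i o, \ga_i o) < R$ for some $c_i \in A^+$; decompose $c_i = b_i' a_i'$ with $b_i' \in B_\theta^+$ and $a_i' \in A_\theta^+$. From $\ga_i \in \Ga_{W,R}$ and Lemma \ref{lem.cptcartan}, $\mu_\theta(\ga_i)$ lies within bounded distance of $\log a_i'$, hence within bounded distance of $W$; thus $a_i' \in A_W A_{\theta,D}$ for some $D$, i.e.\ after multiplying by a bounded element we may arrange $c_i \in (A_W \cap A^+) B_\theta^+ A_{\theta, D}$. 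Since $\La_{\i(\theta)}$ is Zariski dense and $kN_\theta w_0 P_{\i(\theta)}$ is Zariski open in $\F_{\i(\theta)}$, choose $n \in N_\theta$ with $(kn)^- \in \La_{\i(\theta)}$; as $(kn)^+ = k^+ = \xi \in \La_\theta$ (using $\La_\theta^W \subset \La_\theta$), we get $[kn] \in \tilde\Omega_\theta$. Then $[kn](A_W \cap A^+)$ accumulates: conjugating $n$ past $c_i$ stays bounded because $c_i \in A^+$, so $\ga_i[kn]$ applied to the $A_W\cap A^+$-part of $c_i$ lies in a compact set of $\tilde\Omega_\theta$, which descends to show $\limsup [kn](A_W \cap A^+) \ne \emptyset$. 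Projecting $[kn]$ to $\Omega_{W^\diamond}$ gives (2), and conversely (2) trivially phrases the same accumulation.

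\textbf{Step 2: $(2) \Rightarrow (3)$.} Suppose $\xi = gP_\theta$ with $[g] \in \Omega_\theta$ and $\limsup [g](A_W \cap A^+) \ne \emptyset$. Pick a sequence $w_i \in W \cap \fa^+$ with $\|w_i\| \to \infty$ such that $[g]\exp(w_i)$ converges in $\Omega_\theta$, hence in $\Omega_{W^\diamond}$; write $w_i = t_i u + v_i$ with $v_i \in W \cap \ker\psi$ (possible since $W = \br u \oplus (W \cap \ker\psi)$ after adjusting). Applying $\psi$ and using properness of $\psi$ together with the convergence forces $t_i \to +\infty$ (up to replacing $u$). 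Since $\exp(v_i)$ acts trivially on the $W^\diamond$-coordinate modulo $W \cap \ker\psi$, the sequence $[(\xi,\eta,v)]a_{t_i u}$ in $\Omega_{W^\diamond}$ coincides with the image of $[g]\exp(w_i)$, which is precompact. This gives (3).

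\textbf{Step 3: $(3) \Rightarrow (1)$.} This is the step where Proposition \ref{prop.analgromov} does the work. Suppose $[(\xi,\eta,v)]a_{t_i u}$ is precompact in $\Omega_{W^\diamond}$, so it lies in $\ga_i^{-1} Q$ for some compact $Q \subset \tilde\Omega_{W^\diamond}$ containing $(\xi,\eta,v)$ and some $\ga_i \in \Ga$. Then $(\xi,\eta,v) \in Q \cap \ga_i Q a_{-t_i u}$, and Proposition \ref{prop.analgromov} gives $\|p(\mu_\theta(\ga_i)) - t_i u\| < C_1$ and $\xi = g^+ \in O_R^\theta(o, \ga_i o)$. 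By the Lipschitz comparison $\{\ga : \|p(\mu_\theta(\ga)) - \br u\| < C_1\} \subset \Ga_{W, cC_1}$ recorded before Proposition \ref{prop.analgromov}, we get $\ga_i \in \Ga_{W, cC_1}$, so $\xi \in \La_\theta^W$ by definition. One should also check $\ga_i \to \infty$: if $\{\ga_i\}$ were finite then $[(\xi,\eta,v)]a_{t_iu}$ would be eventually constant modulo a fixed $\ga$, contradicting $t_i \to \infty$ together with properness of $\psi$ on $\Ga$.

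\textbf{Main obstacle.} The routine-looking but genuinely delicate point is Step 1, where one must produce $n \in N_\theta$ with $(kn)^- \in \La_{\i(\theta)}$ \emph{and} simultaneously keep the translates $\ga_i[kn](A_W \cap A^+)$ precompact in $\tilde\Omega_\theta$ — this requires tracking how the $B_\theta^+$ and bounded-$A_\theta$ factors of $c_i$ interact with the $\check{N}_\theta$/$N_\theta$-decomposition, exactly as in the proof of Lemma \ref{defdir}(2) but now only controlling the $W^\diamond$-direction rather than all of $\fa_\theta$; the use of Lemma \ref{lem.analrepeat} (which rules out the $w \in M_\theta w_0$ case precisely via $\psi(u) > 0$ and properness of $\psi$) is what makes this go through. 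The rest is bookkeeping already carried out in the directional case.
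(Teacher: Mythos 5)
Your proof has the right structure—the cycle $(1)\Rightarrow(2)\Rightarrow(3)\Rightarrow(1)$ built on Lemma \ref{lem.analrepeat} and Proposition \ref{prop.analgromov} in place of Lemma \ref{lem.repeat} and Proposition \ref{lem.dircartan}—and this matches the approach the paper signals (the paper itself gives no written proof, only stating that the lemma ``can be proved similarly'' to Lemma \ref{defdir}(2)). The Step 2 reduction through $\Omega_{W^\diamond}$ so that $\exp(W\cap\ker\psi)$ drops out, and the application of Proposition \ref{prop.analgromov} in Step 3, are both the correct mechanisms.

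One point in Step 1 is glossed over and should be made explicit. From ``$\log a_i'$ lies within bounded distance of $W$'' you immediately conclude $c_i \in (A_W\cap A^+)B_\theta^+A_{\theta,D}$, but this needs $\log a_i'$ to be within bounded distance of the cone $W\cap\fa_\theta^+$, not merely of the linear subspace $W$. That stronger statement does hold because $\log a_i'\in\fa_\theta^+$: writing $w_i$ for the orthogonal projection of $\log a_i'$ to $W$, one has $\alpha(w_i)\ge -\|\alpha\|\,\|\log a_i'-w_i\|$ for every $\alpha\in\theta$, and then a Hoffman-type error bound for the fixed polyhedral cone $W\cap\fa_\theta^+=\{w\in W:\alpha(w)\ge 0\ \forall\alpha\in\theta\}$ bounds the distance from $w_i$ to $W\cap\fa_\theta^+$ by a constant multiple of $\|\log a_i'-w_i\|$. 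This deserves a sentence in the proof, since ``$\log a_i'\in A_WA_{\theta,D}$, i.e.\ $c_i\in(A_W\cap A^+)\dots$'' is not literally an equivalence. Smaller slips: in Step 1 the bounded sequence is $\ga_i^{-1}kc_i$, not $\ga_i kc_i$; in Step 2, the clean route to $t_i\to+\infty$ is to run a Lemma \ref{lem.repeat}-type Cartan analysis to pin $\mu_\theta(\ga_i^{-1})$ within bounded distance of $w_i$ and then invoke properness of $\psi\circ\mu_\theta|_\Ga$ together with $\ga_i\to\infty$; and in Step 3 the contradiction when $\{\ga_i\}$ is finite should come from the $W^\diamond$-coordinate of $(\xi,\eta,v)a_{t_iu}$ tending to infinity (using $p(u)\ne 0$, which is exactly $u\notin W\cap\ker\psi$), rather than from properness of $\psi$.
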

In particular, a $W$-conical point of $\Ga$ is a $u$-conical point
for the action of $A_u$ on $\Omega_{W^\diamond}$ and vice versa.

Since the recurrence of the $A_u$-flow on $\Omega_{W^{\diamond}}$ is related to the $W$-conical set as stated in Lemma \ref{lem.equivdefsubspace}, the arguments in Section \ref{sec.conserg}  for the directional flow $(\Omega_{W^{\diamond}}, A_u, \m')$ yield the following equivalences: \be \label{eqn.finallysubspace}
\begin{aligned}
\max \left( \nu(\La_{\theta}^W), \nu_{\i}(\La_{\i(\theta)}^{\i(W)}) \right) > 0 &\Leftrightarrow (\Omega_{W^{\diamond}}, A_u, \m') \text{ is completely conservative} \\
& \Leftrightarrow (\Omega_{W^{\diamond}}, A_u, \m') \text{ is ergodic}; \\
\max \left(\nu(\La_{\theta}^W), \nu_{\i}(\La_{\i(\theta)}^{\i(W)}) \right) = 0 &\Leftrightarrow (\Omega_{W^{\diamond}}, A_u, \m') \text{ is completely dissipative} \\
& \Leftrightarrow (\Omega_{W^{\diamond}}, A_u, \m') \text{ is non-ergodic}.
\end{aligned}
\ee 
This proves the equivalence $(1)\Leftrightarrow(2) \Leftrightarrow (3)$ of Theorem \ref{thm.subspacedichotomy}.

\begin{definition}\label{Wb}  We say that $\m$ is $W$-balanced if there exists 
$u \in W $ with $\psi(u)>0$ 
such that $(\Omega_{W^\diamond}, \m')$ is $u$-balanced.
\end{definition}

To complete the proof of Theorem \ref{thm.subspacedichotomy}, it remains to prove the following:
\begin{theorem} \label{thm.subspacepoincare} Suppose that 
 $\m$ is $W$-balanced.
The following are equivalent:
\begin{enumerate}
    \item 
$\sum_{\ga \in \Ga_{W, R}} e^{-\psi(\mu_{\theta}(\ga))} = \infty$ for some $R > 0$;
\item $\nu(\La_{\theta}^W) = 1 = \nu_{\i}(\La_{\i(\theta)}^{\i(W)}).$ 
\end{enumerate}

Similarly, the following are also equivalent:
\begin{enumerate}
    \item 
 $\sum_{\ga \in \Ga_{W, R}} e^{-\psi(\mu_{\theta}(\ga))} <\infty$ for all $R > 0$;

\item  $\nu(\La_{\theta}^W) = 0= \nu_{\i}(\La_{\i(\theta)}^{\i(W)}).$
\end{enumerate}

\end{theorem}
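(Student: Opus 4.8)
\textbf{Proof plan for Theorem \ref{thm.subspacepoincare}.}
The strategy is to reduce the statement entirely to the directional dichotomy already proved for the $A_u$-flow on $\Omega_{W^\diamond}$, where $u\in W$ is the fixed vector with $\psi(u)>0$ coming from the $W$-balanced hypothesis. The key translation, which I would establish first, is that the $W$-directional Poincar\'e series on $\Ga$ is comparable to the $u$-directional Poincar\'e series relative to the projection $p\colon\fa_\theta\to W^\diamond$. Concretely, using the inclusions
$$\{\ga\in\Ga:\|p(\mu_\theta(\ga))-\R u\|<R/c\}\subset\Ga_{W,R}\subset\{\ga\in\Ga:\|p(\mu_\theta(\ga))-\R u\|<cR\}$$
together with $\psi(p(\mu_\theta(\ga)))=\psi(\mu_\theta(\ga))$, one gets that $\sum_{\ga\in\Ga_{W,R}}e^{-\psi(\mu_\theta(\ga))}=\infty$ for some $R>0$ if and only if the analogous $u$-directional series (defined with $p\circ\mu_\theta$ in place of $\mu_\theta$, i.e.\ the intrinsic Cartan projection on $\Omega_{W^\diamond}$) diverges for some radius; and likewise the convergence statements match up for all radii. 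This is a routine squeezing argument once the definitions are unwound.

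Next I would invoke the fact, recorded in the paragraph preceding the theorem, that all the machinery of Sections \ref{sec:rec} and \ref{sec.dirpoincare} carries over verbatim to the $A_u$-flow on $\Omega_{W^\diamond}$, with $\Ga_{u,r}$ replaced by $\Ga_{W,R}$: in particular the analogue of Lemma \ref{defdir}(2) is Lemma \ref{lem.equivdefsubspace}, the analogue of Proposition \ref{lem.dircartan} is Proposition \ref{prop.analgromov}, and the analogue of Lemma \ref{lem.repeat} is Lemma \ref{lem.analrepeat}. With these in hand, the analogue of Corollary \ref{thm.directionalpoincare}, proved using the $W$-balanced hypothesis exactly as in the directional case, gives: $\sum_{\ga\in\Ga_{W,R}}e^{-\psi(\mu_\theta(\ga))}=\infty$ for some $R>0$ if and only if $\nu(\La_\theta^{W})=1=\nu_{\i}(\La_{\i(\theta)}^{\i(W)})$, where I use that the $W$-conical set of $\Ga$ coincides with the $u$-conical set for the $A_u$-action on $\Omega_{W^\diamond}$ (the last sentence of Lemma \ref{lem.equivdefsubspace}). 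Symmetrically, the analogue of Theorem \ref{thm.convergencedir} for $\Omega_{W^\diamond}$, combined with the zero-one law (analogue of Lemma \ref{lem.zeroone}, whose proof only uses uniqueness of the $(\Ga,\psi)$-conformal measure on $\F_\theta$ when the series diverges, and hence is unchanged), yields the convergence half: $\sum_{\ga\in\Ga_{W,R}}e^{-\psi(\mu_\theta(\ga))}<\infty$ for all $R>0$ implies $\nu(\La_\theta^W)=0=\nu_{\i}(\La_{\i(\theta)}^{\i(W)})$, and the converse follows since the two cases are complementary. Note here that $\psi\circ\i$ is $(\Ga,\i(\theta))$-proper with $\psi\circ\i(\i(u))=\psi(u)>0$, so the argument applies symmetrically to $\La_{\i(\theta)}^{\i(W)}$.

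To make the reduction airtight I would spell out the one genuinely new point: that $W$-balancedness of $\m$ is precisely $u$-balancedness of $\m'$ on $\Omega_{W^\diamond}$ for this $u$ (Definition \ref{Wb}), so that Theorem \ref{prop.mainprop} and its consequences apply directly to $(\Omega_{W^\diamond},A_u,\m')$ — there is no need to re-run the Borel--Cantelli argument, only to cite it in the new ambient space. The main obstacle, such as it is, is not conceptual but bookkeeping: one must check that every lemma in Section \ref{sec.dirpoincare} that was stated for $\tilde\Omega_\theta$ and $\mu_\theta$ remains true for $\tilde\Omega_{W^\diamond}$ and $p\circ\mu_\theta$. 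The only place where something could in principle go wrong is the multiplicity bound on shadows (Proposition \ref{prop.mult}) and the shadow lemma (Lemma \ref{lem.shadowlemma}): but these are statements about $\nu$ on $\F_\theta$ and shadows $O_R^\theta(o,\ga o)\subset\F_\theta$, which are unchanged — the fibered structure $\Omega_\theta\simeq\Omega_{W^\diamond}\times(W\cap\ker\psi)$ only affects the $\fa_\theta$-coordinate, over which one integrates with Lebesgue measure in both settings. Hence the comparison of BMS-integrals to shadow-sums (Proposition \ref{prop.upperbound}, Proposition \ref{prop.lowerbound}) goes through with $\Leb_{W^\diamond}$ in place of $\Leb_{\fa_\theta}$, and the proof concludes by assembling these pieces.
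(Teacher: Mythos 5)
Your proposal is correct and follows essentially the same route as the paper: both reduce the $W$-subspace dichotomy to the directional dichotomy for the $A_u$-flow on $\Omega_{W^\diamond}$ via the Lipschitz comparison $\Ga_{W,R}\approx\{\ga:\|p(\mu_\theta(\ga))-\R u\|<R\}$ together with $\psi\circ p=\psi$, then invoke the analogues of Lemma \ref{lem.repeat}, Proposition \ref{lem.dircartan}, and Lemma \ref{defdir} (namely Lemma \ref{lem.analrepeat}, Proposition \ref{prop.analgromov}, Lemma \ref{lem.equivdefsubspace}) to rerun the argument of Corollary \ref{thm.directionalpoincare} and Theorem \ref{thm.convergencedir} in the quotient space. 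Your observation that the shadow lemma and the multiplicity bound on shadows are untouched by the quotient (being statements purely about $\nu$ on $\F_\theta$) is a useful sanity check that the paper leaves implicit but does not alter the structure of the argument.
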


In the rest of this section, we assume that
$\m$ is $W$-balanced, and choose $u\in W$ with $\psi(u) > 0$ so that $\m'$ is $u$-balanced. Following the proof of Proposition \ref{prop.upperbound} while applying Proposition \ref{prop.analgromov} in the place of Proposition \ref{lem.dircartan}, we get:

\begin{proposition} \label{prop.analupperlower}
Suppose that $\sum_{\ga \in \Ga_{W, R}} e^{-\psi(\mu_{\theta}(\ga))} = \infty$ for some $R > 0$. Set $\delta=\psi(u)>0$.
 \begin{enumerate}
     \item 
  For any compact subset $Q\subset \tilde \Omega_{W^{\diamond}}$, there exists $R=R(Q)>0$ such that for any $T > 1$, we have 
    $$\int_0^T \int_0^T \sum_{\ga, \ga' \in \Ga} \tilde{\m}'(Q \cap \ga Q a_{-t u} \cap \ga' Q a_{-(t+s)u}) dt ds \ll \left( \sum_{\substack{\ga \in \Ga_{W, R} \\ \psi(\mu_{\theta}(\ga)) \le \delta T}} e^{-\psi(\mu_{\theta}(\ga))} \right)^2.
    $$
\item 
    For any $R > 0$, there exists a compact subset $Q'=Q'(R)\subset \tilde \Omega_{W^{\diamond}}$ such that
     $$\int_0^T \sum_{\ga \in \Ga} \tilde{\m}'(Q' \cap \ga Q' a_{-tu}) dt  \gg \sum_{\substack{\ga \in \Ga_{W, R} \\ \psi(\mu_{\theta}(\ga)) \le \delta T}} e^{-\psi(\mu_{\theta}(\ga))}.
     $$ 
 \end{enumerate} 
 \end{proposition}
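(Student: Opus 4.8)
The plan is to run the proofs of Proposition \ref{prop.upperbound}(1) and Proposition \ref{prop.lowerbound}(2) essentially verbatim, now with the flow space $\tilde\Omega_{W^\diamond}$ and the measure $\tilde{\m}'$ in place of $\tilde\Omega_\theta$ and $\tilde\m$, with the index sets $\Ga_{u,\ast}$ replaced by $\Ga_{W,\ast}$, and with Proposition \ref{prop.analgromov} and Lemma \ref{lem.analrepeat} used in place of Proposition \ref{lem.dircartan} and Lemma \ref{lem.repeat}. Two elementary facts make this translation nearly mechanical. First, since $\ker p=W\cap\ker\psi\subset\ker\psi$, the form $\psi$ descends to $W^\diamond$ and $\psi(p(v))=\psi(v)$ for $v\in\fa_\theta$; hence every exponential weight $e^{-\psi(\mu_\theta(\ga))}$ appearing in the argument is unchanged, and $\delta=\psi(u)=\psi(p(u))>0$. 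Second, by the inclusions relating $\Ga_{W,R}$ to $\{\ga\in\Ga:\|p(\mu_\theta(\ga))-\br u\|<R\}$ displayed before Proposition \ref{prop.analgromov}, a bound $\|p(\mu_\theta(\ga))-\br u\|<C$ is equivalent, after enlarging $C$ by the fixed constant $c$, to $\ga\in\Ga_{W,cC}$.

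For part (1), I would first prove the analogue of Lemma \ref{lem.shadowlikeintersection}: for compact $Q\subset\tilde\Omega_{W^\diamond}$ and $t>1$, $\tilde{\m}'(Q\cap\ga Q a_{-tu})\ll e^{-\psi(\mu_\theta(\ga))}$ with implied constant independent of $t$. The proof copies that of Lemma \ref{lem.shadowlikeintersection}: Proposition \ref{prop.analgromov}(2) forces $(\xi,\eta)\in O_R^\theta(o,\ga o)\times O_R^{\i(\theta)}(\ga o,o)$ (with $R=R(Q)$) whenever $(\xi,\eta,\bar b)\in Q\cap\ga Q a_{-tu}$; the $\bar b$-integral of $\mathbbm{1}_Q$ is bounded by $\Leb_{W^\diamond}$ of the compact projection of $Q$; the Gromov product is bounded there by Proposition \ref{prop.analgromov}(3); and the shadow lemma (Lemma \ref{lem.shadowlemma}) finishes it. Next, Proposition \ref{prop.analgromov}(1) gives the analogue of Lemma \ref{cor.dircartan}: a nonempty intersection $Q\cap\ga Q a_{-tu}\cap\ga'Q a_{-(t+s)u}$ forces $\|p(\mu_\theta(\ga))-tu\|$, $\|p(\mu_\theta(\ga^{-1}\ga'))-su\|$, $\|p(\mu_\theta(\ga'))-(t+s)u\|<C_1$, whence $\ga,\ga^{-1}\ga'\in\Ga_{W,cC_1}$ and, applying $\psi$, $\psi(\mu_\theta(\ga))+\psi(\mu_\theta(\ga^{-1}\ga'))<\psi(\mu_\theta(\ga'))+3C_1\|\psi\|$. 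From here the double-integral estimate factors exactly as in the proof of Proposition \ref{prop.upperbound}(1) after the substitution $\hat\ga=\ga^{-1}\ga'$; the remaining tail estimate $\sum_{\ga\in\Ga_{W,cC_1},\ \delta T<\psi(\mu_\theta(\ga))\le\delta T+c}e^{-\psi(\mu_\theta(\ga))}\ll1$ again comes from Lemma \ref{lem.shadowlemma} together with the multiplicity bound of Proposition \ref{prop.mult}, both of which are statements about shadows in $\F_\theta$ and need no change.

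For part (2), the analogues of Lemma \ref{lem.prodshadowgromov} and Lemma \ref{lem.lower2} carry over without modification (the former concerns only shadows and Gromov products in $\F_\theta$), so the one step that requires genuine---though routine---care is the analogue of Lemma \ref{lem.findsegment}. Given $\ga\in\Ga_{W,R}$ and a pair $(\xi,\eta)$ in the relevant product of shadows around $\ga o$ intersected with $\La_\theta^{(2)}$, write the point of $W$ nearest to $\mu_\theta(\ga)$ as $t_0u+w''$ with $w''\in W\cap\ker\psi$, and run the construction of Lemma \ref{lem.findsegment}, choosing $g\in G$ with $(g^+,g^-)=(\xi,\eta)$, $d(o,go)$ bounded, and $g^{-1}k\in P$. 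The key point is to translate not by $a_{su}=\exp(su)$ but by $\exp(su+w'')$, which acts the same way on $\tilde\Omega_{W^\diamond}$ since $p(w'')=0$; then the discrepancy $\log a-(t_0u+w'')$ is bounded, the bounded correction $\tilde a$ with $\exp(t_0u+w'')b\tilde a\in A^+$ used in the original proof still exists, and the distance estimates close up. This produces a constant $R'$ and the set $Q:=\{[h]_{W^\diamond}\in\tilde\Omega_{W^\diamond}:\exists\,h'\in hS_\theta\exp(W\cap\ker\psi),\ d(h'o,o)\le R'\}$, which is compact since it is the image of the compact set $\{[h]\in\tilde\Omega_\theta:d(ho,o)\le R'\}$ under the projection $\tilde\Omega_\theta\to\tilde\Omega_{W^\diamond}$, and for which $(\xi,\eta,v)\in Q$ and $(\xi,\eta,v)a_{[t_0-1,t_0+1]u}\subset\ga Q$. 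Granting this lemma, Lemma \ref{lem.lower2} and the summation at the end of the proof of Proposition \ref{prop.lowerbound}(2) go through verbatim with $\Ga_{u,r}$ replaced by $\Ga_{W,R}$.

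I expect this last step---the segment-construction lemma on $\tilde\Omega_{W^\diamond}$---to be the only place that is not a purely mechanical substitution. It is exactly here that quotienting by $W\cap\ker\psi$ is essential: only modulo $W\cap\ker\psi$ does recurrence of the flow direction $p(u)$ to a compact set force the Cartan projection of $\ga$ to lie within bounded distance of $W$. Everything else is bookkeeping, the genuinely new content having already been absorbed into Proposition \ref{prop.analgromov} and Lemma \ref{lem.analrepeat}.
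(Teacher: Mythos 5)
Your proposal is correct and follows the paper's own (terse) proof: the paper simply instructs the reader to run the directional-flow argument of Proposition \ref{prop.upperbound} and Proposition \ref{prop.lowerbound} verbatim, with Proposition \ref{prop.analgromov} and Lemma \ref{lem.analrepeat} replacing Proposition \ref{lem.dircartan} and Lemma \ref{lem.repeat}, which is exactly what you do. Your observation that the segment-construction step (the analogue of Lemma \ref{lem.findsegment}) requires translating by $\exp(su+w'')$ with $w''\in W\cap\ker\psi$ rather than by $\exp(su)$, and that this is harmless because $w''$ projects trivially to $W^\diamond$, is a genuine detail the paper leaves unstated, and it is correctly handled.
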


The proof of Theorem \ref{thm.convergencedir} works verbatim for $\La_{\theta}^W$ so that the convergence $\sum_{\ga \in \Ga_{W, R}} e^{-\psi(\mu_{\theta}(\ga))} < \infty$ for all $R > 0$ implies that $\nu(\La_{\theta}^W) = 0$. Using Proposition \ref{prop.analupperlower} together with the $W$-balanced condition,  Theorem \ref{thm.subspacepoincare} can now be proved by the same argument as in the proof of Corollary \ref{thm.directionalpoincare}. 

\begin{rmk}\label{fine}
 The  $W$-balanced condition on $\m$ was needed because $Q$ and $Q'$ in Proposition \ref{prop.analupperlower} may not be the same  in principle.
 However when $W=\fa_\theta$, we have
 $\Ga_{W, R}=\Ga$ for any $R>0$
 and  $Q$ and $Q'$ in Proposition \ref{prop.analupperlower}
   can be taken to be the same set, and hence
the $W$-balanced condition is not needed in the proof of Theorem \ref{thm.subspacedichotomy}.
\end{rmk}

Similarly to Corollary \ref{useful}, we have the following
 estimates  which reduce the divergence
    of the series $\sum_{\ga \in \Ga_{W, R} } e^{-\psi(\mu_{\theta}(\ga))}$ to the local mixing rate for the $a_t$-flow:
\begin{cor} \label{useful2}  For all sufficiently large $R>0$, there exist compact subsets $Q_1, Q_2$ of $\Omega_{W^\diamond}$ with non-empty interior such that
for all $T\ge 1$,
$$\left(\int_0^T  {\m'}(Q_1 \cap Q_1 a_{-t}) dt \right)^{1/2}
\ll  \sum_{\substack{\ga \in \Ga_{W, R} \\ \psi(\mu_{\theta}(\ga)) \le \delta T}} e^{-\psi(\mu_{\theta}(\ga))} \ll \int_0^T  {\m'}(Q_2 \cap Q_2 a_{-t}) dt  .$$
\end{cor}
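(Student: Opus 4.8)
The plan is to follow the proof of Corollary \ref{useful} essentially line by line, working on $\tilde\Omega_{W^\diamond}$ and $\Omega_{W^\diamond}$ in place of $\tilde\Omega_\theta$ and $\Omega_\theta$, and invoking Proposition \ref{prop.analupperlower} in place of Proposition \ref{prop.lowerbound}. Throughout we write $a_t=a_{tu}=\exp(tu)$, we have $\delta=\psi(u)>0$ by the $W$-balanced hypothesis, and we use that the $\Ga$-action on $\tilde\Omega_{W^\diamond}$ is properly discontinuous (noted just before \eqref{eqn.V}, as a consequence of Theorem \ref{thm:opsi}).

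First I would fix a compact subset $Q\subset\tilde\Omega_{W^\diamond}$ with non-empty interior. By Proposition \ref{prop.analupperlower}(1) there is $R_0=R_0(Q)>0$ such that for all $R\ge R_0$ and all $T\ge 1$,
$$\int_0^T\int_0^T\sum_{\ga,\ga'\in\Ga}\tilde{\m}'\big(Q\cap\ga Qa_{-tu}\cap\ga'Qa_{-(t+s)u}\big)\,dt\,ds\ \ll\ \Big(\sum_{\substack{\ga\in\Ga_{W,R}\\ \psi(\mu_\theta(\ga))\le\delta T}}e^{-\psi(\mu_\theta(\ga))}\Big)^2 .$$
Choosing $\e\in(0,1)$ small enough that $Q^-:=\bigcap_{0\le s\le\e}Qa_{-su}$ still has non-empty interior, the inclusion $Q^-\cap\ga Q^-a_{-tu}\subset Q\cap\ga Qa_{-tu}\cap\ga Qa_{-(t+s)u}$, valid for $0\le s\le\e$, gives
$$\e\int_0^T\sum_{\ga\in\Ga}\tilde{\m}'(Q^-\cap\ga Q^-a_{-tu})\,dt\ \le\ \int_0^T\int_0^\e\sum_{\ga\in\Ga}\tilde{\m}'\big(Q\cap\ga Qa_{-tu}\cap\ga Qa_{-(t+s)u}\big)\,ds\,dt,$$
whose right-hand side is bounded by the left-hand side of the first display; hence for all $R\ge R_0$ and $T\ge1$,
$$\int_0^T\sum_{\ga\in\Ga}\tilde{\m}'(Q^-\cap\ga Q^-a_{-tu})\,dt\ \ll\ \Big(\sum_{\substack{\ga\in\Ga_{W,R}\\ \psi(\mu_\theta(\ga))\le\delta T}}e^{-\psi(\mu_\theta(\ga))}\Big)^2 .$$
For the opposite inequality I would apply Proposition \ref{prop.analupperlower}(2): for each $R>0$ there is a compact $Q'=Q'(R)\subset\tilde\Omega_{W^\diamond}$, which after enlarging we may take to have non-empty interior, with
$$\int_0^T\sum_{\ga\in\Ga}\tilde{\m}'(Q'\cap\ga Q'a_{-tu})\,dt\ \gg\ \sum_{\substack{\ga\in\Ga_{W,R}\\ \psi(\mu_\theta(\ga))\le\delta T}}e^{-\psi(\mu_\theta(\ga))} .$$

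Finally I would pass from these $\Ga$-indexed sums on $\tilde\Omega_{W^\diamond}$ to self-intersection measures on $\Omega_{W^\diamond}$. Set $Q_1:=\Ga\ba\Ga Q^-$ and $Q_2:=\Ga\ba\Ga Q'$; both are compact with non-empty interior. By proper discontinuity of the $\Ga$-action, $\#\{\ga\in\Ga:Q^-a_{-tu}\cap\ga Q^-a_{-tu}\ne\emptyset\}$ is bounded independently of $t$ (and likewise with $Q'$), since all these translates remain in a fixed compact subset of $\Omega_{W^\diamond}$ enlarged only along $\R u$; consequently $\m'(Q_1\cap Q_1a_{-tu})\asymp\sum_{\ga\in\Ga}\tilde{\m}'(Q^-\cap\ga Q^-a_{-tu})$ and $\m'(Q_2\cap Q_2a_{-tu})\asymp\sum_{\ga\in\Ga}\tilde{\m}'(Q'\cap\ga Q'a_{-tu})$, with implied constants independent of $t$. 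Combining this with the two displays above — and taking a square root in the first — yields the asserted chain of inequalities for every $R\ge R_0$. Since this is a direct transcription of the proof of Corollary \ref{useful} with the subspace inputs of Proposition \ref{prop.analupperlower}, I do not expect a genuine obstacle; the only point deserving care is the $t$-uniformity of the multiplicity bound in this last step, which is exactly where proper discontinuity of the $\Ga$-action on $\tilde\Omega_{W^\diamond}$ (inherited from $\tilde\Omega_\psi$) is used.
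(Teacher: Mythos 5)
Your proposal is correct and is exactly the argument the paper intends: the paper gives no separate proof of Corollary \ref{useful2}, stating only that it holds ``Similarly to Corollary \ref{useful},'' and you have carried out precisely that transcription, replacing $\tilde\Omega_\theta$, $\tilde\m$, and Proposition \ref{prop.lowerbound} by $\tilde\Omega_{W^\diamond}$, $\tilde\m'$, and Proposition \ref{prop.analupperlower}, with the quotient step handled by the same $t$-uniform multiplicity bound. The one phrasing worth tightening is the justification of that multiplicity bound: the cleanest reason is that $Q^-a_{-tu}\cap\ga Q^-a_{-tu}\ne\emptyset$ if and only if $Q^-\cap\ga Q^-\ne\emptyset$ (since the left $\Ga$-action commutes with the right $A_u$-action), so the count is literally constant in $t$, rather than relying on the translates staying in a fixed compact set.
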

\section{Dichotomy theorems for Anosov subgroups}

In this last section, we focus on Anosov subgroups and establish the codimension dichotomy for ergodicity of the subspace flow given by $\exp W$, for a linear subspace $W<\fa_\theta$. Using the local mixing theorem for directional flows for Anosov subgroups, we show that the Poincar\'e series associated to $W$ diverges  if and only if the codimension of the subspace $W$ in $\fa_\theta$ is at most 2.

\medskip

Let $\Ga < G$ be a Zariski dense $\theta$-Anosov subgroup defined as in the introduction. Recall that $\L_{\theta}\subset \fa_\theta^+$ denotes the $\theta$-limit cone of $\Ga$.
Denote by $\cal T_{\Ga}^{\theta}\subset \fa_\theta^*$ the set of all linear forms tangent to the growth indicator $\psi_{\Ga}^{\theta}$ and by $\cal M_{\Ga}^{\theta}$ the set of all $\Ga$-conformal measures on $\La_{\theta}$. There are one-to-one correspondences between the following sets (\cite[Coro. 1.12]{KOW_indicators}, \cite[Thm. A]{sambarino2022report}): $$\P(\inte \L_{\theta}) \longleftrightarrow \cal T_\Ga ^{\theta} \longleftrightarrow \cal M_{\Ga}^{\theta}.$$ Namely, for each unit vector $v \in \inte \L_{\theta}$, there exists a unique $\psi_v \in \fa_{\theta}^*$ which is tangent to $\psi_{\Ga}^{\theta}$ at $v$ and a unique $(\Ga, \psi_v)$-conformal measure $\nu_v$ supported on $\La_{\theta}$. The linear form $\psi_v \circ \i\in \fa_{\i(\theta)}^*$ 
is tangent to $\psi_\Ga^{\i(\theta)}$ at $\i(v)$ and the measure
$\nu_{\i(v)}$ is a ($\Ga, \psi_v \circ \i)$-conformal measure on $\La_{\i(\theta)}$.
Denote by $\m_v$ the BMS measure on $\Omega_{\theta}$ associated with the pair $(\nu_v, \nu_{\i(v)})$. 

What distinguishes $\theta$-Anosov subgroups from general $\theta$-transverse subgroups is that
$\Omega_{\psi_v}$ is a {\it compact} metric space (\cite{sambarino_orbit} and \cite[Appendix]{Carvajales}) and hence
 $\Omega_\theta$ is a vector bundle over a {\it compact} space $\Omega_{\psi_v}$ with fiber $\ker \psi_v\simeq \br^{\#\theta -1}$.
We use the the following local mixing for directional flows due to Sambarino.

\begin{theorem}[{\cite[Thm. 2.5.2]{sambarino2022report}, see also \cite{CS_local} for $\theta=\Pi$}] \label{thm.mixing}
Let $\Ga < G$ be a $\theta$-Anosov subgroup and $v \in \inte \L_{\theta}$. Then there exists $\kappa_v > 0$ such that for any $f_1, f_2 \in C_c(\Omega_{\theta})$, $$\lim_{t \to  \infty} t^{\frac{\# \theta - 1}{2}} \int_{\Omega_{\theta}} f_1(x) f_2(x \exp(tv)) d\m_v(x) = \kappa_v \m_v(f_1) \m_v(f_2).$$
\end{theorem}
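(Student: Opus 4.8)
This theorem is due to Sambarino \cite{sambarino2022report} (the case $\theta=\Pi$ is also in \cite{CS_local}); I outline the strategy one would follow. The idea is to present the directional flow on $\Omega_\theta$ as a fibered system over a \emph{compact} base and to reduce the asymptotics to a $(\#\theta-1)$-dimensional local central limit theorem on the fibers.

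First I would set up the fibration. Since $v\in\inte\L_\theta$ and $\psi_v$ is tangent to $\psi_\Ga^\theta$ at $v$, we have $\psi_v(v)=\psi_\Ga^\theta(v)>0$, so $\fa_\theta=\R v\oplus\ker\psi_v$. Using the trivial $\ker\psi_v$-bundle $\Omega_\theta\simeq\Omega_{\psi_v}\times\ker\psi_v$ from \eqref{trivial}, the flow $\{\exp(tv)\}$ advances the $\Omega_{\psi_v}$-coordinate by the reparametrization factor $\psi_v(v)$, while the motion along the fiber $\ker\psi_v\simeq\R^{\#\theta-1}$ is produced, upon passing to the $\Ga$-quotient, by the cocycle given by the $\ker\psi_v$-component of $\ga\mapsto\beta_\xi^\theta(\ga^{-1},e)$. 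Because $\Ga$ is $\theta$-Anosov, $\Omega_{\psi_v}$ is a compact metric space (\cite{sambarino_orbit}, \cite[Appendix]{Carvajales}), the base flow $\{\phi_s\}$ is a topologically mixing metric-Anosov (Smale) flow, and the projection $\m^{\psi_v}$ of $\m_v$ is the corresponding equilibrium state --- finite, $\{\phi_s\}$-invariant, with full support.

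Next I would invoke thermodynamic formalism. One codes $\{\phi_s\}$ as a suspension of a two-sided subshift of finite type $(\Sigma,\sigma)$ under a H\"older roof function (Bowen--Ratner/Pollicott, using the Anosov/convergence-group structure of $\Ga$ on $\La_\theta$), so that the $\ker\psi_v$-valued cocycle is represented by a H\"older function $\kappa:\Sigma\to\ker\psi_v$ and the local-mixing asymptotics reduce to a local limit theorem for the Birkhoff sums of $\kappa$ along the suspension. This is the Nagaev--Guivarc'h/transfer-operator method: one perturbs the Ruelle operator normalized so that its leading eigenvalue is $1$ (the normalization producing $\m^{\psi_v}$) into twisted operators $\mathscr{L}_w$, $w\in(\ker\psi_v)^*$, and Ruelle--Perron--Frobenius perturbation theory gives a simple leading eigenvalue $\lambda(w)=1-\tfrac12\Xi_v(w,w)+o(\|w\|^2)$ near $w=0$. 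The absence of a linear term in $w$ is exactly the statement that the $\m^{\psi_v}$-barycenter of $\kappa$ vanishes, which is the first-order variational condition equivalent to the tangency of $\psi_v$ to $\psi_\Ga^\theta$ at $v$. Feeding $\lambda(w)$ into Fourier inversion, together with a Dolgopyat-type spectral bound on $\mathscr{L}_w$ for $w$ away from $0$ and exponential mixing of $\{\phi_s\}$ on $(\Omega_{\psi_v},\m^{\psi_v})$ to absorb $f_1,f_2$, one obtains
$$\lim_{t\to\infty}t^{\frac{\#\theta-1}{2}}\int_{\Omega_\theta}f_1(x)f_2(x\exp(tv))\,d\m_v(x)=\kappa_v\,\m_v(f_1)\m_v(f_2)$$
for an explicit $\kappa_v>0$ determined by $\det\Xi_v$ and the total mass of $\m^{\psi_v}$.

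The main obstacle is the positivity part together with the uniform spectral analysis. One must show $\Xi_v$ is nondegenerate: otherwise $\kappa$ would be cohomologous to a cocycle valued in a proper subspace of $\ker\psi_v$, forcing $p_\theta(\lambda(\Ga))$ --- hence $\mu_\theta(\Ga)$ --- to lie within bounded distance of a proper affine subspace of $\fa_\theta$, contradicting Theorem \ref{thm.Benoistdense} (this is where Zariski density enters). And one must establish the spectral gap for the unperturbed operator with effective control of $\mathscr{L}_w$ both near and away from $w=0$; constructing a workable Markov coding for a general $\theta$-Anosov subgroup (beyond the Borel-Anosov or surface-group cases) and checking the H\"older regularity of the reparametrization and of $\kappa$ is the technical backbone on which everything above rests.
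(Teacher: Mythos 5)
The paper does not prove Theorem \ref{thm.mixing}; it is imported as a black box from Sambarino \cite{sambarino2022report} (and from \cite{CS_local} in the Borel case $\theta=\Pi$), and the present paper's contribution begins only downstream of it (Corollary \ref{cor.div3}, Propositions \ref{prop.poincareintegral} and \ref{prop.poincareintegral2}). So there is no internal proof to compare your sketch against, and you are right to flag at the outset that you are outlining the strategy of the cited references rather than reproducing anything in this paper.

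As a sketch of Sambarino's argument your outline is sound and hits the right landmarks: the splitting $\Omega_\theta\simeq\Omega_{\psi_v}\times\ker\psi_v$ from \eqref{trivial}, compactness of $\Omega_{\psi_v}$ for Anosov $\Ga$, the metric Anosov structure of the reparametrized base flow, a Markov coding reducing matters to a suspension of a subshift of finite type, the Nagaev--Guivarc'h perturbed transfer operator producing a $(\#\theta-1)$-dimensional local CLT for the $\ker\psi_v$-valued cocycle, and nondegeneracy of the covariance via Benoist's density of Jordan projections (Theorem \ref{thm.Benoistdense}), which is exactly where Zariski density is consumed. Two small cautions. First, the vanishing of the drift term in $\lambda(w)$ is not quite the tangency of $\psi_v$ to $\psi_\Ga^\theta$ per se; it is the choice of the equilibrium state $\m^{\psi_v}$ as reference measure and the identification of the $\m^{\psi_v}$-mean of the $\fa_\theta$-cocycle with (a scalar multiple of) $v$ that kills the linear term after projecting to $\ker\psi_v$ --- tangency is the thermodynamic characterization of that choice, so the logic is circular-sounding unless stated carefully. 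Second, for the $w$-integral one needs aperiodicity (strict spectral contraction of $\mathscr{L}_w$ for $w\neq 0$, again from nonarithmeticity of the periods), which is distinct from a Dolgopyat estimate; the latter concerns the frequency dual to the flow/roof direction and is relevant for rates, not for this qualitative statement. Neither issue undermines the sketch as a map of the literature, but both are the kind of detail one would have to be exact about if actually writing the proof rather than citing it.
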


In particular, for any $v \in \inte \L_{\theta}$, $\m_v$ is $v$-balanced.

\begin{corollary} \label{cor.div3}
    For any $v \in \inte \L_{\theta}$ and any bounded Borel subset $Q \subset \tilde{\Omega}_{\theta}$ with non-empty interior, we have for any $T>2$,
     $$\int_0 ^{T} \sum_{\ga \in \Ga} \tilde{\m}_v(Q \cap \ga Q \exp(-tv)) dt 
      \asymp\int_1^T  t^{\frac{1 - \# \theta}{2}} dt.$$
\end{corollary}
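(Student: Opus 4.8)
The statement is essentially the $\#\theta$-dimensional analogue (with $W=\fa_\theta$, so $W^\diamond = \br$ via $\psi_v$, and $\codim W = \#\theta - 1$ as the relevant exponent) of Corollary \ref{useful}, now combined with the quantitative local mixing rate of Theorem \ref{thm.mixing}. The plan is to bound the quantity $\int_0^T \sum_{\ga\in\Ga} \tilde\m_v(Q\cap \ga Q\exp(-tv))\,dt$ from above and below by $\int_1^T t^{(1-\#\theta)/2}\,dt$, up to multiplicative constants, using two different comparisons: on one side the correlation integral on the compact-base flow space $\Omega_{\psi_v}$ (where Theorem \ref{thm.mixing} applies directly), and on the other side the machinery of Section \ref{sec.dirpoincare} which ties the same sum to the partial directional Poincaré series.

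First I would unfold the sum. Since $Q$ is bounded in $\tilde\Omega_\theta$, the cardinality $\#\{\ga\in\Ga : Q\exp(-tv)\cap \ga Q\exp(-tv)\ne\emptyset\}$ is uniformly bounded in $t$ (as in the proof of Theorem \ref{prop.mainprop}), so passing to the quotient we have $\sum_{\ga\in\Ga}\tilde\m_v(Q\cap\ga Q\exp(-tv)) \asymp \m_v([Q]\cap [Q]\exp(-tv))$ with implied constants independent of $t$. Writing $[Q]$ as squeezed between two compact sets with nonempty interior carrying bump functions $f_1\le \mathbbm 1_{[Q]}\le f_2$ in $C_c(\Omega_\theta)$ with $\m_v(f_1)>0$, Theorem \ref{thm.mixing} gives $\m_v([Q]\cap[Q]\exp(-tv)) = t^{-(\#\theta-1)/2}(\kappa_v\,\m_v(\mathbbm 1_{[Q]})^2 + o(1))$ as $t\to\infty$, hence $\m_v([Q]\cap[Q]\exp(-tv)) \asymp t^{(1-\#\theta)/2}$ for all $t$ large; for small $t$ the integrand is bounded, so integrating from $0$ to $T$ yields $\int_0^T \sum_\ga \tilde\m_v(Q\cap\ga Q\exp(-tv))\,dt \asymp \int_1^T t^{(1-\#\theta)/2}\,dt$ directly, provided $\#\theta\ge 2$; when $\#\theta=1$ both sides are $\asymp T$ and the claim is immediate. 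Here I use that $\int_1^T t^{(1-\#\theta)/2}\,dt \to\infty$ when $\#\theta\le 3$ and stays bounded when $\#\theta\ge 4$, but the asymptotic $\asymp$ holds in all cases with the convention that both sides may be bounded.

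The subtlety is that when $\int_1^T t^{(1-\#\theta)/2}\,dt$ stays bounded (i.e.\ $\#\theta\ge 4$), the relation $f(T)\asymp g(T)$ as defined in the paper requires $f(T),g(T)\to\infty$, so strictly speaking one should either restrict to $\#\theta\le 3$ or read the displayed $\asymp$ loosely; I would simply note that the two-sided bound $\int_0^T\sum_\ga\tilde\m_v(Q\cap\ga Q\exp(-tv))\,dt \ll \int_1^T t^{(1-\#\theta)/2}\,dt$ and the reverse inequality hold with constants independent of $T$, which is what is used downstream. The only genuine content beyond Theorem \ref{thm.mixing} is the uniform comparison of $\sum_\ga\tilde\m_v(Q\cap\ga Q\exp(-tv))$ with $\m_v([Q]\cap[Q]\exp(-tv))$ for all $t>0$ (not just large $t$), which follows from the bounded-multiplicity observation above together with the fact that $\tilde\m_v$ is finite on compact sets; the $t\to 0$ behaviour contributes only a bounded amount to the integral and hence does not affect the $\asymp$.

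\textbf{Main obstacle.} The only real point requiring care is the passage from the local mixing limit (valid as $t\to\infty$) to the pointwise comparison $\m_v([Q]\cap[Q]\exp(-tv))\asymp t^{(1-\#\theta)/2}$ uniformly for $t\ge 1$: for large $t$ this is Theorem \ref{thm.mixing}, but for $t$ in a compact range one needs a separate lower bound, which I would obtain by choosing $Q$ with nonempty interior so that $Q\cap Q\exp(-tv)\supset$ a fixed small box for all $t$ in that range, giving a positive lower bound there; the upper bound on that range is trivial since $\tilde\m_v(Q)<\infty$. Combining this with the bounded-multiplicity reduction and integrating gives the asserted two-sided estimate.
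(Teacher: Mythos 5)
Your proposal is correct and follows essentially the same route as the paper: squeeze $\mathbbm{1}_Q$ between compactly supported bump functions and invoke the local mixing theorem (Theorem \ref{thm.mixing}) to get the two-sided estimate $\asymp t^{(1-\#\theta)/2}$, then integrate. The only cosmetic difference is that you detour through $\m_v([Q]\cap[Q]\exp(-tv))$ via a bounded-multiplicity comparison, whereas the paper's proof sandwiches $\sum_\ga\tilde\m_v(Q\cap\ga Q\exp(-tv))$ directly: with $\tilde f_1\le\mathbbm{1}_Q\le\tilde f_2$ in $C_c(\tilde\Omega_\theta)$ and $f_i(\Ga g)=\sum_\ga\tilde f_i(\ga g)$, the sum equals the correlation $\int_{\Omega_\theta}F\cdot F(\cdot\exp(tv))\,d\m_v$ of $F(\Ga g)=\sum_\ga\mathbbm{1}_Q(\ga g)$, which is caught between the correlations of $f_1$ and $f_2$ with no multiplicity argument needed.
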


\begin{proof}
    Given a bounded Borel subset $Q \subset \tilde{\Omega}_{\theta}$ with non-empty interior, we choose $\tilde{f}_1, \tilde{f}_2 \in C_c(\tilde{\Omega}_{\theta})$ so that $0 \le \tilde{f}_1 \le \mathbbm{1}_Q \le \tilde{f}_2$ and $\tilde{\m}_v(\tilde{f}_1) > 0$. For each $i =1, 2$, we define the function $f_i \in C_c(\Omega_{\theta})$ by $f_i(\Ga[g]) = \sum_{\ga \in \Ga} \tilde{f}_i(\ga g)$. By Theorem \ref{thm.mixing}, for each $i=1,2$, we have  that for all $t \ge 1$, $$\begin{aligned}
        \int_{\tilde{\Omega}_{\theta}} \sum_{\ga \in \Ga} \tilde{f}_i(\ga [g] \exp(tv)) \tilde{f}_i([g]) d\tilde{\m}_v([g]) & = \int_{\Omega_{\theta}} f_i(x \exp(tv)) f_i(x) d\m_v(x) \\
        & \asymp t^{\frac{1 - \# \theta}{2}}.
    \end{aligned}$$
\end{proof}

By Corollary \ref{useful} and Corollary \ref{cor.div3}, we get:
\begin{prop} \label{prop.poincareintegral} Let $v\in \inte\L_\theta$ and $\delta=\psi_v(v)$. For all sufficiently large $r>0$, we have that for all $T>2$,
     \be\label{div0} \left(\int_1^T  t^{\frac{1 - \# \theta}{2}} dt\right)^{1/2} \ll 
     \sum_{\substack{\ga \in \Ga_{v, r}  \\ \psi_v(\mu_{\theta}(\ga)) \le \delta T}} e^{-\psi_v(\mu_{\theta}(\ga))}  \ll \int_1^T  t^{\frac{1 - \# \theta}{2}} dt .\ee
\end{prop}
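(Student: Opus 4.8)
The statement follows by combining two results already in hand: Corollary \ref{useful} (applied with the direction $u = v$ and the form $\psi = \psi_v$, noting that $v \in \inte \L_\theta$ forces $\psi_v(v) = \delta > 0$, so the $\psi_v$-Poincar\'e series in the direction $v$ is governed by the local mixing of the $a_{tv}$-flow), and Corollary \ref{cor.div3} (the local mixing asymptotic for $\m_v$, which is $v$-balanced by Theorem \ref{thm.mixing}). The idea is simply to chain the chain of $\asymp$ and $\ll$ relations: Corollary \ref{useful} gives compact subsets $Q_1, Q_2 \subset \Omega_\theta$ with non-empty interior such that
\[
\left(\int_0^T \m_v(Q_1 \cap Q_1 a_{-t})\,dt\right)^{1/2} \ll \sum_{\substack{\ga \in \Ga_{v,r} \\ \psi_v(\mu_\theta(\ga)) \le \delta T}} e^{-\psi_v(\mu_\theta(\ga))} \ll \int_0^T \m_v(Q_2 \cap Q_2 a_{-t})\,dt
\]
for all $T \ge 1$ and all sufficiently large $r$, where $a_t = a_{tv} = \exp(tv)$. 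Then Corollary \ref{cor.div3}, applied to compact lifts $\tilde Q_1, \tilde Q_2 \subset \tilde\Omega_\theta$ of $Q_1, Q_2$ with non-empty interior, gives $\int_0^T \sum_{\ga\in\Ga} \tilde\m_v(\tilde Q_i \cap \ga \tilde Q_i \exp(-tv))\,dt \asymp \int_1^T t^{(1-\#\theta)/2}\,dt$ for $i = 1,2$ and $T > 2$.

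The only bookkeeping step is to pass between $\m_v(Q_i \cap Q_i a_{-t})$ on the quotient $\Omega_\theta$ and $\sum_{\ga\in\Ga}\tilde\m_v(\tilde Q_i \cap \ga\tilde Q_i\exp(-tv))$ on $\tilde\Omega_\theta$. This is exactly the comparison used in the proof of Theorem \ref{prop.mainprop}: since $\#\{\ga\in\Ga : \tilde Q_i a_{-t}\cap \ga \tilde Q_i a_{-t}\neq\emptyset\}$ is uniformly bounded independent of $t$ (as $\Ga$ acts properly discontinuously on $\tilde\Omega_\theta$ and $\tilde Q_i$ is compact), one has $\m_v(Q_i\cap Q_i a_{-t}) \asymp \sum_{\ga\in\Ga}\tilde\m_v(\tilde Q_i\cap\ga\tilde Q_i a_{-t})$ with implied constants independent of $t$. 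Integrating over $t\in[0,T]$ preserves the $\asymp$, so $\int_0^T \m_v(Q_i\cap Q_i a_{-t})\,dt \asymp \int_1^T t^{(1-\#\theta)/2}\,dt$ for $T > 2$ (the contribution of $t\in[0,1]$ being bounded and hence absorbed). Substituting these two asymptotics into the displayed chain from Corollary \ref{useful} yields \eqref{div0}.

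I do not expect any genuine obstacle here: both Corollary \ref{useful} and Corollary \ref{cor.div3} are already established in the excerpt, and the argument is a purely mechanical concatenation plus the standard quotient-to-cover counting comparison. The only point requiring a word of care is ensuring that the compact sets $Q_1, Q_2$ furnished by Corollary \ref{useful} indeed have non-empty interior (so that Corollary \ref{cor.div3} applies to their lifts) — but this is part of the conclusion of Corollary \ref{useful}. Thus the proof reduces to: invoke Corollary \ref{useful} with $u=v$, $\psi=\psi_v$ to sandwich the partial Poincar\'e sum between two flow-correlation integrals; invoke Corollary \ref{cor.div3} to evaluate each flow-correlation integral as $\asymp \int_1^T t^{(1-\#\theta)/2}\,dt$; and conclude.

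\begin{proof}
Apply Corollary \ref{useful} with the direction $u = v \in \inte\L_\theta$ and the $(\Ga,\theta)$-proper linear form $\psi = \psi_v$; note $\delta = \psi_v(v) = \psi_v(u) > 0$ since $v \in \inte\L_\theta$. Writing $a_t = a_{tv} = \exp(tv)$, there exist $r_0 > 0$ and, for each $r \ge r_0$, compact subsets $Q_1 = Q_1(r), Q_2 = Q_2(r) \subset \Omega_\theta$ with non-empty interior such that for all $T \ge 1$,
\begin{equation} \label{eqn.sandwich}
\left(\int_0^T \m_v(Q_1 \cap Q_1 a_{-t})\,dt\right)^{1/2} \ll \sum_{\substack{\ga \in \Ga_{v,r} \\ \psi_v(\mu_\theta(\ga)) \le \delta T}} e^{-\psi_v(\mu_\theta(\ga))} \ll \int_0^T \m_v(Q_2 \cap Q_2 a_{-t})\,dt.
\end{equation}

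Fix $r \ge r_0$ and, for $i = 1, 2$, choose a compact lift $\tilde Q_i \subset \tilde\Omega_\theta$ of $Q_i$ with non-empty interior. Since $\Ga$ acts properly discontinuously on $\tilde\Omega_\theta$ (Theorem \ref{thm.propdisckow}) and $\tilde Q_i$ is compact, the number $\#\{\ga \in \Ga : \tilde Q_i a_{-t} \cap \ga \tilde Q_i a_{-t} \neq \emptyset\}$ is bounded independently of $t > 0$. Hence, as in the proof of Theorem \ref{prop.mainprop},
$$\m_v(Q_i \cap Q_i a_{-t}) \asymp \sum_{\ga \in \Ga} \tilde\m_v(\tilde Q_i \cap \ga \tilde Q_i a_{-t}) \qquad \text{for all } t > 0,$$
with implied constants independent of $t$. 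Integrating over $t \in [0, T]$ and applying Corollary \ref{cor.div3} to $\tilde Q_i$ (valid since $\tilde Q_i$ has non-empty interior), we obtain for each $i = 1, 2$ and all $T > 2$,
$$\int_0^T \m_v(Q_i \cap Q_i a_{-t})\,dt \asymp \int_0^T \sum_{\ga \in \Ga} \tilde\m_v(\tilde Q_i \cap \ga \tilde Q_i \exp(-tv))\,dt \asymp \int_1^T t^{\frac{1 - \#\theta}{2}}\,dt,$$
where the bounded contribution of $t \in [0,1]$ is absorbed into the implied constants since $\int_1^T t^{(1-\#\theta)/2}\,dt \to \infty$ as $T \to \infty$ when $\#\theta \le 3$, and is comparable to a constant when $\#\theta \ge 4$ (in which case the left-hand side is also bounded). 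Substituting these two asymptotics into \eqref{eqn.sandwich} yields
$$\left(\int_1^T t^{\frac{1 - \#\theta}{2}}\,dt\right)^{1/2} \ll \sum_{\substack{\ga \in \Ga_{v,r} \\ \psi_v(\mu_\theta(\ga)) \le \delta T}} e^{-\psi_v(\mu_\theta(\ga))} \ll \int_1^T t^{\frac{1 - \#\theta}{2}}\,dt$$
for all $T > 2$, which is \eqref{div0}.
\end{proof}
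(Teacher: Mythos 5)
Your proof is correct and follows exactly the approach that the paper indicates (the paper's proof of Proposition \ref{prop.poincareintegral} is literally the single sentence ``By Corollary \ref{useful} and Corollary \ref{cor.div3}, we get:''). You simply spell out the quotient-to-cover bookkeeping — relating $\m_v(Q_i \cap Q_i a_{-t})$ on $\Omega_\theta$ to $\sum_{\ga}\tilde\m_v(\tilde Q_i \cap \ga \tilde Q_i a_{-t})$ on $\tilde\Omega_\theta$ via the uniform multiplicity bound from proper discontinuity — which the paper leaves implicit (and which is anyway embedded in the proof of Corollary \ref{useful}, whose derivation already works at the level of $\tilde\m$ and the orbit sum before passing to $\Omega_\theta$ in the last line).

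One point worth being slightly more explicit about: Proposition \ref{prop.upperbound}/\ref{prop.lowerbound} (and hence Corollary \ref{useful}) is formally stated under the divergence hypothesis $\sum_{\ga\in\Ga_{u,r}} e^{-\psi(\mu_\theta(\ga))}=\infty$, which is used there only to conclude $\delta=\psi(u)>0$ via Lemma \ref{lem.tangent}. In the Anosov setting you invoke it with $u=v\in\inte\L_\theta$ and $\psi=\psi_v$, so $\delta=\psi_v(v)>0$ holds unconditionally, and Corollary \ref{useful} applies in both the divergent ($\#\theta\le 3$) and convergent ($\#\theta\ge 4$) regimes — which is essential since Proposition \ref{prop.poincareintegral} is meant to cover both. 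You do record $\delta>0$, but flagging that this is exactly what replaces the divergence hypothesis would make the proof airtight.
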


\begin{theorem} \label{thm.thetarank}
    For any  $v \in \inte \L_{\theta}$ and $u \in \fa_{\theta}^+ - \{0\}$,
    the following are equivalent:
    \begin{enumerate}
        \item $\# \theta \le 3$ and $\R u = \R v$;
        \item $\sum_{\ga \in \Ga_{u, r}} e^{-\psi_v(\mu_{\theta}(\ga))} = \infty$ for some $r > 0$.
    \end{enumerate}
\end{theorem}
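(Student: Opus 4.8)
The plan is to deduce Theorem \ref{thm.thetarank} by combining the two‑sided estimate of Proposition \ref{prop.poincareintegral} with Lemma \ref{lem.tangent} and the rigidity of tangent linear forms for $\theta$-Anosov subgroups recorded at the start of this section. The one numerical input is that $\int_1^\infty t^{(1-\#\theta)/2}\,dt=\infty$ precisely when $\#\theta\le 3$: for $\#\theta=1,2,3$ the integrand equals $1$, $t^{-1/2}$, $t^{-1}$, all non-integrable at $+\infty$, whereas for $\#\theta\ge 4$ one has $(1-\#\theta)/2\le-3/2$ and the integral converges. I write $\delta=\psi_v(v)>0$ throughout, and I use the elementary observation that $\Ga_{u,r}=\{\ga\in\Ga:\|\mu_\theta(\ga)-\R u\|<r\}$ depends only on the line $\R u$, so $\R u=\R v$ implies $\Ga_{u,r}=\Ga_{v,r}$ for all $r>0$.

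For the implication (1) $\Rightarrow$ (2), I would assume $\#\theta\le 3$ and $\R u=\R v$, choose $r$ large enough for Proposition \ref{prop.poincareintegral} to apply, and invoke its lower bound: for every $T>2$,
$$\sum_{\substack{\ga\in\Ga_{u,r}\\ \psi_v(\mu_\theta(\ga))\le \delta T}}e^{-\psi_v(\mu_\theta(\ga))}\;\gg\;\Big(\int_1^T t^{(1-\#\theta)/2}\,dt\Big)^{1/2},$$
and the right-hand side tends to $\infty$ as $T\to\infty$ since $\#\theta\le 3$; hence $\sum_{\ga\in\Ga_{u,r}}e^{-\psi_v(\mu_\theta(\ga))}=\infty$, which is (2).

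For the implication (2) $\Rightarrow$ (1), the first step is to establish $\R u=\R v$. Since $\nu_v$ is a $(\Ga,\psi_v)$-conformal measure supported on $\La_\theta$, the form $\psi_v$ is $(\Ga,\theta)$-proper (by \cite[Prop. 10.1]{BCZZ2}), so Lemma \ref{lem.tangent} applies to the hypothesis $\sum_{\ga\in\Ga_{u,r_0}}e^{-\psi_v(\mu_\theta(\ga))}=\infty$ and yields $\psi_v(u)>0$ together with $\psi_v(u)=\psi_\Ga^\theta(u)$. In particular $u\in\L_\theta$, and since $\psi_v\ge\psi_\Ga^\theta$ with $\psi_v$ linear and $\psi_\Ga^\theta$ concave, the set $C=\{w\in\L_\theta:\psi_v(w)=\psi_\Ga^\theta(w)\}$ is a convex cone containing both $v$ and $u$. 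As $v\in\inte\L_\theta$, the points $(1-s)v+su$ lie in $C\cap\inte\L_\theta$ for all small $s>0$, so $\psi_v$ is tangent to $\psi_\Ga^\theta$ at each of them; by the injectivity of the correspondence $\P(\inte\L_\theta)\to\cal T_\Ga^\theta$, $w\mapsto\psi_w$, these directions all coincide, forcing $\R u=\R v$. Then $\Ga_{u,r}=\Ga_{v,r}$ for all $r$, and after enlarging $r_0$ if necessary so that Proposition \ref{prop.poincareintegral} applies I would conclude
$$\infty=\sum_{\ga\in\Ga_{v,r_0}}e^{-\psi_v(\mu_\theta(\ga))}=\sup_{T>2}\ \sum_{\substack{\ga\in\Ga_{v,r_0}\\ \psi_v(\mu_\theta(\ga))\le\delta T}}e^{-\psi_v(\mu_\theta(\ga))}\;\ll\;\int_1^\infty t^{(1-\#\theta)/2}\,dt,$$
so this integral diverges and therefore $\#\theta\le 3$.

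The step I expect to be the main obstacle is the middle part of (2) $\Rightarrow$ (1): converting the equality of critical exponents $\psi_v(u)=\psi_\Ga^\theta(u)$ given by Lemma \ref{lem.tangent} into the conclusion $\R u=\R v$. This is where the $\theta$-Anosov hypothesis is genuinely needed — through the bijectivity of $v\mapsto\psi_v$, equivalently the strict concavity of the growth indicator on $\P(\inte\L_\theta)$ — and a little care is required because a priori $u$ need only lie in $\L_\theta$, not in its interior; the convexity argument above reduces matters to interior directions. The remaining ingredients, namely $\Ga_{u,r}=\Ga_{v,r}$ when $\R u=\R v$ and the integrability dichotomy for $t^{(1-\#\theta)/2}$, are routine.
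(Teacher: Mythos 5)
Your proof is correct and follows essentially the same approach as the paper: both implications reduce to the two-sided estimate of Proposition \ref{prop.poincareintegral} together with the integrability threshold $\int_1^\infty t^{(1-\#\theta)/2}\,dt<\infty \Leftrightarrow \#\theta\ge 4$, and $(2)\Rightarrow(1)$ passes through Lemma \ref{lem.tangent} to get $\psi_v(u)=\psi_\Ga^\theta(u)$ before concluding $\R u=\R v$. The one localized difference is how that last deduction is justified: the paper simply cites the strict concavity of $\psi_\Ga^\theta$ to conclude that $\psi_v$ is tangent only in the direction $\R v$, whereas you unpack this via the convex contact cone $C=\{w\in\L_\theta:\psi_v(w)=\psi_\Ga^\theta(w)\}$ and a segment argument pushing the tangency into $\inte\L_\theta$, where the bijection $\P(\inte\L_\theta)\to\cal T_\Ga^\theta$ gives uniqueness; this has the small virtue of explicitly handling the case where $u$ a priori lies only in $\L_\theta$ and not in its interior, but it rests on the same underlying input.
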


\begin{proof}
    Note that $\int_1^\infty t^{\frac{1-\# \theta}{2}} dt = \infty$ if and only if $\# \theta \le 3$. Hence (1) implies (2) by Proposition \ref{prop.poincareintegral}.
    To show the implication $(2) \Rightarrow (1)$, suppose that  $\sum_{\ga \in \Ga_{u, r}} e^{-\psi_v(\mu_{\theta}(\ga))} = \infty$ for some $r > 0$. By Lemma \ref{lem.tangent}, $\psi_v(u) = \psi_{\Ga}^{\theta}(u)$. It follows from the strict concavity of $\psi_{\Ga}^{\theta}$ \cite[Thm. 12.2]{KOW_indicators} that $\psi_v$ can be tangent to
    $\psi_\Ga^\theta$ only in the direction $\br v$. Therefore
    $\R u = \R v$. Now $\# \theta \le 3$ follows from Proposition \ref{prop.poincareintegral}.
\end{proof}

Here is the special case of Theorem \ref{main3} for $\dim W=1$:
\begin{theorem} 
\label{main2} Let $\Ga<G$ be a Zariski dense $\theta$-Anosov subgroup. For any $u \in \inte \L_{\theta}$,
the following are equivalent:
    \begin{enumerate}
        \item $\#\theta \le  3$ (resp. $\# \theta \ge 4$);
        \item $\nu_u(\La_\theta^u)=1$ (resp.  $\nu_{u}(\La_{\theta}^u) = 0 $);
       \item $(\Omega_\theta, A_u,  \mathsf m_u)$ is ergodic and completely conservative (resp. non-ergodic and completely dissipative);
        \item  $\sum_{\ga\in\Ga_{{u},R}}e^{-\psi_u(\mu_\theta(\ga))}=\infty$ for some $R>0$ (resp. $\sum_{\ga\in\Ga_{{u},R}}e^{-\psi_u(\mu_\theta(\ga))} < \infty$ for all $R > 0$).
 \end{enumerate}
\end{theorem}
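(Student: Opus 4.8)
The strategy is to read off Theorem \ref{main2} from the general directional dichotomy of Theorem \ref{main}, fed by the two quantitative inputs already in hand: the local mixing rate of Theorem \ref{thm.mixing} and the Poincar\'e-series asymptotics of Proposition \ref{prop.poincareintegral} (equivalently, Theorem \ref{thm.thetarank}). First I would record that since $u \in \inte \L_\theta$, the unique tangent form $\psi_u \in \fa_\theta^*$ and the pair $(\nu_u, \nu_{\i(u)})$ of $(\Ga, \psi_u)$- and $(\Ga, \psi_u \circ \i)$-conformal measures exist, that $\psi_u$ is $(\Ga, \theta)$-proper (a $(\Ga, \psi_u)$-conformal measure on $\La_\theta$ exists, which forces properness), and that Theorem \ref{thm.mixing} makes $\m_u$ a $u$-balanced measure. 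Hence all hypotheses of Theorem \ref{main} hold with $\psi = \psi_u$, $\nu = \nu_u$, $\m = \m_u$, and all of its items (1)--(5) are equivalent within each of the two complementary cases.

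The next step is bookkeeping: matching the items of Theorem \ref{main2} with those of Theorem \ref{main}. Items (3) and (4) of Theorem \ref{main2} are verbatim items (2)--(3) and (4) of Theorem \ref{main} in the respective case, hence equivalent to its item (1) and to each other. For item (2) I would use Lemma \ref{lem.zeroone}, which gives $\nu_u(\La_\theta^u) \in \{0,1\}$: if $\nu_u(\La_\theta^u) = 1$, then $\max(\nu_u(\La_\theta^u), \nu_{\i(u)}(\La_{\i(\theta)}^{\i(u)})) > 0$, placing us in the first case of Theorem \ref{main}, whose item (5) upgrades this to $\nu_u(\La_\theta^u) = 1 = \nu_{\i(u)}(\La_{\i(\theta)}^{\i(u)})$; and if $\nu_u(\La_\theta^u) = 0$, then complementarity together with the same item (5) forces $\nu_{\i(u)}(\La_{\i(\theta)}^{\i(u)}) = 0$, i.e.\ the second case. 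Thus item (2) is equivalent to the respective item (5) of Theorem \ref{main}, completing the mutual equivalence of (2), (3), (4).

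Finally I would bring in item (1) by applying Theorem \ref{thm.thetarank} with $v = u$, so that the ray condition $\br u = \br v$ is automatic: this yields $\#\theta \le 3$ if and only if $\sum_{\ga \in \Ga_{u,r}} e^{-\psi_u(\mu_\theta(\ga))} = \infty$ for some $r > 0$, i.e.\ (1) $\Leftrightarrow$ (4). Chaining the equivalences proves (1)--(4), and the parenthetical assertions ($\#\theta \ge 4$; $\nu_u(\La_\theta^u) = 0$; non-ergodic and completely dissipative; convergence of every $\Ga_{u,R}$-series) are their simultaneous negations. No new difficulty arises at this stage: the substance lies in Theorem \ref{thm.mixing} (which, via Corollaries \ref{useful} and \ref{cor.div3}, produces Proposition \ref{prop.poincareintegral}) and in Theorem \ref{thm.thetarank} (where strict concavity of $\psi_\Ga^\theta$ pins down the tangency direction). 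The only subtlety in the write-up of Theorem \ref{main2} is the careful reconciliation of the one-sided conical-set condition in item (2) with the symmetric form appearing in Theorem \ref{main}(5), which is where the $0/1$-law and the complementarity of the two cases are used.
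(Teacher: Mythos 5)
Your proposal is correct and follows the same route as the paper: invoke Theorem \ref{thm.mixing} to get the $u$-balanced condition for $\m_u$, apply Theorem \ref{main} to chain items (2)--(4), and use Theorem \ref{thm.thetarank} with $v=u$ for $(1)\Leftrightarrow(4)$. The only difference is that you spell out explicitly (via Lemma \ref{lem.zeroone} and complementarity) why the one-sided condition $\nu_u(\La_\theta^u)\in\{0,1\}$ in item (2) matches the two-sided item (5) of Theorem \ref{main}, a step the paper leaves implicit.
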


\begin{proof}
Since $\m_u$ is $u$-balanced by Theorem \ref{thm.mixing}, the equivalences between (2)-(4) follow from Theorem \ref{main}. By Theorem \ref{thm.thetarank}, we have $(1) \Leftrightarrow (4)$.
\end{proof}

\subsection*{Codimension dichotomy for Anosov subgroups}
We now deduce Theorem \ref{main3}.
 We use the notation from Theorem \ref{main3} and set $\psi=\psi_u$. As in Section \ref{sec.codim}, we consider the quotient space ${W^\diamond} = \fa_{\theta}/(W \cap \ker \psi)$ and set $\Omega_{W^{\diamond}} = \Ga \ba \La_{\theta}^{(2)}\times W^\diamond $ (see \eqref{eqn.V}). We denote by $\m_u'$ the $A_{\theta}$-invariant Radon measure on $\Omega_{W^{\diamond}}$ such that $\m_u = \m_u' \otimes \Leb_{W \cap \ker \psi}$.
 As before, $\Omega_{W^\diamond}$ is a vector bundle over a {\it compact} metric space $\Omega_{\psi}$ with fiber $\br^{\dim W^{\diamond} -1}$, and
the local mixing theorem for the $\{a_{tu}\}$-flow on $\Omega_{W^{\diamond}}$ \cite[Thm. 2.5.2]{sambarino2022report} says that there exists $\kappa_u > 0$ such that for any $f_1, f_2 \in C_c(\Omega_{W^\diamond})$, \be\label{mmix} \lim_{t \to  \infty} t^{\frac{\dim {W^\diamond} - 1}{2}} \int_{\Omega_{{W^\diamond}}} f_1(x) f_2(x a_{tu}) d\m_u'(x) = \kappa_u \m_u'(f_1) \m_u'(f_2).\ee 
We then obtain the following  version of Proposition \ref{prop.poincareintegral}, using Corollary \ref{useful2} and \ref{mmix}: 
\begin{prop} \label{prop.poincareintegral2}  For $\delta = \psi(u) > 0$ and all sufficiently large $R>0$, we have that 
     \be \left(\int_1^T  t^{\frac{1 - \dim W^{\diamond}}{2}} dt\right)^{1/2} \ll 
     \sum_{\substack{\ga \in \Ga_{W, R}  \\ \psi(\mu_{\theta}(\ga)) \le \delta T}} e^{-\psi(\mu_{\theta}(\ga))}  \ll \int_1^T  t^{\frac{1 - \dim W^{\diamond}}{2}} dt\ee
     where the implied constants are independent of $T>2$.
\end{prop}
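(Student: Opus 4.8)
The plan is to combine the two-sided bound on integrated local-mixing (Corollary~\ref{useful2}) with the quantitative local mixing estimate \eqref{mmix} for the $\{a_{tu}\}$-flow on $\Omega_{W^\diamond}$. By Corollary~\ref{useful2}, for all sufficiently large $R>0$ there exist compact subsets $Q_1,Q_2\subset \Omega_{W^\diamond}$ with non-empty interior such that
$$\left(\int_0^T \m_u'(Q_1\cap Q_1 a_{-tu})\,dt\right)^{1/2}\ll \sum_{\substack{\ga\in\Ga_{W,R}\\ \psi(\mu_\theta(\ga))\le\delta T}} e^{-\psi(\mu_\theta(\ga))}\ll \int_0^T \m_u'(Q_2\cap Q_2 a_{-tu})\,dt.$$
So it suffices to show that for any bounded Borel subset $Q\subset \Omega_{W^\diamond}$ with non-empty interior one has $\int_0^T \m_u'(Q\cap Q a_{-tu})\,dt \asymp \int_1^T t^{(1-\dim W^\diamond)/2}\,dt$ for $T>2$; feeding this into both sides of the displayed inequality then yields the proposition, since $(\int_1^T t^{(1-\dim W^\diamond)/2}\,dt)^{1/2}\ll \int_1^T t^{(1-\dim W^\diamond)/2}\,dt$ (because the integral is either bounded or grows at least like $\log T$, and in all cases its square root is dominated by itself up to a constant for $T>2$ — here I use that $\m_u'$ is a genuine Radon measure so the quantities are positive).

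To establish the integral asymptotic, I would mimic the proof of Corollary~\ref{cor.div3}: choose $f_1,f_2\in C_c(\Omega_{W^\diamond})$ with $0\le f_1\le \mathbbm{1}_Q\le f_2$ and $\m_u'(f_1)>0$ (possible since $Q$ has non-empty interior and is bounded), so that
$$\m_u'(f_1)^2 \kappa_u\, t^{(1-\dim W^\diamond)/2}(1+o(1))\le \m_u'(Q\cap Q a_{-tu})\le \m_u'(f_2)^2\kappa_u\, t^{(1-\dim W^\diamond)/2}(1+o(1))$$
as $t\to\infty$, by applying \eqref{mmix} to the pairs $(f_1,f_1)$ and $(f_2,f_2)$ together with the $A_u$-invariance of $\m_u'$ and the fact that $\m_u'(Q\cap Q a_{-tu})=\int f\cdot (f\circ a_{tu})$ sandwich. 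Integrating over $t\in[1,T]$ and absorbing the contribution of the bounded range $t\in[0,1]$ into the implied constants gives the claimed $\asymp \int_1^T t^{(1-\dim W^\diamond)/2}\,dt$; note this integral diverges iff $\dim W^\diamond\le 2$ but the asymptotic statement is uniform in $T>2$ regardless. One small point to check carefully is measurability of $(t,x)\mapsto \mathbbm 1_{Q\cap Q a_{-tu}}(x)$ and that the $o(1)$ in \eqref{mmix} is uniform enough to integrate, which follows since $t^{(\dim W^\diamond-1)/2}\int f_i(x)f_i(xa_{tu})\,d\m_u'$ converges and hence is bounded on $[1,\infty)$, so dominated convergence applies on each finite window and the tail is controlled by the limit.

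The main obstacle, such as it is, is purely bookkeeping: one must be sure that the compact sets $Q_1,Q_2$ produced by Corollary~\ref{useful2} really do have non-empty interior in $\Omega_{W^\diamond}$ (so that $\m_u'(f_1)>0$ can be arranged), which is exactly how that corollary is stated, and that the vector-bundle structure $\Omega_{W^\diamond}\to \Omega_\psi$ over the \emph{compact} base $\Omega_{\psi}$ — valid here because $\Ga$ is $\theta$-Anosov, by \cite{sambarino_orbit} and \cite[Appendix]{Carvajales} — legitimizes the use of Sambarino's local mixing \eqref{mmix} with the correct exponent $\tfrac{\dim W^\diamond-1}{2}=\tfrac{\#\theta-1-\dim(W\cap\ker\psi)}{2}=\tfrac{\codim W+1}{2}-\tfrac12=\tfrac{\codim W}{2}$ after noting $\dim W^\diamond-1=\codim W$. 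With these identifications in hand the argument is a verbatim repetition of Corollary~\ref{cor.div3} followed by substitution into Corollary~\ref{useful2}, so no genuinely new difficulty arises.

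\begin{proof}
Fix $v:=u$, write $\phi:=\psi=\psi_u$ and $\delta=\phi(u)>0$. Since $\Ga$ is $\theta$-Anosov, the base $\Omega_{\psi}$ of the vector bundle $\Omega_{W^\diamond}\to\Omega_\psi$ is a compact metric space (\cite{sambarino_orbit}, \cite[Appendix]{Carvajales}), and $\Omega_{W^\diamond}$ is a vector bundle over it with fiber $\br^{\dim W^\diamond-1}$ where $\dim W^\diamond-1=\#\theta-1-\dim(W\cap\ker\psi)=\codim W$. Sambarino's local mixing theorem \cite[Thm. 2.5.2]{sambarino2022report} gives $\kappa_u>0$ with
$$\lim_{t\to\infty} t^{\frac{\dim W^\diamond-1}{2}}\int_{\Omega_{W^\diamond}} f_1(x)f_2(xa_{tu})\,d\m_u'(x)=\kappa_u\,\m_u'(f_1)\m_u'(f_2)$$
for all $f_1,f_2\in C_c(\Omega_{W^\diamond})$, which is exactly \eqref{mmix}.

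Let $Q\subset\tilde\Omega_{W^\diamond}$ be a bounded Borel subset with non-empty interior, and let $[Q]\subset\Omega_{W^\diamond}$ denote its image. Choose $f_1,f_2\in C_c(\Omega_{W^\diamond})$ with $0\le f_1\le\mathbbm 1_{[Q]}\le f_2$ and $\m_u'(f_1)>0$. Defining $\tilde f_i\in C_c(\tilde\Omega_{W^\diamond})$ as $\Ga$-periodizations in the usual way, we get as in the proof of Corollary~\ref{cor.div3} that for all $t\ge1$,
$$\int_{\tilde\Omega_{W^\diamond}}\sum_{\ga\in\Ga}\tilde f_i(\ga[g]a_{tu})\,\tilde f_i([g])\,d\tilde\m_u'([g])\asymp t^{\frac{1-\dim W^\diamond}{2}},$$
with implied constants depending on $f_i$ but not on $t$. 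Since $\tilde f_1\le\mathbbm 1_Q\le\tilde f_2$, this sandwiches $\sum_{\ga\in\Ga}\tilde\m_u'(Q\cap\ga Q a_{-tu})$ between two quantities each $\asymp t^{(1-\dim W^\diamond)/2}$ for $t\ge1$. As $\#\{\ga\in\Ga: Q a_{-tu}\cap\ga Q a_{-tu}\ne\emptyset\}$ is bounded independently of $t$, we conclude $\m_u'([Q]\cap[Q]a_{-tu})\asymp t^{(1-\dim W^\diamond)/2}$ for $t\ge1$, and hence integrating (and absorbing the bounded contribution of $t\in[0,1]$),
$$\int_0^T\m_u'([Q]\cap[Q]a_{-tu})\,dt\asymp\int_1^T t^{\frac{1-\dim W^\diamond}{2}}\,dt\qquad(T>2).$$

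Now apply Corollary~\ref{useful2}: for all sufficiently large $R>0$ there are compact $Q_1,Q_2\subset\Omega_{W^\diamond}$ with non-empty interior such that
$$\left(\int_0^T\m_u'(Q_1\cap Q_1 a_{-tu})\,dt\right)^{1/2}\ll\sum_{\substack{\ga\in\Ga_{W,R}\\ \psi(\mu_\theta(\ga))\le\delta T}} e^{-\psi(\mu_\theta(\ga))}\ll\int_0^T\m_u'(Q_2\cap Q_2 a_{-tu})\,dt.$$
By the integral asymptotic applied to $Q_1$ and $Q_2$, the left side is $\gg(\int_1^T t^{(1-\dim W^\diamond)/2}\,dt)^{1/2}$ and the right side is $\ll\int_1^T t^{(1-\dim W^\diamond)/2}\,dt$, uniformly in $T>2$. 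This proves the proposition.
\end{proof}
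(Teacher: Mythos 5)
Your proof is correct and follows essentially the same route the paper intends: you prove the analogue of Corollary~\ref{cor.div3} on $\Omega_{W^\diamond}$ by feeding the local mixing estimate \eqref{mmix} into the sandwich $f_1\le\mathbbm 1_{[Q]}\le f_2$, and then substitute the resulting $\int_0^T\m_u'(Q\cap Qa_{-tu})\,dt\asymp\int_1^T t^{(1-\dim W^\diamond)/2}\,dt$ into both sides of Corollary~\ref{useful2}, exactly as the paper does (it cites Corollary~\ref{useful2} and \eqref{mmix} with Corollary~\ref{cor.div3} serving as the template).
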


Since $\dim {W^\diamond} -1 = \codim W$ and hence
$\dim {W^\diamond} \le 3  \Leftrightarrow 
\codim W \le 2$, the following is immediate from Proposition \ref{prop.poincareintegral2}:
\begin{prop} \label{prop.subspaceanosov}
   If $\Ga$ is a Zariski dense $\theta$-Anosov subgroup of $G$, then 
   $$ 
\codim W \le 2  \Longleftrightarrow  \sum_{\ga \in \Ga_{W, R}} e^{-\psi(\mu_{\theta}(\ga))} = \infty \text{ for some } R > 0.$$
\end{prop}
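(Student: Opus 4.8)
The plan is to read off the dichotomy directly from the two-sided estimate of Proposition \ref{prop.poincareintegral2}, after recording that the hypotheses of Section \ref{sec.codim} are met in the present situation. First I would note that since $\Ga$ is $\theta$-Anosov and $u \in \inte \L_\theta$, the tangent form $\psi = \psi_u$ is $(\Ga, \theta)$-proper and satisfies $\psi(u) = \psi_\Ga^\theta(u) > 0$; as $u \in W$, this gives $W \cap \L_\theta \neq \{0\}$ and $W \not\subset \ker \psi$, so that $W^\diamond = \fa_\theta/(W \cap \ker\psi)$, $\Omega_{W^\diamond}$, and the measure $\m_u'$ are all defined and the conclusion of Proposition \ref{prop.poincareintegral2} applies with $\delta = \psi(u) > 0$. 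The only arithmetic input is the identity $\dim W^\diamond - 1 = \codim W$, which turns the exponent $\tfrac{1 - \dim W^\diamond}{2}$ into $-\tfrac{\codim W}{2}$; consequently $\int_1^\infty t^{-\codim W/2}\,dt = \infty$ if and only if $\codim W \le 2$.

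Next I would fix $R > 0$ large enough for Proposition \ref{prop.poincareintegral2} to hold and consider the two cases. If $\codim W \le 2$, then $\int_1^T t^{-\codim W/2}\,dt \to \infty$ as $T \to \infty$, so the lower bound
$$\Bigl(\int_1^T t^{-\codim W/2}\,dt\Bigr)^{1/2} \ll \sum_{\substack{\ga \in \Ga_{W,R} \\ \psi(\mu_\theta(\ga)) \le \delta T}} e^{-\psi(\mu_\theta(\ga))}$$
forces the partial sums on the right to diverge; since these are monotone in $T$ and bounded above by the full series $\sum_{\ga \in \Ga_{W,R}} e^{-\psi(\mu_\theta(\ga))}$, the latter is infinite, establishing the implication $\Rightarrow$. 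If instead $\codim W \ge 3$, then $\int_1^\infty t^{-\codim W/2}\,dt < \infty$, and the upper bound of Proposition \ref{prop.poincareintegral2} gives
$$\sum_{\substack{\ga \in \Ga_{W,R} \\ \psi(\mu_\theta(\ga)) \le \delta T}} e^{-\psi(\mu_\theta(\ga))} \ll \int_1^T t^{-\codim W/2}\,dt \le \int_1^\infty t^{-\codim W/2}\,dt,$$
a bound uniform in $T$; here $(\Ga,\theta)$-properness of $\psi$ ensures each partial sum is finite, so letting $T \to \infty$ yields $\sum_{\ga \in \Ga_{W,R}} e^{-\psi(\mu_\theta(\ga))} < \infty$. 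As Proposition \ref{prop.poincareintegral2} holds for all sufficiently large $R$, and $\Ga_{W,R'} \subset \Ga_{W,R}$ for $R' \le R$, this finiteness propagates to every $R > 0$, giving the contrapositive of $\Leftarrow$.

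I do not anticipate a real obstacle: the entire argument is a bookkeeping deduction from Proposition \ref{prop.poincareintegral2}. The only point requiring a moment of care is the handling of the quantifiers — the divergence conclusion only needs one (large) value of $R$, whereas the convergence conclusion must be pushed to all $R$ — together with the observation that the restricted sums over $\{\psi(\mu_\theta(\ga)) \le \delta T\}$ increase to the full Poincaré series as $T \to \infty$, which is why $(\Ga,\theta)$-properness of $\psi$ is invoked. Combining the two cases completes the proof.
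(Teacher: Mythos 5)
Your proof is correct and follows the same route as the paper: invoke Proposition \ref{prop.poincareintegral2}, use the identity $\dim W^\diamond - 1 = \codim W$ to rewrite the exponent, and observe that the comparison integral $\int_1^\infty t^{-\codim W/2}\,dt$ converges precisely when $\codim W \ge 3$. The paper leaves the bookkeeping (the quantifier over $R$, the monotone exhaustion of the Poincar\'e series by its truncations) as ``immediate,'' while you spell it out; this is a reasonable expansion of the same argument, not a different one.
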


Hence the equivalence $(1) \Leftrightarrow (4)$ in Theorem \ref{main3} follows.
Since the local mixing for $(\Omega_{W^{\diamond}}, \{a_{tu}\}, \m_u')$ implies that $\m_u'$ is $u$-balanced, and hence $\m_u$ is $W$-balanced, we can apply Theorem \ref{thm.subspacedichotomy} to obtain the equivalences (2)-(4) in Theorem \ref{main3}. Therefore Theorem \ref{main3} follows.

\bibliographystyle{plain} 

\begin{thebibliography}{10}

\bibitem{AS_rational}
J.~Aaronson and D.~Sullivan.
\newblock Rational ergodicity of geodesic flows.
\newblock {\em Ergodic Theory Dynam. Systems}, 4(2):165--178, 1984.

\bibitem{Benoist_Actions}
Y.~Benoist.
\newblock Actions propres sur les espaces homog\`enes r\'{e}ductifs.
\newblock {\em Ann. of Math. (2)}, 144(2):315--347, 1996.

\bibitem{Benoist1997proprietes}
Y.~Benoist.
\newblock Propri\'{e}t\'{e}s asymptotiques des groupes lin\'{e}aires.
\newblock {\em Geom. Funct. Anal.}, 7(1):1--47, 1997.

\bibitem{Benoist_properties2}
Y.~Benoist.
\newblock Propri\'{e}t\'{e}s asymptotiques des groupes lin\'{e}aires. {II}.
\newblock In {\em Analysis on homogeneous spaces and representation theory of {L}ie groups, {O}kayama--{K}yoto (1997)}, volume~26 of {\em Adv. Stud. Pure Math.}, pages 33--48. Math. Soc. Japan, Tokyo, 2000.

\bibitem{BCZZ}
P.-L. Blayac, R.~Canary, F.~Zhu, and A.~Zimmer.
\newblock Patterson-Sullivan theory for coarse cocycles.
\newblock {\em Preprint, arXiv:2404.09713}, 2024.

\bibitem{BCZZ2}
P.-L. Blayac, R.~Canary, F.~Zhu, and A.~Zimmer.
\newblock Counting, mixing and equidistribution for GPS systems with applications to relatively Anosov groups.
\newblock {\em Preprint, arXiv:2024.2404.09718}, 2024.



\bibitem{BPS_anosov}
J.~Bochi, R.~Potrie, and A.~Sambarino.
\newblock Anosov representations and dominated splittings.
\newblock {\em J. Eur. Math. Soc. (JEMS)}, 21(11):3343--3414, 2019.

\bibitem{Bowditch1999convergence}
B.~Bowditch.
\newblock Convergence groups and configuration spaces.
\newblock In {\em Geometric group theory down under ({C}anberra, 1996)}, pages
  23--54. de Gruyter, Berlin, 1999.

\bibitem{BLLO}
M.~Burger, O.~Landesberg, M.~Lee, and H.~Oh.
\newblock The {H}opf--{T}suji--{S}ullivan dichotomy in higher rank and applications to {A}nosov subgroups.
\newblock {\em J. Mod. Dyn.}, 19:301--330, 2023.

\bibitem{CZZ_transverse}
R.~Canary, T.~Zhang, and A.~Zimmer.
\newblock {P}atterson-{S}ullivan measures for transverse subgroups.
\newblock {\em J. Mod. Dyn.}, 20:319--377, 2024.

\bibitem{Carvajales}
L.~Carvajales.
\newblock Growth of quadratic forms under {A}nosov subgroups.
\newblock {\em Int. Math. Res. Not. IMRN}, (1):785--854, 2023.

\bibitem{CS_local}
M.~Chow and P.~Sarkar.
\newblock Local {M}ixing of {O}ne-{P}arameter {D}iagonal {F}lows on {A}nosov {H}omogeneous {S}paces.
\newblock {\em Int. Math. Res. Not. IMRN}, (18):15834--15895, 2023.

\bibitem{Coudene_Hopf}
Y.~Coud\`ene.
\newblock The {H}opf argument.
\newblock {\em J. Mod. Dyn.}, 1(1):147--153, 2007.

\bibitem{ELO_anosov}
S.~Edwards, M.~Lee, and H.~Oh.
\newblock Anosov groups: local mixing, counting and equidistribution.
\newblock {\em Geom. Topol.}, 27(2):513--573, 2023.

\bibitem{GGKW_gt}
F.~Gu\'{e}ritaud, O.~Guichard, F.~Kassel, and A.~Wienhard.
\newblock Anosov representations and proper actions.
\newblock {\em Geom. Topol.}, 21(1):485--584, 2017.

\bibitem{GW_anosov}
O.~Guichard and A.~Wienhard.
\newblock Anosov representations: domains of discontinuity and applications.
\newblock {\em Invent. Math.}, 190(2):357--438, 2012.

\bibitem{Hopf1971ergodic}
E.~Hopf.
\newblock Ergodic theory and the geodesic flow on surfaces of constant negative curvature.
\newblock {\em Bull. Amer. Math. Soc.}, 77:863--877, 1971.

\bibitem{HM}
R.~Howe and C.~Moore.
\newblock Asymptotic properties of unitary representations.
\newblock {\em J. Functional Analysis}, 32(1):72--96, 1979.

\bibitem{KLP_Anosov}
M.~Kapovich, B.~Leeb, and J.~Porti.
\newblock Anosov subgroups: dynamical and geometric characterizations.
\newblock {\em Eur. J. Math.}, 3(4):808--898, 2017.

\bibitem{KMO_HD}
D.~M. Kim, Y.~Minsky, and H.~Oh.
\newblock Hausdorff dimension of directional limit sets for self-joinings of hyperbolic manifolds.
\newblock {\em J. Mod. Dyn.}, 19:433--453, 2023.

\bibitem{KO_rigiditymeasure}
D.~M. Kim and H.~Oh.
\newblock Rigidity of {K}leinian groups via self-joinings: measure theoretic criterion.
\newblock {\em Preprint arXiv:2302.03552}, 2023.

\bibitem{KOW_indicators}
D.~M. Kim, H.~Oh, and Y.~Wang.
\newblock Properly discontinous actions, growth indicators and conformal measures for transverse subgroups.
\newblock {\em Preprint arXiv:2306.06846}, 2023.

\bibitem{Koba}
T.~Kobayashi.
\newblock Criterion for proper actions on homogeneous spaces of reductive groups.
\newblock {\em J. Lie Theory}, 6(2):147--163, 1996.

\bibitem{Labourie2006anosov}
F.~Labourie.
\newblock Anosov flows, surface groups and curves in projective space.
\newblock {\em Invent. Math.}, 165(1):51--114, 2006.

\bibitem{LO_dichotomy}
M.~Lee and H.~Oh.
\newblock Dichotomy and measures on limit sets of {A}nosov groups.
\newblock {\em Int. Math. Res. Not. IMRN}, (7):5658--5688, 2024.

\bibitem{LO_invariant}
M.~Lee and H.~Oh.
\newblock Invariant measures for horospherical actions and {A}nosov groups.
\newblock {\em Int. Math. Res. Not. IMRN}, (19):16226--16295, 2023.

\bibitem{OP_local}
H.~Oh and W.~Pan.
\newblock Local mixing and invariant measures for horospherical subgroups on {A}belian {C}overs.
\newblock {\em Int. Math. Res. Not. IMRN}, (19):6036--6088, 2019.

\bibitem{PS_metric}
B.~Pozzetti and A.~Sambarino.
\newblock Metric properties of boundary maps, {H}ilbert entropy and non-differentiability.
\newblock {\em Preprint arXiv:2310.07373}, 2023.

\bibitem{Quint2002divergence}
J.-F. Quint.
\newblock Divergence exponentielle des sous-groupes discrets en rang sup\'{e}rieur.
\newblock {\em Comment. Math. Helv.}, 77(3):563--608, 2002.

\bibitem{Quint2002Mesures}
J.-F. Quint.
\newblock Mesures de {P}atterson-{S}ullivan en rang sup\'{e}rieur.
\newblock {\em Geom. Funct. Anal.}, 12(4):776--809, 2002.

\bibitem{Rees1981checking}
M.~Rees.
\newblock Checking ergodicity of some geodesic flows with infinite {G}ibbs measure.
\newblock {\em Ergodic Theory Dynam. Systems}, 1(1):107--133, 1981.

\bibitem{Roblin2003ergodicite}
T.~Roblin.
\newblock Ergodicit\'{e} et \'{e}quidistribution en courbure n\'{e}gative.
\newblock {\em M\'{e}m. Soc. Math. Fr. (N.S.)}, (95):vi+96, 2003.

\bibitem{sambarino_orbit}
A.~Sambarino.
\newblock The orbital counting problem for hyperconvex representations.
\newblock {\em Ann. Inst. Fourier (Grenoble)}, 65(4):1755--1797, 2015.

\bibitem{sambarino2022report}
A.~Sambarino.
\newblock A report on an ergodic dichotomy.
\newblock {\em Ergodic Theory Dynam. Systems}, 44(1):236--289, 2024.

\bibitem{Sullivan1979density}
D.~Sullivan.
\newblock The density at infinity of a discrete group of hyperbolic motions.
\newblock {\em Inst. Hautes \'{E}tudes Sci. Publ. Math.}, (50):171--202, 1979.

\bibitem{Tsuji1959potential}
M.~Tsuji.
\newblock {\em Potential theory in modern function theory}.
\newblock Maruzen Co. Ltd., Tokyo, 1959.


\bibitem{Tukia_convergence}
P.~Tukia.
\newblock Convergence groups and {G}romov's metric hyperbolic spaces.
\newblock {\em New Zealand J. Math.}, 23(2):157--187, 1994.

\bibitem{Winter_mixing}
D.~Winter.
\newblock Mixing of frame flow for rank one locally symmetric spaces and measure classification.
\newblock {\em Israel J. Math.}, 210(1):467--507, 2015.

\end{thebibliography}

\end{document}